\newcommand*{\idx}[2]{\index{{\hspace{-2ex}\color{white}#2}{#1}}{#1}}
\newtheorem{thm}{Theorem}
\newtheorem{cor}[thm]{Corollary}
\newtheorem{lem}[thm]{Lemma}
\newtheorem{pro}[thm]{Proposition}
\newtheorem*{thm*}{Theorem}
\newtheorem{opq}[thm]{Problem}
\newtheorem*{opq*}{Problem}
\newtheorem*{cau}{Caution}
\theoremstyle{remark}
\newtheorem{rem}[thm]{Remark}
\theoremstyle{definition}
\newtheorem{exa}[thm]{Example}
\newtheorem{preexa}[thm]{Procedure}
\DeclareMathOperator{\lin}{lin}
\DeclareMathOperator{\D}{d\hspace{-0.1ex}}
\DeclareMathOperator{\dzii}{{\mathsf{Chi}}}
\DeclareMathOperator{\koo}{{\mathsf{root}}}
\DeclareMathOperator{\paa}{{\mathsf{par}}}
\DeclareMathOperator{\bigU}{{\bigsqcup}} 
\newcommand*{\ascr}{\mathscr A}
\newcommand*{\at}[1]{\mathsf{At}(#1)}
\newcommand*{\atb}{{\ascr \otimes \borel{\rbb_+}}}
\newcommand*{\ats}{\mathrm{At}}
\newcommand*{\borel}[1]{{\mathfrak B}(#1)}
\newcommand*{\bscr}{\mathscr B}
\newcommand*{\card}[1]{\mathrm{card}(#1)}
\newcommand*{\cbb}{\mathbb C}
\newcommand*{\ceb}[1]{\mathsf{E}(#1;\bscr,\nu)}
\newcommand*{\cen}[1]{\mathsf{E}_{\phi^{#1}}}
\newcommand*{\cfw}{C_{\phi,w}}
\newcommand*{\cfwm}{C_{\phi,|w|}}
\newcommand*{\cfww}{C_{\phi,\widetilde w}}
\newcommand*{\cpv}{C_{\psi,v}}
\newcommand*{\ctfw}{C_{\tilde\phi,w}}
\newcommand*{\CFW}{C_{\varPhi,W}}
\newcommand*{\cscr}{\mathscr{C}}
\newcommand*{\dscr}{\mathscr{D}}
\newcommand*{\dz}[1]{{\mathscr D}(#1)}
\newcommand*{\dzi}[1]{\dzii(#1)}
\newcommand*{\dziplus}[1]{\dzii_\lambdab^+(#1)}
\newcommand*{\dzn}[1]{{\mathscr D}^\infty(#1)}
\newcommand*{\efw}{\mathsf{E}_{\phi,w}}
\newcommand*{\efww}{\mathsf{E}_{\phi,\widetilde w}}
\newcommand*{\etfw}{\mathsf{E}_{\tilde\phi,w}}
\newcommand*{\escr}{\mathscr E}
\newcommand*{\fscr}{\mathscr{F}}
\newcommand*{\Ge}{\geqslant}
\newcommand*{\hh}{\mathcal H}
\newcommand*{\hscr}{\mathscr H}
\newcommand*{\hfw}{\mathsf{h}_{\phi,w}}
\newcommand*{\hfwm}{\mathsf{h}_{\phi,|w|}}
\newcommand*{\htfw}{\mathsf{h}_{\tilde\phi,w}}
\newcommand*{\hfwi}[1]{\mathsf{h}_{\phi_{#1},w_{#1}}}
\newcommand*{\hfww}{\mathsf{h}_{\phi,\widetilde w}}
\newcommand*{\hfwn}[1]{\mathsf{h}_{\phi^{#1},\widehat w_{#1}}}
\newcommand*{\HFW}{\mathsf{h}_{\varPhi,W}}
\newcommand*{\HtFW}{\mathsf{h}_{\tilde\varPhi,W}}
\newcommand*{\id}{\mathrm{id}}
\newcommand*{\is}[2]{\langle#1,#2\rangle}
\newcommand*{\jd}[1]{\mathscr N(#1)}
\newcommand*{\kk}{\mathcal K}
\newcommand*{\lambdab}{{\boldsymbol\lambda}}
\newcommand*{\Le}{\leqslant}
\newcommand*{\nbb}{\mathbb N}
\newcommand*{\ogr}[1]{\boldsymbol B(#1)}
\newcommand*{\ob}[1]{{\mathscr R}(#1)}
\newcommand*{\pa}[1]{\paa(#1)}
\newcommand*{\pscr}{{\mathscr P}}
\newcommand*{\qscr}{\mathscr{Q}}
\newcommand*{\rbb}{\mathbb R}
\newcommand*{\rbop}{{\overline{\rbb}_+}}
\newcommand*{\slam}{S_{\boldsymbol \lambda}}
\newcommand*{\smalloplus}{\raise0pt
\hbox{$\scriptscriptstyle \oplus$}}
\newcommand*{\supp}[1]{\mathrm{supp}\,#1}
\newcommand*{\tcal}{{\mathscr T}}
\newcommand*{\zbb}{\mathbb Z}
\begin{document}
   \title[Unbounded Weighted Composition
Operators in $L^2$-Spaces]{Unbounded Weighted
Composition Operators \\ in $L^2$-Spaces}
   \author[P.\ Budzy\'{n}ski]{Piotr Budzy\'{n}ski}
   \address{Katedra Zastosowa\'{n} Matematyki,
Uniwersytet Rolniczy w Krako\-wie, ul.\ Balicka 253c,
PL-30198 Krak\'ow}
\email{piotr.budzynski@ur.krakow.pl}
   \author[Z.\ J.\ Jab{\l}o\'nski]{Zenon Jan
Jab{\l}o\'nski}
   \address{Instytut Matematyki,
Uniwersytet Jagiello\'nski, ul.\ \L ojasiewicza 6, PL-30348 Kra\-k\'ow,
Poland} \email{Zenon.Jablonski@im.uj.edu.pl}
   \author[I.\ B.\ Jung]{Il Bong Jung}
   \address{Department of Mathematics, Kyungpook National University,
Da\-egu 41566, Korea} \email{ibjung@knu.ac.kr}
   \author[J.\ Stochel]{Jan Stochel}
\address{Instytut Matematyki, Uniwersytet
Jagiello\'nski, ul.\ \L ojasiewicza 6, PL-30348 Kra\-k\'ow, Poland}
\email{Jan.Stochel@im.uj.edu.pl}
   \thanks{The research of the
second and fourth authors was supported by the NCN
(National Science Center), decision No.
DEC-2013/11/B/ST1/03613. The research of the third
author was supported by Basic Science Research Program
through the National Research Foundation of Korea
(NRF) funded by the Ministry of Science, ICT and
future Planning (2015R1A2A2A01006072).}
   \subjclass[2010]{Primary 47B33, 47B20; Secondary
47B37, 44A60}
   \keywords{Weighted composition operator, subnormal
operator, conditional expectation, consistency
condition}
   \begin{abstract}
Bounded and unbounded weighted composition operators
on $L^2$ spaces over $\sigma$-finite measure spaces
are investigated. A variety of questions related to
seminormality of such operators are discussed.
   \end{abstract}
   \maketitle

   \setcounter{tocdepth}{2} \tableofcontents

   \chapter{Preliminaries}
   \section{Introduction}
The classical Banach-Stone theorem (see
\cite[Th\'{e}or\`{e}me XI.3]{Ban} and \cite{Ston}, see
also \cite[Theorem 2.1.1]{Fl-Jam}) states that if $X$
and $Y$ are compact Hausdorff topological spaces and
$A\colon C(X) \to C(Y)$ is a surjective linear
isometry, then there exist a continuous function
$w\colon Y\to \rbb$ and a homeomorphism $\phi \colon
Y\to X$ such that $|w|\equiv 1$ and
   \begin{align} \label{mariu}
Af = w \cdot (f \circ \phi)
   \end{align}
for every $f \in C(X)$; here $C(Z)$ stands for the
Banach space of real valued continuous functions on a
compact Hausdorff topological space $Z$ equipped with
the supremum norm. This result provided a strong
motivation to study isometric operators between
function spaces (see e.g., the monographs
\cite{Fl-Jam,Fl-Jam2} and the references therein). It
also brought attention to the investigation of
operators of the form \eqref{mariu}, without assuming
that $|w|\equiv 1$, acting in spaces of analytic
functions (cf.\ \cite{Sha,Cow-MC95}) or in
$L^p$-spaces (cf.\ \cite{nor,sin-man}). Another
important source of motivation comes from the ergodic
theory where Koopman operators (which are isometric
operators of the form \eqref{mariu} with $w\equiv 1$)
play an important role (cf.\ \cite{Kren,EFH}).

Let $(X,\ascr,\mu)$ be a $\sigma$-finite measure space
and $\phi\colon X\to X$ and $w\colon X \to \cbb$ be
measurable transformations. A linear operator in
$L^2(\mu)$ with the domain
   \begin{align*}
\big\{f \in L^2(\mu)\colon w \cdot (f\circ \phi) \in
L^2(\mu)\big\}
   \end{align*}
acting in accordance with the formula
   \begin{align*}
f \longmapsto w \cdot (f \circ \phi)
   \end{align*}
is called a weighted composition operator with a
symbol $\phi$ and a weight $w$; it will be denoted by
$\cfw$. If $w\equiv 1$, then we call it a composition
operator and abbreviate $\cfw$ to $C_{\phi}$. The
reader is referred to Section \ref{Sec2.2} for the
discussion of when the so-defined operator is
well-defined. Note that to each weighted composition
operator $\cfw$ there corresponds the composition
operator $C_{\phi}$. The relationship between
well-definiteness of $\cfw$ and $C_{\phi}$ is
discussed in Section \ref{Sec7.1}.

Weighted composition operators (in $L^2$-spaces) turn
out to be interesting objects of Operator Theory. The
class of these operators includes multiplication
operators, partial (weighted and unweighted)
composition operators, weighted shifts on directed
trees, unilateral and bilateral weighted shifts and
their adjoints (see Section \ref{pco}). Much effort
was put into the investigation of bounded weighted
composition operators. In particular, under certain
restrictive assumptions (see below for more details),
characterizations of their selfadjointness, normality,
quasinormality, hyponormality and cohyponormality were
given (see
\cite{ca-jam-corr,ca-em-fl-nar,lam3,ca-hor}; see also
\cite{car1} for the discrete case and
\cite{wh,sin-kum,sin,ha-wh} for the case of
composition operators). Moreover, criteria for
subnormality and cosubnormality of bounded composition
operators were invented (see \cite{lam1,emb-lam3}).
Questions related to compactness, commutants and
spectra of weighted and unweighted composition
operators were studied as well (see
\cite{Rid,Pet,car2,Tak,Cha-Par-Poz,Par-Poz,Ma-Av,Sch};
see also \cite{Kam}, which was an inspiration for
\cite{Tak}). For more references and an overview of
results on bounded weighted composition operators the
reader is referred to \cite{sin-man}.

Until now, little was known about the properties of
unbounded weighted composition operators. The
questions of their hyponormality and cohyponormality
were investigated by Campbell and Hornor in
\cite{ca-hor}, still under restrictive assumptions
(see also \cite{jj} for the case of unbounded
cohyponormal composition operators with bimeasurable
symbols). The problem of finding reasonable criteria
for subnormality of unbounded weighted composition
operators is much more challenging. The celebrated
Lambert's characterizations of subnormality solve the
problem for bounded composition operators (see
\cite{lam1}; see also \cite{lam2}). None of them is
true for unbounded ones (see \cite{b-j-j-sC,j-j-s0}).
Successful attempts to solve this problem in the
unbounded case were undertaken by the present authors
for weighted shifts on directed trees (see
\cite{b-j-j-sA,b-j-j-sB}) and, very recently, for
composition operators (see \cite{b-j-j-sS}). The
solutions were given in terms of families of
probability measures satisfying the so-called
consistency condition.

In most papers concerning weighted composition
operators (including
\cite{ca-jam-corr,ca-em-fl-nar,lam3,ca-hor}), the
authors assume that the measure spaces under
consideration are complete and that the corresponding
composition operators are densely defined. This
enables them to use the conditional expectation
$\mathsf{E}(\,\cdot\,; \phi^{-1}(\ascr), \mu)$ with
respect to the $\sigma$-algebra $\phi^{-1}(\ascr)$ and
to regard a weighted composition operator $\cfw$ as
the product $M_w C_{\phi}$ of the operator $M_w$ of
multiplication by $w$ and the composition operator
$C_{\phi}$. In particular, such $\cfw$ is well-defined
(but not necessarily densely defined, see Example
\ref{ciach4}). Surprisingly, however, if the
aforementioned assumptions are dropped, it may even
happen that $\cfw$ is an isometry while $C_{\phi}$ is
not well-defined (see Example \ref{cohyp-count2}), or
that $\cfw$ is bounded whereas the conditional
expectation $\mathsf{E}(\,\cdot\,; \phi^{-1}(\ascr),
\mu)$ does not exist (see Example \ref{ciach2}). This
means that the approach proposed by these authors
excludes, a priori, a variety of weighted composition
operators.

In 1950 Halmos introduced the notion of a bounded
subnormal operator and gave its first characterization
(cf.\ \cite{hal1}), which was successively simplified
by Bram \cite{bra}, Embry \cite{emb} and Lambert
\cite{lam}. Neither of them is true for unbounded
operators (see \cite{Con} and \cite{StSz1,StSz2,StSz3}
for foundations of the theory of bounded and unbounded
subnormal operators). The only known general
characterizations of subnormality of unbounded
operators refer to semispectral measures or elementary
spectral measures (cf.\ \cite{bis,foi,FHSz}). They
seem to be useless in the context of particular
classes of operators. The other known criteria for
subnormality (with the exception of \cite{Sz4})
require the operator in question to have an invariant
domain (cf.\ \cite{StSz2,StSz,c-s-sz,Al-V}). In this
paper, we give a criterion for subnormality of densely
defined weighted composition operators in $L^2$-spaces
with no additional restrictions.

The question of subnormality of bounded weighted
composition operators was studied by Azimi in a recent
paper \cite{Azi}, still under the restrictive
assumptions mentioned above. Unfortunately, the author
uses an invalid formula for $\|\cfw^n f\|^2$ (see the
proof of \cite[Theorem 3.3]{Azi}). As a consequence,
most of the results in his paper are incorrect. The
reader is referred to Theorem \ref{gsms+} and
\eqref{poz1-1} for the correct ones.
   \begin{cau}
In this paper, weighted composition operators are
considered only in $L^2$-spaces. For brevity, we will
omit the expression ``in $L^2$-space''.
   \end{cau}
   \section{Notations and prerequisites}
We write \idx{$\zbb$}{01}, \idx{$\rbb$}{02} and
\idx{$\cbb$}{03} for the sets of integers, real
numbers and complex numbers, respectively. We denote
by \idx{$\nbb$}{04}, \idx{$\zbb_+$}{05} and
\idx{$\rbb_+$}{06} the sets of positive integers,
nonnegative integers and nonnegative real numbers,
respectively. Set \idx{$\rbop$}{07}$ = \rbb_+ \cup
\{\infty\}$. In what follows, we adhere to the
convention that
   \begin{align} \label{conv-j}
0 \cdot \infty = \infty \cdot 0 = 0, \quad \frac{1}{0}
= \infty, \quad \frac{0}{0} = 0 \quad \text{and} \quad
\frac{1}{\infty} = 0.
   \end{align}
We write \idx{$\delta_{i,j}$}{08} for the Kronecker's
delta. The expression ``a countable set'' means a
finite set or a countably infinite set. We denote by
\idx{$\card{X}$}{09} the cardinal number of a set $X$.
We put \idx{$\varDelta\vartriangle
\varDelta^\prime$}{10}$= (\varDelta \setminus
\varDelta^\prime) \cup (\varDelta^\prime \setminus
\varDelta)$ for subsets $\varDelta$ and
$\varDelta^\prime$ of $X$. Given subsets $\varDelta,
\varDelta_n$ of $X$, $n\in \nbb$, we write
\idx{$\varDelta_n \nearrow \varDelta$}{11} as $n\to
\infty$ if $\varDelta_n \subseteq \varDelta_{n+1}$ for
every $n\in \nbb$ and $\varDelta =
\bigcup_{n=1}^\infty \varDelta_n$. The notation
``\idx{$\bigU$}{12}'' is reserved to indicate pairwise
disjointness of union of sets. For a family $\cscr
\subseteq 2^X$, we denote by
\idx{$\sigma_X(\cscr)$}{13} the $\sigma$-algebra
generated by $\cscr$; we will abbreviate
$\sigma_X(\cscr)$ to \idx{$\sigma(\cscr)$}{14} if this
does not lead to ambiguity. We also put \idx{$\cscr
\cap A$}{15}$=\{C \cap A\colon C \in \cscr\}$ for
$A\in 2^X$. The characteristic function of a subset
$\varDelta$ of $X$ is denoted by
\idx{$\chi_\varDelta$}{16}. If no confusion can arise,
we write \idx{$\boldsymbol{1}$}{17} for $\chi_X$. If
$f$ is a $\cbb$-valued or an $\rbop$-valued function
on a set $X$, then we put $\{f=0\}=\{x\in X\colon
f(x)=0\}$ and $\{f\neq 0\}=\{x\in X\colon f(x)\neq
0\}$; if $f$ is an $\rbop$-valued function on $X$,
then we set $\{f> 0\}= \{x\in X\colon f(x) > 0\}$,
$\{f < \infty\}=\{x\in X\colon f(x) < \infty\}$,
$\{f=\infty\}=\{x\in X\colon f(x)=\infty\}$ and $\{0 <
f < \infty\} = \{f>0\} \cap \{f < \infty\}$. Given
functions $f,f_n\colon X \to \rbop$, $n\in \nbb$, we
write $f_n \nearrow f$ as $n\to \infty$ if the
sequence $\{f_n(x)\}_{n=1}^\infty$ is monotonically
increasing and converging to $f(x)$ for every $x \in
X$. All measures considered in this paper are assumed
to be nonnegative. To simplify the notation, we write
   \begin{align*}
\nu(x)=\nu(\{x\}), \quad x \in X,
   \end{align*}
whenever $\nu$ is a measure on a $\sigma$-algebra
$\ascr\subseteq 2^X$ such that $\{x\}\in \ascr$ for
all $x\in X$. If $(X,\ascr,\nu)$ is a measure space
and $\varDelta, \tilde \varDelta \in \ascr$ are such
that $\nu(\varDelta \setminus \tilde \varDelta)=0$,
then we write $\varDelta \subseteq \tilde \varDelta$
a.e.\ $[\nu]$; if $\varDelta \subseteq \tilde
\varDelta$ a.e.\ $[\nu]$ and $\tilde\varDelta
\subseteq \varDelta$ a.e.\ $[\nu]$ (equivalently
$\nu(\varDelta\vartriangle \tilde \varDelta)=0$), then
we write $\varDelta = \tilde \varDelta$ a.e.\ $[\nu]$.
Clearly, $\varDelta \subseteq \tilde \varDelta$ a.e.\
$[\nu]$ (resp., $\varDelta = \tilde \varDelta$ a.e.\
$[\nu]$) if and only if $\chi_{\varDelta} \Le
\chi_{\tilde \varDelta}$ a.e.\ $[\nu]$ (resp.,
$\chi_{\varDelta} = \chi_{\tilde \varDelta}$ a.e.\
$[\nu]$). Given two measures $\mu$ and $\nu$ on the
same $\sigma$-algebra, we write $\mu \ll \nu$ if $\mu$
is absolutely continuous with respect to $\nu$; if
this is the case, then $\frac{\D \mu}{\D\nu}$ stands
for the Radon-Nikodym derivative of $\mu$ with respect
to $\nu$ (provided it exists). The $\sigma$-algebra of
all Borel sets of a topological space $Z$ is denoted
by $\borel{Z}$. We write $\supp \nu$ for the closed
support of a Borel measure $\nu$ on $Z$ (provided it
exists). Given $t\in \rbb_+$, we denote by $\delta_t$
the Borel probability measure on $\rbb_+$ concentrated
at $t$. Let $(X,\ascr, \nu)$ be a measure space. If $p
\in [1,\infty)$, then $L^p(\nu)=L^p(X,\ascr, \nu)$
stands for the Banach space of all $p$-integrable
(with respect to $\nu$) complex functions on $X$
equipped with the standard $L^p$-norm. In turn,
$L^{\infty}(\nu)=L^{\infty}(X,\ascr, \nu)$ denotes the
Banach space of all $\nu$-essentially bounded complex
functions on $X$ equipped with the standard
$L^{\infty}$-norm. As usual, an element of $L^p(\nu)$
may be regarded either as an equivalence class of
functions or simply as a function, depending on
circumstances. For $p\in [1,\infty]$, we denote by
$L_+^p(\nu)$ the convex cone $\{f \in L^p(\nu)\colon f
\Ge 0 \text{ a.e.\ $[\nu]$}\}$. The space $L^2(\nu)$
is regarded as a Hilbert space equipped with the
standard inner product and the corresponding norm
denoted by \mbox{$\|\cdot\|=\|\cdot\|_{\nu}$}. If $X$
is an nonempty set, then $\ell^2(X)$ will be
identified with the Hilbert space $L^2(X,2^X,\mu)$,
where $\mu$ is the counting measure on $X$. In what
follows, $\cbb[t]$ stands for the ring of all complex
polynomials in one real variable $t$.

The following lemma is a direct consequence of \cite[Proposition
I-6-1]{Nev} and \cite[Theorem 1.3.10]{Ash}.
   \begin{lem} \label{2miary}
Let $\pscr$ be a semi-algebra of subsets of a set $X$
and $\nu_1, \nu_2$ be measures on $\sigma(\pscr)$ such
that $\nu_1(\varDelta) = \nu_2(\varDelta)$ for all
$\varDelta \in \pscr$. Suppose there exists a sequence
$\{\varDelta_n\}_{n=1}^\infty \subseteq \pscr$ such
that $\varDelta_n \nearrow X$ as $n\to \infty$ and
$\nu_1(\varDelta_k) < \infty$ for every $k \in \nbb$.
Then $\nu_1 = \nu_2$.
   \end{lem}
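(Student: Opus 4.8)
The plan is to reduce this to the classical Dynkin $\pi$--$\lambda$ argument, using that a semi-algebra is by definition closed under finite intersections, hence is a $\pi$-system generating $\sigma(\pscr)$. First I would record the only consequence of the hypotheses that is needed throughout: since each $\varDelta_n$ belongs to $\pscr$, we have $\mu_2(\varDelta_n) = \mu_1(\varDelta_n) < \infty$, so \emph{both} measures are finite on every $\varDelta_n$.

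Next, fix $n \in \nbb$ and set
\[
\mathcal D_n = \bigl\{\varDelta \in \sigma(\pscr) : \mu_1(\varDelta \cap \varDelta_n) = \mu_2(\varDelta \cap \varDelta_n)\bigr\}.
\]
I would verify that $\mathcal D_n$ is a Dynkin system ($\lambda$-system): it contains $X$ because $\mu_1(\varDelta_n) = \mu_2(\varDelta_n)$; it is closed under proper differences because, whenever $\varDelta^\prime \subseteq \varDelta$ both lie in $\mathcal D_n$, the finiteness of $\mu_i(\varDelta_n)$ permits the subtraction $\mu_i\bigl((\varDelta \setminus \varDelta^\prime) \cap \varDelta_n\bigr) = \mu_i(\varDelta \cap \varDelta_n) - \mu_i(\varDelta^\prime \cap \varDelta_n)$ for $i = 1, 2$; and it is closed under countable increasing unions by continuity of measures from below. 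Moreover $\pscr \subseteq \mathcal D_n$, since for $\varDelta \in \pscr$ the set $\varDelta \cap \varDelta_n$ again lies in $\pscr$ and the two measures agree on it by hypothesis. As $\pscr$ is a $\pi$-system, the Dynkin $\pi$--$\lambda$ theorem gives $\sigma(\pscr) \subseteq \mathcal D_n$; that is, $\mu_1(\varDelta \cap \varDelta_n) = \mu_2(\varDelta \cap \varDelta_n)$ for every $\varDelta \in \sigma(\pscr)$ and every $n \in \nbb$.

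Finally, for an arbitrary $\varDelta \in \sigma(\pscr)$ I would let $n \to \infty$: since $\varDelta_n \nearrow X$ we have $\varDelta \cap \varDelta_n \nearrow \varDelta$, so continuity of measures from below gives $\mu_i(\varDelta) = \lim_{n\to\infty} \mu_i(\varDelta \cap \varDelta_n)$ for $i = 1, 2$, and these two sequences of values coincide term by term; hence $\mu_1(\varDelta) = \mu_2(\varDelta)$. The only point that demands care is the bookkeeping of finiteness --- one must invoke $\mu_1(\varDelta_n) < \infty$ (and the resulting $\mu_2(\varDelta_n) < \infty$) precisely where differences of measure values are formed, and never appeal to any global finiteness of $\mu_1$ or $\mu_2$; the remaining steps are entirely routine. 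An equally serviceable route is to first pass to the algebra generated by $\pscr$, whose members are finite disjoint unions of elements of $\pscr$ so that $\mu_1$ and $\mu_2$ agree on it by additivity, and then quote a standard uniqueness theorem for extensions of $\sigma$-finite premeasures.
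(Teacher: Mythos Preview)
Your proof is correct; the Dynkin $\pi$--$\lambda$ argument you give is the standard way to establish this uniqueness result, and you have handled the one genuine subtlety (using the finiteness of $\mu_i(\varDelta_n)$ exactly where subtraction is needed) with care. The paper itself does not supply a proof at all but merely records the lemma as a direct consequence of \cite[Proposition~I-6-1]{Nev} and \cite[Theorem~1.3.10]{Ash}; your write-up is therefore more explicit than the paper's treatment, and the alternative route you sketch at the end (passing to the generated algebra and invoking uniqueness of extension of a $\sigma$-finite premeasure) is essentially what those cited references do.
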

The proof of the next lemma is left to the reader.
   \begin{lem} \label{f=g}
If $(X,\ascr,\nu)$ is a $\sigma$-finite measure space
and $f,g$ are $\ascr$-measurable complex functions on
$X$ such that $\int_{\varDelta} |f| \D \nu < \infty$,
$\int_{\varDelta} |g| \D \nu < \infty$ and
$\int_{\varDelta} f \D \nu = \int_{\varDelta} g \D
\nu$ for every $\varDelta \in \ascr$ such that
$\nu(\varDelta) < \infty$, then $f=g$ a.e.\ $[\nu]$.
   \end{lem}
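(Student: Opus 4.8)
The plan is to reduce the statement to the familiar fact that a nonnegative integrable function with vanishing integral over a set is zero a.e.\ on that set. Put $h = f - g$; the goal is to show $h = 0$ a.e.\ $[\mu]$. Since $\mu$ is $\sigma$-finite, I would fix a sequence $\{X_n\}_{n=1}^\infty \subseteq \ascr$ with $X_n \nearrow X$ as $n\to\infty$ and $\mu(X_n) < \infty$ for every $n \in \nbb$. By hypothesis $\int_{X_n} |f| \D\mu < \infty$ and $\int_{X_n} |g| \D\mu < \infty$, so $h$ is integrable over each $X_n$, and for every $\varDelta \in \ascr$ with $\varDelta \subseteq X_n$ the assumption $\int_{\varDelta} f \D\mu = \int_{\varDelta} g \D\mu$ gives $\int_{\varDelta} h \D\mu = 0$.

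Next I would split into real and imaginary parts: write $h = u + \I v$ with $u = \mathrm{Re}\,h$ and $v = \mathrm{Im}\,h$, both $\ascr$-measurable and integrable over each $X_n$. From $\int_{\varDelta} h \D\mu = 0$ for all $\varDelta \in \ascr$ with $\varDelta \subseteq X_n$ one reads off $\int_{\varDelta} u \D\mu = 0$ and $\int_{\varDelta} v \D\mu = 0$ for all such $\varDelta$. Taking $\varDelta = \{x \in X_n \colon u(x) \Ge 0\}$ yields $\int_{X_n} u^+ \D\mu = 0$, hence $u^+ = 0$ a.e.\ $[\mu]$ on $X_n$; taking $\varDelta = \{x \in X_n \colon u(x) \Le 0\}$ gives $u^- = 0$ a.e.\ $[\mu]$ on $X_n$. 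Therefore $u = 0$ a.e.\ $[\mu]$ on $X_n$, and the same argument applied to $v$ shows $v = 0$ a.e.\ $[\mu]$ on $X_n$, so $h = 0$ a.e.\ $[\mu]$ on $X_n$.

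Finally, letting $n \to \infty$ and using $X_n \nearrow X$, the set $\{h \neq 0\} = \bigcup_{n=1}^\infty \big(\{h \neq 0\} \cap X_n\big)$ is a countable union of $\mu$-null sets, hence $\mu$-null; that is, $f = g$ a.e.\ $[\mu]$. I do not expect a genuine obstacle here: the only points that require a little care are the invocation of $\sigma$-finiteness to guarantee local integrability of $h$ (so that the integral may legitimately be decomposed into real and imaginary, then positive and negative parts) and the elementary fact, used twice, that a nonnegative integrable function whose integral over a measurable set vanishes must be zero a.e.\ on that set.
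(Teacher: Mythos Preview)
Your argument is correct and complete. The paper does not actually supply a proof of this lemma---it states ``The proof of the following useful fact is left to the reader''---so there is nothing to compare against; your reduction via $\sigma$-finiteness to local integrability, followed by the standard positive/negative-part trick, is exactly the kind of routine verification the authors had in mind.
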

A sequence $\{\gamma_n\}_{n=0}^\infty \subseteq \rbb_+$ is said to be a
{\em Stieltjes moment sequence} if there exists a Borel measure $\nu$ on
$\rbb_+$ such that (we write $\int_0^\infty$ in place of $\int_{\rbb_+}$)
   \begin{align*}
\gamma_n = \int_0^\infty s^n \D \nu(s), \quad n \in \zbb_+;
   \end{align*}
such a $\nu$ is called a representing measure of
$\{\gamma_n\}_{n=0}^\infty$. We say that a Stieltjes
moment sequence is {\em determinate} if it has a
unique representing measure; otherwise, we call it
{\em indeterminate}. A Stieltjes moment sequence
$\{\gamma_n\}_{n=0}^\infty$ is called {\em
non-degenerate} if $\gamma_n \neq 0$ for all $n\in
\zbb_+$. Using \cite[Theorem 1.39]{Rud}, we obtain the
following.
   \begin{lem} \label{lemS3} If $\nu$ is a Borel
probability measure on $\rbb_+$, then $\int_0^\infty
t^k \D \nu(t) = 0$ for some $k \in \nbb$
$($equivalently, for every $k \in \nbb$$)$ if and only
if $\nu=\delta_0$.
   \end{lem}
A sequence $\{\gamma_n\}_{n=0}^\infty \subseteq
\rbb_+$ is said to be {\em positive definite} if
   \begin{align*}
\sum_{i,j=0}^n \gamma_{i+j} \alpha_i \bar\alpha_j \Ge 0, \quad
\alpha_0,\ldots, \alpha_n \in \cbb, \ n\in \zbb_+.
   \end{align*}
The Stieltjes theorem (see \cite[Theorem 6.2.5]{b-c-r}) states that
   \begin{align} \label{Sti}
   \begin{minipage}{70ex}
{\em a sequence $\{\gamma_n\}_{n=0}^\infty \subseteq
\rbb_+$ is a Stieltjes moment sequence if and only if
the sequences $\{\gamma_n\}_{n=0}^\infty$ and
$\{\gamma_{n+1}\}_{n=0}^\infty$ are positive
definite.}
   \end{minipage}
   \end{align}
Applying \eqref{Sti} and \cite[Exercise 4(e), Chapter 3]{Rud} (see also
\cite[p.\ 50]{fug} for the determinacy issue), we obtain the following
characterization of Stieltjes moment sequences with polynomial growth.
   \begin{align} \label{Stiogr}
   \begin{minipage}{70ex}
{\em If $\{\gamma_n\}_{n=0}^\infty \subseteq \rbb_+$
and $r \in \rbb_+$, then $\{\gamma_n\}_{n=0}^\infty$
is a Stieltjes moment sequence with a representing
measure whose closed support is contained in $[0,r]$
if and only if $\{\gamma_n\}_{n=0}^\infty$ and
$\{\gamma_{n+1}\}_{n=0}^\infty$ are positive definite
and $\gamma_n \Le c r^n$ for all $n \in \zbb_+$ and
for some $c \in \rbb_+$. Moreover, if one of these
equivalent conditions holds, then
$\{\gamma_n\}_{n=0}^\infty$ is determinate.}
   \end{minipage}
   \end{align}
The particular case of $r=1$ is of special interest.
Namely, a sequence $\{\gamma_n\}_{n=0}^\infty
\subseteq \rbb_+$ is called a {\em Hausdorff moment
sequence} if there exists a Borel measure $\nu$ on
$[0,1]$ such that
   \begin{align*}
\gamma_n = \int_{[0,1]} s^n \D \nu(s), \quad n \in
\zbb_+.
   \end{align*}
In view of \eqref{Stiogr}, each Hausdorff moment
sequence is determinate as a Stieltjes moment
sequence.

Let $A$ be an operator in a complex Hilbert space
$\hh$ (all operators considered in this paper are
linear). Denote by $\dz{A}$, $\jd{A}$, $\ob{A}$, $\bar
A$ and $A^*$ the domain, the kernel, the range, the
closure and the adjoint of $A$ (in case they exist)
respectively. Set $\dzn{A} = \bigcap_{n=0}^\infty
\dz{A^n}$ with $A^0=I$, where $I=I_{\hh}$ is the
identity operator on $\hh$. Members of $\dzn{A}$ are
called {\em $C^\infty$-vectors} of $A$. We write
$\is{\cdot}{\mbox{-}}_A$ and $\|\cdot\|_A$ for the
graph inner product and the graph norm of $A$, i.e.,
$\is{f}{g}_A = \is{f}{g} + \is{Af}{Ag}$ and $\|f\|_A^2
= \is{f}{f}_A$ for $f,g\in \dz{A}$. We also use the
norm $\|\cdot\|_{A;n}$ on $\dz{A^n}$ defined by
$\|f\|_{A;n}^2 = \sum_{j=0}^n \|A^j f\|^2$ for $f\in
\dz{A^n}$ and $n\in\zbb_+$. We say that a vector
subspace $\escr$ of $\dz{A}$ is a {\em core} for $A$
if $\escr$ is dense in $\dz{A}$ with respect to the
graph norm of $A$. It is well-known that if $A$ is
closable, then $\escr$ is a core for $A$ if and only
if $\bar A = \overline{A|_{\escr}}$. Given two
operators $A$ and $B$ in $\hh$, we write $A \subseteq
B$ if $\dz{A} \subseteq \dz{B}$ and $Af=Bf$ for all
$f\in \dz{A}$. An operator $A$ in $\hh$ is called {\em
positive} if $\is{Af}{f} \Ge 0$ for all $f\in \dz{A}$.
A densely defined operator $N$ in $\hh$ is said to be
{\em normal} if $N$ is closed and $N^*N=NN^*$ (or
equivalently if and only if $\dz{N}=\dz{N^*}$ and
$\|Nf\|=\|N^*f\|$ for all $f \in \dz{N}$, see
\cite{b-s}). We say that a densely defined operator
$A$ in $\hh$ is {\em formally normal} if $\dz{A}
\subseteq \dz{A^*}$ and $\|Af\|=\|A^*f\|$ for all $f
\in \dz{A}$. It is well-known that normal operators
are formally normal, but not conversely (cf.\
\cite{Cod1,Cod2}). A densely defined operator $S$ in
$\hh$ is said to be {\em selfadjoint} or {\em
symmetric} if $S=S^*$ or $S \subseteq S^*$,
respectively. Recall that symmetric operators may not
be selfadjoint and that each symmetric operator has a
selfadjoint extension possibly in a larger Hilbert
space (see \cite[Theorem 1 in Appendix I.2]{a-g}; see
also \cite{b-s,Weid} for more information on symmetric
operators). A closed densely defined operator $Q$ in
$\hh$ is said to be {\em quasinormal} if $U |Q|
\subseteq |Q|U$, where $|Q|$ is the modulus of $Q$ and
$Q=U|Q|$ is the polar decomposition of $Q$ (see
\cite{bro,StSz2}). It was shown in \cite[Theorem
3.1]{j-j-s1} that
   \begin{align} \label{QQQ}
   \begin{minipage}{70ex}
{\em a closed densley defined operator $Q$ in $\hh$ is
quasinormal if and only if $Q|Q|^2 = |Q|^2Q.$}
   \end{minipage}
   \end{align}
We say that a densely defined operator $S$ in $\hh$ is
{\em subnormal} if there exist a complex Hilbert space
$\kk$ and a normal operator $N$ in $\kk$ such that
$\hh \subseteq \kk$ (isometric embedding), $\dz{S}
\subseteq \dz{N}$ and $Sf = Nf$ for all $f \in
\dz{S}$. A densely defined operator $A$ in $\hh$ is
called {\em hyponormal} if $\dz{A} \subseteq \dz{A^*}$
and $\|A^* f\| \Le \|Af\|$ for all $f \in \dz{A}$. We
say that a densely defined operator $A$ in $\hh$ {\em
cohyponormal} if $\dz{A^*} \subseteq \dz{A}$ and $\|A
f\| \Le \|A^*f\|$ for all $f \in \dz{A^*}$. If
additionally $A$ is closed, then, by the von Neumann
theorem, $A$ is cohyponormal if and only if $A^*$ is
hyponormal. As a consequence, a closed operator is
normal if and only if it is hyponormal and
cohyponormal. Operators which are either hyponormal or
cohyponormal are called {\em seminormal}. An operator
$A$ in $\hh$ is said to be {\em paranormal} if
$\|Af\|^2 \Le \|f\| \|A^2f\|$ for all $f \in
\dz{A^2}$. It is well-known that quasinormal operators
are subnormal (see \cite[Theorem 1]{bro} and
\cite[Theorem 2]{StSz2}), subnormal operators are
hyponormal (see \cite[Lemma 2.8]{ot-sch}) and
hyponormal operators are paranormal (see \cite[Lemma
3.1]{ot-sch}), but none of these implications can be
reversed in general (this can be seen by considering
weighted shifts on directed trees, see e.g.,
\cite{j-j-s}). As shown by Daniluk in \cite{dan}, a
paranormal operator may not be closable and the
closure of a closable paranormal operator may not be
paranormal; in both cases the operators in question
may have invariant domains. In what follows
$\ogr{\hh}$ stands for the $C^*$-algebra of all
bounded operators in $\hh$ whose domains are equal to
$\hh$. The linear span of a set $\fscr$ of vectors in
$\hh$ will be denoted by $\lin\fscr$.

The following lemma gives a necessary and sufficient condition for two
positive selfadjoint operators to be equal.
   \begin{lem} \label{useful}
Let $A$ and $B$ be positive selfadjoint operators in
$\hh$ such that $\dz{A} = \dz{B}$ and $\|Af\|=\|Bf\|$
for every $f \in \dz{A}$. Then $A=B$.
   \end{lem}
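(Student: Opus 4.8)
The plan is to reduce the assertion to the uniqueness of the positive selfadjoint square root of a positive selfadjoint operator. The first step is to upgrade the norm identity to a sesquilinear one: since $\dz A = \dz B$ is a linear subspace of $\hh$, the vectors $f \pm g$ and $f \pm \I g$ lie in $\dz A = \dz B$ whenever $f, g$ do, so applying the hypothesis $\|A \cdot\| = \|B \cdot\|$ to each of these four vectors and combining the resulting equalities via the polarization formula yields
\[
\is{Af}{Ag} = \is{Bf}{Bg}, \qquad f, g \in \dz A = \dz B .
\]

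Next I would show that $A^2 = B^2$. Fix $f \in \dz{A^2}$; then $Af \in \dz A = \dz{A^*}$, so by the identity just obtained together with $A^* = A$,
\[
\is{Bf}{Bg} = \is{Af}{Ag} = \is{A^2 f}{g}, \qquad g \in \dz B = \dz A .
\]
Thus $g \mapsto \is{Bf}{Bg}$ coincides on $\dz B$ with the bounded linear functional $g \mapsto \is{A^2 f}{g}$, which, by the definition of the adjoint and the selfadjointness of $B$, forces $Bf \in \dz{B^*} = \dz B$ and $B(Bf) = A^2 f$; in other words $f \in \dz{B^2}$ and $B^2 f = A^2 f$. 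Hence $A^2 \subseteq B^2$, and since the hypotheses are symmetric in $A$ and $B$, also $B^2 \subseteq A^2$, so $A^2 = B^2$. Finally, $A^2$ is a positive selfadjoint operator, and both $A$ and $B$ are positive selfadjoint operators whose squares equal $A^2$; by the uniqueness of the positive selfadjoint square root, $A = (A^2)^{1/2} = (B^2)^{1/2} = B$.

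The parts that are purely routine are the bookkeeping in the polarization step and the appeal to spectral theory (existence and uniqueness of the positive selfadjoint square root) at the end. The one place that calls for a little care is the passage from the sesquilinear identity to the operator identity $A^2 = B^2$, where the selfadjointness of $A$ and $B$ is used; equivalently, one can phrase this by noting that $\is{A\cdot}{A\cdot}$ and $\is{B\cdot}{B\cdot}$ are the closed nonnegative sesquilinear forms associated with $A^2$ and $B^2$ respectively and that these forms coincide. I do not anticipate a genuine obstacle.
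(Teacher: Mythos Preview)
Your proof is correct. Both your argument and the paper's reduce the statement to $A^2 = B^2$ and then invoke the uniqueness of the positive selfadjoint square root; the only difference is in how $A^2 = B^2$ is obtained. The paper takes a more geometric route: from $\|Af\| = \|Bf\|$ one has $\jd A = \jd B$ and hence $\overline{\ob A} = \overline{\ob B}$, so the map $Af \mapsto Bf$ extends to a unitary $\tilde U$ on $\overline{\ob A}$; setting $U = \tilde U \oplus I_{\jd A}$ gives a unitary with $UA = B$, whence $B^2 = B^*B = A^*U^*UA = A^*A = A^2$. Your polarization-and-adjoint argument is equally short and avoids constructing the intertwining unitary, at the price of a small domain bookkeeping step (checking $\dz{A^2} \subseteq \dz{B^2}$ via the definition of the adjoint); the paper's version instead makes the isometric equivalence of $A$ and $B$ explicit before squaring. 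The essential content is the same.
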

   \begin{proof}
Note that $\overline{\ob{A}} = \jd{A}^\perp = \jd{B}^\perp
=\overline{\ob{B}}$. It follows from our assumptions that there exists a
unique unitary operator $\tilde U \in \ogr{\overline{\ob{A}}}$ such that
$\tilde U A = B$. Then $U:=\tilde U \oplus I_{\jd{A}}$ is unitary and
$UA=B$. Hence $B^2 = B^*B = A^* U^* UA = A^2$, which, by uniqueness of
square roots, implies that $A=B$.
   \end{proof}
      \chapter{Preparatory Concepts}
This chapter introduces some concepts of measure
theory that will be useful for studying weighted
composition operators (including the Radon-Nikodym
derivative $\hfw$ and the conditional expectation
$\mathsf{E}(\,\cdot\,;\phi^{-1}(\ascr),\mu_w)$; see
Sections \ref{sec-2.1} and \ref{Sec2.4}). Weighted
composition operators are introduced and initially
investigated in Section \ref{Sec2.2}. Assorted classes
of weighted composition operators including classical
(unilateral and bilateral) weighted shifts and their
adjoints are discussed in Section \ref{pco}. The
adjoint and the polar decomposition of a weighted
composition operator are explicitly described in
Section \ref{Sec2.5}. The chapter is concluded with a
characterization of quasinormality of weighted
composition operators (see Theorem \ref{quain}). This
result is used in the proof of Theorem \ref{MAIN1},
which is the main criterion for subnormality of
weighted composition operators.
   \section{\label{sec-2.1}Measure-theory background}
   Let $(X, \ascr, \nu)$ be a measure space and let
$\bscr\subseteq \ascr$ be a $\sigma$-algebra. We say
that $\bscr$ is {\em relatively $\nu$-complete} if
$\ascr_0 \subseteq \bscr$, where $\ascr_0= \{\varDelta
\in \ascr\colon \nu(\varDelta)=0\}$ (see \cite[Chapter
8]{Rud}). The smallest relatively $\nu$-complete
$\sigma$-algebra containing $\bscr$, denoted by
$\bscr^\nu$ and called the {\em relative
$\nu$-completion} of $\bscr$, is equal to the
$\sigma$-algebra generated by $\bscr \cup \ascr_0$.
Moreover, we have
   \begin{align} \label{complascr}
\bscr^\nu = \{\varDelta \in \ascr \, | \, \exists \varDelta^\prime \in
\bscr \colon \nu(\varDelta \vartriangle \varDelta^\prime)=0\}.
   \end{align}
The $\bscr^\nu$-measurable functions are described in
\cite[Lemma 1, p.\ 169]{Rud}.

Let $(X,\ascr, \mu)$ be a $\sigma$-finite measure
space. We shall abbreviate the expressions ``almost
everywhere with respect to $\mu$'' and ``for
$\mu$-almost every $x$'' to ``a.e.\ $[\mu]$'' and
``for $\mu$-a.e.\ $x$'', respectively. We call a
mapping $\phi\colon X\to X$ a {\em transformation} of
$X$ and write $\phi^{-1}(\ascr) =
\{\phi^{-1}(\varDelta)\colon \varDelta \in \ascr\}$.
For $n \in \nbb$, we denote by $\phi^n$ the $n$-fold
composition of $\phi$ with itself. We write $\phi^0$
for the identity transformation $\id_X$ of $X$. Set
$\phi^{-n}(\varDelta) = (\phi^n)^{-1}(\varDelta)$ for
$\varDelta \in \ascr$ and $n \in \zbb_+$. A
transformation $\phi$ of $X$ is said to be {\em
$\ascr$-measurable} if $\phi^{-1}(\ascr) \subseteq
\ascr$. Clearly, if $\phi$ is $\ascr$-measurable, then
so is $\phi^n$ for every $n\in \zbb_+$. Let $\phi$ be
an $\ascr$-measurable transformation of $X$. If
$\nu\colon \ascr\to \rbop$ is a measure, then $\nu
\circ \phi^{-1}$ stands for the measure on $\ascr$
given by $\nu \circ \phi^{-1} (\varDelta) =
\nu(\phi^{-1} (\varDelta))$ for $\varDelta \in \ascr$.
Let $w$ be a complex $\ascr$-measurable function on
$X$. Define the measures $\mu^w, \mu_w\colon \ascr \to
\rbop$ by
   \begin{align*}
\mu^w(\varDelta) = \mu(\varDelta \cap \{w\neq 0\}) \text{ and }
\mu_w(\varDelta) = \int_{\varDelta} |w|^2 \D\mu \text{ for } \varDelta \in
\ascr.
   \end{align*}
Clearly, the measures $\mu^w$ and $\mu_w$ are
$\sigma$-finite and mutually absolutely continuous.
Moreover, if $u\colon X \to \cbb$ is
$\ascr$-measurable and $u=w$ a.e.\ $[\mu]$, then
$\mu(\{u\neq 0\} \vartriangle \{w\neq 0\}) = 0$,
$\mu^u=\mu^w$ and $\mu_u=\mu_w$. Note also that if
$\mathcal P$ is a property which a point $x \in X$ may
or may not have, then $\mathcal P$ holds a.e.\
$[\mu_w]$ if and only if $\mathcal P$ holds a.e.\
$[\mu]$ on $\{w\neq 0\}$.

If $\mu_w \circ \phi^{-1} \ll \mu$, then by the
Radon-Nikodym theorem (see \cite[Theorem 2.2.1]{Ash})
there exists a unique (up to a set of $\mu$-measure
zero) $\ascr$-measurable function $\hfw\colon X \to
\rbop$ such that
   \begin{align} \label{l1}
\mu_w \circ \phi^{-1}(\varDelta) = \int_{\varDelta}
\hfw \D \mu, \quad \varDelta \in \ascr.
   \end{align}
It follows from \cite[Theorem 1.6.12]{Ash} and
\cite[Theorem 1.29]{Rud} that for every
$\ascr$-measurable function $f \colon X \to \rbop$, or
for every $\ascr$-measurable function $f\colon X \to
\cbb$ such that $f\circ \phi \in L^1(\mu_w)$
(equivalently $f \, \hfw \in L^1(\mu)$),
   \begin{align} \label{l2}
\int_X f \circ \phi \D\mu_w = \int_X f \, \hfw \D \mu.
   \end{align}
In view of \eqref{l2}, $f\circ \phi \in L^1(\mu_w)$ if
and only if $f \hfw \in L^1(\mu)$. Note that the set
$\{\hfw > 0\}$ is determined up to a set of
$\mu$-measure zero.

To avoid the repetition, we gather the following two
assumptions which will be used frequently throughout
this paper.
   \begin{align} \label{stand1} \tag{$\mathrm{AS1}$}
   \begin{minipage}{60ex}
The triplet $(X,\ascr,\mu)$ is a $\sigma$-finite
measure space, $w$ is an $\ascr$-meas\-urable complex
function on $X$ and $\phi$ is an $\ascr$-measurable
transformation of $X$.
   \end{minipage}
   \end{align}
   \begin{align} \label{stand2} \tag{$\mathrm{AS2}$}
   \begin{minipage}{60ex}
The triplet $(X,\ascr,\mu)$ is a $\sigma$-finite
measure space, $w$ is an $\ascr$-meas\-urable complex
function on $X$ and $\phi$ is an $\ascr$-measurable
transformation of $X$ such that $\mu_w \circ \phi^{-1}
\ll \mu$.
   \end{minipage}
   \end{align}
Now we formulate a ``cancellation'' rule for the
operation of composing functions.
   \begin{lem} \label{nuklear}
Suppose \eqref{stand2} holds and $f$ and $g$ are
$\ascr$-measurable $\rbop$-valued or $\cbb$-valued
functions on $X$. Then
   \begin{align*}
f\circ \phi =g\circ \phi \text{ a.e.\ $[\mu_w]$} &
\iff \chi_{\{\hfw > 0\}} \cdot f = \chi_{\{\hfw
> 0\}}\cdot g \text{ a.e.\ $[\mu]$}
   \\
&\iff f=g \text{ a.e.\ $[\hfw\D\mu]$}
   \\
&\iff f=g \text{ a.e.\ $[\mu_w\circ \phi^{-1}]$.}
   \end{align*}
   \end{lem}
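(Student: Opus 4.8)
The plan is to reduce the statement about $f\circ\phi$ and $g\circ\phi$ on $X$ (modulo $\mu_w$) to a statement about $f$ and $g$ on $X$ (modulo $\mu$) via the change-of-variables formula \eqref{l2}. The natural quantity to test is $|f\circ\phi - g\circ\phi|$ (in the $\cbb$-valued case; in the $\rbop$-valued case one should instead test something like $\chi_{\{f\neq g\}}\circ\phi$, since differences of $\rbop$-valued functions need not make sense, and I will handle that case by a truncation argument, reducing to finite-valued measurable functions). First I would observe that the $\rbop$-valued case follows from the $\cbb$-valued case applied to the bounded functions $\arctan\circ f$ and $\arctan\circ g$ (or any fixed Borel isomorphism of $\rbop$ onto a bounded subset of $\rbb$), since such a homeomorphism turns the a.e.\ equalities on both sides into the equalities for $f,g$ themselves; so it suffices to treat $\cbb$-valued $f,g$, and even then it is enough to treat a single $\cbb$-valued (indeed $\rbb_+$-valued) function $h:=$ something measuring the defect.

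The core computation is as follows. Put $h = \min\{|f-g|,\,1\}$, an $\ascr$-measurable function with values in $[0,1]$. By \eqref{l2} applied to the nonnegative measurable function $h$,
\begin{align*}
\int_X h\circ\phi\,\D\mu_w = \int_X h\,\hsf\,\D\mu .
\end{align*}
Now $f\circ\phi = g\circ\phi$ a.e.\ $[\mu_w]$ is equivalent to $h\circ\phi = 0$ a.e.\ $[\mu_w]$, i.e.\ to $\int_X h\circ\phi\,\D\mu_w = 0$ (here we use that $h\circ\phi \ge 0$), hence, by the displayed identity, to $\int_X h\,\hsf\,\D\mu = 0$. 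Since $h\,\hsf \ge 0$, this is equivalent to $h\,\hsf = 0$ a.e.\ $[\mu]$, and because $\hsf > 0$ exactly on $\{\hsf>0\}$ (up to a.e.\ $[\mu]$ equivalence) this is in turn equivalent to $\chi_{\{\hsf>0\}}\cdot h = 0$ a.e.\ $[\mu]$, that is, to $\chi_{\{\hsf>0\}}\cdot f = \chi_{\{\hsf>0\}}\cdot g$ a.e.\ $[\mu]$. Reading the chain of equivalences from both ends gives the lemma for $\cbb$-valued $f,g$.

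The main obstacle, such as it is, is bookkeeping rather than depth: one must be careful that the convention $0\cdot\infty=0$ makes $h\,\hsf$ behave correctly where $\hsf=\infty$, and that the set $\{\hsf>0\}$ is only defined up to $\mu$-null sets — but the conclusion is likewise stated only up to a.e.\ $[\mu]$ equivalence, so this is harmless. One should also note explicitly that \eqref{l2} requires either nonnegativity of the integrand or $L^1(\mu_w)$-integrability of $h\circ\phi$; using the truncated, bounded, nonnegative $h$ sidesteps any integrability worry, and this is exactly why I truncate. Finally, for the $\rbop$-valued case I would spell out that a homeomorphism $\tau:\rbop\to[0,1]$ satisfies $f\circ\phi=g\circ\phi$ a.e.\ iff $(\tau\circ f)\circ\phi=(\tau\circ g)\circ\phi$ a.e., and likewise on the $\{\hsf>0\}$ side, so the two cases are genuinely equivalent. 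Everything else is the elementary observation that a nonnegative integral vanishes iff the integrand vanishes a.e.
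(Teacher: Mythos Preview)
Your proof is correct and follows essentially the same route as the paper: both arguments boil down to the change-of-variables identity $\int_X (\text{defect})\circ\phi\,\D\mu_w = \int_X (\text{defect})\cdot\hsf\,\D\mu$ and the observation that a nonnegative integrand has zero integral iff it vanishes a.e. The paper's one-line version is a bit leaner because it takes the defect to be $\chi_Y$ with $Y=\{f\neq g\}$ (so $\mu_w(\phi^{-1}(Y))=\int_Y \hsf\,\D\mu$), which is well-defined for both $\cbb$-valued and $\rbop$-valued $f,g$ simultaneously and thus avoids your truncation $h=\min\{|f-g|,1\}$ and the separate $\arctan$/homeomorphism reduction for the $\rbop$-valued case.
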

   \begin{proof}
This follows from the equality $\mu_w(\phi^{-1}(Y))
\overset{\eqref{l1}}= \int_Y \hfw \D \mu$, where $Y =
\{x \in X\colon f(x) \neq g(x)\}$.
   \end{proof}
The next lemma is written in the spirit of
\cite{ha-wh,Em-Lam-center,b-j-j-sC,b-j-j-sS}.
   \begin{lem} \label{lemS6}
Suppose \eqref{stand2} holds. Then the following
assertions are valid{\em :}
   \begin{enumerate}
   \item[(i)] $\hfw \circ \phi > 0$ a.e.\ $[\mu_w]$,
   \item[(ii)] if  $\hfw < \infty$ a.e.\ $[\mu]$,
then $\hfw \circ \phi < \infty$ a.e.\ $[\mu_w]$ and
   \begin{align}  \label{hle2}
\int_X \frac{f\circ \phi}{\hfw\circ \phi} \D\mu_w =
\int_{\{\hfw > 0\}} f \D\mu
   \end{align}
for every $\ascr$-measurable $f \colon X \to \rbop$.
   \end{enumerate}
   \end{lem}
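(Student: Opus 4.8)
The plan is to prove (i) by a direct measure-theoretic argument and then to deduce (ii) from \eqref{l2} together with (i).

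For (i), recall that a property holds a.e.\ $[\mu_w]$ precisely when it holds a.e.\ $[\mu]$ on $\{w \neq 0\}$. I would therefore set $\varDelta_0 = \{\hsf = 0\}$ (an $\ascr$-set, determined up to $\mu$-null sets) and aim to show $\mu_w(\phi^{-1}(\varDelta_0)) = 0$, i.e.\ that $\hsf \circ \phi = 0$ only on a $\mu_w$-null set. By the very definition \eqref{l1} of $\hsf$,
\begin{align*}
\mu_w\bigl(\phi^{-1}(\varDelta_0)\bigr) = \mu_w \circ \phi^{-1}(\varDelta_0) = \int_{\varDelta_0} \hsf \,\D\mu = 0,
\end{align*}
since $\hsf$ vanishes on $\varDelta_0$ by construction. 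Hence $\hsf \circ \phi \neq 0$, equivalently $\hsf \circ \phi > 0$ (as $\hsf$ is $\rbop$-valued), a.e.\ $[\mu_w]$. This is exactly the displayed argument pattern used in the proof of Lemma \ref{nuklear}, applied with $Y = \varDelta_0$.

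For (ii), assume now $\hsf < \infty$ a.e.\ $[\mu]$ and fix an $\ascr$-measurable $f \colon X \to \rbop$. Define $g = f / \hsf$ on $\{\hsf > 0\}$ and (say) $g = 0$ on $\{\hsf = 0\}$; this is an $\ascr$-measurable $\rbop$-valued function, and by part (i) the set where $\hsf \circ \phi = 0$ is $\mu_w$-null, so $g \circ \phi = (f \circ \phi)/(\hsf \circ \phi)$ a.e.\ $[\mu_w]$. Applying \eqref{l2} to $g$ gives
\begin{align*}
\int_X \frac{f \circ \phi}{\hsf \circ \phi}\,\D\mu_w = \int_X g \circ \phi \,\D\mu_w = \int_X g\, \hsf \,\D\mu = \int_{\{\hsf > 0\}} \frac{f}{\hsf}\,\hsf\,\D\mu = \int_{\{\hsf > 0\}} f \,\D\mu,
\end{align*}
where in the last step the finiteness of $\hsf$ a.e.\ $[\mu]$ (together with the convention $0 \cdot \infty = 0$) ensures that $\frac{1}{\hsf}\cdot \hsf = 1$ a.e.\ $[\mu]$ on $\{\hsf > 0\}$, so the integrand collapses to $f$ there, while the contribution from $\{\hsf = 0\}$ vanishes because $\hsf = 0$ on that set.

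The only genuinely delicate point is the bookkeeping around the conventions $0 \cdot \infty = 0$ and $\tfrac{1}{0} = \infty$: one must be careful that $g$ is well defined as an $\rbop$-valued function and that the formal cancellation $\frac{1}{\hsf}\cdot\hsf = \chi_{\{\hsf>0\}}$ holds a.e.\ $[\mu]$, which is where the hypothesis $\hsf < \infty$ a.e.\ $[\mu]$ is used; without it one could not rule out an $\infty - \infty$–type ambiguity on $\{\hsf = \infty\}$. Everything else is a routine application of the transport formula \eqref{l2} and part (i).
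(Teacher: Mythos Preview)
Your proof is correct and follows essentially the same route as the paper's: for (i) the paper also computes $\mu_w(\{\hsf\circ\phi=0\}) = \int_{\{\hsf=0\}} \hsf\,\D\mu = 0$, and for (ii) it applies \eqref{l2} to the function $\chi_{\{\hsf>0\}}\cdot f/\hsf$, which is exactly your $g$. The only cosmetic difference is that the paper inserts the factor $\chi_{\{\hsf>0\}}\circ\phi$ explicitly inside the integral rather than building it into the definition of an auxiliary function.
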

   \begin{proof}
(i) This follows from the equalities
   \begin{align*}
\mu_w(\{\hfw\circ \phi = 0\}) = \int_X \chi_{\{\hfw =
0\}} \circ \phi \D \mu_w \overset{\eqref{l1}}=
\int_{\{\hfw = 0\}} \hfw \D\mu = 0.
   \end{align*}

(ii) Since $\mu(\{\hfw=\infty\})=0$, we get
   \begin{align*}
\mu_w(\{\hfw\circ \phi = \infty\}) = \int_X
\chi_{\{\hfw = \infty\}} \circ \phi \D \mu_w
\overset{\eqref{l1}}= \int_{\{\hfw = \infty\}} \hfw
\D\mu = 0.
   \end{align*}
This and (i) yield
   \begin{align*}
\int_X \frac{f\circ \phi}{\hfw\circ \phi} \D\mu_w & =
\int_{\{\hfw \circ \phi > 0\}} \frac{f\circ
\phi}{\hfw\circ \phi} \D\mu_w
   \\
&= \int_X \chi_{\{\hfw > 0\}} \circ \phi \cdot
\frac{f\circ \phi}{\hfw\circ \phi} \D\mu_w
   \\
&\overset{\eqref{l2}}= \int_{\{\hfw > 0\}} f \D\mu,
   \end{align*}
which completes the proof.
   \end{proof}
   \section{\label{Sec2.2}Invitation to weighted composition operators}
Let $(X,\ascr,\mu)$ be a $\sigma$-finite measure
space, $w$ be an $\ascr$-measurable complex function
on $X$ and $\phi$ be an $\ascr$-measur\-able
transformation of $X$. Denote by $L^2(\mu)$ the
complex Hilbert space of all square summable (with
respect to $\mu$) $\ascr$-measurable complex functions
on $X$ (with the standard inner product). By a {\em
weighted composition operator} in $L^2(\mu)$ we mean a
mapping $\cfw \colon L^2(\mu) \supseteq \dz{\cfw} \to
L^2(\mu)$ formally defined by
   \begin{align*}
\dz{\cfw} & = \{f \in L^2(\mu) \colon w \cdot (f\circ \phi) \in
L^2(\mu)\},
   \\
\cfw f & = w \cdot (f\circ \phi), \quad f \in \dz{\cfw}.
   \end{align*}
We call $\phi$ and $w$ the {\em symbol} and the {\em
weight} of $\cfw$ respectively. In general, such
operator may not be well-defined. To be more precise,
   \begin{align*}
   \begin{minipage}{70ex}
$\cfw$ is said to be {\em well-defined} if $w \cdot
(f\circ \phi)= w \cdot (f\circ \phi)$ a.e.\ $[\mu]$
whenever $f,g\colon X \to \cbb$ are $\ascr$-measurable
functions such that $f=g$ a.e.\ $[\mu]$ and
$f,g,w\cdot (f\circ \phi), w\cdot (g\circ \phi)\in
L^2(\mu)$.
   \end{minipage}
   \end{align*}
Below, we describe circumstances under which $\cfw$ is
well-defined, and show that the operator $\cfw$
remains unchanged if $w$ and $\phi$ are modified on
sets of measure zero.
   \begin{pro}\label{wco1}
Let $(X,\ascr,\mu)$ be a $\sigma$-finite measure space, $w$ be an
$\ascr$-measur\-able complex function on $X$ and $\phi$ be an
$\ascr$-measurable transformation of $X$. Then the following conditions
are equivalent{\em :}
   \begin{enumerate}
   \item[(i)] $\cfw$ is well-defined,
   \item[(ii)] $\mu^w \circ \phi^{-1} \ll \mu$,
   \item[(iii)] $\mu_w \circ \phi^{-1} \ll \mu$.
   \end{enumerate}
Moreover, if $\cfw$ is well-defined, $u\colon X \to
\cbb$ and $\psi\colon X \to X$ are $\ascr$-measurable,
$u=w$ a.e.\ $[\mu]$ and $\psi=\phi$ a.e.\ $[\mu_w]$,
then $C_{\psi,u}$ is well-defined, $C_{\psi,u}=\cfw$,
$\mu_u=\mu_w$, $\mathsf{h}_{\psi,u}=\hfw$ a.e.\
$[\mu]$ and
$(\psi^{-1}(\ascr))^{\mu_u}=(\phi^{-1}(\ascr))^{\mu_w}$.
   \end{pro}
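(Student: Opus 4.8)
The plan is to first establish the equivalence of (i)--(iii), then to prove the "moreover" part. For the equivalence, the natural route is (ii)$\Leftrightarrow$(iii) followed by (i)$\Leftrightarrow$(iii). The equivalence (ii)$\Leftrightarrow$(iii) is essentially formal: since $\mu^w$ and $\mu_w$ are mutually absolutely continuous (as noted in the Prerequisites), the pushforward measures $\mu^w\circ\phi^{-1}$ and $\mu_w\circ\phi^{-1}$ are also mutually absolutely continuous (if $\mu^w(\varDelta)=0$ then $\mu_w(\varDelta)=0$, hence $\mu^w(\phi^{-1}(\varDelta'))=0 \Leftrightarrow \mu_w(\phi^{-1}(\varDelta'))=0$), so one is $\ll\mu$ iff the other is. For (i)$\Leftrightarrow$(iii), the point is that $\cfw$ being well-defined means exactly that $w\cdot(f\circ\phi)$ depends only on the class of $f$ in $L^2(\mu)$, i.e., $f=0$ a.e.\ $[\mu]$ implies $w\cdot(f\circ\phi)=0$ a.e.\ $[\mu]$. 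Taking $f=\chi_\varDelta$ with $\mu(\varDelta)=0$, this says $\mu_w(\phi^{-1}(\varDelta)) = \int_{\phi^{-1}(\varDelta)}|w|^2\D\mu = \int_X |w\cdot(\chi_\varDelta\circ\phi)|^2\D\mu = 0$, which is precisely $\mu_w\circ\phi^{-1}\ll\mu$; conversely, if $\mu_w\circ\phi^{-1}\ll\mu$ then $\{w\neq 0\}\cap\phi^{-1}(\{f\neq 0\})$ is $\mu$-null whenever $\{f\neq 0\}$ is, giving well-definedness. A small technical point is to phrase "well-defined" carefully: one should check that $\dz{\cfw}$ and the action are invariant under modifying $f$ on a $\mu$-null set, and the above handles exactly that.

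For the "moreover" part, assume $\cfw$ is well-defined, $u=w$ a.e.\ $[\mu]$ and $\psi=\phi$ a.e.\ $[\mu_w]$. The equality $\mu_u=\mu_w$ is immediate from $|u|^2=|w|^2$ a.e.\ $[\mu]$, already observed in the Prerequisites. Next, to see $C_{\psi,u}$ is well-defined, by the equivalence just proved it suffices to check $\mu_u\circ\psi^{-1}\ll\mu$. Here I would argue that $\mu_w(\psi^{-1}(\varDelta))=\mu_w(\phi^{-1}(\varDelta))$ for every $\varDelta\in\ascr$: indeed $\psi^{-1}(\varDelta)\vartriangle\phi^{-1}(\varDelta)\subseteq\{\psi\neq\phi\}$, and $\{\psi\neq\phi\}$ is $\mu_w$-null by hypothesis; since $\mu_u=\mu_w$ this gives $\mu_u\circ\psi^{-1}=\mu_w\circ\phi^{-1}\ll\mu$. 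The same computation with $\varDelta=\{\hsf\neq{\mathsf h}_{\psi,u}\}$-type sets, combined with uniqueness of the Radon-Nikodym derivative (Lemma~\ref{f=g} or the uniqueness in the Radon-Nikodym theorem), yields ${\mathsf h}_{\psi,u}=\hsf$ a.e.\ $[\mu]$, because both integrate to the same pushforward measure over every $\varDelta\in\ascr$.

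To prove $C_{\psi,u}=\cfw$ I would show the domains coincide and the actions agree. For the action: $u\cdot(f\circ\psi) = w\cdot(f\circ\phi)$ a.e.\ $[\mu]$ for any $\ascr$-measurable $f$. Off $\{w\neq 0\}$ (equivalently $\{u\neq 0\}$) both sides vanish; on $\{w\neq 0\}$ we have $u=w$ a.e.\ $[\mu]$ and $f\circ\psi=f\circ\phi$ a.e.\ $[\mu]$ on $\{w\neq 0\}$, the latter being the translation of "$f\circ\psi=f\circ\phi$ a.e.\ $[\mu_w]$" via the remark that a property holds a.e.\ $[\mu_w]$ iff it holds a.e.\ $[\mu]$ on $\{w\neq 0\}$ (Lemma~\ref{nuklear} applied with $g$ replaced appropriately, or directly). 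This simultaneously shows $f\in\dz{\cfw}\Leftrightarrow f\in\dz{C_{\psi,u}}$ and $\cfw f=C_{\psi,u}f$. Finally, for $(\psi^{-1}(\ascr))^{\mu_u}=(\phi^{-1}(\ascr))^{\mu_w}$: since $\mu_u=\mu_w$ the completions are taken with respect to the same null ideal, and for each $\varDelta\in\ascr$, $\psi^{-1}(\varDelta)\vartriangle\phi^{-1}(\varDelta)$ is $\mu_w$-null, so $\psi^{-1}(\varDelta)\in(\phi^{-1}(\ascr))^{\mu_w}$ by the description \eqref{complascr}; hence $\psi^{-1}(\ascr)\subseteq(\phi^{-1}(\ascr))^{\mu_w}$ and, since the right side is relatively $\mu_w$-complete, $(\psi^{-1}(\ascr))^{\mu_w}\subseteq(\phi^{-1}(\ascr))^{\mu_w}$; by symmetry (the hypotheses are symmetric in $\psi,\phi$ once we know both composition operators are well-defined) equality follows.

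The main obstacle I anticipate is the bookkeeping in translating between "$\mu_w$-a.e." statements about $f\circ\phi$ and "$\mu$-a.e. on $\{w\neq 0\}$" statements, and making sure the set $\{\hsf>0\}$ and the derivative $\hsf$ are handled consistently up to null sets; none of this is deep, but it requires care to invoke the right remark (the $\mu_w$ vs.\ $\mu|_{\{w\neq 0\}}$ equivalence) and Lemma~\ref{nuklear} at each step rather than recomputing.
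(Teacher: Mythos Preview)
Your proposal is correct and follows essentially the same approach as the paper: the equivalence (ii)$\Leftrightarrow$(iii) via mutual absolute continuity of $\mu^w$ and $\mu_w$, the equivalence with (i) by testing on characteristic functions $\chi_\varDelta$ with $\mu(\varDelta)=0$, and the ``moreover'' part via the observation that $\psi^{-1}(\varDelta)\vartriangle\phi^{-1}(\varDelta)$ is $\mu_w$-null together with \eqref{complascr}. The only differences are cosmetic---the paper proves (i)$\Leftrightarrow$(ii) first (using the set identity $\{w\cdot(f\circ\phi)\neq 0\}=\phi^{-1}(\{f\neq 0\})\cap\{w\neq 0\}$ rather than an integral) and is terser in the ``moreover'' part, leaving $C_{\psi,u}=\cfw$ and $\mathsf{h}_{\psi,u}=\hsf$ implicit once $\mu_u\circ\psi^{-1}=\mu_w\circ\phi^{-1}$ is shown.
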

   \begin{proof} First note that for any two functions
$f,g\colon X \to \cbb$,
   \begin{align} \label{aj1}
\{w\cdot (f\circ \phi) \neq w\cdot (g\circ \phi)\} =
\phi^{-1}(\{f\neq g\}) \cap \{w\neq 0\}.
   \end{align}

(i)$\Rightarrow$(ii) What is important here is that
$g\equiv 0$ has the property that $w \cdot f\circ \phi
\in L^2(\mu)$. Take $\varDelta \in \ascr$ such that
$\mu(\varDelta) = 0$. Then $\chi_{\varDelta} = 0$
a.e.\ $[\mu]$, and thus, by (i), $w \cdot
(\chi_{\varDelta} \circ \phi) = 0$ a.e.\ $[\mu]$.
Using \eqref{aj1} with $f=\chi_{\varDelta}$, we get
$\mu^w(\phi^{-1}(\varDelta))=0$.

(ii)$\Rightarrow$(i) If $f,g \colon X \to \cbb$ are
$\ascr$-measurable functions and $f=g$ a.e.\ $[\mu]$,
then $\mu(\{f\neq g\})=0$, which, by \eqref{aj1} and
(ii), gives $w\cdot (f\circ \phi) = w\cdot (g\circ
\phi)$ a.e.\ $[\mu]$. Hence $\cfw$ is well-defined.

(i)$\Leftrightarrow$(iii) Note that the measures
$\mu^w$ and $\mu_w$ are mutually absolutely continuous
and apply the equivalence (i)$\Leftrightarrow$(ii).

To justify the ``moreover'' part, note that $\mu_w=\mu_u$, and $\mu_w
\circ \phi^{-1} = \mu_u \circ \psi^{-1}$ because
   \begin{align*}
\mu_w(\psi^{-1}(\varDelta) \vartriangle \phi^{-1}(\varDelta)) = \int_X
|\chi_{\varDelta} \circ \psi - \chi_{\varDelta} \circ \phi|^2 \D\mu_w = 0,
\quad \varDelta \in \ascr.
   \end{align*}
This, together with \eqref{complascr}, completes the proof.
   \end{proof}
An inspection of the proof of Proposition \ref{wco1}
shows that for every $p \in (0,\infty]$, any of the
equivalent conditions (ii) and (iii) is necessary and
sufficient for $\cfw$ to be well-defined in
$L^p(\mu)$.

   Now we show that each weighted composition operator
$\cfw$ is closed. We also give a necessary and
sufficient condition for $\cfw$ to be bounded.
   \begin{pro} \label{lemS1}
Suppose \eqref{stand2} holds. Then the following
assertions are valid{\em :}
   \begin{enumerate}
   \item[(i)] $\dz{\cfw}  = L^2((1+\hfw)\D \mu)$,
   \item[(ii)] $\|f\|_{\cfw}^2 = \int_X |f|^2(1+\hfw)\D \mu$
for $f \in \dz{\cfw}$,
   \item[(iii)] $\overline{\dz{\cfw}} = \chi_{\{\hfw < \infty\}}
\cdot L^2(\mu)$,
   \item[(iv)] $\cfw$ is closed,
   \item[(v)] $\cfw \in \ogr{L^2(\mu)}$ if and
only if $\hfw \in L^\infty(\mu)$; if this is the case,
then $\|\cfw\|^2 = \|\hfw\|_{L^{\infty}(\mu)}$,
   \item[(vi)] $\cfw$ is the zero operator on
$L^2(\mu)$ $\iff$ $\hfw = 0$ a.e.\ $[\mu]$ $\iff$
$\mu_w=0$ $\iff$ $w=0$ a.e.\ $[\mu]$.
   \end{enumerate}
   \end{pro}
   \begin{proof}
In view of Proposition \ref{wco1}, $\cfw$ is
well-defined. If $f\colon X \to \cbb$ is
$\ascr$-measurable, then
   \begin{align*}
\int_X |f\circ \phi|^2 |w|^2 \D\mu = \int_X |f\circ
\phi|^2 \D\mu_w \overset{\eqref{l2}}= \int_X |f|^2
\hfw \D\mu,
   \end{align*}
which implies (i) and (ii), and thus (iv). The proofs
of (iii) and (v), which are straightforward
adaptations of those for composition operators (see
\cite[Eq.\ (3.8)]{b-j-j-sC} and \cite[Theorem 1]{nor}
respectively), are omitted. The assertion (vi) follows
from \eqref{l1} and (v).
   \end{proof}
For later use we single out the following fact whose
proof is left to the reader.
   \begin{lem} \label{aproks}
If \eqref{stand2} holds and $\hfw < \infty$ a.e.\
$[\mu]$, then there exists a sequence
$\{X_n\}_{n=1}^\infty \subseteq \ascr$ such that
$\mu(X_n) < \infty$ and $\hfw \Le n$ a.e.\ $[\mu]$ on
$X_n$ for every $n\in \nbb$, and $X_n \nearrow X$ as
$n\to \infty$.
   \end{lem}
The question of when $\cfw$ is densely defined is
answered below.
   \begin{pro} \label{lemS2}
If \eqref{stand2} holds, then the following conditions
are equivalent{\em :}
   \begin{enumerate}
   \item[(i)] $\cfw$ is densely defined,
   \item[(ii)] $\hfw < \infty$ a.e.\ $[\mu]$,
   \item[(iii)] $\mu_w \circ \phi^{-1}$ is
$\sigma$-finite,
   \item[(iv)] $\mu_w|_{\phi^{-1}(\ascr)}$  is
$\sigma$-finite.
   \end{enumerate}
   \end{pro}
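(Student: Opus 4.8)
The plan is to read everything off from Proposition~\ref{lemS1}, which identifies $\dz{\cfw}$ with $L^2((1+\hsf)\D\mu)$ and computes the graph norm, together with the Radon--Nikodym identity \eqref{l1}. Concretely, I would prove the chain (ii)$\Rightarrow$(i), (i)$\Rightarrow$(ii) (the latter by contraposition), (ii)$\Leftrightarrow$(iii), and (iii)$\Leftrightarrow$(iv).

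For (ii)$\Rightarrow$(i) I would assume $\hsf<\infty$ a.e.\ $[\mu]$ and apply \eqref{aproks} to get $X_n\nearrow X$ with $\mu(X_n)<\infty$ and $\hsf\Le n$ a.e.\ $[\mu]$ on $X_n$. For arbitrary $f\in L^2(\mu)$ the truncations $\chi_{X_n}f$ satisfy $\int_X|\chi_{X_n}f|^2(1+\hsf)\D\mu\Le(1+n)\int_{X_n}|f|^2\D\mu<\infty$, so $\chi_{X_n}f\in\dz{\cfw}$ by Proposition~\ref{lemS1}(i), while $\chi_{X_n}f\to f$ in $L^2(\mu)$ by dominated convergence; hence $\dz{\cfw}$ is dense. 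For (i)$\Rightarrow$(ii) I would argue contrapositively: if $\mu(\{\hsf=\infty\})>0$ then, using $\sigma$-finiteness of $\mu$, one can choose $\varDelta\in\ascr$ with $\varDelta\subseteq\{\hsf=\infty\}$ and $0<\mu(\varDelta)<\infty$. By Proposition~\ref{lemS1}(ii) every $f\in\dz{\cfw}$ has $\int_\varDelta|f|^2\hsf\D\mu<\infty$, which, since $\hsf\equiv\infty$ on $\varDelta$ and $0\cdot\infty=0$, forces $\mu(\varDelta\cap\{f\neq0\})=0$, i.e.\ $f=0$ a.e.\ $[\mu]$ on $\varDelta$. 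Thus $\chi_\varDelta$ is a nonzero vector of $L^2(\mu)$ orthogonal to $\dz{\cfw}$, so $\cfw$ is not densely defined.

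For (ii)$\Leftrightarrow$(iii) I would use \eqref{l1}, which gives $\mu_w\circ\phi^{-1}(\varDelta)=\int_\varDelta\hsf\D\mu$ for $\varDelta\in\ascr$. If (ii) holds, the sets $X_n$ of \eqref{aproks} satisfy $X_n\nearrow X$ and $\mu_w\circ\phi^{-1}(X_n)\Le n\mu(X_n)<\infty$, giving (iii). Conversely, if $Y_n\nearrow X$ with $\int_{Y_n}\hsf\D\mu<\infty$, then $\hsf<\infty$ a.e.\ $[\mu]$ on each $Y_n$, hence a.e.\ $[\mu]$ on $X$, which is (ii).

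Finally, for (iii)$\Leftrightarrow$(iv): the implication (iii)$\Rightarrow$(iv) is immediate, since $Y_n\nearrow X$ with $\mu_w\circ\phi^{-1}(Y_n)<\infty$ yields $\phi^{-1}(Y_n)\in\phi^{-1}(\ascr)$, $\phi^{-1}(Y_n)\nearrow\phi^{-1}(X)=X$, and $\mu_w(\phi^{-1}(Y_n))=\mu_w\circ\phi^{-1}(Y_n)<\infty$. For (iv)$\Rightarrow$(iii) I would start from $E_n\nearrow X$ with $E_n\in\phi^{-1}(\ascr)$ and $\mu_w(E_n)<\infty$, write $E_n=\phi^{-1}(\widetilde Y_n)$ with $\widetilde Y_n\in\ascr$, and set $Y_n=\bigcup_{k=1}^n\widetilde Y_k\cup\big(X\setminus\bigcup_{j=1}^\infty\widetilde Y_j\big)$; then $Y_n\nearrow X$, and since $\phi^{-1}$ commutes with countable unions and with complements relative to $X$ one gets $\phi^{-1}(Y_n)=E_n\cup\big(X\setminus\bigcup_j E_j\big)=E_n$, whence $\mu_w\circ\phi^{-1}(Y_n)=\mu_w(E_n)<\infty$, i.e.\ (iii). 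The only delicate point in the whole argument is this last step: the preimage representatives $\widetilde Y_n$ are neither increasing nor exhausting in general, and the remedy is to force monotonicity by passing to partial unions and to absorb the $\phi^{-1}$-null remainder $X\setminus\bigcup_j\widetilde Y_j$; everything else reduces to routine use of Proposition~\ref{lemS1} and the identity \eqref{l1}.
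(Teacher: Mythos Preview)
Your argument is correct and rests on the same two ingredients as the paper's proof: Proposition~\ref{lemS1} identifying $\dz{\cfw}$ with $L^2((1+\hsf)\D\mu)$, and the Radon--Nikodym identity \eqref{l1}. The organization differs slightly. The paper runs the cycle (i)$\Rightarrow$(ii)$\Rightarrow$(iii)$\Rightarrow$(iv)$\Rightarrow$(i); in the final step it shows directly that (iv) forces $\hsf<\infty$ a.e.\ $[\mu]$ by observing that $\phi^{-1}\big(X\setminus\bigcup_k X_k\big)=\varnothing$, whence $\hsf=0$ a.e.\ on the uncovered part by \eqref{l1}, and then invokes an external density lemma (\cite[Lemma~12.1]{b-j-j-sC}) for the passage from (ii) to (i). You instead prove (iv)$\Rightarrow$(iii) by an explicit set-theoretic construction absorbing the uncovered remainder into the $Y_n$ --- this uses the very same observation that its $\phi$-preimage is empty --- and you give a self-contained truncation argument $\chi_{X_n}f\to f$ for (ii)$\Rightarrow$(i), which has the advantage of avoiding the external citation. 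Both routes are equally short; yours is marginally more self-contained.
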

   \begin{proof}
(i)$\Rightarrow$(ii) Apply Lemma \ref{lemS1}(iii) and
the assumption that $\mu$ is $\sigma$-finite.

(ii)$\Rightarrow$(iii) Let $\{X_n\}_{n=1}^\infty$ be
as in Lemma \ref{aproks}. Then
   \begin{align*}
\mu_w \circ \phi^{-1}(X_n) \overset{\eqref{l1}}=
\int_{X_n} \hfw \D \mu \Le n \mu(X_n) <\infty, \quad
n\in \nbb.
   \end{align*}
This yields (iii).

(iii)$\Rightarrow$(iv) Evident.

(iv)$\Rightarrow$(i) Let $\{X_n\}_{n=1}^\infty
\subseteq \ascr$ be a sequence such that
$\phi^{-1}(X_n) \nearrow X$ as $n\to \infty$ and
$\mu_w(\phi^{-1}(X_k)) < \infty$ for every $k\in
\nbb$. Without loss of generality we can assume that
$X_n \nearrow X_\infty := \bigcup_{k=1}^\infty X_k$ as
$n\to \infty$. It follows from \eqref{l1} that $\hfw <
\infty$ a.e.\ $[\mu]$ on $X_k$ for every $k\in \nbb$.
This implies that $\hfw < \infty$ a.e.\ $[\mu]$ on
$X_\infty$. Since $\phi^{-1}(X_n) \nearrow
\phi^{-1}(X_\infty) = X$ as $n\to \infty$, we get
$\phi^{-1}(X\setminus X_\infty) = \emptyset$. By
\eqref{l1}, we have $\int_{X\setminus X_\infty} \hfw
\D \mu = 0$, which yields $\hfw = 0$ a.e.\ $[\mu]$ on
$X\setminus X_\infty$. Hence, $\hfw < \infty$ a.e.\
$[\mu]$. Applying Proposition \ref{lemS1}(i) and
\cite[Lemma 12.1]{b-j-j-sC}, we obtain (i).
   \end{proof}
   \begin{cau}
To simplify terminology throughout the rest of the
paper, in saying that ``$\cfw$ is densely defined'',
we tacitly assume that $\cfw$ is well-defined.
   \end{cau}
Using \eqref{l2} we can describe the kernel of $\cfw$.
   \begin{lem} \label{jadro}
Suppose \eqref{stand2} holds. Then $\jd{\cfw}=
\chi_{\{\hfw=0\}} L^2(\mu)$. Moreover, $\jd{\cfw} =
\{0\}$ if and only if $\hfw > 0$ a.e.\ $[\mu]$.
   \end{lem}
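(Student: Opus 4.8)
The plan is to read off both assertions from the formula $\|\cfw f\|^2 = \int_X |f|^2 \hsf \D\mu$ for $f\in\dz{\cfw}$, which is exactly the computation already carried out in the proof of Proposition \ref{lemS1} (it is the displayed equality there, specialized via \eqref{l2}). Indeed, for $f\in\dz{\cfw}$ we have $\|\cfw f\|^2 = \int_X |f\circ\phi|^2|w|^2\D\mu = \int_X |f|^2\hsf\D\mu$, so $f\in\jd{\cfw}$ if and only if $|f|^2\hsf = 0$ a.e.\ $[\mu]$, i.e.\ $f = 0$ a.e.\ $[\mu]$ on $\{\hsf>0\}$, i.e.\ $f = \chi_{\{\hsf=0\}}\cdot f$ a.e.\ $[\mu]$.

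First I would check the inclusion $\chi_{\{\hsf=0\}}L^2(\mu)\subseteq\dz{\cfw}$: if $g\in L^2(\mu)$ then $f:=\chi_{\{\hsf=0\}}g$ satisfies $\int_X|f|^2(1+\hsf)\D\mu = \int_{\{\hsf=0\}}|g|^2\D\mu \Le \|g\|^2 < \infty$, so by Proposition \ref{lemS1}(i) we get $f\in\dz{\cfw}$, and moreover $\|\cfw f\|^2 = \int_X|f|^2\hsf\D\mu = 0$, hence $f\in\jd{\cfw}$. This gives $\chi_{\{\hsf=0\}}L^2(\mu)\subseteq\jd{\cfw}$. For the reverse inclusion, take $f\in\jd{\cfw}$; then $f\in\dz{\cfw}$ and $\int_X|f|^2\hsf\D\mu = \|\cfw f\|^2 = 0$, so $|f|^2\hsf=0$ a.e.\ $[\mu]$, which forces $f=0$ a.e.\ $[\mu]$ on $\{\hsf>0\}$; thus $f = \chi_{\{\hsf=0\}}f$ a.e.\ $[\mu]$, so $f\in\chi_{\{\hsf=0\}}L^2(\mu)$.

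For the ``moreover'' part, note that $\jd{\cfw}=\{0\}$ is equivalent to $\chi_{\{\hsf=0\}}L^2(\mu)=\{0\}$; since $\mu$ is $\sigma$-finite, the latter holds if and only if $\mu(\{\hsf=0\})=0$, i.e.\ $\hsf>0$ a.e.\ $[\mu]$. (If $\mu(\{\hsf=0\})>0$, pick $\varDelta\subseteq\{\hsf=0\}$ with $0<\mu(\varDelta)<\infty$; then $\chi_\varDelta$ is a nonzero element of $\chi_{\{\hsf=0\}}L^2(\mu)$.) I do not anticipate any genuine obstacle here — the only point requiring a little care is invoking $\sigma$-finiteness of $\mu$ for the ``moreover'' equivalence, exactly as in the proof of the implication (i)$\Rightarrow$(ii) in Proposition \ref{lemS2}; everything else is bookkeeping around the norm formula.
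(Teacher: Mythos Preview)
Your proof is correct and follows exactly the route the paper indicates: the paper simply states that the lemma follows from \eqref{l2}, which is precisely the norm identity $\|\cfw f\|^2=\int_X|f|^2\hsf\D\mu$ you unpack. You have supplied the details (including the $\sigma$-finiteness argument for the ``moreover'' part) that the paper leaves implicit, but the approach is the same.
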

Our next aim is to characterize weighted composition
operators satisfying the condition ``$\hfw > 0$ a.e.\
$[\mu_w]$'' that plays an essential role in our study.
In general, even composition operators do not satisfy
this condition (see Example \ref{pico}). Proposition
\ref{hsfd} below is an adaptation of \cite[Proposition
6.2]{b-j-j-sC} to our setting.
   \begin{pro} \label{hsfd}
Suppose \eqref{stand2} holds. Then the following
conditions are equivalent{\em :}
   \begin{enumerate}
   \item[(i)] $\chi_{\{w \neq 0\}} \jd{\cfw} = \{0\}$,
   \item[(ii)] $\mu(\{\hfw=0\}\cap \{w \neq
0\})=0$,
   \item[(iii)] $\hfw > 0$ a.e.\ $[\mu_w]$,
   \item[(iv)] $\chi_{\{\hfw=0\}} = \chi_{\{\hfw=0\}}
\circ \phi$ a.e.\ $[\mu_w]$.
   \end{enumerate}
Moreover, if $\cfw$ is densely defined, then any of the above conditions
is equivalent to the following one{\em :}
   \begin{enumerate}
   \item[(v)] $\chi_{\{w \neq 0\}} \jd{\cfw}
\subseteq \jd{\cfw^*}$.
   \end{enumerate}
   \end{pro}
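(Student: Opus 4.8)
The plan is to reduce the equivalences among (i)--(iv) to the single measure-theoretic statement $\mu(\{\hsf=0\}\cap\{w\neq 0\})=0$, which is exactly (ii), and then to treat (v) separately. For (i)$\Leftrightarrow$(ii) I would invoke Lemma~\ref{jadro} to write $\jd{\cfw}=\chi_{\{\hsf=0\}}L^2(\mu)$, so that $\chi_{\{w\neq 0\}}\jd{\cfw}=\chi_{\{\hsf=0\}\cap\{w\neq 0\}}L^2(\mu)$, and then observe that a subspace of the form $\chi_E L^2(\mu)$ equals $\{0\}$ exactly when $\mu(E)=0$: if $\mu(E)>0$, $\sigma$-finiteness of $\mu$ yields $E'\subseteq E$ with $0<\mu(E')<\infty$, and $\chi_{E'}$ is then a nonzero member of $\chi_E L^2(\mu)$. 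For (ii)$\Leftrightarrow$(iii) I would use the fact, noted earlier, that a property holds a.e.\ $[\mu_w]$ precisely when it holds a.e.\ $[\mu]$ on $\{w\neq 0\}$; this turns ``$\hsf>0$ a.e.\ $[\mu_w]$'' verbatim into (ii). For (ii)$\Leftrightarrow$(iv) I would first apply Lemma~\ref{lemS6}(i), which gives $\hsf\circ\phi>0$ a.e.\ $[\mu_w]$, i.e.\ $\chi_{\{\hsf=0\}}\circ\phi=0$ a.e.\ $[\mu_w]$; hence (iv) collapses to $\chi_{\{\hsf=0\}}=0$ a.e.\ $[\mu_w]$, which is again (ii).

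For the ``moreover'' part I would assume $\cfw$ densely defined, so that $\cfw^*$ exists and, by Proposition~\ref{lemS2}, $\hsf<\infty$ a.e.\ $[\mu]$. I would fix a sequence $\{X_n\}_{n=1}^\infty$ as in~\eqref{aproks}, noting that $\chi_{X_n}\in\dz{\cfw}$ by Proposition~\ref{lemS1}(i), that $\cfw\chi_{X_n}=w\cdot\chi_{\phi^{-1}(X_n)}$, and that $\phi^{-1}(X_n)\nearrow X$ as $n\to\infty$ since $X_n\nearrow X$. The implication (i)$\Rightarrow$(v) is immediate, because (i) makes $\chi_{\{w\neq 0\}}\jd{\cfw}$ trivial.

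The real work — and the only step that is not routine — is the converse (v)$\Rightarrow$(ii); the idea is to test membership in $\jd{\cfw^*}$ against a cleverly chosen element of $\chi_{\{w\neq0\}}\jd{\cfw}$. Fix $f\in\dz{\cfw}$ and set $h:=w\cdot(f\circ\phi)\cdot\chi_{\{\hsf=0\}}$. Since $w\cdot(f\circ\phi)=\cfw f\in L^2(\mu)$ and $h$ vanishes off $\{\hsf=0\}$, Lemma~\ref{jadro} gives $h\in\jd{\cfw}$; since $h$ also vanishes off $\{w\neq 0\}$, we have $h=\chi_{\{w\neq 0\}}h\in\chi_{\{w\neq 0\}}\jd{\cfw}$, so (v) yields $h\in\jd{\cfw^*}$ and hence $h\perp\ob{\cfw}$. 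Pairing $h$ with $\cfw f\in\ob{\cfw}$ gives
   \begin{align*}
0=\is{\cfw f}{h}=\int_X|w\cdot(f\circ\phi)|^2\,\chi_{\{\hsf=0\}}\D\mu=\|h\|^2,
   \end{align*}
so $w\cdot(f\circ\phi)\cdot\chi_{\{\hsf=0\}}=0$ a.e.\ $[\mu]$ for every $f\in\dz{\cfw}$. Specializing to $f=\chi_{X_n}$ gives $\mu(\{w\neq 0\}\cap\phi^{-1}(X_n)\cap\{\hsf=0\})=0$ for all $n$, and letting $n\to\infty$ (using $\phi^{-1}(X_n)\nearrow X$) yields $\mu(\{w\neq 0\}\cap\{\hsf=0\})=0$, i.e.\ (ii).

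Beyond locating the element $h$, I do not anticipate a genuine obstacle. The main point needing care is bookkeeping: $\hsf$, and with it $\{\hsf=0\}$ and $\chi_{\{\hsf=0\}}\circ\phi$, are defined only up to a.e.\ $[\mu]$ (resp.\ a.e.\ $[\mu_w]$) equivalence — but every assertion above is made in exactly that sense. The other ingredient used tacitly, $\jd{\cfw^*}\subseteq(\ob{\cfw})^\perp$, is immediate from the definition of the adjoint.
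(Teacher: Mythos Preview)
Your proof is correct and follows the same overall strategy as the paper: the equivalences (i)--(iv) are handled identically (via Lemma~\ref{jadro}, the $\mu_w$/$\mu$ dictionary, and Lemma~\ref{lemS6}(i)), and for the nontrivial implication (v)$\Rightarrow$(ii)/(iii) both arguments produce test elements in $\chi_{\{w\neq0\}}\jd{\cfw}$, push them into $\jd{\cfw^*}$ via (v), and pair against elements of $\ob{\cfw}$ indexed by the sets $X_n$ from~\eqref{aproks}.

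The execution differs slightly. The paper takes $w\cdot\chi_{X_n\cap\{\hsf=0\}}$ as the test element (which forces the additional requirement $|w|\Le n$ on $X_n$ to ensure $L^2$-membership) and pairs it with $\cfw\chi_{X_n}$. Your choice $h=(\cfw f)\cdot\chi_{\{\hsf=0\}}$ is a bit neater: $h$ is automatically in $L^2(\mu)$ without any control on $|w|$, and pairing $h$ against $\cfw f$ itself yields $\|h\|^2$ directly, so $h=0$ falls out in one line before specializing to $f=\chi_{X_n}$. The gain is cosmetic---one fewer condition on the exhausting sequence---but the idea is the same.
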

   \begin{proof}
(i)$\Leftrightarrow$(ii) This follows from the $\sigma$-finiteness of
$\mu$ and Lemma \ref{jadro}.

(ii)$\Leftrightarrow$(iii) Clear.

(ii)$\Leftrightarrow$(iv) Since, by Lemma
\ref{lemS6}(i), $\chi_{\{\hfw=0\}} \circ \phi=0$ a.e.\
$[\mu_w]$, we are done.

Now we assume that $C_{\phi}$ is densely defined.

(i)$\Rightarrow$(v) Evident.

(v)$\Rightarrow$(iii) By Proposition \ref{lemS2},
$\hfw < \infty$ a.e.\ $[\mu]$. Hence, there exists a
sequence $\{X_n\}_{n=1}^\infty \subseteq \ascr$ such
that $X_n \nearrow X$ as $n\to \infty$ and $\mu(X_k) <
\infty$, $\hfw \Le k$ a.e.\ $[\mu]$ on $X_k$ and $|w|
\Le k$ a.e.\ $[\mu]$ on $X_k$ for every $k\in \nbb$.
Set $Y_n = X_n \cap \{\hfw=0\}$ for $n \in \nbb$. It
follows from Proposition \ref{lemS1}(i) that $\{w
\cdot \chi_{Y_n}\}_{n=1}^\infty \subseteq \chi_{\{w
\neq 0\}} \dz{\cfw}$ and $\{\chi_{X_n}\}_{n=1}^\infty
\subseteq \dz{\cfw}$. This implies that
   \begin{align*}
\|\cfw(w \cdot \chi_{Y_n})\|^2 & = \int_X |w \circ \phi|^2 \cdot
\chi_{Y_n} \circ \phi \D \mu_w
   \\
&\hspace{-.3ex}\overset{\eqref{l2}}= \int_X |w|^2
\chi_{Y_n} \hfw \D \mu = 0, \quad n \in \nbb,
   \end{align*}
and thus by our assumptions $\{w \cdot \chi_{Y_n}\}_{n=1}^\infty \subseteq
\jd{\cfw^*}$. As a consequence, we have
   \begin{align*}
0 = \is{w \cdot \chi_{Y_n}}{\cfw \chi_{X_n}} & = \int_X \chi_{X_n} \circ
\phi \cdot \chi_{Y_n} \D \mu_w
   \\
&= \mu_w(Y_n \cap \phi^{-1}(X_n)), \quad n \in \nbb.
   \end{align*}
By continuity of measures, we conclude that
$\mu_w(\{\hfw = 0\})=0$, which gives (iii). This
completes the proof.
   \end{proof}
Since $\jd{A} \subseteq \jd{A^*}$ for every hyponormal operator $A$, we
get the following.
   \begin{cor} \label{hipinj}
If \eqref{stand2} holds and $\cfw$ is hyponormal, then
$\hfw > 0$ a.e.\ $[\mu_w]$ $($or equivalently{\em :}
$\mu(\{\hfw = 0\} \cap \{w \neq 0\})=0$$)$.
   \end{cor}
It is known that hyponormal composition operators are
automatically injective (see \cite[Corollary
6.3]{b-j-j-sC}). However, there are hyponormal
weighted composition operators which are not
injective. The simplest possible example seems to be a
multiplication operator $M_{w}$ for which
$\mu(\{w=0\}) > 0$ (see Remark \ref{22.X.2013Daegu}).
The case of a quasinormal non-injective weighted
composition operator with nontrivial symbol is
discussed in Example \ref{quasif}.
   \section{\label{pco}Assorted classes of weighted
composition operators} In this section, we single out
some classes of weighted composition operators. Below,
if not stated otherwise, $(X,\ascr,\mu)$ stands for a
$\sigma$-finite measure space.

\vspace{1ex}

(a) [{\sc multiplication operators}] If $w$ is an
$\ascr$-measurable complex function on $X$, then the
operator $M_w := C_{\id_X, w}$ is well-defined; it is
called the {\em operator of multiplication} by $w$ in
$L^2(\mu)$. Recall that the operator $M_w$ is normal
(see \cite[Sect.\ 7.2]{b-s}; see also Remark
\ref{22.X.2013Daegu}). If $u\colon X \to \rbop$ is an
$\ascr$-measurable function which is finite a.e.\
$[\mu]$, then $M_u$ will be understood as the
multiplication operator $M_{\tilde u}$ by any
$\ascr$-measurable function $\tilde u\colon X \to
\rbb_+$ such that $\tilde u=u$ a.e.\ $[\mu]$; clearly,
this definition is correct (cf.\ Proposition
\ref{wco1}). For more information on multiplication
operators, the reader is referred to
\cite{b-s,Con-m,Schb,Weid}.

\vspace{1ex}

(b) [{\sc composition operators}] If $\phi$ is an
$\ascr$-measurable transformation of $X$, then the
operator $C_{\phi}:=C_{\phi,\boldsymbol{1}} $ is
called the {\em composition operator} in $L^2(\mu)$
with {\em symbol} $\phi$. By Proposition \ref{wco1},
$C_{\phi}$ is well-defined if and only if $\mu \circ
\phi^{-1} \ll \mu$; if this is the case, then the
transformation $\phi$ is called {\em nonsingular}. To
simplify the notation, we write $\mathsf h_{\phi}$ in
place of $\mathsf h_{\phi,\boldsymbol{1}}$. The
subject of composition operators has been studied by
many authors over the past 60 years, see, e.g.,
\cite{dun-sch,nor,lam1,lam2,ml,sto,da-st} and
\cite{sin-man}.

\vspace{1ex}

(c) [{\sc partial composition operators}] Let $Y$ be a
nonempty subset of $X$ and $\psi\colon Y \to X$ be an
$\ascr$-measurable mapping, i.e.,
$\psi^{-1}(\varDelta) \in \ascr$ for every $\varDelta
\in \ascr$. The operator $C_{\psi}\colon L^2(\mu)
\supseteq \dz{C_{\psi}} \to L^2(\mu)$ given by
   \allowdisplaybreaks
   \begin{align*}
\dz{C_{\psi}} & = \bigg\{f\in L^2(\mu)\colon \int_Y |f
\circ \psi|^2 \D \mu < \infty\bigg\},
   \\
(C_{\psi} f)(x) & =
   \begin{cases}
f(\psi(x)) & \text{ if } x \in Y,
   \\
0 & \text{ if } x \in X \setminus Y,
   \end{cases}
\quad f \in \dz{C_{\psi}},
   \end{align*}
will be called the {\em partial composition operator}
in $L^2(\mu)$ with the {\em symbol} $\psi$ (note that
$Y=\psi^{-1}(X)\in \ascr$). Arguing as in \cite[p.\
38]{nor}, one can show that $C_{\psi}$ is well-defined
if and only if $\mu\circ \psi^{-1} \ll \mu$, where
$\mu \circ \psi^{-1}(\varDelta) =
\mu(\psi^{-1}(\varDelta))$ for $\varDelta \in \ascr$.
Set $w=\chi_{Y}$ and take any $\ascr$-measurable
transformation $\phi$ of $X$ which extends $\psi$.
Since $\mu(\psi^{-1}(\varDelta)) =
\mu^w(\phi^{-1}(\varDelta))$ for $\varDelta \in
\ascr$, we deduce from Proposition \ref{wco1} that
$C_{\psi}$ is well-defined if and only if $\cfw$ is
well-defined. If this is the case, then $C_{\psi} =
\cfw$. The definition of what we call here a partial
composition operator has been given by Nordgren in
\cite{nor}. Particular classes of partial composition
operators have been studied in \cite{sto}.

\vspace{1ex}

(d) [{\sc weighted partial composition operators}] Let
$Y$ be a nonempty subset of $X$, $v$ be an
$\ascr$-measurable complex function on $X$ and
$\psi\colon Y \to X$ be $\ascr$-measurable mapping
(cf.\ (c)). The operator $\cpv \colon L^2(\mu)
\supseteq \dz{\cpv} \to L^2(\mu)$ given by
   \allowdisplaybreaks
   \begin{align*}
\dz{\cpv} & = \bigg\{f\in L^2(\mu)\colon \int_Y |v
\cdot (f \circ \psi)|^2 \D \mu < \infty\bigg\},
   \\
(\cpv f)(x) & =
   \begin{cases}
v(x)f(\psi(x)) & \text{ if } x \in Y,
   \\
0 & \text{ if } x \in X \setminus Y,
   \end{cases}
\quad f \in \dz{\cpv},
   \end{align*}
will be called the {\em weighted partial composition
operator} in $L^2(\mu)$ with the {\em symbol} $\psi$
and the {\em weight} $v$. As in the proof of
Proposition \ref{wco1}, one can show that $\cpv$ is
well-defined if and only if $\mu^{v}\circ \psi^{-1}
\ll \mu$, where $\mu^{v} (\varDelta) = \mu(\varDelta
\cap \{v \neq 0\})$ for $\varDelta \in \ascr$. Set
$w=v\cdot \chi_{Y}$ and take any $\ascr$-measurable
transformation $\phi$ of $X$ which extends $\psi$.
Since $\mu^v(\psi^{-1}(\varDelta)) =
\mu^w(\phi^{-1}(\varDelta))$ for all $\varDelta \in
\ascr$, we deduce from Proposition \ref{wco1} that
$\cpv$ is well-defined if and only if $\cfw$ is
well-defined. If this is the case, then $\cpv = \cfw$.
Weighted partial composition operators over countably
infinite discrete measure spaces have been
investigated by Carlson in \cite{car1,car2}.

\vspace{1ex}

(e) [{\sc weighted shifts on directed trees}] Suppose
$\tcal=(V,E)$ is a directed tree, where $V$ is the set
of vertices of $\tcal$ and $E$ is the set of edges of
$\tcal$. If $u\in V$, then a (unique) vertex $v \in V$
such that $(v,u)\in E$ is called a {\em parent} of
$u$; it is denoted by $\pa{u}$. A vertex which has no
parent is called a {\em root} of $\tcal$. A root is
unique (provided it exists) and is denoted here by
$\koo$. A directed tree without root is called {\em
rootless}. Set $V^\circ=V \setminus \{\koo\}$ if
$\tcal$ has a root and $V^\circ=V$ otherwise. For
$u\in V$, we write $\dzi{u}=\{v\in V\colon (u,v) \in
E\}$ and call a member of $\dzi{u}$ a {\em child} of
$u$. By a {\em weighted shift on} $\tcal$ with {\em
weights} $\lambdab=\{\lambda_v\}_{v \in V^{\circ}}
\subseteq \cbb$ we mean the operator $\slam$ in
$\ell^2(V)$ defined by
   \begin{align*}
   \begin{aligned}
\dz {\slam} & = \{f \in \ell^2(V) \colon
\varLambda_\tcal f \in \ell^2(V)\},
   \\
\slam f & = \varLambda_\tcal f, \quad f \in
\dz{\slam},
   \end{aligned}
   \end{align*}
where $\varLambda_\tcal$ is a complex mapping on
$\cbb^V$ given by
   \begin{align*}
(\varLambda_\tcal f) (v) =
   \begin{cases}
\lambda_v \cdot f\big(\pa v\big) & \text{ if } v\in
V^\circ,
   \\
0 & \text{ if } v=\koo,
   \end{cases}
\quad f\in \cbb^V.
   \end{align*}
Note that if $\card{V} \Le \aleph_0$, then we can view
a weighted shift on a directed tree as a weighted
partial composition operator in $L^2(V,2^V,\mu)$,
where $\mu$ is the counting measure (cf.\ (d)). The
foundations of the theory of weighted shifts on
directed trees have been established in \cite{j-j-s}.
Very preliminary investigations of a particular class
of weighted adjacency operators of directed graphs,
called also adjacency operators of directed fuzzy
graphs, have been undertaken by Fujii et al.\ in
\cite{f-f-s-w}.

\vspace{1ex}

(f) [{\sc almost nowhere-vanishing weights}] Suppose
$w$ is an $\ascr$-measurable complex function on $X$
such that $w \neq 0$ a.e.\ $[\mu]$ and $\phi$ is an
$\ascr$-measurable transformation of $X$. Then, by
Proposition \ref{wco1}, $\cfw$ is well-defined if and
only if $\mu\circ \phi^{-1} \ll \mu$. This resembles
the case of composition operators (see also (c)).

\vspace{1ex}

(g) [{\sc unilateral weighted shifts}] Let
$\{\lambda_n\}_{n=0}^{\infty}$ be a sequence of
complex numbers. Set $X=\zbb_+$ and $\ascr=2^X$. Let
$\mu$ be the counting measure on $X$. Define the
functions $\phi\colon X \to X$ and $w\colon X \to
\cbb$ by
   \begin{align*}
\phi(n)=
   \begin{cases}
n-1 & \text{for } n\in\nbb,
   \\
0 & \text{for } n=0,
   \end{cases}
\quad \text{and} \quad w(n)=
   \begin{cases}
\lambda_{n-1} & \text{for } n\in \nbb,
   \\
0 & \text{for } n=0.
   \end{cases}
   \end{align*}
Clearly, the weighted composition operator $\cfw$ in
$\ell^2(\zbb_+)$ is well-defined. Set
$e_n=\chi_{\{n\}}$ for $n\in \zbb_+$. Then
$\{e_n\}_{n=0}^{\infty}$ is an orthonormal basis of
$\ell^2(\zbb_+)$. Following \cite{ml2}, we define the
{\em unilateral weighted shift} $W$ in
$\ell^2(\zbb_+)$ with weights
$\{\lambda_n\}_{n=0}^{\infty}$ to be equal to the
product $SD$, where $S$ is the isometric unilateral
shift in $\ell^2(\zbb_+)$ (i.e., $S\in
\ogr{\ell^2(\zbb_+)}$ and $Se_n = e_{n+1}$ for all
$n\in \zbb_+$) and $D$ is the diagonal operator in
$\ell^2(\zbb_+)$ with the diagonal elements
$\{\lambda_n\}_{n=0}^{\infty}$ (i.e., $D$ is a normal
operator, $\{e_n\}_{n=0}^{\infty} \subseteq \dz{D}$
and $De_n=\lambda_n e_n$ for all $n \in \zbb_+$). We
show that $\cfw = W$. It is easily seen that
$\{e_n\}_{n=0}^{\infty} \subseteq \dz{\cfw}$ and
   \begin{align} \label{jajo}
\cfw e_n = \lambda_n e_{n+1}, \quad n\in \zbb_+.
   \end{align}
If $f\in \ell^2(\zbb_+)$, then
   \begin{align*}
\sum_{n=0}^{\infty} |w(n)f(\phi(n))|^2 =
\sum_{n=1}^{\infty} |\lambda_{n-1}f(n-1)|^2 =
\sum_{n=0}^{\infty} |\lambda_{n}f(n)|^2,
   \end{align*}
which implies that
   \begin{gather*}
\dz{\cfw} = \dz{W} = \dz{D} = \bigg\{f\in \cbb^X\colon
\sum_{n=0}^{\infty} |f(n)|^2(1+|\lambda_{n}|^2)
<\infty\bigg\},
   \\
\|f\|^2_{\cfw} = \|f\|^2_{W} = \sum_{n=0}^{\infty}
|f(n)|^2(1+|\lambda_{n}|^2), \quad f\in \dz{\cfw}.
   \end{gather*}
Hence the operators $\cfw$ and $W$ are closed and
$\lin \{e_n\colon n \in \zbb_+\}$ is a core for $\cfw$
and $W$ (see \cite[Eq.\ (1.7)]{ml2} for the case of
$W$). Since $We_n = \lambda_n e_{n+1}$ for all $n\in
\zbb_+$, we infer from \eqref{jajo} that $\cfw = W$.
The topic of weighted shift operators is an immanent
part of operator theory (see, e.g.,
\cite{g-w,hal2,shi,nik}).

\vspace{1ex}

(h) [{\sc bilateral weighted shifts}] Let
$\{\lambda_n\}_{n\in \zbb}$ be a two-sided sequence of
complex numbers. Set $X=\zbb$ and $\ascr=2^X$. Let
$\mu$ be the counting measure on $X$ and
$\{e_n\}_{n\in \zbb}$ be the standard orthonormal
basis of $\ell^2(\zbb)$, i.e., $e_n=\chi_{\{n\}}$ for
all $n\in \zbb$. Define the functions $\phi\colon X
\to X$ and $w\colon X \to \cbb$ by $\phi(n)= n-1$ and
$w(n)= \lambda_{n-1}$ for $n\in\zbb$. Let $V$ be the
unitary bilateral shift in $\ell^2(\zbb)$ (i.e., $V\in
\ogr{\ell^2(\zbb)}$ and $Ve_n = e_{n+1}$ for all $n\in
\zbb$) and let $D$ be the diagonal operator in
$\ell^2(\zbb)$ with the diagonal elements
$\{\lambda_n\}_{n\in \zbb}$ (i.e., $D$ is a normal
operator, $\{e_n\}_{n\in \zbb} \subseteq \dz{D}$ and
$De_n=\lambda_n e_n$ for all $n \in \zbb$). Set
$W=VD$. The operator $W$ is called a {\em bilateral
weighted shift} in $\ell^2(\zbb)$ with weights
$\{\lambda_n\}_{n\in \zbb}$. Arguing similarly as in
(g), one can show that $\lin \{e_n\colon n \in \zbb\}$
is a core for $\cfw$ and $W$, and $\cfw=W$. Let us
recall that a bilateral weighted shift with nonzero
weights is unitarily equivalent to a composition
operator in an $L^2$-space over a $\sigma$-finite
measure space (see, e.g., \cite[Lemma 4.3.1]{j-j-s0}).
For more information on general as well as particular
properties of bilateral weighted shifts see
\cite{her,shi,cu-fi2}.

\vspace{1ex}

(i) [{\sc adjoints of unilateral weighted shifts}] Let
$W=SD$ be a unilateral weighted shift in
$\ell^2(\zbb_+)$ with weights
$\{\lambda_n\}_{n=0}^{\infty}$ (cf.\ (g)). Set
$X=\zbb_+$ and $\ascr=2^X$. Let $\mu$ be the counting
measure on $X$. Define the functions $\phi\colon X \to
X$ and $w\colon X \to \cbb$ by $\phi(n)= n+1$ and
$w(n)= \bar \lambda_{n}$ for $n\in\zbb_+$. We show
that $\cfw=W^*$. For this, first note that $W^* = D^*
S^*$. Then it is clear that $\{e_n\}_{n=0}^{\infty}
\subseteq \dz{\cfw} \cap \dz{W^*}$ and
   \begin{align} \label{nocc}
\cfw e_n = \bar \lambda_{n-1} e_{n-1} = W^* e_n, \quad
n\in \zbb_+,
   \end{align}
with the convention that $e_{-1}=0$ and
$\lambda_{-1}=0$. Arguing similarly as in (g), we
verify that $\dz{\cfw}=\dz{W^*}$. Since $\lin
\{e_n\colon n \in \zbb_+\}$ is a core for $W^*$ (see,
e.g., \cite[Eq.\ (1.11)]{ml2}) and $\cfw$ is closed,
we deduce from \eqref{nocc} that $\cfw=W^*$.

\vspace{1ex}

(j) [{\sc backward unilateral weighted shifts}] A
backward unilateral weight\-ed shift in
$\ell^2(\zbb_+)$ with weights
$\{\lambda_n\}_{n=0}^{\infty}$ can be defined as the
product $W=DS^*$, where $S$ and $D$ are as in (g).
Clearly, such an operator is equal to the adjoint of
the unilateral weighted shift $SD^{*}$ whose weights
have the form $\{\bar \lambda_n\}_{n=0}^{\infty}$.
Hence, by (i), it is a weighted composition operator.

\vspace{1ex}

(k) [{\sc adjoints of bilateral weighted shifts}] We
begin by observing that if $W$ is a bilateral weighted
shift in $\ell^2(\zbb)$ with weights
$\{\lambda_n\}_{n\in \zbb}$ (cf.\ (h)), then $U\tilde
W = W^* U$, where $\tilde W$ is the bilateral weighted
shift in $\ell^2(\zbb)$ with weights
$\{\bar\lambda_{-(n+1)}\}_{n\in \zbb}$ and $U\in
\ogr{\ell^2(\zbb)}$ is the unitary operator such that
$Ue_n=e_{-n}$ for all $n\in\zbb$ (this fact is
well-known in the case of bounded operators). To see
this, verify that $U\tilde W|_{\mathcal E}=
W^*U|_{\mathcal E}$, where $\mathcal E=\lin
\{e_n\colon n \in \zbb\}$, and use the fact that
$\mathcal E$ is a core for $\tilde W$ and $W^*$. In
view of (h), this means that $W^*$ is unitarily
equivalent to a weighted composition operator. On the
other hand, the argument provided in (i) enables one
to show that the adjoint of a bilateral weighted shift
is in fact a weighted composition operator. As in (j)
we can view the adjoint of a bilateral weighted shift
as a backward bilateral weighted shift.
   \section{\label{Sec2.4}Conditional expectation} In this
section, we discuss some properties of the conditional
expectation $\efw$ which plays a crucial role in our
considerations. We refer the reader to Appendix
\ref{AppB} for the necessary background (including
notation) on conditional expectation in a general
non-probabilistic setting.

   Suppose \eqref{stand2} holds and the measure
$\mu_w|_{\phi^{-1}(\ascr)}$ is $\sigma$-finite (or
equivalently, by Proposition \ref{lemS2}, $\hfw <
\infty$ a.e.\ $[\mu]$). Thus we can consider the
conditional expectation
$\mathsf{E}(f;\phi^{-1}(\ascr),\mu_w)$ of a function
$f$ with respect to the $\sigma$-algebra
$\phi^{-1}(\ascr)$ and the measure $\mu_w$. To
simplify the notation, we write $\efw(f)$ in place of
$\mathsf{E}(f;\phi^{-1}(\ascr),\mu_w)$. If
$w=\boldsymbol{1}$, we abbreviate
$\mathsf{E}_{\phi,\boldsymbol{1}}(f)$ to
$\mathsf{E}_{\phi}(f)$. Recall that for a given $p\in
[1,\infty]$, the conditional expectation $\efw(\cdot)$
can be regarded as a linear contraction on
$L^p(\mu_w)$ which leaves invariant the convex cone
$L_+^p(\mu_w)$ (see Theorem \ref{Ath}). In view of
\eqref{B3}, \eqref{B9.5} and the well-known fact that
(cf.\ \cite[Problem 13.3]{bill})
   \begin{align} \label{Il-Zen2}
   \begin{minipage}{72ex}
{\em a function $\tilde g\colon X \to \rbop$ $($resp.,
$\tilde g\colon X \to \rbb_+$, $\tilde g\colon X \to
\cbb$$)$ is $\phi^{-1}(\ascr)$-measurable if and only
if there exists $\ascr$-measurable function $g\colon X
\to \rbop$ $($resp., $g\colon X \to \rbb_+$, $g\colon
X \to \cbb$$)$ such that $\tilde g=g \circ \phi$,}
   \end{minipage}
   \end{align}
we see that
   \begin{align}  \label{wazny}
   \begin{aligned}
   \begin{minipage}{72ex}
{\em if $f,g \colon X \to \rbop$ are
$\ascr$-measurable functions $($resp., $f,g \colon X
\to \cbb$ are $\ascr$-measurable functions such that
$f \in L^p(\mu_w)$ and $g\circ \phi\in L^q(\mu_w)$,
where $p,q \in [1,\infty]$ satisfy $\frac{1}{p} +
\frac {1}{q} =1$$)$, then}
   $$ \int_X g \circ \phi \cdot f \D \mu_w =
\int_X g \circ \phi \cdot \efw(f) \D \mu_w.
   $$
  \end{minipage}
   \end{aligned}
   \end{align}

The following proposition is patterned on \cite[page
325]{ca-hor}.
   \begin{pro} \label{lemS4}
Suppose \eqref{stand2} holds. Assume that $f\colon X
\to \rbop$ $($resp., $f\colon X \to \cbb$\/$)$ is an
$\ascr$-measurable function. Then there exists an
$\ascr$-measurable function $g\colon X \to \rbop$
$($resp., $g\colon X \to \cbb$\/$)$ such that $f\circ
\phi = g \circ \phi$ a.e.\ $[\mu_w]$ and $g=0$ a.e.\
$[\mu]$ on $\{\hfw = 0\}$. Moreover, such $g$ is
unique up to a set of $\mu$-measure zero and is given
by $g=f\cdot \chi_{\{\hfw>0\}}$.
   \end{pro}
   \begin{proof}
By Lemma \ref{lemS6}(i), $\chi_{\{\hfw
> 0\}} \circ \phi = 1$ a.e.\ $[\mu_w]$. This implies
that
   \begin{align*}
(f\cdot \chi_{\{\hfw>0\}}) \circ \phi = (f \circ \phi)
\cdot \chi_{\{\hfw
> 0\}} \circ \phi =f \circ \phi \text{ a.e.\ $[\mu_w]$}.
   \end{align*}
The uniqueness statement follows from Lemma \ref{nuklear}.
   \end{proof}
Assume that \eqref{stand2} holds and $\hfw < \infty$
a.e. $[\mu]$. Suppose that $f\colon X \to \rbop$ is an
$\ascr$-measurable function (resp., $f\in
L^2(\mu_w)$). Then, by the well-known description of
$\phi^{-1}(\ascr)$-measurable functions and
Proposition \ref{lemS4}, $\efw(f) = g\circ \phi$ a.e.\
$[\mu_w]$ with some $\ascr$-measurable $\rbop$-valued
(resp., $\cbb$-valued) function $g$ on $X$ such that
$g=g\cdot \chi_{\{\hfw>0\}}$ a.e.\ $[\mu]$. Set
$\efw(f) \circ \phi^{-1} = g$ a.e.\ $[\mu]$. By
Proposition \ref{lemS4}, this definition is correct,
and by Lemma \ref{nuklear} the following equality
holds
   \begin{align} \label{fifi}
(\efw(f) \circ \phi^{-1})\circ \phi = \efw(f) \quad
\text{a.e.\ $[\mu_w|_{\phi^{-1}(\ascr)}]$.}
   \end{align}
(Of course, the expression ``a.e.\
$[\mu_w|_{\phi^{-1}(\ascr)}]$'' in \eqref{fifi} can be
replaced by ``a.e.\ $[\mu_w]$''.) If $\tilde f\colon X
\to \rbop$ is an $\ascr$-measurable function (resp.,
$\tilde f\in L^2(\mu_w)$) is such that $f=\tilde f$
a.e.\ $[\mu_w]$, then $\efw(f) = \efw(\tilde f)$ a.e.\
$[\mu_w]$ and consequently, in view of Lemma
\ref{nuklear}, $\efw(f) \circ \phi^{-1} = \efw(\tilde
f) \circ \phi^{-1}$ a.e.\ $[\mu]$. It is clear that
$\efw(\boldsymbol{1}) = \boldsymbol{1}$ a.e.\
$[\mu_w]$ and
   \begin{align} \label{jedynka}
\efw(\boldsymbol{1}) \circ \phi^{-1} = \chi_{\{\hfw >
0\}} \text{ a.e.\ $[\mu]$.}
   \end{align}
   \begin{pro}  \label{6 XII 2013}
Suppose \eqref{stand2} holds, $\hfw < \infty$ a.e.
$[\mu]$ and $f,g\colon X \to \rbop$ are
$\ascr$-measurable functions. Then the following
assertions are valid{\em :}
   \begin{enumerate}
   \item[(i)] if $f \Le g$ a.e.\
$[\mu_w]$, then $\efw(f) \circ \phi^{-1} \Le \efw(g)
\circ \phi^{-1}$ a.e.\ $[\mu]$,
   \item[(ii)] if $f \Le c$ a.e.\
$[\mu_w]$ for some $c\in \rbb_+$, then $\efw(f) \circ
\phi^{-1} \Le c$ a.e.\ $[\mu]$.
   \end{enumerate}
   \end{pro}
   \begin{proof}
(i) If $f \Le g$ a.e.\ $[\mu_w]$, then, by \eqref{B4},
$\efw(f) \Le \efw(g)$ a.e.\ $[\mu_w]$, and hence by
\eqref{l2} and \eqref{fifi}, we have
   \begin{multline*}
\int_{\varDelta} \efw(f) \circ \phi^{-1} \cdot \hfw \D
\mu = \int_{\phi^{-1}(\varDelta)} \efw(f) \D \mu_w
   \\
\Le \int_{\phi^{-1}(\varDelta)} \efw(g) \D \mu_w =
\int_{\varDelta} \efw(g) \circ \phi^{-1} \cdot \hfw \D
\mu, \quad \varDelta \in \ascr.
   \end{multline*}
Since $\mu$ is $\sigma$-finite, $\hfw < \infty$ a.e.
$[\mu]$ and $\efw(q) \circ \phi^{-1} = 0$ a.e.\
$[\mu]$ on $\{\hfw=0\}$ for any $\ascr$-measurable
$q\colon X \to \rbop$, we get (i).

(ii) Apply (i) and \eqref{jedynka}.
   \end{proof}
Regarding Proposition \ref{6 XII 2013}, note that part
(i) remains true if ``$\Le$'' is replaced by ``$\Ge$''
(or by ``$=$''). However, both of these replacements
make part (ii) false (see \eqref{jedynka} and Example
\ref{quasif}; see also Remark \ref{22.X.2013Daegu}).

The Radon-Nikodym derivative $\frac{\D \mu^w \circ
\phi^{-1}}{\D \mu}$ can be expressed in terms of
$\hfw$.
   \begin{pro}\label{lemS7}
If \eqref{stand2} holds and $\hfw < \infty$ a.e.\
$[\mu]$, then $\mu^w \circ \phi^{-1} \ll \mu$ and
   \begin{align*}
\frac{\D \mu^w \circ \phi^{-1}}{\D \mu} = \hfw \cdot
\efw\Big(\chi_{\{w \neq 0\}} \cdot
\frac{1}{|w|^2}\Big)\circ \phi^{-1} \text{ a.e.\
$[\mu]$.}
   \end{align*}
   \end{pro}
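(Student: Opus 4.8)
The plan is to compute $\mu^w \circ \phi^{-1}(\varDelta)$ for an arbitrary $\varDelta \in \ascr$ and then read off the Radon--Nikodym derivative by uniqueness. The absolute continuity $\mu^w \circ \phi^{-1} \ll \mu$ is immediate: by \eqref{stand1} we have $\mu_w \circ \phi^{-1} \ll \mu$, and since $\mu^w$ and $\mu_w$ are mutually absolutely continuous (hence so are $\mu^w \circ \phi^{-1}$ and $\mu_w \circ \phi^{-1}$), the claim follows; alternatively this is the implication (iii)$\Rightarrow$(ii) of Proposition \ref{wco1}. It remains to check that $\hsf \cdot \esf\big(\chi_{\{w\neq 0\}} \cdot \frac{1}{|w|^2}\big) \circ \phi^{-1}$ — which is a legitimate $\ascr$-measurable $\rbop$-valued function, because $\hsf < \infty$ a.e.\ $[\mu]$ makes $\esf$ available (Proposition \ref{lemS2}) and Proposition \ref{lemS4} licenses the symbol $\circ\, \phi^{-1}$ — integrates correctly over every $\varDelta \in \ascr$.

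The computation has three steps. Write $f = \chi_{\{w\neq 0\}} \cdot \frac{1}{|w|^2}$, which by the standing conventions $0\cdot\infty = 0$ and $\frac{1}{0}=\infty$ is the $\ascr$-measurable $\rbop$-valued function equal to $|w|^{-2}$ on $\{w\neq 0\}$ and to $0$ on $\{w=0\}$. Then $f\cdot|w|^2 = \chi_{\{w\neq 0\}}$ pointwise on all of $X$ (including $\{w=0\}$), so by the definition of $\mu_w$,
\begin{align*}
\mu^w \circ \phi^{-1}(\varDelta) = \int_{\phi^{-1}(\varDelta)} \chi_{\{w\neq 0\}} \D\mu = \int_{\phi^{-1}(\varDelta)} f \D\mu_w, \quad \varDelta \in \ascr.
\end{align*}
Next, applying the defining property \eqref{bjjs1} of the conditional expectation replaces $f$ by $\esf(f)$ in the last integral, and \eqref{fifi} gives $\esf(f) = (\esf(f)\circ\phi^{-1})\circ\phi$ a.e.\ $[\mu_w]$; combining this with $\chi_\varDelta\circ\phi$ and invoking the change-of-variables formula \eqref{l2} applied to the $\ascr$-measurable $\rbop$-valued function $\chi_\varDelta\cdot(\esf(f)\circ\phi^{-1})$ yields
\begin{align*}
\int_{\phi^{-1}(\varDelta)} f \D\mu_w = \int_X \big(\chi_\varDelta \cdot (\esf(f)\circ\phi^{-1})\big)\circ\phi \D\mu_w = \int_\varDelta (\esf(f)\circ\phi^{-1}) \cdot \hsf \D\mu.
\end{align*}
Since this holds for every $\varDelta \in \ascr$, the uniqueness part of the Radon--Nikodym theorem gives the asserted formula.

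I do not anticipate a genuine obstacle. The only points that require care are the bookkeeping with the conventions $0\cdot\infty = 0$ and $\frac{1}{0}=\infty$ (needed precisely to see $f\cdot|w|^2 = \chi_{\{w\neq 0\}}$ everywhere, not just on $\{w\neq 0\}$), and the verification that $\esf(f)\circ\phi^{-1}$ is well defined and that relation \eqref{fifi} is applicable to this particular $f$ — all of which are guaranteed by Propositions \ref{lemS2} and \ref{lemS4} together with the hypothesis $\hsf < \infty$ a.e.\ $[\mu]$.
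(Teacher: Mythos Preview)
Your proof is correct and follows essentially the same route as the paper's: express $\mu^w(\phi^{-1}(\varDelta))$ as $\int_{\phi^{-1}(\varDelta)} f \D\mu_w$ with $f=\chi_{\{w\neq 0\}}\cdot\frac{1}{|w|^2}$, replace $f$ by $\esf(f)$ via \eqref{bjjs1}, and then pass to an integral over $\varDelta$ against $\hsf\D\mu$ using \eqref{l2} and \eqref{fifi}. Your version is simply more explicit about the bookkeeping (the conventions on $0\cdot\infty$, the availability of $\esf$ and of $\circ\,\phi^{-1}$ via Propositions \ref{lemS2} and \ref{lemS4}), which the paper leaves implicit.
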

   \begin{proof}   Since
   \begin{align*}
\mu^w(\phi^{-1}(\varDelta)) & =
\int_{\phi^{-1}(\varDelta)} \chi_{\{w \neq 0\}} \cdot
\frac{1}{|w|^2} \D \mu_w \overset{\eqref{wazny}}=
\int_{\phi^{-1}(\varDelta)} \efw\Big(\chi_{\{w \neq
0\}} \cdot \frac{1}{|w|^2}\Big) \D \mu_w
   \\
& \overset{\eqref{l2} \& \eqref{fifi}}=
\int_{\varDelta} \hfw \cdot \efw\Big(\chi_{\{w \neq
0\}} \cdot \frac{1}{|w|^2}\Big)\circ \phi^{-1} \D \mu,
\quad \varDelta \in \ascr,
   \end{align*}
the proof is complete.
   \end{proof}
   \section{\label{Sec2.5}Adjoint and polar decomposition}
An unexplicit description of the adjoint of a weighted
composition operator has been given in \cite[Lemma
6.4]{ca-hor}. Below, we provide another one which is
complete and written in terms of the conditional
expectation $\efw$.

First, we single out the following fact.
   \begin{align} \label{fw}
   \begin{minipage}{70ex}
{\em If $(X,\ascr,\mu)$ is a measure space and
$w\colon X \to \cbb$ is $\ascr$-measurable, then the
mapping $\varPsi_w \colon L^2(\mu)\ni f \longmapsto
f_w \in L^2(\mu_w)$, where $f_w := \chi_{\{w \neq 0\}}
\cdot \frac{f}{w}$ for $f \in L^2(\mu)$, is a partial
isometry with the initial space $\chi_{\{w \neq 0\}}
\cdot L^2(\mu)$ and the final space $L^2(\mu)$;
therefore, $\varPsi_w $ is a coisometry.}
   \end{minipage}
   \end{align}
Hence the mapping $L^2(\mu) \ni f \longmapsto
\efw(f_w) \in L^2(\mu_w)$, which appears frequently in
this section, coincides with the product $\efw
\varPsi_w$, where $\efw$ is understood as a bounded
operator on $L^2(\mu_w)$ (in fact, $\efw$ is an
orthogonal projection in $L^2(\mu_w)$; see Theorem
\ref{Ath}). Clearly, this product is a bounded
operator.
   \begin{pro}\label{adj}
Suppose \eqref{stand2} holds and $\cfw$ is densely
defined. Then the following assertions are valid{\em
:}
   \begin{enumerate}
   \item[(i)]
$\dz{\cfw^*} = \big\{f \in L^2(\mu)\colon \hfw \cdot
\efw(f_w) \circ \phi^{-1} \in L^2(\mu)\big\}$,
   \item[(ii)]
$\cfw^*(f) = \hfw \cdot \efw(f_w) \circ \phi^{-1}$ for
all $f \in \dz{\cfw^*}$,
   \item[(iii)]
$\jd{\cfw^*} = \big\{f \in L^2(\mu)\colon \efw(f_w)=0
\text{ a.e.\ $[\mu_w]$}\big\}$,
   \item[(iv)]
$\chi_{\{w=0\}} \cdot L^2(\mu) \subseteq \jd{\cfw^*}$,
   \item[(v)]
$\dz{\cfw^*} = \chi_{\{w\neq 0\}} \cdot \dz{\cfw^*}
\oplus \chi_{\{w=0\}} \cdot L^2(\mu)$ and $\cfw^* f =
\cfw^* (\chi_{\{w\neq 0\}} \cdot f)$ for all $f\in
\dz{\cfw^*}$,
   \item[(vi)] if $\cfw$ has dense range, then
$w \neq 0$ a.e.\ $[\mu]$.
   \end{enumerate}
   \end{pro}
   \begin{proof}
It follows from \eqref{fw} that $\efw(f_w) \in
L^2(\mu_w)$ for every $f \in L^2(\mu)$. In turn, if $g
\in \dz{\cfw}$, then by \eqref{l2} and Proposition~
\ref{lemS1}(i), we get $g\circ \phi \in L^2(\mu_w)$.
This, \eqref{wazny}, \eqref{B9}, \eqref{fifi} and
\eqref{l2} yield
   \begin{align} \notag
\is{\cfw g}{f} & = \int_X g\circ \phi \cdot
\overline{f_w} \D \mu_w
   \\   \notag
& = \int_X g\circ \phi \cdot \overline{\efw(f_w)} \D
\mu_w
   \\  \label{dae1}
& = \int_X g \cdot \hfw \cdot \overline{\efw(f_w)\circ
\phi^{-1}} \D \mu, \quad g \in \dz{\cfw}, \quad f \in
L^2(\mu).
   \end{align}
Denote by $\escr$ the right-hand side of (i). Clearly,
if $f \in \escr$, then, by \eqref{dae1}, $f \in
\dz{\cfw^*}$ and (ii) holds. To complete the proof of
(i) and (ii), it suffices to show that if $f \in
\dz{\cfw^*}$, then $\xi:=\hfw \cdot
\overline{\efw(f_w)\circ \phi^{-1}} \in L^2(\mu)$. By
\eqref{dae1}, $g \cdot \xi \in L^1(\mu)$ and $\int_X
g\cdot \xi \D \mu = \int_X g \cdot \tilde \xi \D \mu$
for every $g \in \dz{\cfw}$, where $\tilde \xi :=
\cfw^* f \in L^2(\mu)$. Let $\{X_n\}_{n=1}^\infty$ be
as in Lemma \ref{aproks}. Considering
$g=\chi_{\varDelta\cap X_n}$, $\varDelta\in \ascr$,
and applying Lemma \ref{f=g} we get $\xi=\tilde\xi$
a.e.\ $[\mu]$ on $X_n$ for every $n\in \nbb$. Hence
$\xi = \tilde\xi$ a.e.\ $[\mu]$, which completes the
proof of (i) and (ii). Now we prove (iii). If $f \in
\jd{\cfw^*}$, then, by (ii), $\hfw \cdot \efw(f_w)
\circ \phi^{-1} = 0$ a.e.\ $[\mu]$. Since, by
definition, $\efw(f_w) \circ \phi^{-1}=0$ a.e.\
$[\mu]$ on $\{\hfw = 0\}$, we deduce that $\efw(f_w)
\circ \phi^{-1}=0$ a.e.\ $[\mu]$. This together with
Lemma \ref{nuklear} and the equality \eqref{fifi}
yield $\efw(f_w)=0$ a.e.\ $[\mu_w]$. Conversely, if
$f\in L^2(\mu)$ is such that $\efw(f_w)=0$ a.e.\
$[\mu_w]$, then, by definition, $\efw(f_w) \circ
\phi^{-1}=0$ a.e.\ $[\mu]$, which according to (i) and
(ii) gives $f \in \jd{\cfw^*}$. This completes the
proof of (iii).

The assertion (iv) is a direct consequence of (iii),
while the assertion (v) follows from (i), (ii) and
(iv). Finally, the assertion (vi) is to be deduced
from (iv).
   \end{proof}
The assertion (iii) of Proposition \ref{adj} says that
the kernel of $\cfw^*$ coincides with the kernel of
the product $\efw \varPsi_w$. By the assertion (iv) of
Proposition \ref{adj}, the range of the multiplication
operator $M_{\chi_{\{w=0\}}}$ is always contained in
the kernel of $\cfw^*$ and it is a (closed) invariant
vector space for $\cfw^*$. Observe also that in
general the implication (vi) of Proposition \ref{adj}
can not be reversed even for bounded composition
operators (see Example \ref{byby}).

Now we describe the polar decompositions of $\cfw$ and
$\cfw^*$.
   \begin{pro}\label{polar}
Suppose \eqref{stand2} holds and $\cfw$ is densely
defined. Let $\cfw = U|\cfw|$ be the polar
decomposition of $\cfw$. Then the following assertions
are valid{\em :}
   \begin{enumerate}
   \item[(i)] $|\cfw| = M_{\hfw^{1/2}}$ $($see
Sect.\ {\em \ref{pco}(a)} for notation$)$,
   \item[(ii)] $U = \cfww$, where $\widetilde w\colon X \to \cbb$
is an $\ascr$-measurable function such
that\footnote{\;Because of Lemma \ref{lemS6} and
Proposition \ref{lemS2}, the rational function
appearing on the right-hand side of the equality in
\eqref{dofood} takes complex values a.e.\ $[\mu]$.
What is important here is that $\widetilde w$
satisfies the equality $\{\widetilde w= 0\} = \{w=0\}$
a.e. $[\mu]$.}
   \begin{align} \label{dofood}
\widetilde w = \frac{w}{(\hfw \circ \phi)^{1/2}}
\text{ a.e.\ $[\mu]$,}
   \end{align}
   \item[(iii)] $U^* f = \hfw^{1/2} \cdot \efw(f_w)
\circ \phi^{-1}$ for $f \in L^2(\mu)$, where $f_w$ is as in \eqref{fw},
   \item[(iv)] the modulus $|\cfw^*|$ is given by
   \begin{align}\label{dzmc*} &
   \left.\begin{aligned} \dz{|\cfw^*|} & = \{f\in
   L^2(\mu)\colon w \cdot (\hfw \circ \phi)^{1/2}
   \cdot \efw(f_w) \in L^2(\mu)\},
   \\ & = \{f\in L^2(\mu) \colon \hfw \cdot \efw(f_w)  \circ
\phi^{-1} \in L^2(\mu)\},
   \end{aligned} \right\}
   \\  \label{pmc*}
&\hspace{3.4ex} |\cfw^*| f = w \cdot (\hfw \circ
\phi)^{1/2} \cdot \efw(f_w), \quad f \in
\dz{|\cfw^*|}.
   \end{align}
   \end{enumerate}
   \end{pro}
   \begin{proof}
First note that, by Proposition \ref{wco1}, $\hfw <
\infty$ a.e.\ $[\mu]$.

(i) By the well-known properties of multiplication
operators, $M_{\hfw^{1/2}}$ is positive and
selfadjoint. So is the operator $|\cfw|$. By
Proposition \ref{lemS1}(i), $\dz{|\cfw|} =
\dz{M_{\hfw^{1/2}}}$ and thus
   \begin{align*}
\||\cfw|f\|^2 = \|\cfw f\|^2 = \int_X |f|^2 \circ \phi
\D \mu_w \overset{\eqref{l2}} = \int_X |f|^2 \hfw \,
\D \mu = \|M_{\hfw^{1/2}} f\|^2
   \end{align*}
for every $f \in \dz{|\cfw|}$. It follows from Lemma
\ref{useful} that $|\cfw| = M_{\hfw^{1/2}}$.

(ii) By Lemma \ref{lemS6}(ii), we have
   \begin{align} \label{isaknox}
\int_X \Big|w \cdot \frac{f\circ \phi}{(\hfw \circ
\phi)^{1/2}}\Big|^2 \D \mu = \int_{\{\hfw > 0\}} |f|^2
\D \mu, \quad f\in L^2(\mu),
   \end{align}
which implies that the operator $\cfww$ is
well-defined and $\cfww \in \ogr{L^2(\mu)}$. According
to \eqref{isaknox} and Lemma \ref{jadro}, we see that
$\jd{\cfww} = \jd{\cfw} = \chi_{\{\hfw > 0\}}
L^2(\mu)$ and $\cfww|_{L^2(\mu) \ominus \jd{\cfww}}$
is an isometry. This means that $\cfww$ is a partial
isometry. It follows from (i) that $\cfw = \cfww
|\cfw|$. By the uniqueness statement in the polar
decomposition theorem, $U=\cfww$, which yields (ii).

(iii) Clearly, $\D \mu_{\widetilde w} = \frac{1}{\hfw
\circ \phi} \D \mu_w$, which means that the measures
$\mu_{\widetilde w}$ and $\mu_w$ are mutually
absolutely continuous and thus $\mu_{\widetilde
w}\circ \phi^{-1} \ll \mu$. By Lemma \ref{lemS6}(ii),
we have
   \begin{align} \label{mtil}
(\mu_{\widetilde w}\circ \phi^{-1})(\varDelta) =
\int_X \frac{\chi_{\varDelta}\circ \phi}{\hfw \circ
\phi} \D\mu_w = \int_{\varDelta} \chi_{\{\hfw
> 0\}} \D \mu, \quad \varDelta \in \ascr,
   \end{align}
which implies that $\hfww = \chi_{\{\hfw > 0\}}$ a.e.\
$[\mu]$.

Now we show that
   \begin{align} \label{nakola}
\efww(f_{\widetilde w}) = \hfw^{1/2} \circ \phi \cdot
\efw(f_w) \text{ a.e.\ $[\mu_{\widetilde w}]$,} \quad
f \in L^2(\mu).
   \end{align}
For this, define $q_{\varDelta} = \chi_{\{\hfw > 0\}}
\cdot \frac{\chi_{\varDelta}}{\hfw^{1/2}}$ a.e.\
$[\mu]$ for $\varDelta \in \ascr$. Then, by Lemma
\ref{lemS6}(i) and Lemma \ref{nuklear}, we have
   \begin{align} \label{cg}
q_{\varDelta} \circ \phi = \frac{\chi_{\varDelta}
\circ \phi}{(\hfw\circ \phi)^{1/2}} \text{ a.e.\
$[\mu_w]$,} \quad \varDelta \in \ascr.
   \end{align}
Take $f \in L^2(\mu)$. Let $\varDelta \in \ascr$ be such that
$\mu_{\widetilde w}(\phi^{-1}(\varDelta)) < \infty$. Then
   \begin{align*}
\int_X |q_{\varDelta} \circ \phi|^2 \D \mu_w
\overset{\eqref{hle2}}= \int_{\{\hfw > 0\}}
\chi_{\varDelta} \D \mu \overset{\eqref{mtil}}=
\mu_{\widetilde w}(\phi^{-1}(\varDelta)) < \infty.
   \end{align*}
This combined with \eqref{wazny}, \eqref{fw} and \eqref{cg} yields
   \begin{align*}
\int_{\phi^{-1}(\varDelta)} \efww(f_{\widetilde w}) \D
\mu_{\widetilde w} &= \int_{X} q_{\varDelta} \circ
\phi \cdot f_w \D \mu_w
   \\
&= \int_{X} q_{\varDelta} \circ \phi \cdot \efw(f_w)
\D \mu_w
   \\
&= \int_{\phi^{-1}(\varDelta)} \hfw^{1/2}\circ \phi
\cdot \efw(f_w) \D \mu_{\widetilde w}.
   \end{align*}
Applying Lemma \ref{f=g} to the measure $\mu_{\widetilde
w}|_{\phi^{-1}(\ascr)}$ gives \eqref{nakola}.

Since the measures $\mu_{\widetilde w}$ and $\mu_w$ are mutually
absolutely continuous, we infer from \eqref{nakola} and Proposition
\ref{lemS4} that
   \begin{align*}
\efww(f_{\widetilde w}) \circ \phi^{-1} = \hfw^{1/2}
\cdot \efw(f_w)\circ \phi^{-1} \text{ a.e.\ $[\mu]$,}
\quad f \in L^2(\mu).
   \end{align*}
This together with Proposition \ref{adj}, applied to $\cfww$, yields
(iii).

(iv) It follows from \cite[Exercise 7.26(b)]{Weid})
that $|\cfw^*| = \cfw U^*$. In view of Proposition
\ref{lemS1}(i), $f\in L^2(\mu)$ belongs to
$\dz{|\cfw^*|}$ if and only if $U^*f \in L^2(\hfw \D
\mu)$. Since, by (iii), \eqref{l2} and \eqref{fifi},
the following equalities hold
   \begin{align*}
\int_X |U^*f|^2 \cdot \hfw \D \mu & = \int_X \hfw^2
\cdot |\efw(f_w) \circ \phi^{-1}|^2 \D \mu
   \\
& = \int_X \hfw \circ \phi \cdot |\efw(f_w)|^2 \D
\mu_w, \quad f\in L^2(\mu),
   \end{align*}
we get \eqref{dzmc*}. The formula \eqref{pmc*} follows
from the equality $|\cfw^*| = \cfw U^*$, the condition
(iii) and the equality \eqref{fifi}. This completes
the proof.
   \end{proof}
   \begin{rem} \label{adj-uw}
Regarding Proposition \ref{polar}, note that
$\efww(f)=\efw(f)$ a.e.\ $[\mu_{w}]$ for every
$\ascr$-measurable function $f\colon X \to \rbop$.
Indeed, this is because
   \begin{align*}
\int_{\phi^{-1}(\varDelta)} f \D \mu_{\widetilde w} &
\overset{\eqref{cg}}= \int_X (q_{\varDelta}\circ \phi)^2
\cdot f \D \mu_w
\\
&\hspace{.7ex}= \int_X (q_{\varDelta}\circ \phi)^2
\cdot \efw(f) \D \mu_w \overset{\eqref{cg}}=
\int_{\phi^{-1}(\varDelta)} \efw(f) \D \mu_{\widetilde
w}, \quad \varDelta \in \ascr.
   \end{align*}
   \end{rem}
Using Proposition \ref{polar}(i) and the well-known
fact that a Hilbert space operator $V\in \ogr{\hh}$ is
an isometry if and only if $|V|=I$, we obtain the
following characterization of isometric weighted
composition operators.
   \begin{align} \label{chisom}
   \begin{minipage}{70ex}
{\em If \eqref{stand1} holds, then $\cfw$ is an
isometry on $L^2(\mu)$ if and only if $\hfw=1$ a.e.\
$[\mu]$.}
   \end{minipage}
   \end{align}
   \section{A characterization of quasinormality}
In this section, we characterize quasinormal weighted
composition operators. This basic characterization
will be used several times in subsequent chapters.
   \begin{thm} \label{quain}
If \eqref{stand2} holds and $\cfw$ is densely defined,
then $\cfw$ is quasinormal if and only if $\hfw \circ
\phi = \hfw$ a.e.\ $[\mu_w]$.
   \end{thm}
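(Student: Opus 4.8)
The plan is to combine the definition of quasinormality with the explicit polar decomposition of $\cfw$ supplied by Proposition~\ref{polar}. Since $\cfw$ is densely defined it is closed (Proposition~\ref{lemS1}(iii)) and $\hsf<\infty$ a.e.\ $[\mu]$ (Proposition~\ref{lemS2}), so Proposition~\ref{polar} applies: writing $\cfw=U|\cfw|$ for the polar decomposition, $|\cfw|=M_{\hsf^{1/2}}$ and $U=\cfww$ with $\widetilde w=w\cdot(\hsf\circ\phi)^{-1/2}$ a.e.\ $[\mu]$, and $\cfww\in\ogr{L^2(\mu)}$. Because $U|\cfw|=\cfw$ and $|\cfw|U=M_{\hsf^{1/2}}\cfww$, quasinormality of $\cfw$ (i.e.\ $U|\cfw|\subseteq|\cfw|U$) is equivalent to $\cfw\subseteq M_{\hsf^{1/2}}\cfww$. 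The identity I would build everything on is
\[
M_{\hsf^{1/2}}\cfww f \;=\; \hsf^{1/2}\,\widetilde w\cdot(f\circ\phi)\;=\;\frac{\hsf^{1/2}}{(\hsf\circ\phi)^{1/2}}\,w\cdot(f\circ\phi)\quad\text{a.e.\ }[\mu],
\]
valid for $\ascr$-measurable $f\colon X\to\cbb$ (with both sides vanishing on $\{w=0\}$); the middle expression is legitimate a.e.\ $[\mu]$ since $\hsf<\infty$ a.e.\ $[\mu]$ and, by Lemma~\ref{lemS6}(i), $\hsf\circ\phi>0$ a.e.\ $[\mu_w]$, and one has $\dz{M_{\hsf^{1/2}}\cfww}=\{f\in L^2(\mu)\colon\hsf^{1/2}\widetilde w\cdot(f\circ\phi)\in L^2(\mu)\}$.

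For the ``if'' direction I would assume $\hsf\circ\phi=\hsf$ a.e.\ $[\mu_w]$. Then a.e.\ $[\mu]$ on $\{w\neq0\}$ one has $0<\hsf=\hsf\circ\phi<\infty$ (strict positivity being forced by Lemma~\ref{lemS6}(i)), hence $\hsf^{1/2}/(\hsf\circ\phi)^{1/2}=1$ there, so $\hsf^{1/2}\widetilde w=w$ a.e.\ $[\mu]$. Feeding this into the displayed identity and the domain description gives $\dz{M_{\hsf^{1/2}}\cfww}=\{f\in L^2(\mu)\colon w\cdot(f\circ\phi)\in L^2(\mu)\}=\dz{\cfw}$ and $M_{\hsf^{1/2}}\cfww f=w\cdot(f\circ\phi)=\cfw f$ for every $f\in\dz{\cfw}$; thus $M_{\hsf^{1/2}}\cfww=\cfw$, in particular $\cfw\subseteq M_{\hsf^{1/2}}\cfww$, and $\cfw$ is quasinormal.

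For the ``only if'' direction I would assume $\cfw\subseteq M_{\hsf^{1/2}}\cfww$. Then for every $f\in\dz{\cfw}$ the displayed identity gives $w\cdot(f\circ\phi)=\cfw f=M_{\hsf^{1/2}}\cfww f=\frac{\hsf^{1/2}}{(\hsf\circ\phi)^{1/2}}\,w\cdot(f\circ\phi)$ a.e.\ $[\mu]$, that is,
\[
\Bigl(1-\frac{\hsf^{1/2}}{(\hsf\circ\phi)^{1/2}}\Bigr)\,w\cdot(f\circ\phi)=0\quad\text{a.e.\ }[\mu].
\]
Taking $f=\chi_{X_n}$ with $\{X_n\}_{n=1}^\infty$ as in \eqref{aproks} (so $\chi_{X_n}\in\dz{\cfw}$ by Proposition~\ref{lemS1}(i)) and letting $n\to\infty$, the relation $\phi^{-1}(X_n)\nearrow\phi^{-1}(X)=X$ yields $(1-\hsf^{1/2}/(\hsf\circ\phi)^{1/2})\,w=0$ a.e.\ $[\mu]$, whence $\hsf^{1/2}=(\hsf\circ\phi)^{1/2}$ a.e.\ $[\mu]$ on $\{w\neq0\}$, i.e.\ $\hsf\circ\phi=\hsf$ a.e.\ $[\mu_w]$. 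The one point requiring genuine care here is the bookkeeping of the $\mu$-null subsets of $\{w\neq0\}$ on which $\hsf\circ\phi$ could a priori be $0$ or $\infty$ --- excluded respectively by Lemma~\ref{lemS6}(i) and by $\hsf<\infty$ a.e.\ $[\mu]$ (which forces $\hsf\circ\phi<\infty$ a.e.\ $[\mu_w]$, as $\mu_w(\{\hsf\circ\phi=\infty\})=\int_{\{\hsf=\infty\}}\hsf\D\mu=0$ by \eqref{l1}) --- together with, in the ``if'' direction, the check that $M_{\hsf^{1/2}}\cfww$ and $\cfw$ coincide as operators (equal domains), not merely formally, which is where Proposition~\ref{lemS1}(i) enters. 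An alternative route uses the criterion $\cfw|\cfw|^2=|\cfw|^2\cfw$ from \eqref{QQQ} with $|\cfw|^2=M_{\hsf}$, trading this bookkeeping for a comparison of $\dz{\cfw M_{\hsf}}$ with $\dz{M_{\hsf}\cfw}$.
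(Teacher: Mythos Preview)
Your argument is correct and complete. The route differs from the paper's: the paper uses the characterization \eqref{QQQ}, namely $\cfw|\cfw|^2=|\cfw|^2\cfw$ with $|\cfw|^2=M_{\hsf}$, and establishes separately that $\dz{\cfw M_{\hsf}}=L^2((1+\hsf^3)\D\mu)$ always and $\dz{M_{\hsf}\cfw}=L^2((1+\hsf^3)\D\mu)$ under the hypothesis $\hsf\circ\phi=\hsf$ a.e.\ $[\mu_w]$, then compares the two operators on $\chi_{X_n}$ exactly as you do. You instead work directly with the defining inclusion $U|\cfw|\subseteq|\cfw|U$ and the factor $\hsf^{1/2}\widetilde w$. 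What this buys you is that $|\cfw|U=M_{\hsf^{1/2}}\cfww$ has a trivially described domain (since $\cfww$ is bounded), so in the ``if'' direction you get the equality $M_{\hsf^{1/2}}\cfww=\cfw$ in one line once $\hsf^{1/2}\widetilde w=w$ is observed, without any separate domain computation; the price is the bookkeeping about $\hsf\circ\phi$ being a.e.\ finite and positive on $\{w\neq0\}$, which you handle correctly. The paper's route avoids ratios of square roots altogether but pays with two explicit domain computations. Your closing remark that the two approaches are interchangeable, trading one piece of bookkeeping for another, is exactly right.
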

   \begin{proof}
It follows from Proposition \ref{polar}(i) that
$|\cfw|^2 = M_{\hfw}$. We claim that
   \begin{align} \label{cl1}
\dz{\cfw |\cfw|^2} = L^2((1+ \hfw^3) \D \mu).
   \end{align}
Indeed, if $f \in \dz{\cfw |\cfw|^2}$, then $f \in
\dz{|\cfw|^2} = L^2((1+\hfw^2) \D \mu)$ and, by
Proposition \ref{lemS1}(i),
   \begin{align*}
\int_X |f|^2 \hfw^3 \D \mu = \int_X |M_{\hfw} f|^2
\hfw \D\mu < \infty,
   \end{align*}
which yields $f \in L^2((1+\hfw^2 + \hfw^3) \D \mu) =
L^2((1+\hfw^3) \D \mu)$. Reversing the above reasoning
proves \eqref{cl1}.

Suppose $\cfw$ is quasinormal. By Proposition
\ref{lemS2}, $\hfw < \infty$ a.e.\ $[\mu]$. Let
$\{X_n\}_{n=1}^\infty$ be as in Lemma \ref{aproks}. In
view of \eqref{cl1}, $\{\chi_{X_n}\}_{n=1}^\infty
\subseteq \dz{\cfw |\cfw|^2}$. Hence
   \begin{align} \notag
w \cdot (\hfw \circ \phi) \cdot (\chi_{X_n} \circ
\phi) & = \cfw |\cfw|^2 \chi_{X_n}
\overset{\eqref{QQQ}}= |\cfw|^2\cfw \chi_{X_n}
   \\  \label{quaZ}
&= w \cdot \hfw \cdot (\chi_{X_n} \circ \phi) \text{
a.e.\ $[\mu]$}, \quad n\in \nbb.
   \end{align}
Since $\phi^{-1}(X_n) \nearrow X$ as $n\to \infty$, we
see that $w\cdot(\hfw\circ \phi) = w \cdot \hfw$ a.e.\
$[\mu]$, or equivalently that $\hfw \circ \phi = \hfw$
a.e.\ $[\mu_w]$.

Assume now that $\hfw \circ \phi = \hfw$ a.e.\
$[\mu_w]$. We claim that
   \begin{align} \label{cl2}
\dz{|\cfw|^2 \cfw} = L^2((1+ \hfw^3) \D \mu).
   \end{align}
Indeed, if $f \in \dz{|\cfw|^2 \cfw}$, then it follows
from Proposition \ref{polar}(i) that $\cfw f \in
L^2((1+ \hfw^2) \D \mu)$ and
   \begin{align*}
\int_X \hfw^3 |f|^2 \D \mu &\overset{\eqref{l2}}=
\int_X (\hfw^2 \circ \phi) \cdot (|f|^2 \circ \phi) \D
\mu_w
   \\&= \int_X \hfw^2 \cdot (|f|^2 \circ \phi) \D
\mu_w = \int_X \hfw^2 |\cfw f|^2 \D \mu < \infty,
   \end{align*}
which implies that $f \in L^2((1+ \hfw^3)\D \mu)$.
Reversing the above reasoning proves \eqref{cl2}.
Combining \eqref{cl1} and \eqref{cl2} shows that
$\dz{\cfw |\cfw|^2} = \dz{|\cfw|^2\cfw}$. An
appropriate modification of \eqref{quaZ} gives $\cfw
|\cfw|^2 = |\cfw|^2\cfw $. Applying \eqref{QQQ}
completes the proof.
   \end{proof}
   \chapter{Subnormality - General Criteria}
The main goal of this chapter is to provide criteria
for subnormality of (not necessarily bounded) weighted
composition operators. The first criterion, which is
given in Section \ref{Sec3.1}, requires that $\hfw >
0$ a.e.\ $[\mu_w]$ and that there exists a measurable
family of Borel probability measures on $\rbb_+$ which
satisfies the consistency condition \eqref{cc} (see
Theorem \ref{MAIN1}). Section \ref{sectcc-1} provides
the second criterion which involves another, stronger
than \eqref{cc}, condition \eqref{cc-1mu} (see Theorem
\ref{main2}). In Section \ref{sec2.4}, we discuss the
interplay between the conditions \eqref{cc} and
\eqref{cc-1mu} (see Theorem \ref{MAIN2}). Section
\ref{Sec3.2} shows that the consistency condition
\eqref{cc} itself is not sufficient for subnormality.
By Theorem \ref{main2}, this means that \eqref{cc}
does not imply \eqref{cc-1mu}.
   \section{\label{Sec3.1}General scheme}
Let $(X, \ascr)$ be a measurable space. A mapping
$P\colon X \times \borel{\rbb_+} \to [0,1]$ is called
an {\em $\ascr$-measurable family of probability
measures} if the set-function $P(x,\cdot)$ is a
probability measure for every $x \in X$ and the
function $P(\cdot,\sigma)$ is $\ascr$-measurable for
every $\sigma \in \borel{\rbb_+}$. Denote by $\atb$
the $\sigma$-algebra generated by the family
   \begin{align*}
\text{$\ascr \boxtimes \borel{\rbb_+}:=\{\varDelta \times \sigma\colon
\varDelta \in \ascr, \, \sigma \in \borel{\rbb_+}\}$.}
   \end{align*}
If $\mu\colon \ascr \to \rbop$ is a $\sigma$-finite
measure, then, by \cite[Theorem 2.6.2]{Ash}, there
exists a unique $\sigma$-finite measure $\rho$ on
$\atb$ such that
   \begin{align} \label{rhoabs}
\rho(\varDelta \times \sigma) = \int_{\varDelta} P(x,\sigma) \mu(\D x),
\quad \varDelta\in \ascr, \sigma \in \borel{\rbb_+}.
   \end{align}
Moreover, for every $\atb$-measurable function
$f\colon X \times \rbb_+ \to \rbop$,
   \begin{gather} \label{pme}
\text{the function $X \ni x \to \int_0^\infty f (x,t) P(x,\D t) \in \rbop$
is $\ascr$-measurable}
   \end{gather}
and
   \begin{gather}\label{intrho}
\int_{X\times \rbb_+} f \D\rho= \int_X\int_0^\infty f (x,t) P(x,\D t)
\mu(\D x).
   \end{gather}
Clearly the measure $\rho$ depends on $P$, but since
we do not exploit this fact, we will not make this
dependence explicit. Let $w\colon X \to \cbb$ be an
$\ascr$-measurable function and $\phi$ be an
$\ascr$-measurable transformation of $X$. Define the
function $W\colon X \times \rbb_+ \to \cbb$ and the
transformation $\varPhi$ of $X\times \rbb_+$ by
   \begin{align}  \label{W}
 W(x,t) & = w(x), \quad \hspace{3.3ex}x \in X, \, t \in \rbb_+,
   \\ \label{vaPhi}
\varPhi(x,t) & = (\phi(x),t), \quad x \in X, t \in \rbb_+.
   \end{align}
It is easily seen that $W$ and $\varPhi$ are $\atb$-measurable. According
to our convention, the measure $\rho_W$ is defined as follows
   \begin{align*}
\rho_{W}(E) = \int_E |W|^2 \D\rho \overset{
\eqref{intrho}}= \int_X \int_0^\infty \chi_E(x,t)
P(x,\D t) \D \mu_w(x), \quad E \in \atb.
   \end{align*}
In what follows, we regard $\CFW$ as a weighted composition operator in
$L^2(\rho)$. There is a natural way of looking at $L^2(\mu)$ as a subspace
of $L^2(\rho)$. Namely, by \eqref{intrho}, the mapping $U\colon L^2(\mu)
\to L^2(\rho)$ given by
   \begin{align*}
(Uf)(x,t)=f(x), \quad x \in X, \, t \in \rbb_+, \, f \in L^2(\mu),
   \end{align*}
is well-defined, linear and isometric. Moreover, if $\CFW$ is
well-defined, then, combining Proposition \ref{wco1}, Lemma \ref{lemS8}
and \eqref{intrho}, we deduce that $\cfw$ is well-defined~ and
   \begin{align} \label{ucu}
U\cfw = \CFW U.
   \end{align}

In order to make the paper more readable, we gather
the following assumptions.
   \begin{align} \label{stand3}  \tag{$\mathrm{AS3}$}
   \begin{minipage}{69ex} The triplet $(X,\ascr,\mu)$
is a $\sigma$-finite measure space, $w$ is an
$\ascr$-meas\-urable complex function on $X$, $\phi$
is an $\ascr$-measurable transformation of $X$ and
$P\colon X \times \borel{\rbb_+} \to [0,1]$ is an
$\ascr$-measurable family of probability measures. The
measure $\rho$, the function $W$ and the
transformation $\varPhi$ are determined by
\eqref{rhoabs}, \eqref{W} and \eqref{vaPhi},
respectively.
   \end{minipage}
   \end{align}
We begin by proving a formula that connects $\hfw$
with $\HFW$ via $\efw$, where $\HFW \colon X\times
\rbb_+\to \rbop$ is a unique (up to a set of
$\rho$-measure zero) $\atb$-measurable function such
that
   \begin{align*}
\rho_W \circ \varPhi^{-1}(\varDelta) =
\int_{\varDelta} \HFW \D \rho, \quad E \in \atb.
   \end{align*}
Since $\rho$ is $\sigma$-finite, such an $\HFW$ exists
due to the Radon-Nikodym theorem.
   \begin{cau}
The reader should be aware of the fact that both the
measure $\rho$ and the Radon-Nikodym derivative $\HFW$
depend on $P$. For notational convenience this
dependence will not be expressed explicitly.
   \end{cau}
   \begin{lem} \label{lemS8}
Suppose \eqref{stand3} holds and $\rho_W \circ
\varPhi^{-1} \ll \rho$. Then $\mu_w \circ \phi^{-1}
\ll \mu$. Moreover, if $\hfw < \infty$ a.e.\ $[\mu]$,
then $\HFW < \infty$ a.e.\ $[\rho]$ and
   \begin{multline}
\Big(\efw(P(\cdot,\sigma)) \circ \phi^{-1}\Big)(x)
\cdot \hfw(x)
   \\ \label{for1}
= \int_{\sigma} \HFW(x,t) P(x,\D t) \text{ for
$\mu$-a.e.\ } x \in X, \quad \sigma \in
\borel{\rbb_+}.
   \end{multline}
   \end{lem}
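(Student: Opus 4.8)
The plan is to first transfer the absolute continuity statement from the product space down to $X$, and then to identify the Radon--Nikodym derivative $\HSF$ on the product space using the explicit formula \eqref{rhoabs}--\eqref{intrho} for $\rho$ together with the defining property \eqref{l1} of $\hsf$. For the first claim, I would test $\rho_W \circ \varPhi^{-1} \ll \rho$ against sets of the form $\varDelta \times \rbb_+$ with $\mu(\varDelta)=0$: since $\varPhi^{-1}(\varDelta\times\rbb_+) = \phi^{-1}(\varDelta)\times\rbb_+$ and $\rho_W(\phi^{-1}(\varDelta)\times\rbb_+) = \mu_w(\phi^{-1}(\varDelta))$ by \eqref{intrho} (as $P(x,\cdot)$ is a probability measure), while $\rho(\varDelta\times\rbb_+)=\mu(\varDelta)=0$, the hypothesis forces $\mu_w(\phi^{-1}(\varDelta))=0$. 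Hence $\mu_w \circ \phi^{-1}\ll\mu$, and Proposition \ref{wco1} makes $\cfw$ well-defined.

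Next, assuming $\hsf<\infty$ a.e.\ $[\mu]$, I would show $\HSF<\infty$ a.e.\ $[\rho]$ and pin down its values. The natural approach is to compute $\rho_W \circ \varPhi^{-1}(\varDelta\times\sigma)$ in two ways. On one hand, by \eqref{intrho} applied to $\chi_{\varPhi^{-1}(\varDelta\times\sigma)}(x,t) = \chi_{\phi^{-1}(\varDelta)}(x)\,\chi_\sigma(t)$, this equals $\int_X \chi_{\phi^{-1}(\varDelta)}(x)\bigl(\int_\sigma P(x,\D t)\bigr)\D\mu_w(x) = \int_{\phi^{-1}(\varDelta)} P(\cdot,\sigma)\,\D\mu_w$. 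Using the conditional expectation identity \eqref{bjjs1}, this is $\int_{\phi^{-1}(\varDelta)} \esf(P(\cdot,\sigma))\,\D\mu_w$, and then by \eqref{l2} combined with \eqref{fifi} it becomes $\int_\varDelta \bigl(\esf(P(\cdot,\sigma))\circ\phi^{-1}\bigr)\cdot\hsf\,\D\mu$. On the other hand, by \eqref{l1} applied in the product space (with $\HSF$ the Radon--Nikodym derivative of $\rho_W\circ\varPhi^{-1}$ w.r.t.\ $\rho$) together with \eqref{intrho}, $\rho_W\circ\varPhi^{-1}(\varDelta\times\sigma) = \int_{\varDelta\times\sigma}\HSF\,\D\rho = \int_\varDelta\bigl(\int_\sigma\HSF(x,t)P(x,\D t)\bigr)\D\mu(x)$. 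Equating and using $\sigma$-finiteness of $\mu$ (via Lemma \ref{f=g}, applied for each fixed $\sigma$ and then using a countable generating semi-algebra of $\borel{\rbb_+}$ to get a single null set, or invoking Lemma \ref{2miary}), I obtain \eqref{for1} for $\mu$-a.e.\ $x$, for every $\sigma\in\borel{\rbb_+}$.

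For the finiteness of $\HSF$: taking $\sigma=\rbb_+$ in the identity just derived and using $\esf(\chi_X)=1$ a.e.\ $[\mu_w]$ (so $\esf(P(\cdot,\rbb_+))\circ\phi^{-1} = \chi_{\{\hsf>0\}}$ a.e.\ $[\mu]$ by \eqref{jedynka}), we get $\int_0^\infty\HSF(x,t)P(x,\D t) = \chi_{\{\hsf>0\}}(x)\hsf(x) = \hsf(x) < \infty$ for $\mu$-a.e.\ $x$, hence $\HSF(x,\cdot)<\infty$ $P(x,\cdot)$-a.e.\ for $\mu$-a.e.\ $x$; by \eqref{intrho} this says exactly $\HSF<\infty$ a.e.\ $[\rho]$.

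The main obstacle I anticipate is the measure-theoretic bookkeeping needed to pass from "for each fixed $\sigma$, \eqref{for1} holds off a $\mu$-null set $N_\sigma$" to "there is a single $\mu$-null set off which \eqref{for1} holds for all $\sigma$." The clean way is to restrict attention to a countable semi-algebra generating $\borel{\rbb_+}$ (e.g.\ intervals with rational endpoints together with $\rbb_+$), establish the identity there off the union of the corresponding countably many null sets, and then extend to all Borel $\sigma$ by a monotone-class / Lemma \ref{2miary} argument, noting that both sides of \eqref{for1}, as functions of $\sigma$ for fixed $x$, are finite measures on $\borel{\rbb_+}$ (the left side because $\hsf(x)<\infty$ and $\esf(P(\cdot,\cdot))$ can be taken to be a kernel via the standard regular-conditional-probability construction, the right side manifestly). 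A minor care point is the well-definedness of $\esf(P(\cdot,\sigma))\circ\phi^{-1}$ as guaranteed by Proposition \ref{lemS4}, which is why one works with the a.e.\ $[\mu_w]$ and a.e.\ $[\mu]$ versions consistently.
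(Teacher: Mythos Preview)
Your proposal is correct and follows essentially the same route as the paper: reduce absolute continuity to rectangles $\varDelta\times\rbb_+$, then compute $\rho_W\circ\varPhi^{-1}(\varDelta\times\sigma)$ in two ways (via \eqref{intrho} and via \eqref{bjjs1}, \eqref{l2}, \eqref{fifi}) to identify the two sides of \eqref{for1} after integrating over arbitrary $\varDelta\in\ascr$. The paper's finiteness argument integrates $\HSF$ over the exhausting rectangles $X_n\times\rbb_+$ from \eqref{aproks} rather than invoking \eqref{for1} pointwise with $\sigma=\rbb_+$, but the content is the same.

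One remark: the ``main obstacle'' you anticipate is not actually present. The statement of the lemma, as formulated here and as used throughout the paper (compare \eqref{cc} and \eqref{cc-1}), asserts only that for each fixed $\sigma\in\borel{\rbb_+}$ the identity \eqref{for1} holds off a (possibly $\sigma$-dependent) $\mu$-null set; the paper's proof simply says ``since $\mu$ is $\sigma$-finite, \eqref{for1} holds'' and stops. There is no need for a regular-conditional-probability or monotone-class argument to secure a single exceptional null set.
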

   \begin{proof}
To prove that $\mu_w \circ \phi^{-1} \ll \mu$, take $\varDelta \in \ascr$
such that $\mu(\varDelta)=0$. Then, by \eqref{rhoabs}, $\rho(\varDelta
\times \rbb_+) = 0$. Hence, in view of \eqref{intrho}, we have
   \begin{align*}
\mu_w \circ \phi^{-1}(\varDelta) = \int_{\phi^{-1}(\varDelta)} |w(x)|^2
\int_0^\infty P(x,\D t) \mu(\D x) = \int_{\varPhi^{-1}(\varDelta \times
\rbb_+)} |W|^2 \D \rho = 0.
   \end{align*}

Assume additionally that $\hfw < \infty$ a.e.\
$[\mu]$. If $\varDelta \in \ascr$ and $\sigma \in
\borel{\rbb_+}$, then \allowdisplaybreaks
   \begin{align} \notag
\int_{\varDelta} \int_{\sigma} \HFW(x,t) P(x,\D t)
\mu(\D x) & \overset{\eqref{intrho}}= \int_{\varDelta
\times \sigma} \HFW \D \rho \overset{\eqref{l1}}=
\int_{\varPhi^{-1}(\varDelta \times \sigma)} |W|^2 \D
\rho
   \\  \notag
& \overset{\eqref{intrho}}= \int_{\phi^{-1}(\varDelta)} |w(x)|^2
\int_{\sigma} P(x,\D t) \mu(\D x)
   \\  \notag
& \hspace{.7ex} = \int_{\phi^{-1}(\varDelta)} P(x,\sigma) \mu_w(\D x)
   \\  \notag
& \hspace{0ex} \overset{\eqref{wazny}}=
\int_{\phi^{-1}(\varDelta)} \efw(P(\cdot,\sigma)) \D
\mu_w
\\   \label{hle}
& \hspace{.5ex}\overset{(\dag)}= \int_{\varDelta}
\efw(P(\cdot,\sigma))\circ \phi^{-1} \cdot \hfw \D
\mu,
   \end{align}
where $(\dag)$ follows from \eqref{l2} and \eqref{fifi}. Since $\mu$ is
$\sigma$-finite, \eqref{for1} holds.

It suffices to show that $\HFW < \infty$ a.e.\
$[\rho]$. Let $\{X_n\}_{n=1}^\infty$ be as in Lemma
\ref{aproks}. Then \allowdisplaybreaks
   \begin{align*}
\int_{X_n \times \rbb_+} \HFW \D \rho
&\overset{\eqref{intrho}}= \int_{X_n} \int_{\rbb_+}
\HFW(x,t) P(x,\D t) \mu(\D x)
   \\
&\overset{\eqref{for1}}= \int_{X_n}
\Big(\efw(P(\cdot,\rbb_+)) \circ \phi^{-1}\Big)(x)
\cdot \hfw(x) \mu(\D x)
   \\
&\overset{\eqref{jedynka}}= \int_{X_n} \hfw \D \mu \Le
n \cdot \mu(X_n) < \infty, \quad n \in \nbb,
   \end{align*}
which implies that $\HFW < \infty$ a.e.\ $[\rho]$ on
$X_n \times \rbb_+$ for every $n\in \nbb$, and thus
$\HFW < \infty$ a.e.\ $[\rho]$. This completes the
proof.
   \end{proof}
Below we introduce the {\em consistency condition}
\eqref{cc} which plays the crucial role in this paper.
In the case of composition operators, it coincides
with the consistency condition that appeared in
\cite{b-j-j-sS} (consult Sect.\ \ref{pco}).
   \begin{lem} \label{lemS9}
Suppose \eqref{stand3} holds, $\mu_w \circ \phi^{-1}
\ll \mu$, $\hfw < \infty$ a.e.\ $[\mu]$ and the
following condition is satisfied\/\footnote{\;By Lemma
\ref{lemS6} and \eqref{pme}, the right-hand side of
the equality in \eqref{cc} is an $\ascr$-measurable
function defined a.e.\ $[\mu]$.}{\em :}
   \begin{align} \tag{CC} \label{cc}
\efw(P(\cdot, \sigma)) (x) = \frac{\int_{\sigma} t
P(\phi(x),\D t)}{\hfw(\phi(x))} \; \text{ for
$\mu_w$-a.e.\ $x \in X$}, \quad \sigma \in
\borel{\rbb_+}.
   \end{align}
Then $\rho_W \circ \varPhi^{-1} \ll \rho$ and
   \begin{align*}
\HFW(x,t) = \chi_{\{\hfw > 0\}} (x) \cdot t \; \text{
for $\rho$-a.e.\ $(x,t) \in X \times \rbb_+$.}
   \end{align*}
   \end{lem}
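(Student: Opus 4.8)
The plan is to verify the hypothesis $\rho_W \circ \varPhi^{-1} \ll \rho$ directly from the definitions and then to compute $\HSF$ via the formula \eqref{for1} of Lemma \ref{lemS8}. First I would establish the absolute continuity: take $E \in \atb$ with $\rho(E) = 0$ and show $\rho_W(\varPhi^{-1}(E)) = 0$. By \eqref{intrho} and the definition of $\varPhi$ in \eqref{vaPhi}, we have
   \begin{align*}
\rho_W(\varPhi^{-1}(E)) = \int_X \int_0^\infty \chi_E(\phi(x),t) P(x,\D t) \D \mu_w(x),
   \end{align*}
so it suffices to show that the inner integral vanishes for $\mu_w$-a.e.\ $x$. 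Using \eqref{cc} with a monotone-class argument (replacing $\sigma \mapsto P(\cdot,\sigma)$ by integration of $\atb$-measurable functions of the form $\chi_E$), the inner integral equals, for $\mu_w$-a.e.\ $x$, the quantity $\frac{1}{\hsf(\phi(x))}\int_0^\infty \chi_E(\phi(x),t)\, t\, P(\phi(x),\D t)$, up to handling the set $\{\hsf\circ\phi = 0\}$ which has $\mu_w$-measure zero by Lemma \ref{lemS6}(i). On the other hand $\rho(E) = 0$ forces, via \eqref{intrho}, that $\int_0^\infty \chi_E(x,t) P(x,\D t) = 0$ for $\mu_w$-a.e.\ $x$, hence for $\mu_w$-a.e.\ $x$ also after composing with $\phi$ (again by Lemma \ref{lemS6}(i) / Lemma \ref{nuklear}). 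Multiplying the integrand by the bounded extra factor $t$ inside the relevant region and being slightly careful with $\sigma$-finiteness to reduce to sets of finite measure, this yields the desired conclusion, so $\rho_W \circ \varPhi^{-1} \ll \rho$.

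Once absolute continuity is in place, Lemma \ref{lemS8} applies: $\HSF < \infty$ a.e.\ $[\rho]$ and \eqref{for1} holds. Now substitute \eqref{cc} into the left-hand side of \eqref{for1}. Composing the consistency condition with $\phi^{-1}$ and using \eqref{fifi} together with Lemma \ref{lemS6}(i) (so that $(\hsf\circ\phi)\circ\phi^{-1} = \hsf$ a.e.\ $[\mu]$ on $\{\hsf>0\}$, and $\esf(P(\cdot,\sigma))\circ\phi^{-1}$ vanishes a.e.\ $[\mu]$ on $\{\hsf=0\}$ by the definition of the $\circ\phi^{-1}$ construction), we get, for $\mu$-a.e.\ $x$ and all $\sigma\in\borel{\rbb_+}$,
   \begin{align*}
\Big(\esf(P(\cdot,\sigma))\circ\phi^{-1}\Big)(x)\cdot\hsf(x)
= \chi_{\{\hsf>0\}}(x)\int_\sigma t\, P(x,\D t).
   \end{align*}
Comparing this with \eqref{for1} gives
   \begin{align*}
\int_\sigma \HSF(x,t)\, P(x,\D t) = \int_\sigma \chi_{\{\hsf>0\}}(x)\, t\, P(x,\D t)
   \end{align*}
for $\mu$-a.e.\ $x$ and every $\sigma\in\borel{\rbb_+}$; by varying $\sigma$ over a countable generating semi-algebra of $\borel{\rbb_+}$ (intervals with rational endpoints) and applying Lemma \ref{2miary} or Lemma \ref{f=g} to the finite measure $P(x,\cdot)$, we conclude that $\HSF(x,\cdot) = \chi_{\{\hsf>0\}}(x)\cdot(\cdot)$ a.e.\ $[P(x,\cdot)]$ for $\mu$-a.e.\ $x$. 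Finally, translating this ``a.e.\ for $\mu$-a.e.\ $x$'' statement into an ``a.e.\ $[\rho]$'' statement is exactly the content of \eqref{rhoabs}–\eqref{intrho}: the set where $\HSF(x,t)\ne\chi_{\{\hsf>0\}}(x)t$ is $\atb$-measurable and its $\rho$-measure is $\int_X P\big(x,\{t : \HSF(x,t)\ne\chi_{\{\hsf>0\}}(x)t\}\big)\,\mu(\D x) = 0$, which gives the claimed conclusion.

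The main obstacle I anticipate is the first step — verifying $\rho_W\circ\varPhi^{-1}\ll\rho$ — because \eqref{cc} is stated ``for $\mu_w$-a.e.\ $x$, for every $\sigma$'' (a null set depending on $\sigma$), and one must promote it to an almost-everywhere identity of measures $P(\phi(x),\cdot)$, i.e.\ choose a single $\mu_w$-null exceptional set valid simultaneously for all $\sigma$ in a countable generating class, then invoke uniqueness of measures (Lemma \ref{2miary}) pointwise. Care is also needed because the map $\sigma \mapsto \int_\sigma t\,P(\phi(x),\D t)$ may be an infinite measure, so the reduction to finite-measure sets via a suitable exhausting sequence (cf.\ \eqref{aproks}) must be carried out before comparing. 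The rest is bookkeeping with the identities \eqref{l2}, \eqref{fifi}, \eqref{intrho} already available.
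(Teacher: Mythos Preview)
Your first step contains a real error. You claim that the inner integral
\[
\int_0^\infty \chi_E(\phi(x),t)\,P(x,\D t)
\]
equals $\frac{1}{\hsf(\phi(x))}\int_0^\infty \chi_E(\phi(x),t)\,t\,P(\phi(x),\D t)$ for $\mu_w$-a.e.\ $x$. Test this on a rectangle $E=\varDelta\times\sigma$: the left side is $\chi_\varDelta(\phi(x))\,P(x,\sigma)$, while the right side is, by \eqref{cc}, $\chi_\varDelta(\phi(x))\,\esf(P(\cdot,\sigma))(x)$. You are therefore asserting $P(x,\sigma)=\esf(P(\cdot,\sigma))(x)$ a.e.\ $[\mu_w]$, i.e.\ that $P(\cdot,\sigma)$ is already $\phi^{-1}(\ascr)$-measurable. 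Nothing in the hypotheses guarantees this; the consistency condition \eqref{cc} controls only the conditional expectation of $P(\cdot,\sigma)$, not its pointwise values. Consequently your plan to show that the inner integral vanishes $\mu_w$-a.e.\ cannot work, and the monotone-class extension you invoke inherits the same defect.

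The paper sidesteps this entirely by never attempting a pointwise statement. It computes $\rho_W(\varPhi^{-1}(\varDelta\times\sigma))$ directly on rectangles: one writes it as $\int_{\phi^{-1}(\varDelta)}P(\cdot,\sigma)\,\D\mu_w$, replaces $P(\cdot,\sigma)$ by $\esf(P(\cdot,\sigma))$ (legitimate here because one is integrating over a $\phi^{-1}(\ascr)$-set), applies \eqref{cc}, then \eqref{hle2} and \eqref{intrho}, to obtain
\[
\rho_W(\varPhi^{-1}(\varDelta\times\sigma))=\int_{\varDelta\times\sigma}\chi_{\{\hsf>0\}}(x)\,t\,\D\rho(x,t).
\]
A single application of Lemma \ref{2miary} (with the exhausting sequence $X_n\times\rbb_+$ coming from \eqref{aproks}) extends this identity to all of $\atb$, and both $\rho_W\circ\varPhi^{-1}\ll\rho$ and the explicit formula for $\HSF$ fall out at once. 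Your second phase (invoking Lemma \ref{lemS8} and then comparing measures in $\sigma$) is thus unnecessary; the whole lemma is a single rectangle computation plus the extension lemma.
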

   \begin{proof}
Arguing as in \eqref{hle}, we get \allowdisplaybreaks
   \begin{align} \notag
\rho_W (\varPhi^{-1}(\varDelta \times \sigma)) &=
\int_{\varPhi^{-1}(\varDelta \times \sigma)} |W|^2
\D\rho = \int_{\phi^{-1}(\varDelta)} \efw(P(\cdot,
\sigma)) \D \mu_w
   \\      \notag
&\hspace{-1.1ex}\overset{\eqref{cc}}= \int_X
\chi_{\varDelta}(\phi(x)) \cdot \frac{\int_{\sigma} t
P(\phi(x),\D t)}{\hfw(\phi(x))} \mu_w (\D x)
   \\            \notag
&\hspace{-.2ex}\overset{\eqref{hle2}}= \int_{\{\hfw >
0\}} \chi_{\varDelta}(x) \int_{\sigma} t P(x,\D t)
\mu(\D x)
      \\   \label{HSF}
&\hspace{-.5ex}\overset{\eqref{intrho}}=
\int_{\varDelta \times \sigma} \chi_{\{\hfw > 0\}}(x)
\cdot t \D \rho(x,t), \quad \varDelta \in \ascr, \,
\sigma \in \borel{\rbb_+}.
   \end{align}
It is clear that $\pscr:=\ascr \boxtimes
\borel{\rbb_+}$ is a semi-algebra and
$\atb=\sigma(\pscr)$. Let $\{X_n\}_{n=1}^\infty$ be as
in Lemma \ref{aproks}. Then $\{X_n \times
\rbb_+\}_{n=1}^\infty \subseteq \pscr$ and
   \begin{align} \notag
\rho_W(\varPhi^{-1}(X_n \times \rbb_+)) & \overset{\eqref{rhoabs}}=
\int_{\phi^{-1}(X_n)} |w|^2 \D \mu =\mu_w \circ \phi^{-1}(X_n)
   \\   \label{HSF2}
& \hspace{.7ex}= \int_{X_n} \hfw \D \mu \Le n \cdot
\mu(X_n)< \infty, \quad n \in \nbb.
   \end{align}
Combining \eqref{HSF} and \eqref{HSF2} with Lemma \ref{2miary}, we get
   \begin{align*}
\rho_W \circ \varPhi^{-1}(E)=\int_{E} \chi_{\{\hfw >
0\}}(x) \cdot t \D \rho(x,t), \quad E \in \atb.
   \end{align*}
This completes the proof.
   \end{proof}
Now we provide some characterizations of the consistency condition
\eqref{cc} that will be used later in this paper.
   \begin{lem} \label{lemS10}
Suppose \eqref{stand3} holds, $\mu_w \circ \phi^{-1}
\ll \mu$ and $\hfw < \infty$ a.e.\ $[\mu]$. Then the
following conditions are equivalent{\em :}
   \begin{enumerate}
   \item[(i)] $P$ satisfies \eqref{cc},
   \item[(ii)] $\big(\efw(P(\cdot, \sigma))\circ
\phi^{-1}\big) (x) \cdot \hfw(x) = \chi_{\{\hfw >
0\}}(x) \int_{\sigma} t P(x,\D t)$ for $\mu$-a.e.\ $x
\in X$ and for every $\sigma \in \borel{\rbb_+}$,
   \item[(iii)] $\rho_W \circ \varPhi^{-1} \ll \rho$
and $\HFW(x,t) = \chi_{\{\hfw > 0\}} (x) \cdot t$ for
$\rho$-a.e.\ $(x,t) \in X \times \rbb_+$,
   \item[(iv)] $\rho_W \circ \varPhi^{-1} \ll \rho$
and $\int_{\sigma} \HFW(\phi(x),t) P(\phi(x), \D t) =
\int_{\sigma} t P(\phi(x), \D t)$ for $\mu_w$-a.e.\
$x\in X$ and for every $\sigma \in \borel{\rbb_+}$.
   \end{enumerate}
   \end{lem}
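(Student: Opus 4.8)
The plan is to prove the four conditions equivalent by establishing a cyclic chain of implications, leaning heavily on the lemmas just proved. The key observation is that (iii) is exactly the conclusion of Lemma \ref{lemS9}, while the identity in (ii) is the $\mu$-a.e.\ reformulation of the integrated equality \eqref{for1} from Lemma \ref{lemS8} combined with \eqref{cc}; this already suggests the natural route (i)$\Rightarrow$(iii)$\Rightarrow$(ii)$\Rightarrow$(i) together with (iii)$\Leftrightarrow$(iv).

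First I would prove (i)$\Rightarrow$(iii): this is immediate from Lemma \ref{lemS9}, which under \eqref{stand2}, $\mu_w\circ\phi^{-1}\ll\mu$, $\hsf<\infty$ a.e.\ $[\mu]$ and \eqref{cc} gives precisely $\rho_W\circ\varPhi^{-1}\ll\rho$ and $\HSF(x,t)=\chi_{\{\hsf>0\}}(x)\cdot t$ a.e.\ $[\rho]$. Next, (iii)$\Rightarrow$(ii): assuming (iii), apply Lemma \ref{lemS8} (legitimate because $\rho_W\circ\varPhi^{-1}\ll\rho$ holds) to obtain \eqref{for1}, namely $\big(\esf(P(\cdot,\sigma))\circ\phi^{-1}\big)(x)\cdot\hsf(x)=\int_\sigma \HSF(x,t)P(x,\D t)$ for $\mu$-a.e.\ $x$; then substitute $\HSF(x,t)=\chi_{\{\hsf>0\}}(x)\cdot t$ from (iii) into the right-hand side, pull the factor $\chi_{\{\hsf>0\}}(x)$ (which does not depend on $t$) out of the $P(x,\D t)$-integral, and arrive at the formula in (ii). For (ii)$\Rightarrow$(i): starting from the identity in (ii), multiply both sides by an appropriate function and integrate, or more directly, note that by Lemma \ref{lemS6}(i) we have $\hsf\circ\phi>0$ a.e.\ $[\mu_w]$, so $\chi_{\{\hsf>0\}}\circ\phi=1$ a.e.\ $[\mu_w]$; composing the identity in (ii) with $\phi$ and using \eqref{fifi} to simplify $\big(\esf(P(\cdot,\sigma))\circ\phi^{-1}\big)\circ\phi=\esf(P(\cdot,\sigma))$ a.e.\ $[\mu_w]$, while the right-hand side becomes $\chi_{\{\hsf>0\}}(\phi(x))\int_\sigma tP(\phi(x),\D t)=\int_\sigma tP(\phi(x),\D t)$ a.e.\ $[\mu_w]$, one recovers $\esf(P(\cdot,\sigma))(x)\cdot\hsf(\phi(x))=\int_\sigma tP(\phi(x),\D t)$ a.e.\ $[\mu_w]$; dividing by $\hsf(\phi(x))$ (positive a.e.\ $[\mu_w]$) yields \eqref{cc}.

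Finally, for the equivalence (iii)$\Leftrightarrow$(iv): assuming $\rho_W\circ\varPhi^{-1}\ll\rho$ in both, the two conditions differ only in how $\HSF$ is described. Given (iii), substitute $\HSF(\phi(x),t)=\chi_{\{\hsf>0\}}(\phi(x))\cdot t$ and use $\chi_{\{\hsf>0\}}\circ\phi=1$ a.e.\ $[\mu_w]$ to get $\int_\sigma\HSF(\phi(x),t)P(\phi(x),\D t)=\int_\sigma tP(\phi(x),\D t)$ a.e.\ $[\mu_w]$, which is (iv). Conversely, given (iv), I would apply the change-of-variables identity \eqref{l2} for the map $\phi$ (with $\mu_w$) to the $\atb$-measurable nonnegative function $(x,t)\mapsto\chi_\sigma(t)\,\HSF(x,t)$, turning the a.e.\ $[\mu_w]$ equality of $\int_\sigma\HSF(\phi(x),t)P(\phi(x),\D t)$ and $\int_\sigma tP(\phi(x),\D t)$ into an equality of the form $\int_\varDelta\big(\cdots\big)\circ\phi^{-1}\cdot\hsf\,\D\mu$, then invoke Lemma \ref{lemS8}'s formula \eqref{for1} on the left and Lemma \ref{2miary} (using an exhausting sequence $\{X_n\}$ from \eqref{aproks} as in \eqref{HSF2}) to identify $\HSF(x,t)$ with $\chi_{\{\hsf>0\}}(x)\cdot t$ a.e.\ $[\rho]$, giving (iii).

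The main obstacle I anticipate is the direction (iv)$\Rightarrow$(iii): one must be careful that the hypothesis in (iv) is only an $\mu_w$-a.e.\ (equivalently, a.e.\ $[\mu]$ on $\{w\neq0\}$) statement about points of the form $\phi(x)$, so it controls $\HSF$ only on the image $\phi(X)\cap\{w\neq0\}$ in the first coordinate; passing from this to the a.e.\ $[\rho]$ identity on all of $X\times\rbb_+$ requires exactly the Radon--Nikodym/uniqueness machinery (Lemma \ref{2miary} on the semi-algebra $\ascr\boxtimes\borel{\rbb_+}$, together with the finiteness estimate \eqref{HSF2}) rather than a pointwise argument, and the bookkeeping of which null sets are with respect to $\mu$, $\mu_w$, or $\rho$ must be done attentively. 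Everything else is a routine substitution using \eqref{fifi}, \eqref{l2}, \eqref{hle2}, \eqref{intrho} and Lemma \ref{lemS6}(i).
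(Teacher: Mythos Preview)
Your proposal is correct and uses exactly the same ingredients as the paper (Lemmas \ref{lemS6}(i), \ref{nuklear}, \ref{lemS8}, \ref{lemS9} together with \eqref{fifi} and \eqref{for1}), but the cycle is organised differently. The paper proves (i)$\Leftrightarrow$(ii) directly (via \eqref{fifi}, Lemma \ref{nuklear} and Lemma \ref{lemS6}(i), exactly as in your (ii)$\Rightarrow$(i)), then runs the loop (i)$\Rightarrow$(iii)$\Rightarrow$(iv)$\Rightarrow$(i). In particular, the paper closes the cycle with (iv)$\Rightarrow$(i) rather than your (iv)$\Rightarrow$(iii): one simply composes both sides of \eqref{for1} with $\phi$ (legitimate by Lemma \ref{nuklear}), applies \eqref{fifi} on the left to get $\esf(P(\cdot,\sigma))(x)\cdot\hsf(\phi(x))$, and uses (iv) on the right to replace $\int_\sigma\HSF(\phi(x),t)P(\phi(x),\D t)$ by $\int_\sigma tP(\phi(x),\D t)$; dividing by $\hsf\circ\phi>0$ gives \eqref{cc}. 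This is a two-line argument and sidesteps the obstacle you flagged. Your route (iv)$\Rightarrow$(iii) via Lemma \ref{2miary} can be made to work (the point being that \eqref{for1} forces $\int_\sigma\HSF(x,t)P(x,\D t)=0$ on $\{\hsf=0\}$, so the information from (iv) on $\{\hsf>0\}$ suffices to match the two measures on rectangles), but it is noticeably heavier than the paper's shortcut.
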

   \begin{proof}
(i)$\Leftrightarrow$(ii) This can be proved by using \eqref{fifi}, Lemma
\ref{nuklear} and Lemma \ref{lemS6}(i).

(i)$\Rightarrow$(iii) Apply Lemma \ref{lemS9}.

(iii)$\Rightarrow$(iv) Note that \allowdisplaybreaks
   \begin{align*}
\int_{\phi^{-1}(\varDelta)} \int_{\sigma} &
\HFW(\phi(x),t) P(\phi(x), \D t) \mu_w(\D x)
   \\
& = \int_{\varDelta} \hfw(x) \int_{\sigma} \HFW(x,t)
P(x, \D t) \mu(\D x)
   \\
& = \int_{\varDelta \times \sigma} \hfw(x) \HFW(x,t)
\D \rho(x,t)
   \\
& \hspace{-.6ex}\overset{\mathrm{(iii)}}=
\int_{\varDelta \times \sigma} \hfw(x) \cdot
\chi_{\{\hfw > 0\}}(x) \cdot t \D \rho(x,t)
   \\
& = \int_{\varDelta} \hfw(x) \chi_{\{\hfw
> 0\}}(x) \int_{\sigma} t P(x,\D t) \mu(\D x)
   \\
& = \int_{\phi^{-1}(\varDelta)} \chi_{\{\hfw
> 0\}}(\phi(x)) \int_{\sigma} t P(\phi(x),\D t) \mu_w(\D x)
   \\
& \hspace{-1.8ex}\overset{\mathrm{Lem.} \, \ref{lemS6}}=
\int_{\phi^{-1}(\varDelta)} \int_{\sigma} t P(\phi(x),\D t) \mu_w(\D x),
\quad \varDelta \in \ascr, \, \sigma \in \borel{\rbb_+}.
   \end{align*}
Since, by Proposition \ref{lemS2}, the measure $\mu_w|_{\phi^{-1}(\ascr)}$
is $\sigma$-finite, we get (iv).

(iv)$\Rightarrow$(i) By Lemma \ref{lemS8},
\eqref{for1} holds. Composing both sides of
\eqref{for1} with $\phi$ and using Lemma
\ref{nuklear}, the equality \eqref{fifi} and (iv), we
obtain (i). This completes the proof.
   \end{proof}
The next lemma deals with a variation of the consistency condition
\eqref{cc}.
   \begin{lem} \label{lemS10.1}
Suppose \eqref{stand3} holds, $\mu_w \circ \phi^{-1}
\ll \mu$, $\hfw < \infty$ a.e.\ $[\mu]$ and
   \begin{multline} \label{cc-1}
\big(\efw(P(\cdot, \sigma))\circ \phi^{-1}\big) (x)
\cdot \hfw(x) = \int_{\sigma} t P(x,\D t) \quad
\text{for $\mu_w$-a.e.\ $x \in X$}
   \\
\text{and for every $\sigma \in \borel{\rbb_+}$.}
   \end{multline}
Then for every Borel function $f\colon \rbb_+ \to \rbop$,
   \begin{multline} \label{calka}
\bigg(\efw\bigg(\int_0^\infty f(t) P(\cdot, \D
t)\bigg)\circ \phi^{-1}\bigg) (x) \cdot \hfw(x) =
\int_0^\infty t \cdot f(t) P(x,\D t)
   \\
\text{ for $\mu_w$-a.e.\ $x \in X$}.
   \end{multline}
   \end{lem}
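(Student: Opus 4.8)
The plan is to prove \eqref{calka} by the standard measure-theoretic bootstrap: first establish it for indicator functions $f = \chi_\sigma$ (which is exactly hypothesis \eqref{cc-1}), then extend by linearity to nonnegative simple functions, and finally pass to general Borel $f\colon \rbb_+ \to \rbop$ by monotone approximation. Throughout, one must keep careful track of the two sides as functions of $x$ defined only a.e.\ $[\mu_w]$ (or a.e.\ $[\mu]$), and exploit that all the maps involved ($t \mapsto \int_0^\infty g(t)\,P(\cdot,\D t)$, $\esf$, composition with $\phi^{-1}$, multiplication by $\hsf$) respect countable monotone limits.

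First I would record that for a nonnegative simple function $f = \sum_{j=1}^k c_j \chi_{\sigma_j}$ with $c_j \in \rbb_+$ and $\sigma_j \in \borel{\rbb_+}$, linearity of the integral $\int_0^\infty f(t)\,P(x,\D t) = \sum_j c_j P(x,\sigma_j)$, linearity of $\esf$ (which holds by its defining property \eqref{bjjs1} and the approximation argument recalled before \eqref{wazny}), and linearity of $g \mapsto g \circ \phi^{-1}$ together give that the left-hand side of \eqref{calka} equals $\sum_j c_j \big(\esf(P(\cdot,\sigma_j)) \circ \phi^{-1}\big)(x)\cdot \hsf(x)$, which by \eqref{cc-1} equals $\sum_j c_j \int_{\sigma_j} t\,P(x,\D t) = \int_0^\infty t\, f(t)\, P(x,\D t)$ for $\mu_w$-a.e.\ $x$. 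Since the null set can be chosen independently of the finitely many $\sigma_j$'s, and a countable dense reservoir of simple functions suffices, \eqref{calka} holds for all nonnegative simple $f$ outside a single $\mu_w$-null set.

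Next I would take an arbitrary Borel $f\colon \rbb_+ \to \rbop$ and a sequence of nonnegative simple functions $f_n \nearrow f$ pointwise on $\rbb_+$. On the right-hand side, $t \mapsto t f_n(t) \nearrow t f(t)$, so $\int_0^\infty t\, f_n(t)\, P(x,\D t) \nearrow \int_0^\infty t\, f(t)\, P(x,\D t)$ by the monotone convergence theorem, for every $x$. On the left-hand side, $\int_0^\infty f_n(t)\, P(\cdot,\D t) \nearrow \int_0^\infty f(t)\, P(\cdot,\D t)$ pointwise (again monotone convergence), and then I would invoke monotonicity and the monotone-limit property of conditional expectation: $\esf(h_n) \nearrow \esf(h)$ a.e.\ $[\mu_w]$ whenever $h_n \nearrow h$ are $\ascr$-measurable $\rbop$-valued functions (this is immediate from \eqref{bjjs1} together with monotone convergence and the $\sigma$-finiteness of $\mu_w|_{\phi^{-1}(\ascr)}$ from Proposition \ref{lemS2}). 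Composing with $\phi^{-1}$ preserves the monotone limit a.e.\ $[\mu_w]$ (by Lemma \ref{nuklear}, or directly from Proposition \ref{lemS4}, since $g_n \nearrow g$ a.e.\ $[\mu]$ forces $g_n \circ \phi^{-1} \nearrow g \circ \phi^{-1}$), and multiplying by the fixed finite-valued function $\hsf$ is harmless. So the left-hand side of \eqref{calka} for $f_n$ increases a.e.\ $[\mu_w]$ to the left-hand side for $f$. Equating the two monotone limits via the simple-function case yields \eqref{calka}.

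The only real point requiring care — and the step I expect to be the main obstacle — is the monotone-limit property of $\esf$ on $\rbop$-valued functions and its interaction with $\circ\,\phi^{-1}$ while staying inside a single null set; one must make sure the exceptional $\mu_w$-null sets accumulated over the countably many steps (and over the countable generating family for simple functions) union to a $\mu_w$-null set, which is fine because countable unions of null sets are null. A secondary subtlety is that $\esf(\,\cdot\,)$ is a priori defined for $L^2(\mu_w)$ or $\rbop$-valued functions, not for functions taking the value $+\infty$ on a positive-measure set; but since we only ever assert an a.e.\ identity and the limit $\esf\big(\int_0^\infty f(t)P(\cdot,\D t)\big)$ is the $\rbop$-valued conditional expectation guaranteed to exist by the Radon--Nikodym theorem in the form used for \eqref{bjjs1}, this causes no difficulty. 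No new machinery beyond what is already in the excerpt is needed.
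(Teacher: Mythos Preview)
Your argument is correct and follows the same overall scheme as the paper's proof: verify \eqref{calka} for indicators (this is \eqref{cc-1}), extend to simple functions by linearity, and pass to general $f$ by monotone approximation. The only difference is in how the limit is taken. You work pointwise, invoking a monotone convergence theorem for the conditional expectation $\esf$ itself and then tracking that the operation $\esf(\cdot)\circ\phi^{-1}$ preserves monotone limits a.e.; the paper instead tests both sides of \eqref{calka} against $\chi_\varDelta$, $\varDelta\in\ascr$, uses \eqref{l2}, \eqref{fifi} and \eqref{wazny} to rewrite the left side as $\int_{\phi^{-1}(\varDelta)} \int_0^\infty s_n(t)\,P(x,\D t)\,|w(\phi(x))|^2\,\mu_w(\D x)$, applies the ordinary monotone convergence theorem to these integrals, and then concludes via the $\sigma$-finiteness of $\mu_w$. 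The paper's route avoids having to state and verify the monotone-limit property of $\esf$ and of $\esf(\cdot)\circ\phi^{-1}$ separately (and sidesteps the small bookkeeping about null sets you flag at the end); your route is more direct once those properties are in hand. One small imprecision in your write-up: in the parenthetical ``$g_n\nearrow g$ a.e.\ $[\mu]$ forces $g_n\circ\phi^{-1}\nearrow g\circ\phi^{-1}$'' the convergence $\esf(h_n)\nearrow\esf(h)$ is only a.e.\ $[\mu_w]$, and $\esf(\cdot)\circ\phi^{-1}$ is not literal composition but the construction from Proposition~\ref{lemS4}; the claim is still true (use Lemma~\ref{nuklear} together with the normalization $\esf(\cdot)\circ\phi^{-1}=0$ on $\{\hsf=0\}$), but the justification should be phrased accordingly.
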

   \begin{proof}
By Proposition \ref{lemS4}, \eqref{calka} holds for every simple Borel
function $f\colon \rbb_+ \to \rbb_+$. Let $f\colon \rbb_+ \to \rbop$ be a
Borel function. Take a sequence $\{s_n\}_{n=1}^\infty$ of simple Borel
functions $s_n\colon \rbb_+ \to \rbb_+$ which is monotonically increasing
and pointwise convergent to $f$. Then, by Lebesgue's monotone convergence
theorem, we have \allowdisplaybreaks
   \begin{align*}
\int_{\varDelta} \efw\bigg(&\int_0^\infty s_n(t)
P(\cdot, \D t)\bigg)\circ \phi^{-1} \cdot \hfw \D
\mu_w
   \\
&\overset{(\dag)}= \int_{\phi^{-1}(\varDelta)}
\efw\bigg(\int_0^\infty s_n(t) P(\cdot, \D t)\bigg)
\cdot |w\circ \phi|^2 \D \mu_w
   \\
&= \int_{\phi^{-1}(\varDelta)} \int_0^\infty s_n(t) P(x, \D t)
|w(\phi(x))|^2 \mu_w(\D x)
   \\
& \hspace{22ex} \bigg\downarrow \mbox{\small{$n\to\infty$}}
   \\
&\hspace{2.7ex}\int_{\phi^{-1}(\varDelta)} \int_0^\infty f(t) P(x, \D t)
|w(\phi(x))|^2 \mu_w(\D x)
   \\
&\hspace{6ex}\overset{(\dag)}= \int_{\varDelta}
\efw\bigg(\int_0^\infty f(t) P(\cdot, \D t)\bigg)\circ
\phi^{-1} \cdot \hfw \D \mu_w, \quad \varDelta \in
\ascr,
  \end{align*}
where $(\dag)$ follows from \eqref{l2} and \eqref{fifi}. Since
   \begin{align*}
\int_{\varDelta} \int_0^\infty t \cdot s_n(t) P(x,\D t) \mu_w(\D x)
\xlongrightarrow{n \to \infty} \int_{\varDelta} \int_0^\infty t \cdot f(t)
P(x,\D t) \mu_w(\D x), \quad \varDelta \in \ascr,
   \end{align*}
and $\mu_w$ is $\sigma$-finite, the proof is complete.
   \end{proof}
Applying the equivalence (i)$\Leftrightarrow$(ii) of Lemma \ref{lemS10},
we obtain the following.
   \begin{pro}\label{nicto}
Suppose \eqref{stand3} holds, $\mu_w \circ \phi^{-1}
\ll \mu$ and $\hfw < \infty$ a.e.\ $[\mu]$. Then the
following two assertions are valid.
   \begin{enumerate}
   \item[(i)] If \eqref{cc-1} holds and
$\big(\efw(P(\cdot, \sigma))\circ \phi^{-1}\big) (x)
\cdot \hfw(x) = \int_{\sigma} t P(x,\D t)$ for
$\mu$-a.e.\ $x \in \{\hfw > 0\} \cap \{w=0\}$ and for
every $\sigma \in \borel{\rbb_+}$, then \eqref{cc}~
holds.
   \item[(ii)] If \eqref{cc} holds and
$\int_0^\infty t P(x,\D t) = 0$ for $\mu$-a.e.\ $x \in
\{\hfw = 0\} \cap \mbox{$\{w \neq 0\}$}$, then
\eqref{cc-1} holds.
   \end{enumerate}
   \end{pro}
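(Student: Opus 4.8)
The plan is to reduce both assertions to the equivalence (i)$\Leftrightarrow$(ii) of Lemma \ref{lemS10}, and then to compare, cell by cell over the partition of $X$ determined by $\{w \neq 0\}$ versus $\{w = 0\}$ and by $\{\hsf > 0\}$ versus $\{\hsf = 0\}$, the equality appearing in Lemma \ref{lemS10}(ii), namely
\[
\big(\esf(P(\cdot,\sigma))\circ \phi^{-1}\big)(x) \cdot \hsf(x) = \chi_{\{\hsf > 0\}}(x) \int_{\sigma} t \, P(x,\D t)
\]
— valid for $\mu$-a.e.\ $x$ and every $\sigma\in\borel{\rbb_+}$, and denoted $(\star)$ below — with the two conditions \eqref{cc} and \eqref{cc-1}. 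Two elementary observations are used throughout. First, by the construction of $\esf(\cdot)\circ \phi^{-1}$ one has $\esf(P(\cdot,\sigma))\circ \phi^{-1} = 0$ a.e.\ $[\mu]$ on $\{\hsf = 0\}$ (and $\Le 1$ a.e.\ $[\mu]$ by Proposition \ref{lemS6.1}, since $P(\cdot,\sigma)\Le 1$), so the left-hand side of $(\star)$ vanishes a.e.\ $[\mu]$ on $\{\hsf=0\}$. Second, a statement holds a.e.\ $[\mu_w]$ if and only if it holds a.e.\ $[\mu]$ on $\{w\neq 0\}$, while $\chi_{\{\hsf>0\}} = 1$ on $\{\hsf>0\}$; I shall also use the trivial bound $\int_{\sigma} t\, P(x,\D t) \Le \int_0^\infty t\, P(x,\D t)$.

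For (i), I assume \eqref{cc-1} together with the stated additional equality and verify that $(\star)$ holds a.e.\ $[\mu]$ on $X$; Lemma \ref{lemS10} then yields \eqref{cc}. On $\{\hsf=0\}$ both sides of $(\star)$ vanish a.e.\ $[\mu]$ by the first observation. On $\{\hsf>0\}\cap\{w\neq 0\}$, where $\chi_{\{\hsf>0\}}=1$, the equality $(\star)$ is exactly \eqref{cc-1}, which holds a.e.\ $[\mu]$ on $\{w\neq 0\}$ by hypothesis. On $\{\hsf>0\}\cap\{w=0\}$, again with $\chi_{\{\hsf>0\}}=1$, the equality $(\star)$ is precisely the additional hypothesis in (i). Assembling the three cells gives $(\star)$ a.e.\ $[\mu]$.

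For (ii), I assume \eqref{cc}; by Lemma \ref{lemS10}(i)$\Rightarrow$(ii) the equality $(\star)$ holds a.e.\ $[\mu]$, and it remains to pass from $(\star)$ to \eqref{cc-1}, that is, to drop the factor $\chi_{\{\hsf>0\}}$ after restricting to $\{w\neq 0\}$. On $\{w\neq 0\}\cap\{\hsf>0\}$ this is immediate because $\chi_{\{\hsf>0\}}=1$ there. On $\{w\neq 0\}\cap\{\hsf=0\}$ the left-hand side of \eqref{cc-1} vanishes a.e.\ $[\mu]$ by the first observation, while its right-hand side $\int_{\sigma} t\, P(x,\D t)$ is dominated by $\int_0^\infty t\, P(x,\D t)$, which equals $0$ for $\mu$-a.e.\ $x$ in this cell by the additional hypothesis in (ii); hence both sides vanish a.e.\ $[\mu]$ there. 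Therefore \eqref{cc-1} holds a.e.\ $[\mu]$ on $\{w\neq 0\}$, i.e.\ a.e.\ $[\mu_w]$.

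I do not expect a genuine obstacle here: the argument is essentially bookkeeping, and the only point that needs care is to assign each hypothesis to the correct cell of the partition $\{w\neq 0\}\sqcup\{w=0\}$ crossed with $\{\hsf>0\}\sqcup\{\hsf=0\}$, keeping in mind that \eqref{cc-1} is an a.e.\ $[\mu_w]$ identity (hence silent on $\{w=0\}$) whereas $(\star)$ is an a.e.\ $[\mu]$ identity carrying the factor $\chi_{\{\hsf>0\}}$ (hence automatic on $\{\hsf=0\}$).
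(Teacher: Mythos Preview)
Your proof is correct and follows exactly the route the paper indicates: the paper's proof consists solely of the remark that the result follows from the equivalence (i)$\Leftrightarrow$(ii) of Lemma \ref{lemS10}, and you have carried out precisely that reduction, tracking $(\star)$ across the four cells of the partition $\{w\neq 0,\,w=0\}\times\{\hsf>0,\,\hsf=0\}$. The only comment is that on $\{\hsf=0\}$ you could have observed more directly that the left side of $(\star)$ vanishes simply because it contains the factor $\hsf(x)=0$ multiplied by a quantity bounded by $1$ (your Proposition \ref{lemS6.1} citation), without invoking the support property of $\esf(\cdot)\circ\phi^{-1}$; but either justification is fine.
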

Regarding Proposition \ref{nicto}(i), we note that if
$\big(\efw(P(\cdot, \sigma)) \circ \phi^{-1}\big) (x)
\cdot \hfw(x) = \int_{\sigma} t P(x,\D t)$ for
$\mu$-a.e.\ $x \in \varTheta_{+0} := \{\hfw > 0\} \cap
\{w=0\}$ and for every $\sigma \in \borel{\rbb_+}$,
then, by \eqref{jedynka}, $\int_0^\infty t P(x,\D t) =
0$ for $\mu$-a.e.\ $x \in \varTheta_{+0}$ if and only
if $\mu(\varTheta_{+0})=0$, or equivalently, if and
only if $\big(\efw(P(\cdot, \sigma)) \circ
\phi^{-1}\big) (x)= 0$ for $\mu$-a.e.\ $x \in
\varTheta_{+0}$.

The $n$-th power of a weighted composition operator
$\cfw$ is related in a natural way to the weighted
composition operator $C_{\phi^n,\widehat w_n}$ with
explicitly given weight $\widehat w_n$. Below, we
provide recurrence formulas for the Radon-Nikodym
derivatives $\hfwn{n}$ attached to $C_{\phi^n,\widehat
w_n}$.
   \begin{lem} \label{lemS11}
Suppose \eqref{stand2} holds. Then the following
assertions are valid{\em :}
   \begin{enumerate}
   \item[(i)] $C_{\phi^n,\widehat w_n}$ is well-defined
and $\cfw^n \subseteq C_{\phi^n,\widehat w_n}$ for
every $n\in \zbb_+$, where
   \begin{align} \label{Zak}
\text{$\widehat w_0 = \boldsymbol{1}$ and $\widehat
w_{n+1} = \prod_{j=0}^n w\circ \phi^j$ for $n\in
\zbb_+$,}
   \end{align}
   \item[(ii)] if $\cfw$ is densely defined, then
   \allowdisplaybreaks
   \begin{align}  \label{rec1}
\hfwn{n+1} & = \efw\big(\hfwn{n}\big)\circ \phi^{-1}
\cdot \hfw \text{ a.e.\ $[\mu]$}, \quad n\in\zbb_+,
      \\  \label{rec2}
\hfwn{n+1} \circ \phi & = \efw\big(\hfwn{n}\big) \cdot
(\hfw \circ \phi) \text{ a.e.\ $[\mu_w]$}, \quad
n\in\zbb_+,
   \end{align}
   \item[(iii)] if $\cfw$ is quasinormal, then
$\hfwn{n}=\hfw^n$ a.e.\ $[\mu]$ for all $n\in \zbb_+$.
   \end{enumerate}
   \end{lem}
   \begin{proof}
(i) Take $\varDelta \in \ascr$ such that
$\mu(\varDelta)=0$. Then by Proposition \ref{wco1}, we
see that $\widehat w_n \cdot
\chi_{(\phi^n)^{-1}(\varDelta)} = \widehat w_n \cdot
(\chi_{\varDelta}\circ \phi^n) = 0$ a.e.\ $[\mu]$ for
every $n\in \zbb_+$, which means that $(\mu_{\widehat
w_n} \circ (\phi^n)^{-1}) (\varDelta)=0$. Applying
Proposition \ref{wco1} again, we conclude that
$C_{\phi^n,\widehat w_n}$ is well-defined. The
inclusion $\cfw^n \subseteq C_{\phi^n,\widehat w_n}$
is easily seen to be true.

(ii) By Lemma \ref{nuklear} and \eqref{fifi}, the
equality \eqref{rec2} follows from \eqref{rec1}. To
prove \eqref{rec1}, note that $\widehat w_{n+1} = w
\circ \phi^n \cdot \widehat w_n$ for $n\in \zbb_+$.
Hence, by \eqref{l2} and \eqref{fifi}, we have
   \allowdisplaybreaks
   \begin{align*}
\mu_{\widehat w_{n+1}}((\phi^{n+1})^{-1}(\varDelta)) &
= \int_X \chi_{\phi^{-1}(\varDelta)} \circ \phi^n
\cdot |w\circ \phi^n|^2 \D \mu_{\widehat w_n}
   \\
&= \int_{\phi^{-1}(\varDelta)} |w|^2 \hfwn{n} \D \mu =
\int_{\phi^{-1}(\varDelta)} \efw(\hfwn{n}) \D \mu_w
   \\
&= \int_{\varDelta} \efw(\hfwn{n})\circ \phi^{-1}
\cdot \hfw \D \mu, \quad \varDelta \in \ascr, \, n \in
\zbb_+,
   \end{align*}
which implies \eqref{rec1}.

(iii) We prove it by induction on $n$. The case of
$n=0$ is obviously true. If $\hfwn{n}=\hfw^n$ a.e.\
$[\mu]$ for a fixed $n\in \zbb_+$, then by Theorem
\ref{quain} we have
   \begin{align*}
\hfwn{n+1} & \overset{\eqref{rec1}}=
\efw\big(\hfwn{n}\big)\circ \phi^{-1} \cdot \hfw
   \\
&\hspace{.75ex} = \efw\big(\hfw^{n}\big)\circ
\phi^{-1} \cdot \hfw
   \\
&\hspace{.75ex} = \efw\big(\hfw^{n}\circ
\phi\big)\circ \phi^{-1} \cdot \hfw
   \\
& \hspace{.45ex}\overset{(\dag)}= \hfw^{n} \cdot \hfw
\\
& \hspace{.75ex} = \hfw^{n+1} \text{ a.e.\ $[\mu]$,}
   \end{align*}
where $(\dag)$ follows from the fact that $\hfw^n=0$
on $\{\hfw=0\}$ a.e.\ $[\mu]$.
   \end{proof}
The result that follows will be used in the proof of
Theorem \ref{MAIN1}. It clarifies the role played by
the assumption ``$\hfw > 0$ a.e.\ $[\mu_w]$'' in this
theorem.
   \begin{thm}  \label{main1}
Suppose \eqref{stand3} holds, $\mu_w \circ \phi^{-1}
\ll \mu$ and $\hfw < \infty$ a.e.\ $[\mu]$. If $P$
satisfies \eqref{cc}, then the following conditions
are equivalent{\em :}
   \begin{enumerate}
   \item[(i)] $P$ satisfies \eqref{cc-1},
   \item[(ii)] $\hfwn{n}(x) =
\int_0^\infty t^n P(x,\D t)$ for every $n \in \zbb_+$ and for
$\mu_w$-a.e.\ $x\in X$,
   \item[(iii)] $\int_0^\infty t P(x,\D t) = 0$ for
$\mu_w$-a.e.\ $x \in \{\hfw = 0\}$,
   \item[(iv)] $P(x,\cdot) = \delta_0(\cdot)$ for
$\mu_w$-a.e.\ $x \in \{\hfw = 0\}$,
   \item[(v)] $\rho_W \circ \varPhi^{-1} \ll \rho$ and
$\HFW(x,t) = t$ for $\rho_W$-a.e.\ $(x,t) \in X \times
\rbb_+$,
   \item[(vi)] $\rho_W \circ \varPhi^{-1} \ll \rho$ and
$\HFW \circ \varPhi = \HFW$ a.e.\ $[\rho_W]$,
   \item[(vii)] $\hfw > 0$ a.e.\ $[\mu_w]$.
   \end{enumerate}
   \end{thm}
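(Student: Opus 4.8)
The plan is to establish the chain of equivalences by a mixture of a short cycle through (i)--(iv), (vii) and two pairwise arguments bringing in (v) and (vi), drawing on Lemmata \ref{lemS3}, \ref{lemS6}(i), \ref{lemS9}, \ref{lemS10.1}, on Proposition \ref{nicto}(ii), and on the recurrence \eqref{rec1}. The first move is to record one elementary but decisive consequence of \eqref{cc}: taking $\sigma=\{0\}$ there and using $\hsf\circ\phi>0$ a.e.\ $[\mu_w]$ (Lemma \ref{lemS6}(i)) gives $\esf(P(\cdot,\{0\}))=0$ a.e.\ $[\mu_w]$, whence, by \eqref{wazny} with $g=\chi_X$, $\int_X P(x,\{0\})\,\mu_w(\D x)=0$, and therefore $P(x,\{0\})=0$ for $\mu_w$-a.e.\ $x\in X$. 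This is the hidden fact that will force (vii).

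Next I would settle the block (i)$\Leftrightarrow$(iii)$\Leftrightarrow$(iv)$\Leftrightarrow$(vii). For (iii)$\Rightarrow$(i), observe that (iii) is verbatim the extra hypothesis in Proposition \ref{nicto}(ii) (since ``$\mu_w$-a.e.\ $x\in\{\hsf=0\}$'' means ``$\mu$-a.e.\ $x\in\{\hsf=0\}\cap\{w\neq0\}$''), so \eqref{cc-1} follows. For (i)$\Rightarrow$(iii), put $\sigma=\rbb_+$ in \eqref{cc-1} and use $\esf(\chi_X)\circ\phi^{-1}=\chi_{\{\hsf>0\}}$ (see \eqref{jedynka}) to get $\hsf=\int_0^\infty t\,P(\cdot,\D t)$ a.e.\ $[\mu_w]$, which restricted to $\{\hsf=0\}$ yields (iii). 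The equivalence (iii)$\Leftrightarrow$(iv) is Lemma \ref{lemS3} applied to each $P(x,\cdot)$. Finally, (vii)$\Rightarrow$(iv) is vacuous, since then $\mu_w(\{\hsf=0\})=0$; and (iv)$\Rightarrow$(vii) follows because (iv) says $P(x,\{0\})=1$ for $\mu_w$-a.e.\ $x\in\{\hsf=0\}$, which, juxtaposed with the consequence $P(x,\{0\})=0$ a.e.\ $[\mu_w]$ of \eqref{cc} recorded above, forces $\mu_w(\{\hsf=0\})=0$.

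For (i)$\Rightarrow$(ii) I would induct on $n$: the case $n=0$ is immediate ($\hsfn{0}=1$ a.e.\ $[\mu]$ and $P(x,\rbb_+)=1$), and for the step I would use the recurrence $\hsfn{n+1}=\esf(\hsfn{n})\circ\phi^{-1}\cdot\hsf$ from \eqref{rec1} (legitimate since $\cfw$ is densely defined here by Proposition \ref{lemS2}); by the inductive hypothesis and the fact that $\esf$ and $(\,\cdot\,)\circ\phi^{-1}$ respect a.e.\ $[\mu_w]$ equality, $\hsfn{n+1}=\esf(\int_0^\infty t^n P(\cdot,\D t))\circ\phi^{-1}\cdot\hsf$ a.e.\ $[\mu_w]$, and Lemma \ref{lemS10.1} applied with $f(t)=t^n$ rewrites the right-hand side as $\int_0^\infty t^{n+1}\,P(\cdot,\D t)$. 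The converse (ii)$\Rightarrow$(iii) is immediate on taking $n=1$, as $\hsfn{1}=\hsf$.

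It remains to fold in (v) and (vi). Lemma \ref{lemS9} (applicable because \eqref{cc} holds) gives $\rho_W\circ\varPhi^{-1}\ll\rho$ unconditionally and $\HSF(x,t)=\chi_{\{\hsf>0\}}(x)\cdot t$ a.e.\ $[\rho]$, hence a.e.\ $[\rho_W]$. Writing out $\rho_W$ via \eqref{intrho}, the relation $\chi_{\{\hsf=0\}}(x)\cdot t=0$ a.e.\ $[\rho_W]$ is seen to be exactly $\int_{\{\hsf=0\}}\bigl(1-P(x,\{0\})\bigr)\,\mu_w(\D x)=0$, i.e.\ exactly (iv); this gives (iv)$\Leftrightarrow$(v). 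For (v)$\Leftrightarrow$(vi), I would note that $\HSF\circ\varPhi(x,t)=\chi_{\{\hsf>0\}}(\phi(x))\cdot t=t$ a.e.\ $[\rho_W]$ always, combining Lemma \ref{lemS9} pulled back through $\varPhi$ with $\chi_{\{\hsf>0\}}\circ\phi=1$ a.e.\ $[\mu_w]$ (Lemma \ref{lemS6}(i)); hence $\HSF\circ\varPhi=\HSF$ a.e.\ $[\rho_W]$ holds iff $\HSF=t$ a.e.\ $[\rho_W]$. I expect the main obstacle to be the step (iv)$\Leftrightarrow$(vii) -- that is, spotting and using the concealed consequence $P(x,\{0\})=0$ a.e.\ $[\mu_w]$ of \eqref{cc} -- together with the careful transfer of almost-everywhere statements among $\mu$, $\mu_w$, $\rho$ and $\rho_W$ and through $\phi$ and $\varPhi$.
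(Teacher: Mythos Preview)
Your argument is correct, and several of its moves coincide with the paper's proof (the induction for (i)$\Rightarrow$(ii) via Lemma~\ref{lemS10.1} and \eqref{rec1}, the use of Lemma~\ref{lemS3} for (iii)$\Leftrightarrow$(iv), and the passage through Lemma~\ref{lemS9}). However, your overall architecture is genuinely different from the paper's.

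The paper runs a single cycle (i)$\Rightarrow$(ii)$\Rightarrow$(iii)$\Leftrightarrow$(iv), then (iii)$\Rightarrow$(v)$\Rightarrow$(vi)$\Rightarrow$(vii)$\Rightarrow$(i). Its key link (vi)$\Rightarrow$(vii) is operator-theoretic: it invokes Proposition~\ref{hsfd} for $\CFW$ (quasinormality/hyponormality-type reasoning via $\chi_{\{W\neq 0\}}\jd{\CFW}=\{0\}$) together with the isometric embedding $U$ to deduce $\chi_{\{w\neq 0\}}\jd{\cfw}=\{0\}$, hence $\hsf>0$ a.e.\ $[\mu_w]$. Its (iii)$\Rightarrow$(v) is a direct integral computation over $\rho_W$, and (vii)$\Rightarrow$(i) comes from Lemma~\ref{lemS10}. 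By contrast, you extract from \eqref{cc} (via $\sigma=\{0\}$) the purely measure-theoretic fact $P(x,\{0\})=0$ a.e.\ $[\mu_w]$, which yields (iv)$\Rightarrow$(vii) in one line and avoids operators entirely; you close (iii)$\Rightarrow$(i) with Proposition~\ref{nicto}(ii) rather than Lemma~\ref{lemS10}; and you handle (v) and (vi) symmetrically by first proving the unconditional identity $\HSF\circ\varPhi=t$ a.e.\ $[\rho_W]$, so that (v)$\Leftrightarrow$(vi) is immediate and (iv)$\Leftrightarrow$(v) reduces to computing $\rho_W(\{\hsf=0\}\times(0,\infty))$. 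Your route is shorter and more self-contained at the measure-theoretic level; the paper's route has the advantage of showcasing the operator-theoretic machinery (Proposition~\ref{hsfd}, the embedding $U$) that is reused elsewhere, and it makes the link to quasinormality of $\CFW$ explicit (cf.\ the Remark following the theorem).
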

   \begin{proof}
(i)$\Rightarrow$(ii) We use induction to prove that
$\hfwn{n}=H_{n}$ a.e.\ $[\mu_w]$ for all $n\in
\zbb_+$, where $H_n(x) = \int_0^\infty t^n P(x,\D t)$
for $x\in X$. The case of $n=0$ is obvious. Suppose
$\hfwn{n}=H_{n}$ a.e.\ $[\mu_w]$ for some unspecified
$n \in \zbb_+$. Then, by Lemma \ref{lemS10.1},
\eqref{calka} holds with $f(t) = t^n$. Hence, we have
   \begin{align}  \notag
H_{n+1}(x) & \overset{\eqref{calka}}= (\efw(H_n)\circ
\phi^{-1}) (x) \cdot \hfw(x)
   \\  \notag
&\hspace{0.7ex}= (\efw(\hfwn{n})\circ \phi^{-1}) (x)
\cdot \hfw(x)
   \\ \label{Arab}
& \overset{\eqref{rec1}}= \hfwn{n+1} \text{ for
$\mu_w$-a.e.\ $x\in X$,}
   \end{align}
where the second and third equalities in \eqref{Arab}
hold a.e.\ $[\mu]$. This completes the induction
argument and gives (ii).

(ii)$\Rightarrow$(iii) Consider the equality in (ii) with $n=1$.

(iii)$\Leftrightarrow$(iv) Use Lemma \ref{lemS3}.

(iii)$\Rightarrow$(v) It follows that
   \begin{align} \label{dinner}
\int_0^\infty t \cdot \chi_E(x,t) P(x,\D t) = 0 \text{
for $\mu_w$-a.e.\ $x \in \{\hfw = 0\}$}, \quad E \in
\atb.
   \end{align}
Hence, by the implication (i)$\Rightarrow$(iii) of Lemma \ref{lemS10},
$\rho_W \circ \varPhi^{-1} \ll \rho$ and \allowdisplaybreaks
   \begin{align*}
\int_E \HFW \D \rho_W & = \int_E |w(x)|^2 \HFW(x,t) \D
\rho(x,t)
   \\
& = \int_E \chi_{\{\hfw > 0\}} (x) \cdot t \cdot
|w(x)|^2\D \rho(x,t)
   \\
& = \int_X \chi_{\{\hfw > 0\}} (x) \int_0^\infty t
\cdot \chi_E(x,t) P(x,\D t) \mu_w(\D x)
   \\
& \hspace{-.8ex}\overset{\eqref{dinner}}= \int_X \int_0^\infty t \cdot
\chi_E(x,t) P(x,\D t) \mu_w(\D x)
   \\
& = \int_E t \D \rho_W (x,t), \quad E \in \atb.
   \end{align*}
Since $\rho_W$ is $\sigma$-finite, (v) holds.

(v)$\Rightarrow$(vi) By the implication (i)$\Rightarrow$(iii) of Lemma
\ref{lemS10} and Lemma \ref{nuklear}, we have
   \begin{align}  \label{zima}
(\HFW \circ \varPhi)(x,t) = \chi_{\{\hfw >
0\}}(\phi(x)) \cdot t \; \text{ for $\rho_W$-a.e.\
$(x,t) \in X \times \rbb_+$.}
   \end{align}
Now we note that if $f, g\colon X \to \rbop$ are $\ascr$-measurable
functions such that $f=g$ a.e.\ $[\mu_w]$, then $f (x) \cdot t = g (x)
\cdot t$ for $\rho_W$-a.e.\ $(x,t)\in X\times \rbb_+$. Indeed, this is
because
   \begin{align*}
\int_{E} f(x) \cdot t \D \rho_W(x,t) & = \int_X f(x) \int_0^\infty
\chi_E(x,t) \cdot t \, P(x,\D t) \mu_w(\D x)
   \\
& = \int_{E} g(x) \cdot t \D \rho_W(x,t), \quad E \in \atb.
   \end{align*}
The above property combined with \eqref{zima} and the
fact that $\chi_{\{\hfw > 0\}} \circ \phi = 1$ a.e.\
$[\mu_w]$ (see Proposition \ref{lemS6}(i)) yields
(vi).

(vi)$\Rightarrow$(vii) By Lemma \ref{lemS6}(i) and Proposition \ref{hsfd},
$\chi_{\{W\neq 0\}}\jd{\CFW} = \{0\}$. Take $f\in \chi_{\{w \neq 0\}}
\jd{\cfw}$. By Lemma \ref{jadro}, $f\in \jd{\cfw}$ and thus, by
\eqref{ucu}, $Uf \in \jd{\CFW}$. Since $\{W=0\} = \{w = 0\} \times \rbb_+$
and $f=0$ a.e.\ $[\mu]$ on $\{w=0\}$, we deduce that
   \begin{align*}
\int_{\{W=0\}} |Uf|^2 \D \rho = \int_{\{w=0\}} |f(x)|^2 \int_0^\infty
P(x,\D t) \mu(\D x) = \int_{\{w=0\}} |f(x)|^2 \mu(\D x) = 0.
   \end{align*}
As a consequence, $Uf \in \chi_{\{W\neq 0\}}\jd{\CFW}
= \{0\}$. Since $U$ is injective, we get $f=0$ a.e.\
$[\mu]$. This means that $\chi_{\{w \neq 0\}}
\jd{\cfw} = \{0\}$. It follows from Proposition
\ref{hsfd} that $\hfw
> 0$ a.e.\ $[\mu_w]$.

(vii)$\Rightarrow$(i) Apply the implication (i)$\Rightarrow$(ii) of Lemma
\ref{lemS10}.
   \end{proof}
   \begin{rem} \label{pb}
The implication (vi)$\Rightarrow$(vii) of Theorem
\ref{main1} can be proved in a shorter (but more
advanced) way by applying Theorem \ref{quain}. Indeed,
by Lemmas \ref{lemS2} and \ref{lemS8}, and Theorem
\ref{quain} the operator $\CFW$ is quasinormal. In
view of \eqref{ucu} and the fact that quasinormal
operators are subnormal, $\cfw$ is subnormal. Since
subnormal operators are hyponormal, an application of
Corollary \ref{hipinj} yields $\hfw > 0$ a.e.\
$[\mu_w]$.
   \end{rem}
Theorem \ref{main1} enables us to formulate a
criterion for subnormality of unbounded weighted
composition operators (see Theorem \ref{MAIN1} below).
In Section \ref{sec2.4}, we shall supply an extension
of this criterion based on the condition
\eqref{cc-1mu}, which is a stronger version of
\eqref{cc-1} (see Theorem \ref{MAIN2}). Note that the
assumption ``$\hfw
> 0$ a.e.\ $[\mu_w]$'' that appears in Theorem
\ref{MAIN1} is not restrictive because it is always
satisfied whenever $\cfw$ is subnormal (see Corollary
\ref{hipinj}).
   \begin{thm} \label{MAIN1}
Let $(X,\ascr,\mu)$ be a $\sigma$-finite measure
space, $w$ be an $\ascr$-measurable complex function
on $X$ and $\phi$ be an $\ascr$-measurable
transformation of $X$ such that $\cfw$ is densely
defined and $\hfw > 0$ a.e.\ $[\mu_w]$. Suppose there
exists an $\ascr$-measurable family of probability
measures $P\colon X \times \borel{\rbb_+} \to [0,1]$
that satisfies \eqref{cc}. Then $\cfw$ is subnormal,
$\CFW$ is its quasinormal extension $($see
\eqref{stand3}$)$~ and
   \begin{align} \label{momom}
\hfwn{n}(x) = \int_0^\infty t^n P(x,\D t) \text{ for
every $n \in \zbb_+$ and for $\mu_w$-a.e.\ $x\in X$.}
   \end{align}
   \end{thm}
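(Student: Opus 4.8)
The strategy is to lift the problem to the product space $X \times \rbb_+$ and use the quasinormality criterion of Theorem~\ref{quain}. First I would observe that \eqref{stand2} is in force with the given data, and that $\mu_w \circ \phi^{-1} \ll \mu$ holds because $\cfw$ is well-defined (Proposition~\ref{wco1}); moreover $\hsf < \infty$ a.e.\ $[\mu]$ since $\cfw$ is densely defined (Proposition~\ref{lemS2}). Since $P$ satisfies \eqref{cc}, Lemma~\ref{lemS9} gives $\rho_W \circ \varPhi^{-1} \ll \rho$, so $\CFW$ is a well-defined operator in $L^2(\rho)$, and it tells us $\HSF(x,t) = \chi_{\{\hsf>0\}}(x)\cdot t$ for $\rho$-a.e.\ $(x,t)$. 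Because we are additionally assuming $\hsf > 0$ a.e.\ $[\mu_w]$, Theorem~\ref{main1} (equivalence (vii)$\Leftrightarrow$(i)) shows $P$ also satisfies \eqref{cc-1}, and hence, by the equivalence (i)$\Leftrightarrow$(v) in that theorem, $\HSF(x,t) = t$ for $\rho_W$-a.e.\ $(x,t)$. In particular $\HSF \circ \varPhi = \HSF$ a.e.\ $[\rho_W]$.

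Next I would establish that $\CFW$ is quasinormal. From Lemma~\ref{lemS8}, $\HSF < \infty$ a.e.\ $[\rho]$, and combining this with $\rho_W \circ \varPhi^{-1} \ll \rho$ shows (Proposition~\ref{lemS2} applied in $L^2(\rho)$) that $\CFW$ is densely defined. Then Theorem~\ref{quain}, applied to the triplet $(X\times\rbb_+,\,\atb,\,\rho)$ with weight $W$ and symbol $\varPhi$, says $\CFW$ is quasinormal precisely because $\HSF \circ \varPhi = \HSF$ a.e.\ $[\rho_W]$, which we have just verified. Since quasinormal operators are subnormal, $\CFW$ is subnormal.

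Now I would transfer subnormality back down to $\cfw$. The isometry $U\colon L^2(\mu)\to L^2(\rho)$ of \eqref{stand2} satisfies $U\cfw = \CFW U$ by \eqref{ucu}. Identifying $L^2(\mu)$ with the closed subspace $U(L^2(\mu))$ of $L^2(\rho)$, the operator $\cfw$ is thereby realized as the restriction of $\CFW$ to a (not necessarily invariant) subspace of its domain; since $\CFW$ is quasinormal, it is in particular subnormal, so there is a normal operator $N$ in some $\kk \supseteq L^2(\rho)$ extending $\CFW$, and then $N$ extends $\cfw$ as well. Hence $\cfw$ is subnormal with quasinormal extension $\CFW$. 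Finally, formula \eqref{momom} is exactly condition (ii) of Theorem~\ref{main1}, which is equivalent to (vii), our standing hypothesis; so \eqref{momom} follows immediately.

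The only place requiring genuine care is the verification that the \emph{unbounded} restriction statement $U\cfw = \CFW U$ really does make $\cfw$ subnormal: one must check the domain inclusions, namely that $Uf \in \dz{\CFW}$ for $f \in \dz{\cfw}$ (which is precisely the content of \eqref{ucu} together with Proposition~\ref{lemS1}(i) comparing $L^2((1+\hsf)\D\mu)$ with the pullback of $L^2((1+\HSF)\D\rho)$), and that consequently $Uf \in \dz{N}$ with $Nf = \CFW Uf = U\cfw f$. This is routine given the machinery already developed, and everything else in the proof is a bookkeeping assembly of Lemmas~\ref{lemS8}, \ref{lemS9}, Theorem~\ref{quain}, Theorem~\ref{main1}, and \eqref{ucu}.
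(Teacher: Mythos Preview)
Your proposal is correct and follows essentially the same route as the paper's proof: verify the hypotheses of Theorem~\ref{main1} via Propositions~\ref{wco1} and~\ref{lemS2}, use Theorem~\ref{main1} to obtain both \eqref{momom} and $\HSF\circ\varPhi=\HSF$ a.e.\ $[\rho_W]$, invoke Lemma~\ref{lemS8} for dense definiteness of $\CFW$, apply Theorem~\ref{quain} for quasinormality, and finish with \eqref{ucu}. Your extra discussion of domain inclusions is harmless but already subsumed in the statement of \eqref{ucu}, and the paper simply cites condition~(vi) of Theorem~\ref{main1} directly rather than passing through (v) as you do.
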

   \begin{proof}
By Propositions \ref{wco1} and \ref{lemS2}, the assumptions of Theorem
\ref{main1} are satisfied. Hence \eqref{momom} holds and, by Lemma
\ref{lemS8} and Theorem \ref{quain}, $\CFW$ is quasinormal. Employing
\eqref{ucu} completes the proof.
   \end{proof}
   \begin{rem} \label{22.X.2013Daegu}
It is worth pointing out that the above criterion can
be applied to prove the normality of the
multiplication operator $M_w$. This is possible
because in this particular case we can describe
explicitly an $\ascr$-measurable family $P$ of
probability measures that satisfies \eqref{cc}.
Indeed, since $M_w=\cfw$ with $\phi=\id_X$ (see Sect.\
\ref{pco}(a)), we see that the conditional expectation
$\efw$ acts as the identity mapping,
$\mu_w\circ\phi^{-1}\ll\mu$ and $\hfw=|w|^2$ a.e.\
$[\mu]$. Hence $M_w$ is densely defined (see
Proposition \ref{lemS2}). Set
$P(x,\sigma)=\chi_{\sigma}(|w(x)|^2)$ for $x\in X$ and
$\sigma\in\borel{\rbb_+}$. It is easily seen that $P$
is an $\ascr$-measurable family of probability
measures which satisfies \eqref{cc}. By Theorem
\ref{MAIN1}, $M_w$ is subnormal. It follows from
Proposition \ref{adj} that $M_w^*=M_{\bar w}$. As a
consequence, $M_w$ and $M_w^*$ are subnormal. Since
subnormal operators are hyponormal, we conclude that
$M_w$ is normal.
   \end{rem}
   \section{\label{Sec3.2}Injectivity versus $(\mathrm{CC})$}
In this section, we discuss the question of whether
the consistency condition \eqref{cc} implies the
injectivity of a weighted composition operator. A
parallel question, which is related to \cite[Theorem
9]{b-j-j-sS}, can be stated for composition operators.
We answer the latter (and so the former) question
negatively, showing that there exists a non-injective
composition operators $C_{\phi}$ in an $L^2$-space
which admits an $\ascr$-measurable family of
probability measures satisfying \eqref{cc} for
$w=\boldsymbol{1}$. This is possible in both
infinite-dimensional and finite-dimensional
$L^2$-spaces (including the two-dimensional case).
Hence, by \cite[Corollary 6.3]{b-j-j-sC}, the
consistency condition \eqref{cc} alone is not
sufficient for subnormality of $C_{\phi}$. As shown
below, the consistency condition \eqref{cc} may not
even be sufficient for paranormality of $C_{\phi}$.
The lack of paranormality in turn implies that
$C_{\phi}$ does not generate Stieltjes moment
sequences (see the proof of \cite[Theorem
4.1.1(iii)]{j-j-s0}; see also Sect.\ \ref{GSMS}).
   \begin{exa} \label{pico}
Fix $N \in \nbb \cup \{\infty\}$. Set $X = \{j \in
\zbb_+\colon j \Le N\}$ and $\ascr = 2^X$. Let $\mu$
be an arbitrary finite measure on $\ascr$ such that
$\mu(j) > 0$ for every $j\in X$. Define a mapping
$\phi\colon X \to X$ by $\phi(j)=0$ for every $j\in
X$. Let $C_{\phi}$ be the composition operator in
$L^2(\mu)$ (see Sect.\ \ref{pco}(b)). It is clear that
$C_{\phi}$ is a well-defined rank-one operator and
$\mathsf h_{\phi} = \frac{\mu(X)}{\mu(0)} \,
\chi_{\{0\}}$, which implies that $\|C_{\phi}\| =
\sqrt{\frac{\mu(X)}{\mu(0)}}$ (see Proposition
\ref{lemS1}(v)). Since $\mu(\{\mathsf h_{\phi} = 0\})
> 0$, we infer from Lemma \ref{jadro} that $C_{\phi}$
is not injective. What is more, $C_{\phi}$ is not
paranormal because
   \begin{align*}
\|\chi_{\{0\}}\| \|C_{\phi}^2 \chi_{\{0\}}\| =
\sqrt{\mu(0) \mu(X)} < \mu(X) = \|C_{\phi}
\chi_{\{0\}}\|^2.
   \end{align*}
Continuing the example, we now describe
$\ascr$-measurable families of probability measures
that satisfies \eqref{cc} for $w=\boldsymbol{1}$
(though in the discrete case $\ascr$-measurabili\-ty
is automatic, we preserve the original terminology).
   \begin{lem} \label{cc-opis}
Suppose $X$, $\ascr$, $\mu$ and $\phi$ are as above
and $P\colon X \times \borel{\rbb_+} \to [0,1]$ is an
$\ascr$-measurable family of probability measures.
Then the following assertions are valid{\em :}
   \begin{enumerate}
   \item[(i)] $P$ satisfies \eqref{cc} for $w=\boldsymbol{1}$
if and only if the following conditions hold{\em :}
   \begin{gather} \label{ccA}
P(j,[0,1]) = 0, \quad j\in \nbb \cap [1,N],
   \\ \label{ccB}
\theta := \sum_{j=1}^N \frac{\mu(j)}{\mu(0)} \int_0^\infty \frac{1}{t-1}
P(j,\D t) \Le 1,
   \\ \label{ccC}
P(0, \sigma) = \sum_{j=1}^N \frac{\mu(j)}{\mu(0)} \int_{\sigma}
\frac{1}{t-1} P(j,\D t) + (1-\theta) \delta_1(\sigma), \quad \sigma \in
\borel{\rbb_+},
   \end{gather}
   \item[(ii)]  the following condition is necessary
and sufficient for $P$ to be
$\phi^{-1}(\ascr)$-measurable and to satisfy
\eqref{cc} for $w=\boldsymbol{1}${\em :}
   \begin{align} \label{ccD}
P(j,\cdot) = \delta_{\frac{\mu(X)}{\mu(0)}}(\cdot), \quad j\in X.
   \end{align}
   \end{enumerate}
   \end{lem}
   \begin{proof}
(i) Since $\phi^{-1}(\{0\})=X$, we see that
   \begin{align*}
\mathsf{E}_{\phi}(f)(y) = \frac{1}{\mu(X)}\int_X f \D
\mu
   \end{align*}
for all $y \in X$ and for each function $f\colon X \to
\rbb_+$ (we refer the reader to Section \ref{Sec5.5}
for more information on conditional expectation in the
case of discrete measure spaces). Hence, we have
   \begin{align*}
\mathsf h_{\phi}(\phi(y)) \mathsf{E}_{\phi}(P(\cdot,
\sigma))(y) = \sum_{j=0}^N \frac{\mu(j)}{\mu(0)} P(j,
\sigma), \quad y \in X, \, \sigma \in \borel{\rbb_+}.
   \end{align*}
This implies that $P$ satisfies \eqref{cc} for
$w=\boldsymbol{1}$ if and only if
   \begin{align} \label{cc2}
\int_{\sigma} t P(0, \D t) = \sum_{j=0}^N \frac{\mu(j)}{\mu(0)} P(j,
\sigma), \quad \sigma \in \borel{\rbb_+}.
   \end{align}

Suppose $P$ satisfies \eqref{cc} for
$w=\boldsymbol{1}$. Then, by \eqref{cc2}, we have
   \begin{align} \label{cc3}
\int_{\sigma} (t-1) P(0, \D t) = \sum_{j=1}^N \frac{\mu(j)}{\mu(0)} P(j,
\sigma), \quad \sigma \in \borel{\rbb_+}.
   \end{align}
Substituting $\sigma=[0,1]$ into \eqref{cc3}, we deduce that \eqref{ccA}
holds and $P(0,[0,1))=0$. This and \eqref{cc3} lead to
   \begin{align} \notag
P(0, \sigma) & = \int_{\sigma \cap (1,\infty)} \frac{1}{t-1} (t-1) P(0, \D
t) + P(0, \sigma \cap \{1\})
      \\  \label{cc4}
& = \sum_{j=1}^N \frac{\mu(j)}{\mu(0)} \int_{\sigma} \frac{1}{t-1} P(j,\D
t) + P(0, \{1\}) \delta_{1}(\sigma), \quad \sigma \in \borel{\rbb_+}.
   \end{align}
Since $P(0,\cdot)$ is probabilistic, \eqref{cc4} implies \eqref{ccB} and
\eqref{ccC}.

Conversely, if \eqref{ccA}, \eqref{ccB} and
\eqref{ccC} hold, then \eqref{cc3} is easily seen to
be satisfied. Hence, by \eqref{cc2}, $P$ satisfies
\eqref{cc} for $w=\boldsymbol{1}$.

(ii) Clearly $\phi^{-1}(\ascr) = \{\emptyset, X\}$,
and so $P$ is $\phi^{-1}(\ascr)$-measurable if and
only if for every $\sigma \in \borel{\rbb_+}$, $P(j,
\sigma) = P(0,\sigma)$ for all $j\in X$. If $P$ is
$\phi^{-1}(\ascr)$-measurable and satisfies \eqref{cc}
for $w=\boldsymbol{1}$, then, by \eqref{ccA} and
\eqref{ccC}, $P(0, \{1\}) = 0$ and
   \begin{align*}
P(0, \sigma) = \int_{\sigma} \frac{\sum_{j=1}^N
\frac{\mu(j)}{\mu(0)}}{t-1} P(0,\D t), \quad \sigma
\in \borel{\rbb_+}.
   \end{align*}
This implies that $t=\frac{\mu(X)}{\mu(0)}$ for
$P(0,\cdot)$-a.e.\ $t \in \rbb_+$, which yields
\eqref{ccD}. The converse is obvious. This completes
the proof.
   \end{proof}
Going back to our example, we note that if
$\{P(j,\cdot)\}_{j=1}^N$ are Borel probability
measures on $\rbb_+$ which satisfy \eqref{ccA} and
\eqref{ccB}, then the formula \eqref{ccC} defines a
Borel probability measure $P(0, \cdot)$ on $\rbb_+$.
In this way, in view of Lemma \ref{cc-opis}, we get an
$\ascr$-measurable family of probability measures
$P\colon X \times \borel{\rbb_+} \to [0,1]$ that
satisfies \eqref{cc} for $w=\boldsymbol{1}$. In
particular, any sequence $\{P(j,\cdot)\}_{j=1}^N$ of
Borel probability measures on $\rbb_+$ such that
$\supp P(j,\cdot) \subseteq
\big[\frac{\mu(X)}{\mu(0)}, \infty\big)$ for all $j
\in \nbb \cap [1,N]$ fulfils \eqref{ccA} and
\eqref{ccB}, which gives rise to a family $P$
satisfying \eqref{cc} for $w=\boldsymbol{1}$. Hence,
there are plenty of $\ascr$-measurable families of
probability measures $P$ satisfying \eqref{cc} for
$w=\boldsymbol{1}$. On the other hand, if
$\{P(j,\cdot)\}_{j=1}^N$ are Borel probability
measures on $\rbb_+$ that satisfy \eqref{ccA} and the
inequality $\int_0^\infty \frac{1}{t-1} P(j,\D t) <
\infty$ for every $j \in \nbb \cap [1,N]$, then we can
always find a sequence $\{\mu(j)\}_{j=0}^N \subseteq
(0,\infty)$ (and consequently a measure $\mu$) such
that \eqref{ccB} holds. This again gives rise to a
family $P$ satisfying \eqref{cc} for
$w=\boldsymbol{1}$.
   \end{exa}
   \section{\label{sectcc-1}The condition
$(\mathrm{CC}^{-1})$} If \eqref{stand3} holds and
$\cfw$ is densely defined, then one can consider the
following version of the condition \eqref{cc-1}:
   \begin{multline}  \tag{CC$^{-1}$}  \label{cc-1mu}
\big(\efw(P(\cdot, \sigma))\circ \phi^{-1}\big) (x)
\cdot \hfw(x) = \int_{\sigma} t P(x,\D t) \quad
\text{for $\mu$-a.e.\ $x \in X$}
   \\
\text{and for every $\sigma \in \borel{\rbb_+}$.}
   \end{multline}
It follows from Lemma \ref{nuklear} that
   \begin{align} \label{cc-1to-cc}
\text{\eqref{cc-1mu} implies \eqref{cc} with the same $P$.}
   \end{align}
Our current goal is to prove Theorem \ref{main2}, an analogue of Theorem
\ref{main1}, in which we will give some characterizations of the condition
\eqref{cc-1mu}. The role of \eqref{cc-1mu} and its relationship to
\eqref{cc} will be explained in the subsequent section.

We begin by proving the following lemma.
   \begin{lem} \label{lemS12}
Suppose \eqref{stand3} holds, $\mu_w \circ \phi^{-1}
\ll \mu$, $\hfw < \infty$ a.e.\ $[\mu]$ and $P$
satisfies \eqref{cc}. Then for every Borel function
$f\colon \rbb_+ \to \rbop$,
   \begin{multline} \label{calka2}
\bigg(\efw\bigg(\int_0^\infty f(t) P(\cdot, \D
t)\bigg)\circ \phi^{-1}\bigg) (x) \cdot \hfw(x) =
\chi_{\{\hfw > 0\}}(x) \int_0^\infty t \cdot f(t)
P(x,\D t) \;\;\;
   \\
\text{ for $\mu$-a.e.\ $x \in X$}.
   \end{multline}
   \end{lem}
   \begin{proof}
It follows from Proposition \ref{lemS4} and Lemma \ref{lemS10} that
\eqref{calka2} holds for every simple Borel function $f\colon \rbb_+ \to
\rbb_+$. Let $f\colon \rbb_+ \to \rbop$ be an arbitrary Borel function.
Take a sequence $\{s_n\}_{n=1}^\infty$ of simple Borel functions
$s_n\colon \rbb_+ \to \rbb_+$ which is monotonically increasing and
pointwise convergent to $f$. Arguing as in the proof of Lemma
\ref{lemS10.1}, we see that
   \begin{multline*}
\int_{\varDelta} \bigg(\efw\bigg(\int_0^\infty f(t)
P(\cdot, \D t)\bigg)\circ \phi^{-1}\bigg) (x) \cdot
\hfw(x) \D \mu(x)
   \\
= \lim_{n \to \infty} \int_{\varDelta}
\bigg(\efw\bigg(\int_0^\infty s_n(t) P(\cdot, \D
t)\bigg)\circ \phi^{-1}\bigg) (x) \cdot \hfw(x) \D
\mu(x), \quad \varDelta \in \ascr.
   \end{multline*}
Since \allowdisplaybreaks
   \begin{multline*}
\int_{\varDelta} \chi_{\{\hfw > 0\}}(x) \int_0^\infty
t \cdot f(t) P(x,\D t) \mu(\D x)
   \\
= \lim_{n \to \infty} \int_{\varDelta} \chi_{\{\hfw >
0\}}(x) \int_0^\infty t \cdot s_n(t) P(x,\D t) \mu(\D
x), \quad \varDelta \in \ascr,
   \end{multline*}
and $\mu$ is $\sigma$-finite, we get \eqref{calka2}. This completes the
proof.
   \end{proof}
We are now in a position to formulate and prove the
aforementioned analogue of Theorem \ref{main1}. Note
that the relations ``a.e.'' appearing in the
conditions (i)-(vii) of this theorem are related to
the measures $\mu_w$ and $\rho_W$, while their
counterparts in Theorem \ref{main2} below are related
to the measures $\mu$ and $\rho$.
   \begin{thm} \label{main2}
Suppose \eqref{stand3} holds, $\mu_w \circ \phi^{-1}
\ll \mu$, $\hfw < \infty$ a.e.\ $[\mu]$ and $P$
satisfies \eqref{cc}. Consider the following
conditions{\em :}
   \begin{enumerate}
   \item[(i$^\star$)] $P$ satisfies  \eqref{cc-1mu},
   \item[(ii$^\star$)] $\hfwn{n}(x) =
\int_0^\infty t^n P(x,\D t)$ for every $n \in \zbb_+$
and for $\mu$-a.e.\ $x\in X$,
   \item[(iii$^\star$)] $\int_0^\infty t P(x,\D t) = 0$ for
$\mu$-a.e.\ $x \in \{\hfw = 0\}$,
   \item[(iv$^\star$)] $P(x,\cdot) = \delta_0(\cdot)$ for
$\mu$-a.e.\ $x \in \{\hfw = 0\}$,
   \item[(v$^\star$)] $\rho_W \circ \varPhi^{-1} \ll \rho$ and
$\HFW(x,t) = t$ for $\rho$-a.e.\ $(x,t) \in X \times
\rbb_+$,
   \item[(vi$^\star$)] $\rho_W \circ \varPhi^{-1} \ll \rho$ and
   there exist an $\ascr$-measurable function $g
\colon X\times \rbb_+ \to \rbb_+$ such that $g=\HFW$
a.e.\ $[\rho]$ and $g \circ \varPhi = g$ a.e.\
$[\rho]$,
   \item[(vii$^\star$)] $\hfw > 0$ a.e.\ $[\mu]$.
   \end{enumerate}
Then the conditions {\em
\mbox{(i$^\star$)-(v$^\star$)}} are equivalent and the
implications
   \begin{align} \label{futro}
\text{\em
\mbox{(v$^\star$)}$\Rightarrow$\mbox{(vi$^\star$)},
\mbox{(vii$^\star$)}$\Rightarrow$\mbox{(i$^\star$)}
and
\mbox{(vii$^\star$)}$\Rightarrow$\mbox{(vi$^\star$)}}
   \end{align}
hold. Moreover, if {\em \mbox{(i$^\star$)}} holds,
then $\hfw > 0$ a.e.\ $[\mu_w]$, $\cfw$ is subnormal
and $\CFW$ is a quasinormal extension of $\cfw$.
   \end{thm}
   \begin{proof}
(i$^\star$)$\Rightarrow$(iii$^\star$) Substituting
$\sigma=\rbb_+$ into \eqref{cc-1mu}, we obtain
   \begin{align*}
\int_0^\infty t P(x, \D t) &= \big(\efw(P(\cdot,
\rbb_+)) \circ \phi^{-1}\big)(x) \cdot \hfw(x)
   \\
&\hspace{-.8ex}\overset{\eqref{jedynka}}= \hfw(x)
\quad \text{for $\mu$-a.e.\ $x \in X$.}
   \end{align*}
This implies \mbox{(iii$^\star$)}.

(iii$^\star$)$\Rightarrow$(v$^\star$) By the
definitions of $\rho$ and $\rho_W$, we have
\allowdisplaybreaks
   \begin{align}  \notag
\int_{\varPhi^{-1}(\varDelta \times \sigma)} \D \rho_W & =
\int_{\phi^{-1}(\varDelta)} |w(x)|^2 P(x,\sigma) \D \mu(x)
   \\ \notag
& = \int_{\phi^{-1}(\varDelta)} P(x,\sigma) \D \mu_w(x)
   \\  \notag
&= \int_{\phi^{-1}(\varDelta)} \efw(P(\cdot,\sigma))
\D \mu_w
   \\ \notag
&\hspace{-.5ex} \overset{\mbox{\small $(\dag)$}}=
\int_{\varDelta} \efw(P(\cdot,\sigma))\circ \phi^{-1}
\cdot \hfw \D \mu
   \\ \notag
&\hspace{-.5ex}\overset{\mbox{\small $(\ddag)$}}=
\int_{\varDelta} \int_{\sigma} t P(x,\D t) \D \mu(x)
   \\ \label{ptaszek}
& = \int_{\varDelta \times \sigma} t \D \rho(x,t),
\quad \varDelta \in \ascr, \, \sigma \in
\borel{\rbb_+},
   \end{align}
where $(\dag)$ follows from \eqref{l2} and
\eqref{fifi}, while $(\ddag)$ can be deduced from
Lemma \ref{lemS10} and \mbox{(iii$^\star$)}. Take a
sequence $\{X_n\}_{n=1}^\infty \subseteq \ascr$ such
that $X_n \nearrow X$ as $n \to \infty$, and $\mu(X_k)
< \infty$ for every $k\in \nbb$. Set $\sigma_n =
[0,n]$ for $n \in \nbb$. Then clearly $\{X_n \times
\sigma_n\}_{n=1}^\infty \subseteq \ascr \otimes
\borel{\rbb_+}$, $X_n \times \sigma_n \nearrow X
\times \rbb_+$ as $n \to \infty$ and
   \begin{align*}
\int_{X_n \times \sigma_n} t \D \rho(x,t) = \int_{X_n} \int_{\sigma_n} t
P(x, \D t) \D \mu(x) \Le n \cdot \mu(X_n) < \infty, \quad n\in \nbb.
   \end{align*}
Hence, by \eqref{ptaszek} and Lemma \ref{2miary}, the
measures $E \mapsto \int_{\varPhi^{-1}(E)} \D \rho_W$
and $E \mapsto \int_{E} t \D \rho(x,t)$ coincide on
$\ascr \otimes \borel{\rbb_+}$. As a consequence,
$\rho_W \circ \varPhi^{-1} \ll \rho$ and $\HFW(x,t)=t$
for $\rho$-a.e.\ $(x,t) \in X \times \rbb_+$. This
yields \mbox{(v$^\star$)}.

(v$^\star$)$\Rightarrow$(i$^\star$) By the definitions
of $\rho$ and $\rho_W$, we have \allowdisplaybreaks
   \begin{align*}
\int_{\varDelta} \int_{\sigma} t P(x,\D t) \D \mu(x) &
= \int_{\varDelta \times \sigma} t \D \rho(x,t)
   \\
&\hspace{-1ex}\overset{\mbox{\small (v$^\star$)}}=
\int_{\varDelta \times \sigma} \HFW \D \rho
   \\
& = \int_{\varPhi^{-1}(\varDelta \times \sigma)} 1 \D
\rho_W =
   \\
&= \int_{\phi^{-1}(\varDelta)} P(x,\sigma) \D\mu_w (x)
   \\
&= \int_{\phi^{-1}(\varDelta)} \efw(P(\cdot,\sigma))
\D\mu_w
   \\
&\hspace{-.4ex}\overset{\mbox{\small$(\dag)$}}=
\int_{\varDelta} \efw(P(\cdot,\sigma)) \circ \phi^{-1}
\cdot \hfw \D\mu, \quad \varDelta \in \ascr, \sigma
\in \borel{\rbb_+},
   \end{align*}
where $(\dag)$ can be inferred from \eqref{l2} and
\eqref{fifi}. Since $\mu$ is $\sigma$-finite, we get
\mbox{(i$^\star$)}.

(iv$^\star$)$\Rightarrow$(ii$^\star$) Set $H_n(x) =
\int_0^\infty t^n P(x,\D t)$ for $x\in X$ and $n\in
\zbb_+$. Clearly, by \eqref{pme}, each $H_n\colon X
\to \rbop$ is $\ascr$-measurable. It follows from
\mbox{(iv$^\star$)} that
   \begin{align} \label{hnx0}
H_n(x) = 0 \text{ for $\mu$-a.e.\ $x \in \{\hfw=0\}$
and for all $n\in \nbb$.}
   \end{align}
Using induction, we will show that for every $n\in \zbb_+$,
   \begin{align} \label{hnhni}
H_n(x) = \hfwn{n}(x) \text{ for $\mu$-a.e.\ $x \in
X$.}
   \end{align}
The case of $n=0$ is obvious. Now assume that \eqref{hnhni} holds for a
fixed $n \in \zbb_+$. Then
   \begin{align*}
H_{n+1}(x) &\overset{\eqref{hnx0}} = \chi_{\{\hfw
> 0\}} (x) \cdot \int_0^\infty t \cdot t^{n} P(x,
\D t)
   \\
&\overset{\eqref{calka2}}=
\bigg(\efw\bigg(\int_0^\infty t^n P(\cdot, \D
t)\bigg)\circ \phi^{-1}\bigg) (x) \cdot \hfw(x)
   \\
&\overset{\eqref{hnhni}}= \big(\efw(\hfwn{n}) \circ
\phi^{-1}\big) (x) \cdot \hfw(x)
   \\
&\overset{\eqref{rec1}}= \hfwn{n+1}(x) \qquad
\text{for $\mu$-a.e.\ $x\in X$,}
   \end{align*}
which completes the induction argument.

(ii$^\star$)$\Rightarrow$(iii$^\star$) Clearly
$\int_0^\infty t P(x,\D t) = \hfw(x) = 0$ for
$\mu$-a.e.\ $x\in \{\hfw = 0\}$.

(iii$^\star$)$\Rightarrow$(iv$^\star$) Use Lemma
\ref{lemS3}.

Summarizing, we have shown that the conditions
\mbox{(i$^\star$)}-\mbox{(v$^\star$)} are equivalent.

(v$^\star$)$\Rightarrow$(vi$^\star$) Observe that the
function $g\colon X\times \rbb_+ \to \rbb_+$ defined
by $g(x,t)=t$ for $(x,t) \in X\times \rbb_+$ has the
required properties.

\mbox{(vii$^\star$)}$\Rightarrow$\mbox{(i$^\star$)}
This follows from the chain of implications
\mbox{(vii$^\star$)}$\Rightarrow$\mbox{(iii$^\star$)}$\Rightarrow$\mbox{(i$^\star$)}.

(vii$^\star$)$\Rightarrow$(vi$^\star$) For this, note
that
\mbox{(vii$^\star$)}$\Rightarrow$\mbox{(i$^\star$)}$\Rightarrow$\mbox{(v$^\star$)}$\Rightarrow$\mbox{(vi$^\star$)}.

To prove the ``moreover'' part, assume that
\mbox{(i$^\star$)} holds. By \mbox{(v$^\star$)} and
Lemma \ref{nuklear} (applied to $\rho$ and $\rho_W$ in
place of $\mu$ and $\mu_w$), we see that $\HFW \circ
\varPhi = \HFW$ a.e.\ $[\rho_W]$. According to
Proposition \ref{lemS2}, Theorem \ref{quain} and
\eqref{ucu}, $\CFW$ is a quasinormal extension of
$\cfw$ and consequently $\cfw$ is subnormal. This and
Corollary \ref{hipinj} imply that $\hfw > 0$ a.e.\
$[\mu_w]$, which completes the proof.
   \end{proof}
   \begin{rem} \label{rem1}
Under the assumptions of Theorem \ref{main2}, if
$\cfw$ is subnormal, then the condition
\mbox{(iii$^\star$)} of this theorem is equivalent to
   \begin{enumerate}
   \item[(iii$^\prime$)] $\int_0^\infty t P(x,\D t) = 0$ for
$\mu$-a.e.\ $x \in \{\hfw = 0\} \cap \{w=0\}$.
   \end{enumerate}
Indeed, if (iii$^\prime$) holds, then by Corollary
\ref{hipinj}, $\hfw > 0$ a.e.\ $[\mu_w]$, and
consequently $\mu(\{\hfw = 0\} \cap \{w \neq 0\}) =
0$. Hence (iii$^\star$) holds. The reverse implication
is obvious.
   \end{rem}
Comparing Theorems \ref{main1} and \ref{main2}, one
can ask whether any of the conditions
\mbox{(vi$^\star$)} and \mbox{(vii$^\star$)} is
equivalent to \mbox{(iii$^\star$)}. The answer is
negative. What is more, the conditions (vi$^\star$)
and (vii$^\star$) are not equivalent. All this will be
shown in Example~ \ref{rem0+}.
   \section{\label{sec2.4}Subnormality via
$(\mathrm{CC}^{-1})$} In this section, we will show
that a densely defined weighted composition operator
that admits an $\ascr$-measurable family of
probability measures $P$ satisfying \eqref{cc-1mu} is
subnormal (see Theorem \ref{MAIN2}). Moreover, we will
prove that the condition \eqref{cc-1mu} is equivalent
to the conjunction of the conditions \eqref{cc} and
``$\hfw > 0$ a.e.\ $[\mu_w]$'', not necessarily with
the same $P$ (see Theorem \ref{MAIN2} again and Remark
\ref{zzz}). In other words, if the assumptions of
Theorem \ref{MAIN1} are satisfied, then the family $P$
appearing therein can always be modified so as to
satisfy \eqref{cc-1mu}. In particular, by Theorem
\ref{main2}, the so-modified $P$ satisfies the
condition \mbox{(vi$^\star$)} of this theorem.

First, we propose a method of modifying the family $P$
under which the consistency condition \eqref{cc} is
preserved.
   \begin{lem} \label{lemS13}
Suppose \eqref{stand3} holds, $\mu_w \circ \phi^{-1}
\ll \mu$, $\hfw < \infty$ a.e.\ $[\mu]$ and $P$
satisfies \eqref{cc}. Let $Q \colon X \times
\borel{\rbb_+} \to [0,1]$ be an $\ascr$-measurable
family of probability measures such that
   \begin{align} \label{odw}
\text{$Q(x,\cdot) = P(x,\cdot)$ for $\mu$-a.e.\ $x\in
\{\hfw > 0\}$.}
   \end{align}
Then the following conditions are equivalent{\em :}
   \begin{enumerate}
   \item[(i)] $Q$ satisfies  \eqref{cc} with
$Q$ in place of $P$,
   \item[(ii)] $\efw(Q(\cdot,\sigma)) = \efw(P(\cdot,\sigma))$
a.e.\ $[\mu_w]$ for every $\sigma \in \borel{\rbb_+}$.
   \end{enumerate}
Moreover, if \eqref{odw} holds with $\mu_w$ in place of $\mu$ and
   \begin{align} \label{odw2}
\int_{\phi^{-1}(\varDelta) \cap \{\hfw=0\}}
Q(x,\sigma) \D \mu_w(x) = \int_{\phi^{-1}(\varDelta)
\cap \{\hfw=0\}} P(x,\sigma) \D \mu_w(x)
   \end{align}
for all $\varDelta \in \ascr$ and $\sigma \in \borel{\rbb_+}$, then {\em
(i)} is valid.
   \end{lem}
   \begin{proof}
First note that
   \begin{align*}
\int_{\phi^{-1}(\varDelta)} \int_{\sigma} t Q(\phi(x),
& \D t) \D \mu_w(x) \overset{\eqref{l1}}=
\int_{\varDelta} \hfw(x) \int_{\sigma} t Q(x, \D t) \D
\mu(x)
   \\
& = \int_{\varDelta \cap \{\hfw > 0\}} \hfw(x)
\int_{\sigma} t Q(x, \D t) \D \mu(x)
   \\
& \hspace{-.7ex}\overset{\eqref{odw}} =
\int_{\varDelta \cap \{\hfw
> 0\}} \hfw(x) \int_{\sigma} t P(x, \D t) \D \mu(x)
   \\
& = \int_{\phi^{-1}(\varDelta)} \int_{\sigma} t P(\phi(x), \D t) \D
\mu_w(x), \quad \varDelta \in \ascr, \, \sigma \in \borel{\rbb_+}.
   \end{align*}
Since the measure $\mu_w|_{\phi^{-1}(\ascr)}$ is
$\sigma$-finite (see Proposition \ref{lemS2}), we have
   \begin{align*}
\int_{\sigma} t Q(\phi(x), \D t) = \int_{\sigma} t P(\phi(x), \D t) \quad
\text{for $\mu_w$-a.e.\ $x\in X, \, \sigma \in \borel{\rbb_+}$.}
   \end{align*}
Using the above equality, we verify that (i) is equivalent to (ii).

Now we prove the ``moreover'' part. First, observe
that $\phi^{-1}(\varDelta)$ is a disjoint union of the
sets $\phi^{-1}(\varDelta) \cap \{\hfw=0\}$ and
$\phi^{-1}(\varDelta) \cap \{\hfw>0\}$, then apply
\eqref{odw} with $\mu_w$ in place of $\mu$ and finally
use \eqref{odw2}. What we get is
   \begin{align*}
\int_{\phi^{-1}(\varDelta)} Q(x, \sigma) \D \mu_w(x) =
\int_{\phi^{-1}(\varDelta)} P(x, \sigma) \D \mu_w(x), \quad \varDelta \in
\ascr.
   \end{align*}
This implies (ii) and completes the proof.
   \end{proof}
The following realization of the above general scheme is particularly
useful.
   \begin{pro} \label{lemS14}
Suppose \eqref{stand3} holds, $\mu_w \circ \phi^{-1}
\ll \mu$, $\hfw < \infty$ a.e.\ $[\mu]$ and $P$
satisfies \eqref{cc}. Set $\varTheta_{00} = \{\hfw =
0\}\cap \{w=0\}$. Let $\{\tau_x\}_{x \in
\varTheta_{00}}$ be a family of Borel probability
measures on $\rbb_+$ such that the mapping
$\varTheta_{00} \ni x \longmapsto \tau_x(\sigma) \in
[0,1]$ is $\ascr$-measurable for every $\sigma \in
\borel{\rbb_+}$. Define the mapping $Q\colon X \times
\borel{\rbb_+} \to [0,1]$~ by
   \begin{align*}
Q(x,\sigma) =
   \begin{cases}
   P(x,\sigma) & \text{if} \quad x \in X \setminus \varTheta_{00},
   \\
   \tau_x(\sigma) & \text{if} \quad x \in \varTheta_{00},
   \end{cases}
\quad \sigma \in \borel{\rbb_+}.
   \end{align*}
Then $Q$ is an $\ascr$-measurable family of probability measures which
satisfies \eqref{cc}. In particular, the family $\{\tau_x\}_{x \in
\varTheta_{00}}$ given by $\tau_x = \delta_0$ for $x \in \varTheta_{00}$
meets our requirements and $\int_0^\infty t Q(x,\D t) = 0$ for all $x \in
\varTheta_{00}$.
   \end{pro}
   \begin{proof}
By definition, $Q$ is an $\ascr$-measurable family of probability measures
which satisfies \eqref{odw} and has the property that for all $\sigma \in
\borel{\rbb_+}$ and $\varDelta \in \ascr$,
   \begin{align*}
\int\limits_{\phi^{-1}(\varDelta)} Q(x,\sigma) \D \mu_w(x) =
\int\limits_{\phi^{-1}(\varDelta)\cap \{w\neq 0\}} Q(x,\sigma) \D \mu_w(x)
= \int\limits_{\phi^{-1}(\varDelta)} P(x,\sigma) \D \mu_w(x).
   \end{align*}
Hence, by Lemma \ref{lemS13}, $Q$ satisfies \eqref{cc}. The ``moreover''
part is obvious.
   \end{proof}
   \begin{cor} \label{lemS15}
Suppose \eqref{stand3} holds, $\mu_w \circ \phi^{-1}
\ll \mu$, $\hfw < \infty$ a.e.\ $[\mu]$ and $P$
satisfies \eqref{cc}. Then the following assertions
are valid{\em :}
   \begin{enumerate}
   \item[(i)]  if  $\hfw > 0$ a.e.\ $[\mu_w]$, then
there exists an $\ascr$-measurable family of
probability measures $Q\colon X \times \borel{\rbb_+}
\to [0,1]$ which satisfies \eqref{cc} with $Q$ in
place of $P$ and such that
   \begin{align} \label{fish1}
\int_0^\infty t Q(x,\D t) = 0 \quad \text{for
$\mu$-a.e.\ $x \in \{\hfw=0\}$,}
   \end{align}
   \item[(ii)] if $\mu(\varTheta_{00}) > 0$ with
$\varTheta_{00} :=\{\hfw = 0\} \cap \{w=0\}$, then
there exists an $\ascr$-measurable family of
probability measures $Q\colon X \times \borel{\rbb_+}
\to [0,1]$ which satisfies \eqref{cc} with $Q$ in
place of $P$ and such that
   \begin{align} \label{fish2}
\int_0^\infty t Q(x,\D t) = \infty \quad \text{for
$\mu$-a.e.\ $x \in \varTheta_{00}$.}
   \end{align}
   \end{enumerate}
   \end{cor}
   \begin{proof}
(i) By our assumption, $\hfw > 0$ a.e.\ $[\mu]$ on
$\{w \neq 0\}$ and consequently $\mu(\{\hfw=0\} \cap
\{w\neq 0\}) =0$. Applying Proposition \ref{lemS14}
with $\tau_x=\delta_0$, we get \eqref{fish1}.

(ii) Take any Borel probability measure $\tau$ on
$\rbb_+$ such that $\int_0^\infty t \D \tau(t) =
\infty$ (e.g., $\tau = \frac{6}{\pi^2}
\sum_{n=1}^\infty \frac{1}{n^2} \delta_n$). Applying
Proposition \ref{lemS14} with $\tau_x = \tau$ for
every $x \in \varTheta_{00}$, we get \eqref{fish2}.
This completes the proof.
   \end{proof}
 \begin{rem} \label{4sets}
In view of Remark \ref{rem1} and Corollary
\ref{lemS15}, it is of some interest to know which of
the following sets allow for modifications (modulo
$\mu$) of $P$ that preserve \eqref{cc}:
   \allowdisplaybreaks
   \begin{align*}
\varTheta_{0+} & =\{\hfw = 0\} \cap \{w \neq 0\}, &
\varTheta_{++} = \{\hfw > 0\} \cap \{w \neq 0\},
   \\
\varTheta_{00} & = \{\hfw = 0\} \cap \{w=0\}, &
\varTheta_{+0} = \{\hfw > 0\} \cap \{w=0\}.
   \end{align*}
First, we consider the case when $\cfw$ is bounded and subnormal. Then, by
Corollary \ref{hipinj}, $\mu(\varTheta_{0+})=0$. According to Theorem
\ref{bsubn}, $\cfw$ has an $\ascr$-measurable family of probability
measures $P\colon X \times \borel{\rbb_+} \to [0,1]$ satisfying
\eqref{cc}, and each such family is uniquely determined (modulo $\mu$) on
$\varTheta_{++}$. Also, in general, such $P$ may happen to be uniquely
determined (modulo $\mu$) on $\varTheta_{+0}$ with $\mu(\varTheta_{+0})
> 0$ (see Example \ref{quasif}). This means that $\varTheta_{00}$ is
the largest subset of $X$ (modulo $\mu$) that allows
for modifications of $P$ that preserve \eqref{cc} (see
Proposition \ref{lemS14}). However, if $\cfw$ is not
subnormal, then the set $\varTheta_{0+}$ may have
positive $\mu$-measure. This may happen even in the
case of a composition operator $C_{\phi}$ which admits
an $\ascr$-measurable family of probability measures
$P$ satisfying \eqref{cc}; in fact, $C_{\phi}$ may
have plenty of such families (see Example \ref{pico}).

The following instances completely illustrate the
interplay between $\mu$-positivity and $\mu$-nullity
of the sets defined above in the case of subnormal
weighted composition operators that admit
$\ascr$-measurable families of probability measures
$P$ satisfying \eqref{cc} (in fact, most of these
operators are quasinormal, which, even in the
unbounded case, admit $\ascr$-measurable families of
probability measures $P$ satisfying \eqref{cc}, see
Proposition \ref{munich2}). Recall that in this case
$\mu(\varTheta_{0+}) = 0$.

1) $\mu(\varTheta_{++})=0$. Then $\mu_w=0$, which, by
Proposition \ref{lemS1}(vi), implies that
$\mu(\varTheta_{+0})=0$ and $\cfw=0$ (clearly,
$\cfw=0$ implies $\mu(\varTheta_{++})=0$). If $\cfw$
is a composition operator, then evidently
$\mu(\varTheta_{00})=0$. However, in the weighted
case, it may happen that $\mu(\varTheta_{00}) > 0$
(e.g., for the zero multiplication operator).

2) $\mu(\varTheta_{++}) > 0$. Then each of the following cases may occur:
   \begin{enumerate}
   \item[$\bullet$] $\mu(\varTheta_{00})=0$ and
$\mu(\varTheta_{+0})=0$ (e.g., for a composition operator),
   \item[$\bullet$] $\mu(\varTheta_{00})=0$
and $\mu(\varTheta_{+0})> 0$ (e.g., for $C_{\zeta,v}$ as in Remark
\ref{Zenchce}),
   \item[$\bullet$] $\mu(\varTheta_{00})> 0$ and
$\mu(\varTheta_{+0})=0$ (e.g., for a non-injective multiplication
operator),
   \item[$\bullet$] $\mu(\varTheta_{00}) > 0$ and
$\mu(\varTheta_{+0}) > 0$ (e.g., for $\cfw$ as in Example \ref{quasif}).
   \end{enumerate}
   \end{rem}
Theorem \ref{MAIN2} below, which is the main result of
this paper, extends the criterion for subnormality of
unbounded weighted composition operators given in
Theorem \ref{MAIN1}. The quasi-moment formula
\eqref{momomu} below generalizes that for composition
operators established in \cite[Theorem 17]{b-j-j-sS}.
   \begin{thm} \label{MAIN2}
Let $(X,\ascr,\mu)$ be a $\sigma$-finite measure space, $w$ be an
$\ascr$-measurable complex function on $X$ and $\phi$ be an
$\ascr$-measurable transformation of $X$ such that $\cfw$ is densely
defined. Then the following three conditions are equivalent{\em :}
   \begin{enumerate}
   \item[(i)] $\hfw > 0$ a.e.\ $[\mu_w]$ and there
exists $P\colon X \times \borel{\rbb_+} \to [0,1]$, an
$\ascr$-measur\-able family of probability measures, which satisfies
\eqref{cc},
   \item[(ii)] there exists $P\colon X \times
\borel{\rbb_+} \to [0,1]$, an $\ascr$-measurable family of probability
measures, which satisfies \eqref{cc} and the condition below
   \begin{align} \label{momomu}
\hfwn{n}(x) = \int_0^\infty t^n P(x,\D t) \quad
\text{for every $n \in \zbb_+$ and for $\mu$-a.e.\
$x\in X$,}
   \end{align}
   \item[(iii)] there exists $P\colon X \times
\borel{\rbb_+} \to [0,1]$, an $\ascr$-measurable family of probability
measures, which satisfies \eqref{cc-1mu}.
   \end{enumerate}
Moreover, if {\em (iii)} holds, then $\cfw$ is
subnormal and $\CFW$ is its quasinormal extension
$($see \eqref{stand3}$)$.
   \end{thm}
   \begin{proof}
(i)$\Rightarrow$(ii) Apply Corollary \ref{lemS15}(i)
and Theorem \ref{main2}.

(ii)$\Rightarrow$(iii) Apply Theorem \ref{main2} again.

(iii)$\Rightarrow$(i) Multiplying both sides of the
equality in \eqref{cc-1mu} by $\chi_{\{\hfw > 0\}}$,
we get
   \begin{align*}
\big(\efw(P(\cdot, \sigma))\circ \phi^{-1}\big) (x)
\cdot \hfw(x) = \chi_{\{\hfw > 0\}}(x) \int_{\sigma} t
P(x,\D t) \text{ for $\mu$-a.e.\ $x \in X$},
   \end{align*}
for every $\sigma \in \borel{\rbb_+}$. This combined
with Lemma \ref{lemS10} implies that $P$ satisfies
\eqref{cc}. Substituting $\sigma = \rbb_+$ into
\eqref{cc-1mu}, we deduce that $\int_0^\infty t P(x,\D
t) = 0$ for $\mu$-a.e.\ $x \in \{\hfw = 0\}$. Hence,
by Theorem \ref{main1}, $\hfw > 0$ a.e.\ $[\mu_w]$,
which gives (i).

The ``moreover'' part follows from Theorem \ref{MAIN1}. This completes the
proof.
   \end{proof}
   \begin{rem} \label{zzz}
A close inspection of the proof of Theorem \ref{MAIN2}
reveals that the conditions (ii) and (iii) are
equivalent with the same $P$. Similarly, (ii) implies
(i) with the same $P$. However, in general, (i) does
not imply (ii) with the same $P$ (see Example
\ref{rem0+}).
   \end{rem}
The following corollary is related to \cite[Theorem
7]{b-j-j-sS} (see the equivalence
(i)$\Leftrightarrow$(iv) therein).
   \begin{cor}\label{nicto2} Suppose
\eqref{stand3} holds and $\cfw$ is densely defined.
Consider the following two assertions{\em :}
   \begin{enumerate}
   \item[(i)] $P$ satisfies \eqref{cc} and $\hfw
> 0$ a.e.\ $[\mu_w]$,
   \item[(ii)] $P$ satisfies \eqref{cc-1}.
   \end{enumerate}
Then {\em (i)} implies {\em (ii)}. Moreover, if $w \neq 0$ a.e.\ $[\mu]$,
then {\em (ii)} implies {\em (i)}.
   \end{cor}
   \begin{proof}
That (i) implies (ii) follows from Theorem
\ref{main1}. The moreover ``part'' can be deduced from
the implication (iii)$\Rightarrow$(i) of Theorem
\ref{MAIN2} (see Remark~ \ref{zzz}).
   \end{proof}
   \chapter{$C^\infty$-vectors}
In this chapter, we turn our interest to weighted
composition operators that have sufficiently many
$C^\infty$-vectors. In Section \ref{sec4.1}, we give
necessary and sufficient conditions for the $n$th
power of a weighted composition operator $\cfw$ to be
densely defined or to be closed (see Lemmas
\ref{lemS11p} and \ref{13-12-13}, and Proposition
\ref{potegi-p}). The question of when
$C^\infty$-vectors are dense in $L^2(\mu)$ is answered
in Theorem \ref{Mittag}. Section \ref{GSMS} is devoted
to characterizing weighted composition operators
generating Stieltjes moment sequences (see Theorem
\ref{gsms}). Finally, in Section \ref{Sec4.3}, we
provide a new characterization of subnormality of
bounded weighted composition operators (see Theorem
\ref{bsubn}).
   \section{\label{sec4.1}Powers of $\cfw$}
Some basic properties of powers of unbounded
composition operators have been established in
\cite[Sect.\ 4]{b-j-j-sC}. As shown in this section,
most of them, but not all, remain true for powers of
unbounded weighted composition operators.

Our first aim is to give necessary and sufficient
conditions for the $n$th power of a weighted
composition operator $\cfw$ to be densely defined (see
\eqref{Zak} for the definition of $\widehat w_n$).
   \begin{lem} \label{lemS11p}
Suppose \eqref{stand2} holds. Then the following
assertions are valid{\em :}
   \begin{enumerate}
   \item[(i)] $\int_X |f g| \, \hfwn{n} \D \mu< \infty$
and $\is{\cfw^n f}{\cfw^n g} = \int_X f\bar g \,
\hfwn{n} \D \mu$ for all $f,g \in \dz{\cfw^n}$ and
$n\in \zbb_+,$
   \item[(ii)] $\|f\|_{\cfw^n}^2 = \int_X |f|^2
(1 + \hfwn{n}\big) \D \mu$ for all $f \in \dz{\cfw^n}$
and $n\in \nbb$,
   \item[(iii)] $\dz{\cfw^n}  =
L^2\big(\big(\sum_{j=0}^n \hfwn{j}\big)\D \mu\big)$
for every $n\in \zbb_+$,
   \item[(iv)] for every $n \in \nbb$,
$\dz{\cfw^n} = \bigcap_{j=1}^n \dz{C_{\phi^j,\widehat
w_j}}$,
   \item[(v)] for every $n \in \nbb$, $\cfw^n$
is densely defined if and only if $\sum_{j=1}^n
\hfwn{j} < \infty$ a.e.\ $[\mu]$,
   \item[(vi)] for every $n \in \nbb$, $\cfw^n$
is densely defined if and only if $C_{\phi^j,\widehat
w_j}$ is densely defined for every $j \in \{1, \dots,
n\}$.
   \end{enumerate}
   \end{lem}
   \begin{proof}
(i) Apply Lemma \ref{lemS11}(i) and \eqref{l2} (with
$\phi^n$, $\widehat w_n$, $\hfwn{n}$ in place of
$\phi$, $w$, $\hfw$, respectively).

(ii) This follows from (i).

(iii) Employ Lemma \ref{lemS11}(i), \eqref{l2} and induction on $n$.

(iv) This can be deduced from (iii) and Proposition \ref{lemS1}(i).

(v) The ``if'' part follows from (iii) and \cite[Lemma
12.1]{b-j-j-sC}. To prove the ``only if'' part,
suppose that $\mu(\varDelta)> 0$, where $\varDelta :=
\{x \in X\colon \sum_{j=1}^n \hfwn{j}(x)=\infty\}$.
One can deduce from (iii) that $f=0$ a.e.\ $[\mu]$ on
$\varDelta$ for all $f \in \dz{\cfw^n}$, and
consequently for all $f \in L^2(\mu)$, which
contradicts the $\sigma$-finiteness of $\mu$.

(vi) This follows from (v) and Proposition \ref{lemS1}(i).
   \end{proof}
Now we give sufficient conditions for the $n$th power of a weighted
composition operator $\cfw$ to be closed.
   \begin{lem} \label{13-12-13}
Suppose \eqref{stand2} holds. Fix an integer $n \Ge
2$. Then the following assertions are valid{\em :}
   \begin{enumerate}
   \item[(i)] $\cfw^n$ is closable,
   \item[(ii)] if $\cfw^{n-1}$ is densely defined, then
$\overline{\cfw^n} = C_{\phi^n,\widehat w_n}$,
   \item[(iii)] if $\cfw^{n-1}$ is densely defined,
then $\cfw^n$ is closed if and only $\cfw^n =
C_{\phi^n,\widehat w_n}$,
   \item[(iv)] $\cfw^n = C_{\phi^n,\widehat w_n}$
if and only if there exists $c \in \rbb_+$ such that
   \begin{align*}
\sum_{j=1}^{n-1} \hfwn{j} \Le c (1 + \hfwn{n}) \text{
a.e.\ $[\mu]$.}
   \end{align*}
   \item[(v)] if $\cfw$ is quasinormal, then
$C_{\phi^n,\widehat w_n}$ is quasinormal and $\cfw^n =
C_{\phi^n,\widehat w_n}$.
   \end{enumerate}
   \end{lem}
   \begin{proof}
(i) This is a direct consequence of Lemma
\ref{lemS11}(i) and Proposition \ref{lemS1}(iv).

(ii) Adapt the proof of \cite[Proposition 4.1(iii)]{b-j-j-sC} to the
present situation and use Lemma \ref{lemS11p}.

(iii) This follows from (ii) and Proposition
\ref{lemS1}(iv).

(iv) It follows from Lemma \ref{lemS11}(i) that
$\cfw^n = C_{\phi^n,\widehat w_n}$ if and only if
$\dz{C_{\phi^n,\widehat w_n}} \subseteq \dz{\cfw^n}$.
In turn, by Lemma \ref{lemS11p}(iii),
$\dz{C_{\phi^n,\widehat w_n}} = L^2\big(\big(1 +
\hfwn{n}\big)\D \mu\big)$ and $\dz{\cfw^n} =
L^2\big(\big(\sum_{j=0}^n \hfwn{j}\big)\D \mu\big)$.
Hence an application of \cite[Corollary
12.4]{b-j-j-sC} gives (iv).

(v) This follows from (iii) and the fact that powers
of quasinormal operators being quasinormal are closed
and densely defined (see, e.g., \cite[Corollary
3.8]{j-j-s1}).
   \end{proof}
Regarding Lemma \ref{13-12-13}, we note that the
assertion (ii) is no longer true if we do not assume
that $\cfw^{n-1}$ is densely defined. It may also
happen that $\cfw$ has a dense set of
$C^\infty$-vectors but $\cfw^n$ is not closed for
every integer $n\Ge 2$ (consequently, $\cfw^n \neq
C_{\phi^n,\widehat w_n}$ for every integer $n\Ge 2$).
Both cases can be illustrated with the aid of
composition operators (see \cite[Examples 5.1 and
5.4]{b-j-j-sC}).

It follows from the assertions (ii) and (iv) of
\cite[Proposition 4.1]{b-j-j-sC} that if a composition
operator $C_{\phi}$ is well-defined and $n$ is an
integer greater than or equal to~ $2$, then
$C_{\phi}^n$ is densely defined if and only if
$C_{\phi^n}$ is densely defined (or equivalently, by
\cite[Proposition 3.2]{b-j-j-sC}, if and only if
$\mathsf{h}_{\phi^n} < \infty$ a.e.\ $[\mu]$). The
corresponding statement for a weighted composition
operator $\cfw$ is false in general (see Example
\ref{powers}). However, it is true if $w(x) \neq 0$
for every $x\in X$ (see Proposition \ref{potegi-p});
this covers the case of composition operators.
   \begin{pro}\label{potegi-p}
Suppose \eqref{stand2} holds and $w \neq 0$ a.e.\
$[\mu]$. Fix an integer $n \Ge 2$. Then $\widehat w_n
\neq 0$ a.e.\ $[\mu]$ and the following conditions are
equivalent{\em :}
   \begin{enumerate}
   \item[(i)] $\cfw^n$ is densely defined,
   \item[(ii)] $C_{\phi^n,\widehat w_n}$ is densely
defined,
   \item[(iii)] $\hfwn{n} < \infty$ a.e.\ $[\mu]$.
   \end{enumerate}
   \end{pro}
   \begin{proof}
By Lemma \ref{lemS11}(i), the operator
$C_{\phi^j,\widehat w_j}$ is well-defined for every $j
\in \zbb_+$. Since $w \neq 0$ a.e.\ $[\mu]$, there
exists $\ascr$-measurable function $u\colon X \to
\cbb\setminus \{0\}$ such that $w=u$ a.e.\ $[\mu]$. We
infer from Lemma \ref{nuklear}, using induction on
$k$, that $\widehat w_k = \widehat u_k$ a.e.\ $[\mu]$
for every $k\in\zbb_+$. Applying the ``moreover'' part
of Lemma \ref{wco1}, we see that there is no loss of
generality in assuming that $w(x) \neq 0$ for all
$x\in X$.

(i)$\Rightarrow$(ii) Apply Lemma \ref{lemS11}(i).

(ii)$\Rightarrow$(i) It follows from Lemma
\ref{aproks} that there exists a sequence
$\{X_k\}_{k=1}^\infty \subseteq \ascr$ such that
$\mu_{\widehat w_n} \circ \phi^{-n}(X_k) < \infty$ for
every $k \in \nbb$ and $X_k \nearrow X$ as $k\to
\infty$. Set
   \begin{align*}
\text{$\varDelta_k = \Big\{x \in X\colon |w(x)| \Ge \frac 1 k \Big\}$ and
$\nabla_k = \phi^{-n}(X_k) \cap \phi^{-(n-1)}(\varDelta_k)$ for $k\in
\nbb$.}
   \end{align*}
Then
   \begin{multline*}
\mu_{\widehat w_{n-1}} \circ
\phi^{-(n-1)}\Big(\phi^{-1}(X_k) \cap \varDelta_k\Big)
= \mu_{\widehat w_{n-1}}(\nabla_k) = \int_{\nabla_k}
\frac{|\widehat w_{n}|^2}{|w\circ \phi^{n-1}|^2} \D
\mu
   \\
\Le k^2 \int_{\nabla_k} |\widehat w_{n}|^2 \D \mu \Le
k^2 \mu_{\widehat w_{n}} \circ \phi^{-n}(X_k) <
\infty, \quad k\in \nbb.
   \end{multline*}
Since $\phi^{-1}(X_k) \cap \varDelta_k \nearrow X$ as
$k\to \infty$, we infer from Proposition \ref{lemS2}
that the operator $C_{\phi^{n-1},\widehat w_{n-1}}$ is
densely defined. By reverse induction, we deduce that
$C_{\phi^{j},\widehat w_{j}}$ is densely defined for
every $j \in \{1, \ldots, n\}$. This and Lemma
\ref{lemS11p}(vi) imply that $\cfw^n$ is densely
defined.

(ii)$\Leftrightarrow$(iii) Apply Proposition \ref{lemS2}.
   \end{proof}
Now we show that Proposition \ref{potegi-p} is no
longer true if the assumption ``$w \neq 0$ a.e.\
$[\mu]$'' is removed.
   \begin{exa} \label{powers}
Fix an integer $n$ greater than or equal to $2$. Set $X =
\bigsqcup_{k=0}^{n-1} \nbb^k$ with $\nbb^0 = \{0\}$ and $\ascr = 2^{X}$.
Let $\mu$ be the counting measure on $X$. Define the transformation
$\phi\colon X \to X$ by
   \begin{align*}
\phi(x) =
   \begin{cases}
0 & \text{ if } x \in \nbb^0 \cup \nbb^1,
   \\
(i_1, \ldots, i_{k-1}) & \text{ if } x=(i_1, \ldots, i_{k}) \in \nbb^{k},
\, 2 \Le k \Le n-1 \text{ and } n \Ge 3.
   \end{cases}
   \end{align*}
Set $w=\chi_{Y}$ with $Y=X \setminus \nbb^0$. Clearly,
$\cfw$ is a well-defined partial composition operator
(see Sect.\ \ref{pco}(c)). Since $\phi^{n-1}(x) = 0$
for all $x \in X$, we see that $\hfwn{n} = 0$. Hence,
$C_{\phi^{n},\widehat w_{n}}$ is the zero operator on
$L^2(\mu)$. Now we calculate the function $\hfwn{j}$
for $j \in \{1, \ldots, n-1\}$. Fix $j \in \{1,
\ldots, n-1\}$ and $k \in \{0,\ldots,n-1\}$. If $k \Le
n-1-j$, then for every $y \in \nbb^k$,
$\phi^{-j}(\{y\})$ is an infinite subset of
$\nbb^{k+j}$ and $\widehat w_j(x)=|w(x)|^2 \cdot
\ldots \cdot |w(\phi^{j-1}(x))|^2 = 1$ for every $x
\in \phi^{-j}(\{y\})$. This implies that
   \begin{align}  \label{21.12.13}
\hfwn{j}(y) = \int_{\phi^{-j}(\{y\})} |\widehat w_j|^2
\D \mu = \infty, \quad y \in \nbb^k, \, 0 \Le k \Le
n-1-j.
   \end{align}
In turn, if $k > n-1-j$, then for every $y \in
\nbb^k$, $\phi^{-j}(\{y\}) = \emptyset$. Hence, we
have
   \begin{align*}
\hfwn{j}(y) = 0, \quad y \in \nbb^k, \, n-1-j < k \Le
n-1.
   \end{align*}
It follows from \eqref{21.12.13} that $\hfwn{j}(0) =
\infty$ for every $j \in \{1, \ldots, n-1\}$.
Therefore, by Proposition \ref{lemS2}, the operator
$C_{\phi^j,\widehat w_j}$ is not densely defined for
each $j\in\{1, \dots, n-1\}$. As a consequence,
$\cfw^{n}$ is not densely defined. Since
$C_{\phi^{n},\widehat w_{n}}$ is densely defined, the
implications (ii)$\Rightarrow$(i) and
(iii)$\Rightarrow$(i) of Proposition \ref{potegi-p} do
not hold.
   \end{exa}
It was proved in \cite{b-j-j-sC} that a composition operator $C_{\phi}$
whose powers are all densely defined has a dense set of
$C^\infty$-vectors. As shown below, the same is true for weighted
composition operators (see also Appendix \ref{App} for abstract versions
of this result). For completeness we include the proof which is an
adaptation of that of \cite[Theorem 4.7]{b-j-j-sC}.
   \begin{thm} \label{Mittag}
Suppose \eqref{stand2} holds. Then the following
conditions are equivalent\/{\em :}
   \begin{enumerate}
   \item[(i)] $\dz{\cfw^n}$ is dense in $L^2(\mu)$ for every $n
\in \nbb$,
   \item[(ii)] $\dzn{\cfw}$ is dense in $L^2(\mu)$,
   \item[(iii)] $\dzn{\cfw}$ is a core for $\cfw^n$
for every $n\in \zbb_+$ $($in particular, $\dzn{\cfw}$ is dense in
$L^2(\mu)$$)$,
   \item[(iv)] $\dzn{\cfw}$
is dense in $(\dz{\cfw^n}, \|\cdot\|_{\cfw;n})$ for every $n\in \zbb_+$.
   \end{enumerate}
   \end{thm}
   \begin{proof}
Only the implication (i)$\Rightarrow$(iv) requires a
proof. Assume (i) holds. By Lemma \ref{lemS11p}(v), $0
\Le \hfwn{n} < \infty$ a.e.\ $[\mu]$ for every $n\in
\zbb_+$. It follows from assertions (i) and (iii) of
Lemma \ref{lemS11p} that for every $n\in \zbb_+$, the
vector space $\dz{\cfw^n}$ equipped with the norm
$\|\cdot\|_{\cfw;n}$ coincides with the Hilbert space
$L^2((\sum_{j=0}^n \hfwn{j})\D\mu)$, and thus by
\cite[Lemma 12.1]{b-j-j-sC}, $\dz{\cfw^{n+1}}$ is
dense in $\dz{\cfw^n}$ with respect to the norm
$\|\cdot\|_{\cfw;n}$. Applying Lemma \ref{closedpow}
with $k_n = n$ completes the proof.
   \end{proof}
   \section{\label{GSMS}Generating Stieltjes moment sequences}
   Following \cite{j-j-s0}, we say that an operator
$A$ in a complex Hilbert space $\hh$ {\em generates
Stieltjes moment sequences} if $\dzn{A}$ is dense in
$\hh$ and $\{\|A^n f\|^2\}_{n=0}^\infty$ is a
Stieltjes moment sequence for every $f \in \dzn{A}$.
It is known that if $A$ is a subnormal operator in
$\hh$ such that $\overline{\dzn{A}} = \hh$, then $A$
generates Stieltjes moment sequences; however, the
converse implication does not hold in general (see,
e.g., \cite[Sect.\ 3.2]{b-j-j-sA}).

The following theorem generalizes characterizations of
composition operators generating Stieltjes moment
sequences given in \cite[Theorem 10.4]{b-j-j-sC} to
the case of weighted composition operators. They
resemble the celebrated Lambert's characterizations of
subnormality of bounded composition operators given in
\cite{lam1}. As shown in \cite[Theorem 4.3.3]{j-j-s0}
and \cite[Sect.\ 11]{b-j-j-sC} Lambert's
characterizations are no longer true for unbounded
composition operators. Below, $\cbb[t]_+$ stands for
the set of all polynomials $p \in \cbb[t]$ that are
nonnegative on $\rbb_+$ and ${\EuScript M}_X$ for the
set of all $\ascr$-measurable complex functions on
$X$.
   \begin{thm} \label{gsms}
Suppose \eqref{stand2} holds. Then the following
conditions are equivalent $($see \eqref{Zak} for the
definition of $\widehat w_n$$)$\/{\em :}
   \begin{enumerate}
   \item[(i)] $\cfw$ generates  Stieltjes moment
sequences,
   \item[(ii)] $\{\|\cfw^{n}
f\|^2\}_{n=0}^\infty$ is a Stieltjes moment sequence for all $f \in
\dzn{\cfw}$, and $\overline{\dz{\cfw^k}} = L^2(\mu)$ for all $k\in \nbb$,
   \item[(iii)] $\{\hfwn{n}(x)\}_{n=0}^\infty$ is a
Stieltjes moment sequence for $\mu$-a.e.\ $x \in X$,
   \item[(iv)] $\{\mu_{\widehat w_n}(\phi^{-n}
(\varDelta))\}_{n=0}^\infty$ is a Stieltjes mo\-ment
sequence for all $\varDelta \in \ascr$ such that
$\mu_{\widehat w_k}(\phi^{-k}(\varDelta)) < \infty$
for all $k \in \zbb_+$, and $\overline{\dz{\cfw^k}} =
L^2(\mu)$ for all~ $k\in \nbb$,
   \item[(v)] there exists
a linear mapping $L\colon \cbb[t] \to {\EuScript M}_X$
such that\/\footnote{\;Note that the condition
``$\hfwn{n} < \infty$ a.e.\ $[\mu]$ for all $n\in
\nbb$'' is necessary and sufficient for the existence
of a linear mapping $L\colon \cbb[t] \to {\EuScript
M}_X$ satisfying \eqref{ltn}. Of course, if $\tilde L$
is another such mapping, then $L(p) = \tilde L(p)$
a.e.\ $[\mu]$ for every $p \in \cbb[t]$.}
   \begin{align} \label{ltn}
L(t^n) & = \hfwn{n} \text{ a.e.\ $[\mu]$,} \quad n \in
\zbb_+,
   \\  \notag
L(p) & \Ge 0 \text{ a.e.\ $[\mu]$,} \quad p\in \cbb[t]_+.
   \end{align}
   \end{enumerate}
Moreover, if {\em (i)} holds, then
   \begin{enumerate}
   \item[(a)] $\dzn{\cfw}$ is a core for $\cfw^n$ for every
$n\in \zbb_+$,
   \item[(b)] $\cfw^n$ is closed and
$\cfw^n = C_{\phi^n,\widehat w_n}$ for every $n \in
\zbb_+$.
   \end{enumerate}
   \end{thm}
   \begin{proof}
By Lemma \ref{lemS11p}(v) and Theorem \ref{Mittag},
any of the conditions (i) to (v) implies that (a)
holds and $0\Le \hfwn{n} (x) < \infty$ for $\mu$-a.e.\
$x \in X$ and every $n\in \zbb_+$; hence, there is no
loss of generality in assuming that $0\Le \hfwn{n} (x)
< \infty$ for all $x \in X$ and $n\in \zbb_+$. Take
$\boldsymbol \alpha = \{\alpha_i\}_{i=0}^n \subseteq
\cbb$ with $n\in \zbb_+$. Set $g_{\boldsymbol
\alpha}=\sum_{i,j=0}^n \alpha_i\bar\alpha_j
\hfwn{i+j}$. It follows from Lemma \ref{lemS11p}(i)
that for every $f \in \dz{\cfw^{2n}}$,
   \begin{align} \label{pdef}
\int_X |g_{\boldsymbol \alpha}| |f|^2 \D \mu < \infty
\text{ and } \int_X g_{\boldsymbol \alpha} |f|^2 \D\mu
= \sum_{i,j=0}^n \|C_{\phi}^{i+j} f\|^2 \alpha_i
\bar\alpha_j.
   \end{align}

(i)$\Leftrightarrow$(ii) This is evident due to
Theorem \ref{Mittag}.

(i)$\Rightarrow$(iii) In view of \eqref{Sti} and
\eqref{pdef}, $\int_X g_{\boldsymbol \alpha} |f|^2
\D\mu \Ge 0$ for every $f \in \dzn{\cfw}$. According
to Lemma \ref{lemS11p}(iii), $\chi_{\varDelta} \cdot f
\in \dzn{\cfw}$ for all $f \in \dzn{\cfw}$ and
$\varDelta \in \ascr$. Applying \eqref{pdef} and
\cite[Corollary 12.6]{b-j-j-sC} with
$\escr=\dzn{\cfw}$, we get
   \begin{gather*}
\text{$\sum_{i,j=0}^n \alpha_i\bar\alpha_j \hfwn{i+j}
\Ge 0$ a.e.\ $[\mu]$ for all $\{\alpha_i\}_{i=0}^n
\subseteq \cbb$ and $n\in\zbb_+$.}
   \end{gather*}
Arguing as in the proof of \cite[Lemma
10.1(a)]{b-j-j-sC}, we deduce that the sequence
$\{\hfwn{n}(x)\}_{n=0}^\infty$ is positive definite
for $\mu$-a.e.\ $x\in X$. Replacing $f$ by $\cfw f$ in
the above reasoning, we conclude that the sequence
$\{\hfwn{n+1}(x)\}_{n=0}^\infty$ is positive definite
for $\mu$-a.e.\ $x\in X$. Employing \eqref{Sti} gives
(iii).

(iii)$\Rightarrow$(i) By \eqref{Sti} and \eqref{pdef}, the sequence
$\{\|\cfw^{n} f\|^2\}_{n=0}^\infty$ is positive definite for every $f \in
\dzn{\cfw}$. Replacing $f$ by $\cfw f$ and using \eqref{Sti} again, we
obtain (i).

(i)$\Rightarrow$(iv) Clearly, if $\varDelta \in \ascr$
is such that $\mu_{\widehat w_k}(\phi^{-k}(\varDelta))
< \infty$ for all $k \in \zbb_+$, then
$\chi_{\varDelta} \in \dz{C_{\phi^n,\widehat w_n}}$
and $\|C_{\phi^n,\widehat w_n} \chi_{\varDelta}\|^2 =
\mu_{\widehat w_n}(\phi^{-n}(\varDelta))$ for every $n
\in \zbb_+$. Applying Lemma \ref{lemS11}(i) and Lemma
\ref{lemS11p}(iv) yields (iv).

(iv)$\Rightarrow$(i) Let $s$ be a simple $\ascr$-measurable function on
$X$ of the form $s = \sum_{i=1}^k \alpha_i \chi_{\varDelta_i}$, where
$\{\varDelta_i\}_{i=1}^k \subseteq \ascr$ are pairwise disjoint and
$\{\alpha_i\}_{i=1}^k \subseteq (0,\infty)$. Suppose $s \in \dzn{\cfw}$.
Then, by assertions (i) and (iii) of Lemma \ref{lemS11p},
$\chi_{\varDelta_i} \in \dzn{\cfw}$ for every $i\in \{1, \ldots,k\}$ and
   \begin{align*}
\|\cfw^n s\|^2 & = \sum_{i,j=1}^k \alpha_i \alpha_j
\int_{\varDelta_i \cap \varDelta_j} \hfwn{n} \D\mu
   \\
& = \sum_{i=1}^k \alpha_i^2 \int_{\varDelta_i}
\hfwn{n} \D\mu = \sum_{i=1}^k \alpha_i^2 \mu_{\widehat
w_n}(\phi^{-n}(\varDelta_i)), \quad n\in \zbb_+.
   \end{align*}
Hence, by (iv), $\{\|C_{\phi}^n s\|^2\}_{n=0}^\infty$
is a Stieltjes moment sequence (clearly, this is the
case for $s = 0$ as well). If $f \in \dzn{C_{\phi}}$,
then there exists a sequence $\{s_n\}_{n=1}^\infty$ of
simple $\ascr$-measurable functions $s_n\colon X \to
\rbb_+$ such that $s_n(x) \Le s_{n+1}(x) \Le |f(x)|$
and $\lim_{k \to \infty} s_k(x) = |f(x)|$ for all $x
\in X$ and $n\in \nbb$. By Lemma \ref{lemS11p}(iii),
this implies that $\{s_n\}_{n=1}^\infty \subseteq
\dzn{C_{\phi}}$. Applying Lemma \ref{lemS11p}(i) and
Lebesgue's monotone convergence theorem yields
   \begin{align*}
\|\cfw^n f\|^2 & = \int_X |f|^2 \hfwn{n} \D\mu
   \\
& = \lim_{k\to \infty} \int_X s_k^2 \hfwn{n} \D\mu =
\lim_{k\to \infty} \|\cfw^n s_k\|^2, \quad n \in
\zbb_+.
   \end{align*}
Since the class of Stieltjes moment sequences is
closed under the operation of taking pointwise limits
(use \eqref{Sti}), we see that $\{\|\cfw^n
f\|^2\}_{n=0}^\infty$ is a Stieltjes moment sequence.

(iii)$\Rightarrow$(v) If $p\in \cbb[t]_+$, then there exist
$q_1,q_2\in\cbb[t]$ such that $p(t)=t|q_1(t)|^2 + |q_2(t)|^2$ for all $t
\in \rbb$ (see \cite[Problem 45, p.\ 78]{P-S}). This, together with
\eqref{Sti}, implies that $L(p) \Ge 0$ a.e.\ $[\mu]$.

(v)$\Rightarrow$(iii) Let $Q$ be a countable dense subset of $\cbb$. Since
$L(|q|^2) \Ge 0$ a.e.\ $[\mu]$ and $L(t|q|^2) \Ge 0$ a.e.\ $[\mu]$ for
every $q \in \cbb[t]$, there exists $\varDelta \in \ascr$ such that $\mu(X
\setminus \varDelta) = 0$,
   \begin{align} \label{pd2}
\text{$\sum_{i,j=0}^n \hfwn{i+j}(x) \alpha_i \bar
\alpha_j \Ge 0$ and $\sum_{i,j=0}^n \hfwn{i+j+1}(x)
\alpha_i \bar \alpha_j \Ge 0$}
   \end{align}
for all $x \in \varDelta$, $\alpha_0, \ldots, \alpha_n \in Q$ and $n\in
\zbb_+$. By the density of $Q$ in $\cbb$, we conclude that \eqref{pd2}
holds for all $x \in \varDelta$, $\alpha_0, \ldots, \alpha_n \in \cbb$ and
$n\in \zbb_+$. Applying \eqref{Sti} gives (iii).

(i)$\Rightarrow$(b) Arguing as in the proof of \cite[Lemma
10.1(b)]{b-j-j-sC}, we see that
   \begin{align*}
\sum_{j=0}^{n} \hfwn{j} \Le (n+1) (1 + \hfwn{n})
\text{ a.e.\ $[\mu]$,} \quad n \in \zbb_+.
   \end{align*}
Hence, applying Lemma \ref{13-12-13}(iv) and
Proposition \ref{lemS1}(iv), we get (b).
   \end{proof}
   \section{\label{Sec4.3}Subnormality in the bounded case}
We begin by stating the weighted composition operator
counterpart of the celebrated Lambert's
characterizations of bounded subnormal composition
operators (see \cite{lam1}). It is a direct
consequence of Theorem \ref{gsms} and \cite[Theorem
3.1]{lam} (see also \cite[Theorem 7]{StSz2} for the
general, not necessarily injective, case). Recall that
subnormal weighted composition operators may not be
injective which is not the case for composition
operators (see \cite[Corollary 6.3]{b-j-j-sC}). For
the definition of $\widehat w_n$ we refer the reader
to Lemma \ref{lemS11}.
   \begin{thm} \label{gsms+}
Suppose \eqref{stand2} holds and $\cfw \in
\ogr{L^2(\mu)}$. Then the following conditions are
equivalent{\em :}
   \begin{enumerate}
   \item[(i)] $\cfw$ is subnormal,
   \item[(ii)] $\{\|\cfw^{n}
f\|^2\}_{n=0}^\infty$ is a Stieltjes moment sequence for all $f \in
L^2(\mu)$,
   \item[(iii)] $\{\hfwn{n}(x)\}_{n=0}^\infty$ is a
Stieltjes moment sequence for $\mu$-a.e.\ $x \in X$,
   \item[(iv)] $\{\mu_{\widehat w_n}(\phi^{-n}
(\varDelta))\}_{n=0}^\infty$ is a Stieltjes mo\-ment
sequence for all $\varDelta \in \ascr$ such that
$\mu(\varDelta) < \infty$,
   \item[(v)] there exists
a linear mapping $L\colon \cbb[t] \to {\EuScript M}_X$ such that
   \begin{align*}
L(t^n) & = \hfwn{n} \text{ a.e.\ $[\mu]$,} \quad n \in
\zbb_+,
   \\
L(p) & \Ge 0 \text{ a.e.\ $[\mu]$,} \quad p\in \cbb[t]_+.
   \end{align*}
   \end{enumerate}
   \end{thm}
Before proving the main result of this section, we recall a ``moment
measurability" lemma which is a variant of \cite[Lemma 1.3]{lam2}.
   \begin{lem}[\mbox{\cite[Lemma 11]{b-j-j-sS}}]
\label{mommea} Let $(X,\ascr)$ be a measurable space
and $K$ be a compact subset of $\rbb_+$. Suppose that
$\{P(x,\cdot)\}_{x\in X}$ is a family of Borel
probability measures on $\rbb_+$ such that $\supp
P(x,\cdot) \subseteq K$ for every $x\in X$, and the
mapping $X\ni x \longmapsto \int_{K} t^n P(x, \D t)
\in \rbb_+$ is $\ascr$-measurable for every $n\in
\zbb_+$. Then the mapping $P\colon X \times
\borel{\rbb_+} \ni (x,\sigma) \longmapsto P(x,\sigma)
\in [0,1]$ is an $\ascr$-measurable family of
probability measures.
   \end{lem}
The following theorem generalizes a characterization of subnormality of
bound\-ed composition operators given in \cite[Theorem 13]{b-j-j-sS} to
the case of bounded weighted composition operators. The present proof is
completely different and fits nicely into our framework.
   \begin{thm} \label{bsubn}
Suppose $(X,\ascr,\mu)$ is a $\sigma$-finite measure space, $w \colon X
\to \cbb$ is an $\ascr$-measurable function and $\phi$ is an
$\ascr$-measurable transformation of $X$ such that $\cfw \in
\ogr{L^2(\mu)}$. Then the following three conditions are equivalent{\em :}
   \begin{enumerate}
   \item[(i)] $\cfw$ is subnormal,
   \item[(ii)] $\hfw > 0$ a.e.\ $[\mu_w]$ and
there exists $P\colon X \times \borel{\rbb_+} \to [0,1]$, an
$\ascr$-measur\-able family of probability measures, which satisfies
\eqref{cc},
   \item[(iii)] there exists $P\colon X \times
\borel{\rbb_+} \to [0,1]$, an $\ascr$-measurable family of probability
measures, which satisfies \eqref{cc-1mu}.
   \end{enumerate}
Moreover, the following three assertions are valid{\em :}
   \begin{enumerate}
   \item[(a)] if {\em (i)} holds, then there exists $P\colon X
\times \borel{\rbb_+} \to [0,1]$, an
$\ascr$-measur\-able family of probability measures,
which satisfies {\em \eqref{cc-1mu}} $($and thus {\em
\eqref{cc}}$)$, and which has the property that $\supp
P(x,\cdot) \subseteq [0,\|\cfw\|^2]$ for each $x\in
X$,
   \item[(b)] if $P_1,P_2\colon X \times
\borel{\rbb_+} \to [0,1]$ are $\ascr$-measurable
families of probability measures satisfying \eqref{cc}
and $\hfw > 0$ a.e.\ $[\mu_w]$, then $P_1(x, \cdot) =
P_2(x, \cdot)$ for $\mu_w$-a.e.\ $x\in X$,
   \item[(c)] if $P_1,P_2\colon X \times \borel{\rbb_+} \to
[0,1]$ are $\ascr$-measurable families of probability measures satisfying
\eqref{cc-1mu}, then $P_1(x, \cdot) = P_2(x, \cdot)$ for $\mu$-a.e.\ $x\in
X$.
   \end{enumerate}
   \end{thm}
   \begin{proof}
The implication (ii)$\Rightarrow$(i) follows from Theorem \ref{MAIN1},
while the equivalence (ii)$\Leftrightarrow$(iii) is a consequence of
Theorem \ref{MAIN2}.

To prove the implication (i)$\Rightarrow$(ii), we
assume that $\cfw$ is subnormal. If $\cfw=0$, then by
Proposition \ref{lemS1}(vi) the family $P(x, \cdot) =
\delta_0(\cdot)$, $x\in X$, meets our requirements.
Hence, we can assume that $r := \|\cfw\|^2 > 0$. It
follows from Corollary \ref{hipinj} that $\hfw > 0$
a.e.\ $[\mu_w]$. Set $J=[0,r]$. According to
\cite[Proposition 3.2.1]{b-j-j-sA}, $\cfw$ generates
Stieltjes moment sequences. By Lemma \ref{lemS11p}(i),
we have
   \begin{align*}
\int_{\varDelta} \hfwn{n} \D \mu =
\|\cfw^n(\chi_{\varDelta})\|^2 \Le r^n \mu(\varDelta),
\quad \varDelta \in \ascr, \, \mu(\varDelta) < \infty.
   \end{align*}
Since $\mu$ is $\sigma$-finite, we infer from
\cite[Theorem 1.40]{Rud} that $\hfwn{n} \Le r^n$ a.e.\
$[\mu]$ for every $n \in \zbb_+$. This, \eqref{Sti}
and Theorem \ref{gsms}(iii) imply that there exists
$X_0 \in \ascr$ with $\mu(X \setminus X_0)=0$ such
that for every $x \in X_0$, \allowdisplaybreaks
   \begin{align}  \label{17-0}
\hfwn{0}(x) & = 1,
   \\ \label{17-1}
0 \Le \hfwn{n}(x) & \Le r^n, \quad n \in \zbb_+,
   \\    \label{17-2}
\sum_{i,j=0}^n \alpha_i\bar\alpha_j \hfwn{i+j}(x) &
\Ge 0, \quad \alpha_0, \ldots, \alpha_n \in \cbb, \, n
\in \zbb_+,
   \\ \label{17-3}
\sum_{i,j=0}^n \alpha_i\bar\alpha_j \hfwn{i+j+1}(x) &
\Ge 0, \quad \alpha_0, \ldots, \alpha_n \in \cbb, \, n
\in \zbb_+.
   \end{align}
Setting $\hfwn{n}(x) = \int_J t^n \D \delta_r(t)$ for
all $x \in X \setminus X_0$ and $n \in \zbb_+$, we may
assume without loss of generality that the conditions
\eqref{17-0}--\eqref{17-3} hold for all $x \in X$. By
\eqref{Stiogr}, for every $x \in X$ there exists a
unique Borel probability measure $P(x, \cdot)$ on
$\rbb_+$ such that $\supp P(x, \cdot) \subseteq J$ and
   \begin{align} \label{smom0}
\hfwn{n}(x) = \int_{J} t^n P(x, \D t), \quad x \in X,
\, n\in \zbb_+.
   \end{align}
Applying Lemma \ref{mommea}, we see that the mapping
$P\colon X \times \borel{\rbb_+} \ni (x, \sigma)
\longmapsto P(x, \sigma) \in [0,1]$ is an
$\ascr$-measurable family of probability measures.
Using Theorem \ref{Ath} (with $p=\infty$) and arguing
as in \cite[Lemma 10.1(a)]{b-j-j-sC}, we deduce from
\eqref{17-1}, \eqref{17-2} and \eqref{17-3} that there
exists $X_1 \in \phi^{-1}(\ascr)$ with $\mu_w(X
\setminus X_1) = 0$ such that for every $x \in X_1$,
\allowdisplaybreaks
   \begin{align*}
\efw(\hfwn{0})(x) & = 1,
   \\
0 \Le \efw(\hfwn{n})(x) & \Le r^n, \quad n \in \zbb_+,
   \\
\sum_{i,j=0}^n \alpha_i\bar\alpha_j
\efw(\hfwn{i+j})(x) & \Ge 0, \quad \alpha_0, \ldots,
\alpha_n \in \cbb, \, n \in \zbb_+,
   \\
\sum_{i,j=0}^n \alpha_i\bar\alpha_j
\efw(\hfwn{i+j+1})(x) & \Ge 0, \quad \alpha_0, \ldots,
\alpha_n \in \cbb, \, n \in \zbb_+.
   \end{align*}
Hence, applying \eqref{Stiogr} again, we see that for every $x \in X_1$
there exists a unique Borel probability measure $Q(x, \cdot)$ on $\rbb_+$
such that $\supp Q(x, \cdot) \subseteq J$ and
   \begin{align} \label{smom1}
\efw(\hfwn{n})(x) = \int_{J} t^n \D Q(x, \D t), \quad
n\in \zbb_+.
   \end{align}
Setting $Q(x, \cdot) = \delta_r(\cdot)$ and
$\efw(\hfwn{n})(x) = r^n$ for all $x \in X \setminus
X_1$ and $n \in \zbb_+$, we may assume without loss of
generality that \eqref{smom1} holds for all $x \in X$.
Hence, by Lemma \ref{mommea}, $Q\colon X \times
\borel{\rbb_+} \ni (x, \sigma) \longmapsto Q(x,
\sigma) \in [0,1]$ is a $\phi^{-1}(\ascr)$-measurable
family of probability measures such that $\supp
Q(x,\cdot) \subseteq J$ for every $x \in X$. Combining
\eqref{smom1} (which holds for all $x\in X$),
\eqref{rec2} and \eqref{smom0}, we see that for
$\mu_w$-a.e.\ ~ $x \in X$,
   \begin{align} \label{etacc}
\hfw(\phi(x)) \int_{J} t^n Q(x,\D t) = \int_{J} t^n t
P(\phi(x), \D t), \quad n \in \zbb_+.
   \end{align}
However, since the measures $Q(x, \cdot)$, $x\in X$, are compactly
supported, \eqref{Stiogr} yields
   \begin{align} \label{cceta}
\hfw(\phi(x)) \cdot Q(x, \sigma) = \int_{\sigma} t
P(\phi(x), \D t) \text{ for $\mu_w$-a.e.\ $x \in X$},
\quad \sigma \in \borel{\rbb_+}.
   \end{align}
Hence, by \eqref{smom1} (which is valid for all $x\in X$), we have
\allowdisplaybreaks
   \begin{align*}
\int_{\phi^{-1}(\varDelta)} \int_{J} t^n Q(x, \D t) \D
\mu_w(x) & = \int_{\phi^{-1}(\varDelta)}
\efw(\hfwn{n})(x) \D \mu_w(x)
   \\
& \hspace{-.7ex} \overset{\eqref{wazny}}=
\int_{\phi^{-1}(\varDelta)} \hfwn{n}(x) \D \mu_w(x)
   \\
& \hspace{-.7ex} \overset{\eqref{smom0}}= \int_{\phi^{-1}(\varDelta)}
\int_{J} t^n P(x, \D t) \D \mu_w(x), \quad \varDelta \in \ascr, \, n \in
\zbb_+.
   \end{align*}
As a consequence, we obtain
   \begin{multline} \label{zas1}
\int_{\phi^{-1}(\varDelta)} \int_{J} p(t) Q(x, \D t) \D \mu_w(x) =
\int_{\phi^{-1}(\varDelta)} \int_{J} p(t) P(x, \D t) \D \mu_w(x),
   \\
\varDelta \in \ascr, \, p\in \cbb[t].
   \end{multline}
Since $\mu$ is $\sigma$-finite, there exists a
sequence $\{Y_k\}_{k=1}^{\infty} \subseteq \ascr$ such
that $Y_k \nearrow X$ as $k \to \infty$, and $\mu(Y_k)
< \infty$ for every $k\in \nbb$. Fix $k \in \nbb$ and
$\varDelta \in \ascr$ such that $\varDelta \subseteq
Y_k$. Let $f\in C(J)$ (as usual, $C(J)$ stands for the
set of all continuous complex functions on $J$). Then,
by the Weierstrass theorem, there exists a sequence
$\{p_n\}_{n=1}^{\infty} \subseteq \cbb[t]$ such that
$\sup_J |f-p_n| \to 0$ as $n \to \infty$. Since
$P(x,\cdot)$ and $Q(x,\cdot)$ are probability measures
for all $x \in X$, we infer from \eqref{l1} and
Proposition \ref{lemS1}(v) that
   \begin{multline}  \label{zas0}
\bigg|\int_{\phi^{-1}(\varDelta)} \int_{J} (f(t) - p_n(t)) S(x, \D t) \D
\mu_w(x) \bigg| \Le \sup_J |f-p_n| \, \mu_w(\phi^{-1}(\varDelta))
   \\
\Le r \sup_J |f-p_n| \, \mu(Y_k), \quad n \in \nbb,
   \end{multline}
where $S \in \{P, Q\}$. This and \eqref{zas1} imply
   \begin{align} \label{zas2}
\int_{\phi^{-1}(\varDelta)} \int_{J} f(t) Q(x, \D t)
\D \mu_w(x) = \int_{\phi^{-1}(\varDelta)} \int_{J}
f(t) P(x, \D t) \D \mu_w(x), \quad f \in C(J).
   \end{align}
Let $\sigma = [0,a)$ with $a \in (0,r]$. Then there
exists a sequence $\{f_n\}_{n=1}^{\infty} \subseteq
C(J)$ such that $0 \Le f_n \Le 1$ for all $n \in
\nbb$, and $f_n(t) \to \chi_{\sigma}(t)$ as $n\to
\infty$ for every $t\in J$. Since
$\mu_w(\phi^{-1}(\varDelta)) < \infty$ (see
\eqref{zas0}), we may apply \eqref{zas2} and
Lebesgue's dominated convergence theorem to get the
following equality
   \begin{align} \label{zas4}
\int_{\phi^{-1}(\varDelta)} Q(x, \sigma) \D \mu_w(x) =
\int_{\phi^{-1}(\varDelta)} P(x, \sigma) \D \mu_w(x).
   \end{align}
Set $\pscr=\big\{[a,b) \cap J \colon a, b \in
\rbb\big\}$. Noticing that $[a,b)\cap J = ([0,b)\cap
J) \setminus ([0,a) \cap J)$ whenever $a \Le b$, we
deduce that \eqref{zas4} holds for every $\sigma \in
\pscr$. Since $\pscr$ is a semi-algebra,
$\borel{J}=\sigma_J(\pscr)$ and
$\mu_w(\phi^{-1}(\varDelta)) < \infty$, we infer from
Lemma \ref{2miary} that \eqref{zas4} holds for every
$\sigma \in \borel{J}$. If $\varDelta \in \ascr$, then
$\phi^{-1}(\varDelta \cap Y_k) \nearrow
\phi^{-1}(\varDelta)$ as $k \to \infty$. Applying
\eqref{zas4} to $\varDelta \cap Y_k$ in place of
$\varDelta$ and using Lebesgue's monotone convergence
theorem, we see that \eqref{zas4} holds for all
$\varDelta \in \ascr$ and $\sigma \in \borel{J}$.
Since $\supp P(x,\cdot) \subseteq J$ and $\supp
Q(x,\cdot) \subseteq J$ for all $x \in X$, we deduce
that \eqref{zas4} holds for all $\varDelta \in \ascr$
and $\sigma \in \borel{\rbb_+}$. In view of the fact
that $Q(\cdot, \sigma)$ is
$\phi^{-1}(\ascr)$-measurable for all $\sigma \in
\borel{\rbb_+}$, we conclude that
   \begin{align*}
Q(x, \sigma) = \efw(P(\cdot, \sigma))(x) \text{ for
$\mu_w$-a.e.\ $x\in X$, } \quad \sigma \in
\borel{\rbb_+}.
   \end{align*}
This combined with \eqref{cceta} shows that $P$
satisfies \eqref{cc} (hence, (ii) holds). Since $P$
also satisfies \eqref{smom0}, we infer from Theorem
\ref{main2} that $P$ satisfies \eqref{cc-1mu} as well.
Recall that $\supp P(x, \cdot) \subseteq
[0,\|\cfw\|^2]$ for every $x\in X$ (hence, (a) holds).
This, together with \eqref{Stiogr} and the implication
(vii)$\Rightarrow$(ii) of Theorem \ref{main1}, proves
(b). In turn, using \eqref{Stiogr} and the implication
\mbox{(i$^\star$)}$\Rightarrow$\mbox{(ii$^\star$)} of
Theorem \ref{main2} (recall that \eqref{cc-1mu}
implies \eqref{cc} with the same $P$) gives (c). This
completes the proof.
   \end{proof}
   \begin{rem} \label{eta}
Regarding the proof of the implication
(i)$\Rightarrow$(ii) of Theorem \ref{bsubn}, we note
that in the case of composition operators the second
usage of Lemma \ref{mommea} is to be omitted. Indeed,
in this case we can assume that $\mathsf h_{\phi}(x) >
0$ for all $x\in X$ (see \cite[Corollary
6.3]{b-j-j-sC}). Arguing as in the original proof, we
find a family $\{\eta_x \colon x \in X_1\}$ of Borel
probability measures on $\rbb_+$ supported in $J$,
which satisfies \eqref{smom1}, \eqref{etacc} and
\eqref{cceta} with $\eta_x$ in place of $Q(x, \cdot)$
(the first two conditions hold for $\mu$-a.e.\ $x\in
X$). Next we define a $\phi^{-1}(\ascr)$-measurable
family of probability measures $Q\colon X \times
\borel{\rbb_+} \to [0,1]$~ by
   \begin{align*}
Q(x, \sigma) = \frac{\int_{\sigma} t P(\phi(x), \D t)}{\mathsf
h_{\phi}(\phi(x))}, \quad x \in X, \, \sigma \in \borel{\rbb_+}.
   \end{align*}
Clearly, this $Q$ satisfies \eqref{cceta}. Beginning
with \eqref{cceta} we may repeat the rest of the
original proof. It is worth pointing out that this
procedure breaks down in the case of weighted
composition operators.
   \end{rem}
   \chapter{\label{Chap5}Seminormality}
In this chapter, we give characterizations of
seminormal, formally normal, symmetric, selfadjoint
and positive selfadjoint weighted composition
operators. Hyponormality and cohyponormality are
characterized in Sections \ref{Sec5.1} and
\ref{Sec5.2}, respectively (see Theorems \ref{hypon}
and \ref{cohypon-m}). The introductory part of Section
\ref{Sec5.2} is devoted to the study of the range of
the conditional expectation regarded either as a
mapping on the set of $\rbop$-valued
$\ascr$-measurable functions or as an operator in
$L^2$-space. In Section \ref{Sec5.3}, we characterize
normal weighted composition operators (see Theorem
\ref{normal-m}). We also show that formally normal (in
particular, symmetric) weighted composition operators
are automatically normal (see Theorem \ref{fn2n}). In
Section \ref{Sec5.4}, we characterize selfadjoint and
positive selfadjoint weighted composition operators
(see Theorems \ref{self} and \ref{psa}).

The characterizations of hyponormal and cohyponormal
(not necessarily bound\-ed) weighted composition
operators were given by Campbell and Hornor in
\cite{ca-hor} under quite restrictive assumptions.
Namely, in contrast to our paper, they made the
following assumptions (in the notation of Section
\ref{pco}(b)):
   \begin{enumerate}
   \item[\ding{202}] the underlying measure space $(X,\ascr, \mu)$ is
complete,
   \item[\ding{203}] the measure space
$(X,\phi^{-1}(\ascr),\mu|_{\phi^{-1}(\ascr)})$ is
complete,
   \item[\ding{204}] $\mu\circ \phi^{-1} \ll
\mu$ (equivalently, $C_{\phi}$ is well-defined),
   \item[\ding{205}] $\mathsf{h}_{\phi} < \infty$ a.e.\
$[\mu]$ (equivalently, $C_{\phi}$ is densely defined),
   \item[\ding{206}] $w \Ge 0$ a.e.\ $[\mu]$.
   \end{enumerate}
They wrote that the assumption \ding{205} (of course,
combined with \ding{204}) ``plays an important role in
obtaining our results; it is implicit in most of the
definitions and used explicitly in many of the
calculations''. It guarantees the existence of the
conditional expectation $\mathsf{E}(\,\cdot\,;
\phi^{-1}(\ascr),\mu)$, which, together with
\ding{206}, enables them to deal with $\mathsf{E}(w;
\phi^{-1}(\ascr),\mu)$. It is worth pointing out that,
under the assumptions \ding{204} and \ding{205}, a
weighted composition operator $\cfw$ may not coincide
with the product $M_wC_{\phi}$ even if the operators
$C_{\phi}$ and $\cfw$ are subnormal (see Example
\ref{mwc8}; see also Section \ref{Sec7.1}). What is
worse, it may happen that a weighted composition
operator $\cfw$ is an isometry while the corresponding
composition operator $C_{\phi}$ is even not
well-defined (see Example \ref{cohyp-count2}). The
assumptions \ding{202} and \ding{203} may not be
satisfied even in the case of discrete measure spaces
(see Remark \ref{figurar}).

The reader should be aware that in this chapter the
conditional expectation is regarded either as a
mapping on the set of $\rbop$-valued
$\ascr$-measurable functions (modulo $\mu_w$) or as an
operator in the Hilbert space $L^2(\mu_w)$. In
particular, writing $\jd{\efw}$ and $\ob{\efw}$, we
regard $\efw$ as an operator in $L^2(\mu_w)$.
   \section{\label{Sec5.1}Hyponormality}
   Below we give a few useful characterizations of
hyponormality of densely defined weighted composition
operators.
   \begin{thm} \label{hypon} Suppose \eqref{stand2}
holds and $\cfw$ is densely defined. Then the
following conditions are equivalent{\em :}
   \begin{enumerate}
   \item[(i)] $\cfw$ is hyponormal,
   \item[(ii)] $\hfw > 0$ a.e.\ $[\mu_w]$ and
$\efw\Big(\sqrt{\frac{\hfw \circ \phi}{\hfw}}\cdot
f\Big)^2 \Le \efw(f^2)$ a.e.\ $[\mu_w]$ for every
$\ascr$-measurable function $f\colon X \to \rbb_+$,
   \item[(iii)] $\hfw > 0$ a.e.\ $[\mu_w]$ and
$\efw\Big(\frac{\hfw\circ \phi}{\hfw}\Big) \Le 1$
a.e.\ $[\mu_w]$,
   \item[(iv)] $\hfw > 0$ a.e.\ $[\mu_w]$ and
$\efw\Big(\frac{1}{\hfw}\Big) \Le \frac{1}{\hfw \circ
\phi}$ a.e.\ $[\mu_w]$.
   \end{enumerate}
   \end{thm}
   \begin{proof}  In view of
Lemma \ref{lemS6}, Proposition \ref{lemS2} and
Corollary \ref{hipinj}, we can assume, without loss of
generality, that
   \begin{align} \label{dyszcz}
0 < \hfw < \infty \text{ a.e.\ $[\mu_w]$ and } 0 <
\hfw \circ \phi < \infty \text{ a.e.\ $[\mu_w]$.}
   \end{align}

(i)$\Leftrightarrow$(ii) It follows from \eqref{l2},
Proposition \ref{lemS1}(i) and Proposition
\ref{adj}(i)\&(ii) that $\cfw$ is hyponormal if and
only if the following two conditions hold:
   \begin{gather} \notag
L^2((1+\hfw)\D \mu) \subseteq \big\{f \in
L^2(\mu)\colon \hfw \cdot \efw(f_w) \circ \phi^{-1}
\in L^2(\mu)\big\},
   \\ \label{pozni2}
\int_X \hfw^2 \cdot |\efw(f_w) \circ \phi^{-1}|^2 \D
\mu \Le \int_X \hfw |f|^2 \D \mu, \quad f \in
L^2((1+\hfw)\D \mu).
   \end{gather}
Clearly, this implies that $\cfw$ is hyponormal if and
only if \eqref{pozni2} holds. Making the substitution
$f \leftrightsquigarrow f\cdot w$ and noting that $f =
f\cdot \chi_{\{w\neq 0\}}$ a.e.\ $[\mu_w]$, we deduce
that \eqref{pozni2} holds if and only if the following
inequality is satisfied
   \begin{align} \label{pozni3}
\int_X \hfw^2 \cdot |\efw(f) \circ \phi^{-1}|^2 \D \mu
\Le \int_X \hfw |f|^2 \D \mu_w, \quad f \in
L^2((1+\hfw)\D \mu_w).
   \end{align}
Since for every $f \in L^2((1+\hfw)\D \mu_w)$,
   \begin{align*}
\int_X \hfw^2 \cdot |\efw(f) \circ \phi^{-1}|^2 \D \mu
& \overset{\eqref{l2}} = \int_X \hfw \circ \phi \cdot
|\efw(f) \circ \phi^{-1} \circ \phi|^2 \D \mu_w
   \\
&\hspace{-.5ex} \overset{\eqref{fifi}} = \int_X \hfw
\circ \phi \cdot |\efw(f)|^2 \D \mu_w,
   \end{align*}
we see that \eqref{pozni3} is equivalent to
   \begin{align} \label{pozni4}
\int_X \hfw \circ \phi \cdot |\efw(f)|^2 \D \mu_w \Le
\int_X \hfw |f|^2 \D \mu_w, \quad f \in L^2((1+\hfw)\D
\mu_w).
   \end{align}

Our next goal is to show that \eqref{pozni4} is
equivalent to
   \begin{multline} \label{pozni5}
\int_X \hfw \circ \phi \cdot \efw(f)^2 \D \mu_w \Le
\int_X \hfw f^2 \D \mu_w, \quad
\text{$\ascr$-measurable $f\colon X \to \rbb_+$.}
   \end{multline}
For this, suppose that \eqref{pozni4} holds. Then, by
Theorem \ref{Ath}(ii), we have
   \begin{align} \label{pozni6}
\int_X \hfw \circ \phi \cdot \efw(f)^2 \D \mu_w \Le
\int_X \hfw f^2 \D \mu_w, \quad f \in L_+^2((1+\hfw)\D
\mu_w).
   \end{align}
Define the measure $\nu\colon \ascr \to \rbop$ by
$\nu(\varDelta) = \int_{\varDelta} (1+\hfw) \D \mu_w$
for $\varDelta \in \ascr$. Since $\mu_w$ is
$\sigma$-finite, we infer from Proposition \ref{lemS2}
that the measure $\nu$ is $\sigma$-finite. Let
$\{\varOmega_k\}_{k=1}^{\infty} \subseteq \ascr$ be
such that $\nu(\varOmega_k) < \infty$ for all $k\in
\nbb$ and $\varOmega_k \nearrow X$ as $k\to \infty$.
Take an $\ascr$-measurable function $f \colon X \to
\rbb_+$. Fix $k\in \nbb$. Then, by \cite[Theorem
1.17]{Rud}, there exists a sequence
$\{s_n\}_{n=1}^{\infty}$ of $\ascr$-measurable simple
functions such that $0 \Le s_n \nearrow
\chi_{\varOmega_k} f$ as $n\to \infty$. It is easily
seen that $\{s_n\}_{n=1}^{\infty} \subseteq
L_+^2(\nu)$. In view of \eqref{B5}, we have $0 \Le
\efw(s_n) \nearrow \efw(\chi_{\varOmega_k}f)$ a.e.\
$[\mu_w]$ as $n\to \infty$. This together with
Lebesgue's monotone convergence theorem and
\eqref{pozni6} yields
   \allowdisplaybreaks
   \begin{align*}
\int_X \hfw \circ \phi \cdot
\efw(\chi_{\varOmega_k}f)^2 \D \mu_w & =
\lim_{n\to\infty} \int_X \hfw \circ \phi \cdot
\efw(s_n)^2 \D \mu_w
   \\
&\Le \lim_{n\to\infty} \int_X \hfw s_n^2 \D \mu_w
   \\
&= \int_X \hfw \chi_{\varOmega_k} f^2 \D \mu_w.
   \end{align*}
The same reasoning applied to $\{\chi_{\varOmega_k}
f\}_{k=1}^{\infty}$ in place of
$\{s_n\}_{n=1}^{\infty}$ leads to \eqref{pozni5}.
Conversely, assume that \eqref{pozni5} holds. Then, by
Proposition \ref{BL3}(i), we have
   \begin{align*}
\int_X \hfw \circ \phi \cdot |\efw(f)|^2 \D \mu_w &
\Le \int_X \hfw \circ \phi \cdot \efw(|f|)^2 \D\mu_w
   \\
& \overset{\eqref{pozni5}} \Le \int_X \hfw |f|^2 \D
\mu_w, \quad f\in L^2((1+\hfw)\D \mu_w),
   \end{align*}
which gives \eqref{pozni4} and completes the proof of
the equivalence
\eqref{pozni4}$\Leftrightarrow$\eqref{pozni5}.

Using \eqref{B6}, we deduce that \eqref{pozni5} is
equivalent to
   \begin{align*}
\int_X \efw(\sqrt{\hfw \circ \phi}\cdot f)^2 \D \mu_w
\Le \int_X \hfw f^2 \D \mu_w, \quad
\text{$\ascr$-measurable } f\colon X \to \rbb_+.
   \end{align*}
Hence, by making the substitution $f
\leftrightsquigarrow \sqrt{\hfw}\cdot f$ and using
\eqref{dyszcz}, we see that \eqref{pozni5} is
equivalent to
   \begin{align}  \notag
\int_X \efw(\vartheta f)^2 \D \mu_w & \Le \int_X f^2
\D \mu_w
   \\ \label{pozni7}
& = \int_X \efw(f^2) \D \mu_w, \quad
\text{$\ascr$-measurable } f\colon X \to \rbb_+,
   \end{align}
where $\vartheta\colon X \to \rbb_+$ is an
$\ascr$-measurable function such that $\vartheta=
\sqrt{\frac{\hfw \circ \phi}{\hfw}}$ a.e.\ $[\mu_w]$
(in view of \eqref{dyszcz} such $\vartheta$ exists).
Substituting $\chi_{\phi^{-1}(\varDelta)}f$ in place
of $f$ and using \cite[Theorem 1.6.11]{Ash} together
with \eqref{B6} and the fact that $\mu_w$ is
$\sigma$-finite, we deduce that \eqref{pozni7} is
equivalent to
   \begin{align} \label{pozni8}
\efw(\vartheta f)^2 \Le \efw(f^2) \text{ a.e.\
$[\mu_w]$,} \quad \text{$\ascr$-measurable } f\colon X
\to \rbb_+.
   \end{align}
Thus (i) and (ii) are equivalent.

(ii)$\Leftrightarrow$(iii) If \eqref{pozni8} holds,
then by substituting $f=\chi_{\{\vartheta \Le n\}}
\vartheta$ into \eqref{pozni8}, where $\{\vartheta \Le
n\}:=\{x \in X\colon \vartheta(x) \Le n\}$, we get
   \begin{align*}
\efw(\chi_{\{\vartheta \Le n\}}\vartheta^2)^2 \Le
\efw(\chi_{\{\vartheta \Le n\}}\vartheta^2) \overset{
\eqref{B4}}\Le n^2 \efw(\boldsymbol{1})=n^2
\boldsymbol{1} \text{ a.e.\ $[\mu_w]$}, \quad n\in
\nbb.
   \end{align*}
This implies that $\efw(\chi_{\{\vartheta \Le
n\}}\vartheta^2)\Le 1$ a.e.\ $[\mu_w]$ for all $n\in
\nbb$. Passing to the limit with $n\to\infty$ and
using \eqref{B5}, we get $\efw(\vartheta^2) \Le 1$
a.e.\ $[\mu_w]$. Conversely, if $\efw(\vartheta^2) \Le
1$ a.e.\ $[\mu_w]$, then by Lemma \ref{BL1} we have
   \begin{align*}
\efw(\vartheta f)^2 &\Le \efw(\vartheta^2)\efw(f^2)
\\
& \Le \efw(f^2) \text{ a.e.\ $[\mu_w]$}, \quad
\text{$\ascr$-measurable } f\colon X \to \rbb_+.
   \end{align*}
This completes the proof of the equivalence
(ii)$\Leftrightarrow$(iii).

The equivalence (iii)$\Leftrightarrow$(iv) is a direct
consequence of \eqref{dyszcz} and \eqref{B6}.
   \end{proof}
   \begin{cor} \label{hypon2}
Suppose \eqref{stand2} holds and $\cfw$ is hyponormal.
Then the following assertions are valid{\em :}
   \begin{enumerate}
   \item[(i)] $\efw(\vartheta^{n+1})^2 \Le
\efw(\vartheta^{2n})$ a.e.\ $[\mu_w]$ for all
$n\in\zbb$,
   \item[(ii)] $\efw(\vartheta) \Le 1$ a.e.\ $[\mu_w]$,
   \item[(iii)] $\efw(\frac{1}{\vartheta^2}) \Ge 1$ a.e.\
$[\mu_w]$,
   \end{enumerate}
where $\vartheta\colon X \to \rbb_+$ is an
$\ascr$-measurable function such that $\vartheta=
\sqrt{\frac{\hfw \circ \phi}{\hfw}}$ a.e.\ $[\mu_w]$.
   \end{cor}
   \begin{proof}
Substituting $f=\vartheta^n$ into Theorem
\ref{hypon}(ii), we obtain (i). In turn, substituting
$n=0$ and $n=-1$ into (i) gives (ii) and (iii),
respectively.
   \end{proof}
   \section{\label{Sec5.2}Cohyponormality}
   Before characterizing cohyponormality of weighted
composition operators (see Theorem \ref{cohypon-m}),
we prove a few auxiliary lemmas. The first two provide
various descriptions of the range of the conditional
expectation $\efw$, where $\efw$ is regarded either as
a mapping on the set of $\rbop$-valued
$\ascr$-measurable functions (see Lemma \ref{peter})
or as an operator in the Hilbert space $L^2(\mu_w)$
(see Lemma \ref{roce}).
   \begin{lem} \label{peter}
Suppose \eqref{stand2} holds and $\cfw$ is densely
defined. Let $f\colon X \to \rbop$ be an
$\ascr$-measurable function such that $f<\infty$ a.e.\
$[\mu_w]$ and $\efw(f)=f$ a.e.\ $[\mu_w]$. Then there
exists an $\ascr$-measurable function $g\colon X \to
\rbb_+$ such that $f=g \circ \phi$ a.e.\ $[\mu_w]$.
   \end{lem}
   \begin{proof}
Since $\efw(f)=f$ a.e.\ $[\mu_w]$ and $0 \Le f<\infty$
a.e.\ $[\mu_w]$, there exists a
$\phi^{-1}(\ascr)$-measurable function $\tilde f
\colon X \to \rbb_+$ such that $\efw(f)=\tilde f$
a.e.\ $[\mu_w]$. By \eqref{Il-Zen2}, there exists an
$\ascr$-measurable function $g\colon X \to \rbb_+$
such that $\tilde f = g \circ \phi$, which completes
the proof.
   \end{proof}
   \begin{lem} \label{roce}
Suppose \eqref{stand2} holds and $\cfw$ is densely
defined. Then the following assertions are valid{\em
:}
   \begin{enumerate}
   \item[(i)] $\hfw\D \mu=\D \mu_w\circ \phi^{-1}$,
   \item[(ii)] if $f \in L^2(\mu_w)$, then
$f\in \ob{\efw}$ if and only if there exists an
$\ascr$-measurable function $g\colon X \to \cbb$ such
that $f=g\circ \phi$ a.e.\ $[\mu_w]$,
   \item[(iii)] the mapping
$V\colon L^2(\hfw\D \mu) \ni g\mapsto g\circ \phi \in
L^2(\mu_w)$ is well-defined and it is a linear
isometry such that
   \begin{align} \label{2xob}
\ob{\efw} = \ob{V} =
L^2(\mu_w|_{\phi^{-1}(\ascr)^{\mu_w}}),
   \end{align}
where $\phi^{-1}(\ascr)^{\mu_w}$ is the relative
$\mu_w$-completion of $\phi^{-1}(\ascr)$ $($see Sect.\
{\em \ref{sec-2.1}}$)$,
   \item[(iv)] if $f \in \ob{\efw}$ and $f\Ge 0$
a.e.\ $[\mu_w]$, then there exists $g\in L^2(\hfw\D\mu)$
such that $f=g\circ \phi$ a.e.\ $[\mu_w]$ and $g(x)\Ge
0$ for all $x\in X$,
   \item[(v)]  if $g \in L^2(\hfw\D\mu)$ and $g\Ge 0$
a.e.\ $[\hfw\D\mu]$, then $g\circ \phi\in \ob{\efw}$
and $g\circ \phi \Ge 0$ a.e.\ $[\mu_w]$.
   \end{enumerate}
   \end{lem}
   \begin{proof}
(i) Apply \eqref{l1}.

(ii) Apply \eqref{Il-Zen2} and \eqref{apme}.

(iii) Since
   \begin{align} \label{Il-Zen}
\int_X |g \circ \phi|^2 \D \mu_w \overset{\eqref{l2}}=
\int_X |g|^2 \hfw \D \mu, \quad
\text{$\ascr$-measurable $g\colon X \to \cbb$,}
   \end{align}
we deduce that the mapping $V$ is well-defined and it
is a linear isometry. Applying (ii) and
\eqref{Il-Zen}, we deduce that $\ob{\efw} = \ob{V}$.
In turn, employing (ii) and \cite[Lemma
13.3]{b-j-j-sC} with $\mu_w$ in place of $\mu$, we
conclude that $\ob{\efw} =
L^2(\mu_w|_{\phi^{-1}(\ascr)^{\mu_w}})$.

(iv) By (ii), $f=V\tilde g$ for some $\tilde g\in
L^2(\hfw\D\mu)$. Since, by Proposition \ref{lemS2},
the measure $\mu_w\circ \phi^{-1}$ is $\sigma$-finite,
there exists a sequence $\{X_n\}_{n=1}^{\infty}
\subseteq \ascr$ such that $X_n\nearrow X$ as $n\to
\infty$ and $\mu_w\circ \phi^{-1}(X_n)< \infty$ for
all $n\in \nbb$. Fix $n\in \nbb$ and take $\varDelta
\in \ascr$ such that $\varDelta \subseteq X_n$. Then
$\chi_{\phi^{-1}(\varDelta)}\in L^2(\mu_w)$ and
   \begin{align} \label{ptica1}
0 \Le \int_X \chi_{\phi^{-1}(\varDelta)} f\D \mu_w =
\int_X \chi_{\varDelta}\circ \phi \cdot \tilde g \circ
\phi \D \mu_w \overset{\eqref{l2}}= \int_{\varDelta}
\tilde g \, \hfw \D \mu.
   \end{align}
Since $\chi_{\varDelta}\circ \phi \cdot \tilde g \circ
\phi \in L^1(\mu_w)$ as the product of two functions
in $L^2(\mu_w)$, we deduce from \eqref{ptica1}, by
using \cite[Theorem 1.6.11]{Ash}, that $\tilde g\Ge 0$
a.e.\ $[\hfw\D\mu]$ on $X_n$. Knowing that
$X_n\nearrow X$ as $n\to \infty$, we deduce that
$\tilde g\Ge 0$ a.e.\ $[\hfw\D\mu]$. Let now $g\colon
X \to \rbb_+$ be such that $\tilde g=g$ a.e.\
$[\hfw\D\mu]$. Then, by Lemma \ref{nuklear}, $f= g
\circ \phi$ a.e.\ $[\mu_w]$. Clearly, $g\in
L^2(\hfw\D\mu)$.

(v) By assumptions, there exists $\tilde g\colon X \to
\rbb_+$ such that $g=\tilde g$ a.e.\ $[\hfw\D\mu]$. It
follows from Lemma \ref{nuklear} that $g\circ
\phi=\tilde g\circ \phi$ a.e.\ $[\mu_w]$. This implies
that $g\circ \phi\Ge 0$ a.e.\ $[\mu_w]$ and by (iii),
$g\circ \phi\in \ob{\efw}$.
   \end{proof}
   The next two lemmas provide several equivalent
variants of the conditions \mbox{(ii-a)} and
\mbox{(ii-b)} that appear in the characterization of
cohyponormality of weighted composition operators
given in Theorem \ref{cohypon-m}.
   \begin{lem} \label{cohypon1}
Suppose \eqref{stand2} holds, $\cfw$ is densely
defined and $\varOmega \in \ascr$. Then the following
conditions are equivalent{\em :}
   \begin{enumerate}
   \item[(i)] $\chi_{\varOmega} \cdot L^2(\mu_w) \subseteq
\ob{\efw}$,
   \item[(ii)] $\jd{\efw} \subseteq \chi_{X\setminus \varOmega}
\cdot L^2(\mu_w)$,
   \item[(iii)] $\big\{\varDelta \in \ascr\colon \varDelta
\subseteq \varOmega \text{ a.e.\ $[\mu_w]$}\big\}
\subseteq \phi^{-1}(\ascr)^{\mu_w}$,
   \item[(iv)] for every $\varDelta \in \ascr$ such
that $\varDelta \subseteq \varOmega$ a.e.\ $[\mu_w]$,
there exists $\tilde \varDelta \in \ascr$ such that
$\mu_w(\varDelta \vartriangle \phi^{-1}(\tilde
\varDelta))=0$,
   \item[(v)] for every $\ascr$-measurable function
$f\colon X \to \rbb_+$ such that $f=\chi_{\varOmega}
\cdot f$ a.e.\ $[\mu_w]$, there exists an
$\ascr$-measurable function $g\colon X \to \rbb_+$
such that $f=g\circ \phi$ a.e.\ $[\mu_w]$.
   \end{enumerate}
Moreover, if {\em (i)} holds, then there exists
$\tilde \varOmega \in \ascr$ such that
$\chi_{\varOmega} = \chi_{\phi^{-1}(\tilde
\varOmega)}$ a.e.\ $[\mu_w]$.
   \end{lem}
   \begin{proof}
(i)$\Leftrightarrow$(ii) Take adjoints and use the
fact that $\efw$ is selfadjoint as an operator in
$L^2(\mu_w)$ (cf.\ Theorem \ref{Ath}).

(i)$\Rightarrow$(iii) Let $\varDelta \in \ascr$ be
such that $\varDelta \subseteq \varOmega$ a.e.\
$[\mu_w]$. Without loss of generality we can assume
that $\varDelta \subseteq \varOmega$. Since $\mu_w$ is
$\sigma$-finite, there exits a sequence
$\{X_n\}_{n=1}^{\infty} \subseteq \ascr$ such that
$X_n \nearrow X$ as $n\to \infty$ and $\mu_w(X_n)<
\infty$ for all $n\in \nbb$. Then $\{\chi_{\varDelta
\cap X_n}\}_{n=1}^{\infty} \subseteq \chi_{\varOmega}
\cdot L^2(\mu_w)$. By (i) and Lemma \ref{roce}(iii),
$\{\chi_{\varDelta \cap X_n}\}_{n=1}^{\infty}\subseteq
L^2(\mu_w|_{\phi^{-1}(\ascr)^{\mu_w}})$ and thus
$\varDelta \cap X_n\in \phi^{-1}(\ascr)^{\mu_w}$ for
all $n\in\nbb$. Since $\varDelta \cap X_n\nearrow
\varDelta$ as $n\to\infty$, we get $\varDelta \in
\phi^{-1}(\ascr)^{\mu_w}$.

(iii)$\Leftrightarrow$(iv) This follows from
\eqref{complascr}.

(iv)$\Rightarrow$(v) Lat $s\colon X \to \rbb_+$ be an
$\ascr$-measurable simple function such that
$s=\chi_{\varOmega} \cdot s$ a.e.\ $[\mu_w]$. If $s$
is of the form $s=\sum_{j=1}^n \lambda_j \chi_{E_j}$,
where $n\in \nbb$, $\{\lambda_j\}_{j=1}^n \subseteq
\rbb_+$ and $\{E_j\}_{j=1}^n$ are pairwise disjoint
$\ascr$-measurable sets, then $\chi_{\varOmega} \cdot
s=\sum_{j=1}^n \lambda_j \chi_{E_j\cap \varOmega}$. By
(iv), there exists a sequence $\{F_j\}_{j=1}^n
\subseteq \ascr$ such that $\chi_{E_j\cap \varOmega} =
\chi_{\phi^{-1}(F_j)}$ a.e.\ $[\mu_w]$ for all $j \in
\{1, \ldots,n\}$. This implies that $s=
\chi_{\varOmega} \cdot s = (\sum_{j=1}^n \lambda_j
\chi_{F_j}) \circ \phi$ a.e.\ $[\mu_w]$, which shows
that (v) is valid for $\ascr$-measurable simple
functions.

Let now $f\colon X \to \rbb_+$ be an
$\ascr$-measurable function such that
$f=\chi_{\varOmega} \cdot f$ a.e.\ $[\mu_w]$. Then, by
\cite[Theorem 1.17]{Rud}, there exists a sequence
$\{s_n\}_{n=1}^{\infty}$ of $\ascr$-measurable simple
functions such that $0 \Le s_n \nearrow f$ as $n\to
\infty$. Clearly, $s_n = s_n \cdot \chi_{\varOmega}$
a.e.\ $[\mu_w]$ for all $n\in\nbb$. Hence, by the
previous paragraph, there exists a sequence $\{\tilde
s_n\}_{n=1}^{\infty}$ of $\ascr$-measurable simple
functions $\tilde s_n \colon X \to \rbb_+$ such that
$s_n = \tilde s_n \circ \phi$ a.e.\ $[\mu_w]$ for all
$n\in \nbb$. Thus there exists $Y\in \ascr$ such that
$\mu_w(X\setminus Y)=0$ and
   \begin{align} \label{pass1}
s_n = \tilde s_n \circ \phi \text{ on } Y, \quad n\in
\nbb.
   \end{align}
Set
   \begin{align*}
L=\{x\in X\colon \text{ the limit $\lim_{n\to \infty}
\tilde s_n (x)$ exists in $\rbb_+$}\}.
   \end{align*}
Let $g\colon X \to \rbb_+$ be a function defined by
$g(x)=\lim_{n\to \infty} \tilde s_n (x)$ for $x\in L$
and $g(x)=0$ for $x\in X \setminus L$. Note that $L\in
\ascr$ and $g$ is $\ascr$-measurable. It follows from
\eqref{pass1} that $\phi(Y) \subseteq L$. Hence, by
passing to the limit with $n\to\infty$ in
\eqref{pass1}, we see that $f=g\circ \phi$ on $Y$, so
$f=g\circ \phi$ a.e.\ $[\mu_w]$.

(v)$\Rightarrow$(i) Decomposing any function belonging
to $\chi_{\varOmega} \cdot L^2(\mu_w)$ into the linear
combination of four functions in $\chi_{\varOmega}
\cdot L_+^2(\mu_w)$ and using Lemma \ref{roce}(ii)
yields (i).

Now we prove the ``moreover'' part. Applying (iv) to
$\varDelta=\varOmega$, we get $\tilde \varOmega \in
\ascr$ such that $\mu_w(\varOmega \vartriangle
\phi^{-1}(\tilde \varOmega))=0$, or equivalently that
$\chi_{\varOmega} = \chi_{\phi^{-1}(\tilde
\varOmega)}$ a.e.\ $[\mu_w]$. This completes the
proof.
   \end{proof}
   \begin{lem} \label{cohypon0}
Suppose \eqref{stand2} holds. Then the following
conditions are equivalent{\em :}
   \begin{enumerate}
   \item[(i)] $\hfw = 0$ on $\{w=0\}$  a.e.\ $[\mu]$,
   \item[(ii)] $\mu_w(\phi^{-1}(\{w=0\}))=0$,
   \item[(iii)] $w\circ \phi\neq 0$  a.e.\ $[\mu_w]$.
   \end{enumerate}
   \end{lem}
   \begin{proof}
That (i) and (ii) are equivalent follows from the
following equality
   \begin{align*}
\mu_w(\phi^{-1}(\{w=0\})) \overset{\eqref{l1}} =
\int_X \chi_{\{w=0\}} \hfw \D \mu.
   \end{align*}
The conditions (ii) and (iii) are easily seen to be
equivalent.
   \end{proof}
   The following is the first step in the proof of
Theorem \ref{cohypon-m}.
   \begin{lem} \label{cohyponA}
Suppose \eqref{stand2} holds and $\cfw$ is densely
defined. Then the following conditions are
equivalent{\em :}
   \begin{enumerate}
   \item[(i)] $\cfw$ is cohyponormal,
   \item[(ii)] $\hfw = 0$ on $\{w=0\}$  a.e.\ $[\mu]$
and
   \begin{align} \label{cohypon4}
\int_X \theta^2 \, |g|^2 \D\mu_w \Le \int_X
|\efw(g)|^2 \D\mu_w, \quad g \in L^2(\mu_w),
   \end{align}
   \item[(iii)] $\hfw = 0$ on $\{w=0\}$  a.e.\ $[\mu]$
and
   \begin{align} \label{cohypon7}
\efw(\theta^2 \, |g|^2) \Le |\efw(g)|^2 \text{ a.e.\
$[\mu_w]$}, \quad g\in L^2(\mu_w),
   \end{align}
   \end{enumerate}
where $\theta\colon X \to \rbb_+$ is an
$\ascr$-measurable function such that $\theta=
\sqrt{\frac{\hfw}{\hfw \circ \phi}}$ a.e.\ $[\mu_w]$.
   \end{lem}
   \begin{proof}
It follows from Lemma \ref{lemS6} and Proposition
\ref{lemS2} that $\theta$ with the required properties
exists. By \eqref{fw} and Proposition \ref{adj}, the
operator $\cfw$ is cohyponormal if and only if for
every $f \in L^2(\mu)$ such that
$\hfw\cdot\efw(f_w)\circ \phi^{-1} \in L^2(\mu)$,
   \begin{align*}
\int_X |f\circ \phi|^2 |w|^2 \D\mu \Le \int_X \hfw^2
|\efw(f_w)\circ \phi^{-1}|^2 \D\mu.
   \end{align*}
Hence, by \eqref{fw}, $\cfw$ is cohyponormal if and
only if the following holds
   \begin{align} \label{cohypon1c}
\int_X |f\circ \phi|^2 |w|^2 \D\mu \Le \int_X \hfw^2
|\efw(f_w)\circ \phi^{-1}|^2 \D\mu, \quad f\in
L^2(\mu).
   \end{align}
Applying \eqref{l2} and \eqref{fifi}, we see that
\eqref{cohypon1c} is equivalent to
   \begin{align} \label{cohypon2}
\int_X |f|^2 \hfw \D\mu \Le \int_X \hfw\circ \phi
\cdot |\efw(f_w)|^2 \D\mu_w, \quad f\in L^2(\mu).
   \end{align}
Now we show that if \eqref{cohypon2} holds, then
\mbox{(ii-a)} is satisfied. Indeed, since $\mu$ is
$\sigma$-finite, there exists $\{X_n\}_{n=1}^{\infty}
\subseteq \ascr$ such that $X_n\nearrow X$ as
$n\to\infty$ and $\mu(X_n)<\infty$ for all $n\in
\nbb$. Substituting $f=\chi_{X_n\cap \{w=0\}}$ (which
is in $L^2(\mu)$) into \eqref{cohypon2}, we deduce
that $\int_{X_n\cap \{w=0\}} \hfw \D\mu = 0$ for all
$n\in \nbb$, which implies that $\int_{\{w=0\}} \hfw
\D\mu = 0$. As a consequence, \mbox{(ii-a)} is
satisfied. This means that we may assume that the
condition \mbox{(ii-a)} holds.

Making the substitution $f \leftrightsquigarrow f\cdot
w$ and using \mbox{(ii-a)} and the equality $f =
f\cdot \chi_{\{w\neq 0\}}$ a.e.\ $[\mu_w]$, we deduce
that \eqref{cohypon2} is equivalent to
   \begin{align} \label{cohypon3}
\int_X |f|^2 \hfw \D\mu_w \Le \int_X \hfw\circ \phi
\cdot |\efw(f)|^2 \D\mu_w, \quad f\in L^2(\mu_w).
   \end{align}
Summarizing, we have shown that (i) is equivalent to
\eqref{cohypon3} (still under the assumption that
$\hfw = 0$ on $\{w=0\}$ a.e.\ $[\mu]$).

\eqref{cohypon3}$\Rightarrow$\eqref{cohypon4}
Substituting $f=g/\sqrt{\hfw\circ \phi}$ into
\eqref{cohypon3} and using \eqref{B10}, we get
   \begin{align} \label{cohypon5}
\int_X \theta^2 \, |g|^2 \D\mu_w \Le \int_X
|\efw(g)|^2 \D\mu_w, \quad g\in
L^2\bigg(\Big(1+\frac{1}{\hfw\circ \phi}\Big)
\D\mu_w\bigg).
   \end{align}
Take $g\in L^2(\mu_w)$ and set $g_n=\chi_{\{\hfw\circ
\phi \Ge \frac{1}{n}\}}g$ for $n\in\nbb$. Then
   \begin{align*}
\{g_n\}_{n=1}^{\infty}\subseteq
L^2\bigg(\Big(1+\frac{1}{\hfw\circ \phi}\Big)
\D\mu_w\bigg).
   \end{align*}
Since $\hfw\circ \phi>0$ a.e.\ $[\mu_w]$, we deduce
that the sequence $\{\chi_{X\setminus \{\hfw\circ \phi
\Ge \frac{1}{n}\}}\}_{n=1}^{\infty}$ converges to $0$
a.e.\ $[\mu_w]$, so by Lebesgue's dominated
convergence theorem $g_n\to g$ in $L^2(\mu_w)$ as
$n\to\infty$. By the continuity of $\efw$ (see Theorem
\ref{Ath}), we see that $\efw(g_n)\to \efw(g)$ in
$L^2(\mu_w)$ as $n\to\infty$. Hence, using Fatou's
lemma, we obtain
   \allowdisplaybreaks
   \begin{align*}
\int_X \theta^2 \, |g|^2 \D\mu_w & \Le
\liminf_{n\to\infty}\int_X \theta^2 \, |g_n|^2 \D\mu_w
   \\
& \hspace{-.8ex}\overset{\eqref{cohypon5}}\Le
\lim_{n\to\infty} \int_X |\efw(g_n)|^2 \D\mu_w
   \\
&= \int_X |\efw(g)|^2 \D\mu_w,
   \end{align*}
which yields \eqref{cohypon4}.

\eqref{cohypon4}$\Rightarrow$\eqref{cohypon7} For
this, take $g\in L^2(\mu_w)$. Since
$\chi_{\phi^{-1}(\varDelta)} \cdot g \in L^2(\mu_w)$
for all $\varDelta \in \ascr$, an application of
\eqref{B10} yields
   \allowdisplaybreaks
   \begin{align*}
\int_{\phi^{-1}(\varDelta)} \efw(\theta^2 \, |g|^2)
\D\mu_w & = \int_{\phi^{-1}(\varDelta)} \theta^2 \,
|g|^2 \D\mu_w
   \\
& \hspace{-.8ex} \overset{\eqref{cohypon4}} \Le \int_X
|\efw(\chi_{\phi^{-1}(\varDelta)} g)|^2 \D\mu_w
   \\
&=\int_{\phi^{-1}(\varDelta)} |\efw(g)|^2 \D\mu_w,
\quad \varDelta \in \ascr.
   \end{align*}
Since $\mu_w$ is $\sigma$-finite, \eqref{cohypon7}
follows from the above inequalities and \cite[Theorem
1.6.11]{Ash}.

\eqref{cohypon7}$\Rightarrow$\eqref{cohypon3} Take
$f\in L^2(\mu_w)$. Set $X_n=\phi^{-1}(\{\hfw \Le
n\})\in\phi^{-1}(\ascr)$ for $n\in \nbb$. Substituting
$g=\chi_{X_n} \sqrt{\hfw\circ \phi}\, f\in L^2(\mu_w)$
into \eqref{cohypon7} and using \eqref{B6} and
\eqref{B10}, we get
   \allowdisplaybreaks
   \begin{align*}
\chi_{X_n} \efw(|f|^2\hfw) & = \efw(\chi_{X_n} |f|^2
\hfw)
   \\
&\Le |\efw(\chi_{X_n} \sqrt{\hfw\circ \phi} \,
f)|^2
   \\
&= \chi_{X_n} (\hfw\circ \phi) \, |\efw(f)|^2 \text{
a.e.\ $[\mu_w]$}, \quad n\in \nbb.
   \end{align*}
Since $\hfw\circ \phi<\infty$ a.e.\ $[\mu_w]$ and $X_n
\nearrow \{\hfw\circ \phi<\infty\}$ as $n\to\infty$,
we deduce that
   \begin{align*}
\efw(\hfw |f|^2) \Le \hfw\circ \phi \, |\efw(f)|^2
\text{ a.e.\ $[\mu_w]$.}
   \end{align*}
This in turn implies that
   \begin{align*}
\int_X |f|^2 \hfw \D\mu_w & = \int_X \efw(\hfw |f|^2)
\D\mu_w
   \\
& \Le \int_X \hfw\circ \phi \cdot |\efw(f)|^2 \D\mu_w,
\quad f\in L^2(\mu_w),
   \end{align*}
which gives \eqref{cohypon3}.
   \end{proof}
A careful inspection of the proof of Lemma
\ref{cohyponA} reveals that under the assumptions of
this lemma the inequalities \eqref{cohypon4},
\eqref{cohypon7} and \eqref{cohypon3} are equivalent
without assuming that $\hfw = 0$ on $\{w=0\}$ a.e.\
$[\mu]$.

   Now we are in a position to prove the
aforementioned characterization of cohyponormality of
weighted composition operators. Combining it with
Lemmas \ref{cohypon1} and \ref{cohypon0} we easily
obtain other characterizations.
   \begin{thm} \label{cohypon-m}
Suppose \eqref{stand2} holds and $\cfw$ is densely
defined. Then the following statements are
equivalent{\em :}
   \begin{enumerate}
   \item[(i)] $\cfw$ is cohyponormal,
   \item[(ii)] the following three conditions hold{\em :}
   \begin{enumerate}
   \item[(ii-a)] $\hfw = 0$ on $\{w=0\}$  a.e.\ $[\mu]$,
   \item[(ii-b)] $\chi_{\{\hfw > 0\}} \cdot L^2(\mu_w) \subseteq
\ob{\efw}$,
   \item[(ii-c)] $\hfw \Le \hfw \circ \phi$ a.e.\
$[\mu_w]$.
   \end{enumerate}
   \end{enumerate}
Moreover, if $\cfw$ is cohyponormal, then
   \begin{enumerate}
   \item[(iii)] $\efw(\hfw) = \hfw$ a.e.\ $[\mu_w]$,
   \item[(iv)] $M_{\theta} \in \ogr{L^2(\mu_w)}$,
$M_{\theta}$ is a contraction, $\ob{\efw}$ reduces
$M_{\theta}$ and
   \begin{align} \label{cohypon11}
M_{\theta} = M_{\theta}|_{\ob{\efw}} \oplus
0|_{\jd{\efw}}.
   \end{align}
   \end{enumerate}
   \end{thm}
   \begin{proof}
(i)$\Rightarrow$(ii) By Lemma \ref{cohyponA}(ii),
\mbox{(ii-a)} holds and
   \begin{align}  \label{cohypon8}
\|M_{\theta} \, g\|_{\mu_w} \Le \|\efw g\|_{\mu_w},
\quad g\in L^2(\mu_w),
   \end{align}
where $\|\cdot\|_{\mu_w}$ stands for the norm of the
Hilbert space $L^2(\mu_w)$. Since $\efw\in
\ogr{L^2(\mu_w)}$ is a contraction (see Theorem
\ref{Ath}(ii)), we have
   \begin{align*}
\|M_{\theta} \, g\|_{\mu_w} \Le \|g\|_{\mu_w}, \quad
g\in L^2(\mu_w),
   \end{align*}
which implies that $M_{\theta}\in \ogr{L^2(\mu_w)}$
and $\|M_{\theta}\|_{\mu_w} \Le 1$, or equivalently
that $\theta \Le 1$ a.e.\ $[\mu_w]$. This means that
\mbox{(ii-c)} holds.

Since the range of $\efw$ is closed (see Theorem
\ref{Ath}(iii)), it follows from \eqref{cohypon8} that
there exists a linear contraction $\tilde T \colon
\ob{\efw} \to \overline{\ob{M_{\theta}}}$ such that
   \begin{align} \label{cohypon6}
\tilde T \, \efw g = M_{\theta} \, g, \quad g\in
L^2(\mu_w).
   \end{align}
Define $T\in \ogr{L^2(\mu_w)}$ by $Tg=\tilde T \, \efw
g$ for $g\in L^2(\mu_w)$. Since $\efw$ is an
orthogonal projection in $L^2(\mu_w)$, we deduce from
\eqref{cohypon6} that $\|T\|_{\mu_w}\Le 1$ and
   \begin{align*}
T \, \efw = M_{\theta}.
   \end{align*}
Since both operators $\efw$ and $M_{\theta}$ are
selfadjoint, we obtain
   \begin{align} \label{cohypon9}
M_{\theta} = \efw T^*.
   \end{align}
Since
   \begin{align} \label{cohypon10}
\overline{\ob{M_{\theta}}}=\chi_{\{\theta > 0\}} \cdot
L^2(\mu_w)=\chi_{\{\hfw > 0\}} \cdot L^2(\mu_w),
   \end{align}
the equality \eqref{cohypon9} implies \mbox{(ii-b)}.

(ii)$\Rightarrow$(i) It follows from \mbox{(ii-c)}
that $M_{\theta} \in \ogr{L^2(\mu_w)}$ and
$\|M_{\theta}\|_{\mu_w} \Le 1$. By \mbox{(ii-b)} and
\eqref{cohypon10}, $\jd{\efw} \subseteq
\jd{M_{\theta}}$. Set $\efw^{\perp}:=I-\efw$. Since
$\efw$ is an orthogonal projection in $L^2(\mu_w)$, we
see that $\ob{\efw^{\perp}} \subseteq
\jd{M_{\theta}}$. Hence, we have
   \allowdisplaybreaks
   \begin{align*}
\|M_{\theta} \, g\|_{\mu_w} &= \|M_{\theta}\, (\efw g
\oplus \efw^{\perp} g)\|_{\mu_w}
   \\
& = \|M_{\theta} \, \efw g\|_{\mu_w}
   \\
&\Le \|\efw g\|_{\mu_w}, \quad g \in L^2(\mu_w),
   \end{align*}
which is equivalent to \eqref{cohypon4}. Applying
Lemma \ref{cohyponA} yields (i).

To prove the ``moreover'' part, assume that $\cfw$ is
cohyponormal. Since $\ob{\efw^{\perp}} \subseteq
\jd{M_{\theta}}$ and $\ob{M_{\theta}} \subseteq
\ob{\efw}$, we get
   \begin{align*}
M_{\theta} \, \efw g = M_{\theta} \, g = \efw
M_{\theta} \, g, \quad g \in L^2(\mu_w).
   \end{align*}
This implies that $\ob{\efw}$ reduces $M_{\theta}$ and
\eqref{cohypon11} is valid. Summarizing, we have shown
that (iv) holds.

By Proposition \ref{lemS2}, the measure $\hfw\D
\mu=\D\mu_w\circ \phi^{-1}$ is $\sigma$-finite, and
thus there exists $f\in L^2(\hfw\D\mu)$ such that
$f(x)\in (0,\infty)$ for all $x\in X$. By Lemma
\ref{roce}(iii), $g:=f\circ \phi \in \ob{\efw}$. Then,
since $\ob{\efw}$ reduces $M_{\theta}$, $\theta g \in
\ob{\efw}$. Applying Lemma \ref{roce}(iv), we find
$\eta\in L^2(\hfw\D\mu)$ such that $\eta(x)\Ge 0$ for
all $x\in X$ and $\theta g=\eta\circ \phi$ a.e.\
$[\mu_w]$. Therefore, we have $\sqrt{\hfw} \,
g=\sqrt{\hfw\circ \phi} \cdot \eta\circ \phi$ a.e.\
$[\mu_w]$. This implies that
   \begin{align*}
\hfw = \bigg(\frac{\hfw\cdot \eta^2}{f^2}\bigg) \circ
\phi \text{ a.e.\ $[\mu_w]$,}
   \end{align*}
which completes the proof of the ``moreover'' part.
   \end{proof}
   \begin{cor} \label{cohyponB}
Suppose \eqref{stand2} holds, $\cfw$ is densely
defined and $w\neq 0$ a.e.\ $[\mu]$. Then the
following conditions are equivalent{\em :}
   \begin{enumerate}
   \item[(i)] $\cfw$ is cohyponormal,
   \item[(ii)] $\chi_{\{\hfw > 0\}} \cdot L^2(\mu_w) \subseteq
\ob{\efw}$ and $\hfw \Le \hfw \circ \phi$ a.e.\
$[\mu_w]$.
   \end{enumerate}
   \end{cor}
   \section{\label{Sec5.3}Normality and formal normality}
The following lemma, which is a direct consequence of
Lemma \ref{cohypon1}, provides a few equivalent
variants of the condition \mbox{(ii-b)} that appears
in the characterization of normality of weighted
composition operators given in Theorem \ref{normal-m}.
   \begin{lem} \label{normal1}
Suppose \eqref{stand2} holds and $\cfw$ is densely
defined. Then the following conditions are
equivalent{\em :}
   \begin{enumerate}
   \item[(i)] $\ob{\efw}=L^2(\mu_w)$,
   \item[(ii)] $\jd{\efw} = \{0\}$,
   \item[(iii)] $\ascr = \phi^{-1}(\ascr)^{\mu_w}$,
   \item[(iv)] for every $\varDelta \in \ascr$,
there exists $\tilde \varDelta \in \ascr$ such that
$\mu_w(\varDelta \vartriangle \phi^{-1}(\tilde
\varDelta))=0$,
   \item[(v)] for every $\ascr$-measurable function
$f\colon X \to \rbb_+$, there exists an
$\ascr$-measura\-ble function $g\colon X \to \rbb_+$
such that $f=g\circ \phi$ a.e.\ $[\mu_w]$.
   \end{enumerate}
   \end{lem}
   \begin{thm} \label{normal-m}
Suppose \eqref{stand2} holds and $\cfw$ is densely
defined. Then the following statements are
equivalent{\em :}
   \begin{enumerate}
   \item[(i)] $\cfw$ is normal,
   \item[(ii)] the following three conditions are
satisfied{\em :}
   \begin{enumerate}
   \item[(ii-a)] $\hfw = 0$ on $\{w=0\}$  a.e.\ $[\mu]$,
   \item[(ii-b)] $\ob{\efw} = L^2(\mu_w)$,
   \item[(ii-c)] $\hfw = \hfw \circ \phi$ a.e.\ $[\mu_w]$.
   \end{enumerate}
   \end{enumerate}
Moreover, if $\cfw$ is normal, then $\{\hfw > 0\} =
\{w\neq 0\}$ a.e.\ $[\mu]$.
   \end{thm}
   \begin{proof}
(i)$\Rightarrow$(ii) First, we recall that normal
operators are simultaneously hyponormal and
cohyponormal. Thus the condition \mbox{(ii-a)} follows
from Theorem \ref{cohypon-m}. In view of Corollary
\ref{hipinj}, $\hfw>0$ a.e.\ $[\mu_w]$, so by Theorem
\ref{cohypon-m} the condition \mbox{(ii-b)} is
satisfied. Using Theorem \ref{cohypon-m} again, we see
that $\hfw \Le \hfw \circ \phi$ a.e.\ $[\mu_w]$ and
$\efw(\hfw) = \hfw$ a.e.\ $[\mu_w]$. On the other
hand, by Theorem \ref{hypon} we have
   \begin{align*}
\frac{\hfw\circ \phi}{\hfw} =
\efw\bigg(\frac{\hfw\circ \phi}{\hfw}\bigg) \Le 1
\text{ a.e.\ $[\mu_w]$.}
   \end{align*}
As a consequence, the condition \mbox{(ii-c)} holds.
(Note also that the condition \mbox{(ii-c)} is a
direct consequence of Theorem \ref{quain}.)

(ii)$\Rightarrow$(i) It follows from Theorem
\ref{cohypon-m} that $\cfw$ is cohyponormal. By
\mbox{(ii-c)} and Lemma \ref{lemS6}, $\hfw > 0$ a.e.\
$[\mu_w]$. Since $\efw\big(\frac{\hfw\circ
\phi}{\hfw}\big) = 1$ a.e.\ $[\mu_w]$, we infer from
Theorem \ref{hypon} that $\cfw$ is hyponormal (this
can also be derived from the quasinormality of $\cfw$
via \mbox{(ii-c)} and Theorem \ref{quain}). Knowing
that $\cfw$ is closed, we conclude that $\cfw$ is
normal.

To prove the ``moreover'' part, assume that $\cfw$ is
normal. Then, by (ii), $\hfw = 0$ on $\{w=0\}$ a.e.\
$[\mu]$. On the other hand, Corollary \ref{hipinj}
guarantees that $\hfw > 0$ on $\{w\neq 0\}$ a.e.\
$[\mu]$, which implies that $\{\hfw > 0\} = \{w\neq
0\}$ a.e.\ $[\mu]$.
   \end{proof}
   \begin{cor} \label{normal-B}
Suppose \eqref{stand2} holds and $\cfw$ is normal.
Then the following assertions are valid for every
$n\in \nbb$,
   \begin{enumerate}
   \item[(i)] $C_{\phi^n,\widehat w_n}$ is normal
and $\cfw^n=C_{\phi^n,\widehat w_n}$,
   \item[(ii)] $\hfwn{n}=\hfw^n$ a.e.\ $[\mu]$,
   \item[(iii)] $\hfw\circ \phi^n = \hfw$ a.e.\
$[\mu_w]$,
   \item[(iv)] $\{\widehat w_n \neq 0\} =
\{w\neq 0\} = \{\hfw > 0\}=\{\hfwn{n}>0\}$ a.e.\
$[\mu]$,
   \item[(v)] $\mu_w \ll \mu_{\widehat w_n}$ and
$\mu_{\widehat w_n} \ll \mu_w$.
   \end{enumerate}
   \end{cor}
   \begin{proof}
(i) Apply Lemma \ref{13-12-13}(v) and the fact that
powers of normal operators are normal (see
\cite[Proposition 4.22]{Schb}).

(ii) This is a consequence of Lemma \ref{lemS11}(iii).

(iv) Using the ``moreover'' part of Theorem
\ref{normal-m} and the assertions (i) and (ii), we get
the following equalities
   \begin{align*}
\{\widehat w_n \neq 0\} = \{\hfwn{n}>0\} = \{\hfw^n >
0\} = \{\hfw > 0\} = \{w\neq 0\} \text{ a.e.\
$[\mu]$.}
   \end{align*}

(v) This can be deduced from the assertion (iv).

(iii) It follows from the assertion (i) and Theorem
\ref{quain} that
   \begin{align*}
\hfwn{n}\circ \phi^n = \hfwn{n} \text{ a.e.\
$[\mu_{\widehat w_n}]$.}
   \end{align*}
By the assertion (ii) and Lemma \ref{nuklear}, this
implies that
   \begin{align*}
(\hfw \circ \phi^n)^n = \hfw^n \text{ a.e.\
$[\mu_{\widehat w_n}]$.}
   \end{align*}
Applying the assertion (v) and taking $n$th roots
completes the proof.
   \end{proof}
   \begin{cor} \label{normal-C}
Suppose \eqref{stand2} holds, $\cfw$ is densely
defined and $w\neq 0$ a.e.\ $[\mu]$. Then the
following conditions are equivalent{\em :}
   \begin{enumerate}
   \item[(i)] $\cfw$ is normal,
   \item[(ii)] $\ob{\efw} = L^2(\mu_w)$ and
$\hfw = \hfw \circ \phi$ a.e.\ $[\mu_w]$.
   \end{enumerate}
   \end{cor}
It was proved in \cite[Theorem 9.4]{b-j-j-sC} that
formally normal composition operators are
automatically normal. As shown below, the same is true
for weighted composition operators. The proof of this
fact is an adaptation of that given in \cite[Remark
9.5]{b-j-j-sC}.
   \begin{thm} \label{fn2n}
Suppose \eqref{stand2} holds and $\cfw$ is formally
normal. Then $\cfw$ is normal.
   \end{thm}
   \begin{proof}
Applying the polarization formula and Proposition
\ref{adj}, we obtain
   \allowdisplaybreaks
   \begin{align} \notag
\int_X f \bar g \, \hfw \D \mu &
\hspace{-.3ex}\overset{\eqref{l2}} = \is{\cfw f}{\cfw
g}
   \\ \notag
& = \is{\cfw^* f}{\cfw^* g}
   \\   \notag
& = \int_X (\efw(f_w) \circ \phi^{-1}) \,
\overline{\efw(g_w) \circ \phi^{-1}} \,\hfw^2 \D\mu
   \\ \notag
&\hspace{-.3ex}\overset{\eqref{l2}}= \int_X
(\efw(f_w)\circ \phi^{-1}) \circ \phi \cdot
\overline{(\efw(g_w)\circ \phi^{-1}) \circ \phi} \cdot
\hfw \circ \phi \D\mu_w,
   \\ \label{mordega}
&\hspace{-.75ex}\overset{\eqref{fifi}}= \int_X
\efw(f_w) \, \overline{\efw(g_w)} \; \hfw \circ \phi
\D\mu_w, \quad f,g\in \dz{\cfw}.
   \end{align}
Since $\mu$ is $\sigma$-finite, there exists a
sequence $\{X_n\}_{n=1}^{\infty} \subseteq \ascr$ such
that $X_n\nearrow X$ as $n\to \infty$ and $\mu(X_n) <
\infty$ for all $n\in \nbb$. Take $\varDelta \in
\ascr$. Knowing that $\hfw>0$ a.e.\ $[\mu_w]$, we can
define the sequences $\{Y_n\}_{n=1}^{\infty}\subseteq
\ascr$ and $\{f_n\}_{n=1}^{\infty},
\{g_n\}_{n=1}^{\infty} \subseteq L^2(\mu)$ by
   \begin{gather*}
   \begin{gathered}
Y_n = X_n \cap \{0 < |w|\Le n\} \cap \{\hfw \Ge 1/n\},
   \\
f_n= \chi_{\varDelta \cap Y_n} \frac{w}{\hfw} \text{
and } g_n= \chi_{\varDelta \cap Y_n} w,
   \end{gathered}
\quad n \in \nbb.
   \end{gather*}
Applying Proposition \ref{lemS1} we easily verify that
$\{f_n\}_{n=1}^{\infty}, \{g_n\}_{n=1}^{\infty}
\subseteq \dz{\cfw}$. Substituting $(f_n,g_n)$ into
\eqref{mordega} in place of $(f,g)$, we obtain
   \allowdisplaybreaks
   \begin{align*}
\mu_w(Y_n \cap \varDelta)&= \int_X
\efw(\hfw^{-1}\cdot\chi_{Y_n \cap \varDelta}) \,
\efw(\chi_{Y_n}) \, \hfw \circ \phi \D\mu_w
   \\
&\hspace{-2ex}\overset{\eqref{B3}}= \int_{Y_n \cap
\varDelta} \efw(\chi_{Y_n}) \frac{\hfw \circ
\phi}{\hfw} \D\mu_w, \quad n \in \nbb.
   \end{align*}
Using \eqref{B5} and Lebesgue's monotone convergence
theorem and the fact that $\chi_{Y_n \cap E} \nearrow
\chi_{E}$ a.e.\ $[\mu_w]$ as $n\to \infty$ for any
$E\in \ascr$, we deduce that
   \begin{align*}
\mu_w(\varDelta) = \int_{\varDelta} \frac{\hfw \circ
\phi}{\hfw} \D\mu_w, \quad \varDelta \in \ascr.
   \end{align*}
Since $\varDelta\in \ascr$ is arbitrary and $\mu_w$ is
$\sigma$-finite, we deduce from \cite[Theorem
1.6.11]{Ash} that $\hfw \circ \phi = \hfw$ a.e.\
$[\mu_w]$. By Proposition \ref{quain}, $\cfw$ is
quasinormal. Since quasinormal formally normal
operators are normal (cf.\ \cite[Corollary 4]{StSz2}),
the proof is complete.
   \end{proof}
   \section{\label{Sec5.4}Selfadjointness}
In this section, we concentrate on the study of
selfadjoint weighted composition operators. This
covers the cases of symmetric and positive weighted
composition operators due to the following
proposition.
   \begin{pro} \label{fn2nC}
Suppose \eqref{stand2} holds and $\cfw$ is densely
defined. If $\cfw$ is symmetric or positive, then
$\cfw$ is selfadjoint.
   \end{pro}
   \begin{proof}
Since densely defined positive complex Hilbert space
operators are symmetric, we can infer Proposition
\ref{fn2nC} from Theorem \ref{fn2n} by noting that
symmetric operators are formally normal and normal
symmetric operators are selfadjoint.
   \end{proof}
   The following lemma, which is of some independent
interest, will be one of the main tools in the proof
of the characterization of selfadjointness of weighted
composition operators given in Theorem \ref{self}.
   \begin{lem} \label{normsa}
Assume that $A$ is a normal operator in a complex
Hilbert space $\hh$. Then the the following two
conditions are equivalent{\em :}
   \begin{enumerate}
   \item[(i)] $A=A^*$,
   \item[(ii)] $A^2$ is selfadjoint and positive.
   \end{enumerate}
   \end{lem}
   \begin{proof}
(i)$\Rightarrow$(ii) This is clear due to the
well-known fact that powers of selfadjoint operators
are selfadjoint (see \cite[Proposition 4.22]{Schb}).

(ii)$\Rightarrow$(i) It follows from the spectral
mapping theorem (see \cite[Proposition 5.25]{Schb})
and the fact that the spectrum of a positive
selfadjoint operator is a subset of $\rbb_+$ (see
\cite[Theorem 5.7 and Proposition 5.10(i)]{Schb}) that
   \begin{align*}
\overline{\{z^2\colon z \in \mathrm{Sp}(A)\}} =
\mathrm{Sp}(A^2) \subseteq \rbb_+,
   \end{align*}
where $\mathrm{Sp}(T)$ denotes the spectrum of an
operator $T$. This implies that
$\mathrm{Sp}(A)\subseteq \rbb$. Hence, by
\cite[Propositions 4.17(ii) and 4.20(i)]{Schb}, the
operator $A$ is selfadjoint.
   \end{proof}
   To prove Theorem \ref{self}, we also need the
following technical lemma.
   \begin{lem} \label{sasq-1}
Suppose $(X,\ascr,\nu)$ is a $\sigma$-finite measure
space, $\tau\colon \ascr \to \rbop$ is a
$\sigma$-finite measure and $\phi$ is an
$\ascr$-measurable transformation of $X$. Then the
following conditions are equivalent\/\footnote{\;In
the condition (i) (see also Corollary \ref{sasq}(i)),
the expression ``for all $f \in L^2(\tau)$'' should be
understood as ``for all $\ascr$-measurable functions
$f\colon X \to \cbb$ such that $\int_X |f|^2
\D\tau<\infty$''.}{\em :}
   \begin{enumerate}
   \item[(i)] $f=f\circ\phi$ a.e.\ $[\nu]$ for all
$f \in L^2(\tau)$,
   \item[(ii)]
$f=f\circ\phi$ a.e.\ $[\nu]$ for all $f\in L^2(\nu)$,
   \item[(iii)] $f=f\circ\phi$ a.e.\ $[\nu]$
for all $\ascr$-measurable function $f\colon X \to
\cbb$.
   \end{enumerate}
   \end{lem}
   \begin{proof}
(i)$\Rightarrow$(iii) Take an $\ascr$-measurable
function $f \colon X \to \cbb$. Since $\tau$ is
$\sigma$-finite, there exists a sequence
$\{X_n\}_{n=1}^{\infty}$ such that $X_n\nearrow X$ as
$n\to \infty$ and $\tau(X_n)< \infty$ for all $n\in
\nbb$. Set $Y_n = \{|f|\Le n\} \cap X_n$ for $n\in
\nbb$. Then $\{\chi_{Y_n} f\}_{n=1}^{\infty} \subseteq
L^2(\tau)$, which yields
   \begin{align*}
\chi_{Y_n} f = \chi_{\phi^{-1}(Y_n)} \, f \circ \phi
\text{ a.e.\ $[\nu]$}, \quad n\in \nbb.
   \end{align*}
Passing to the limit with $n\to\infty$, we get $f = f
\circ \phi$ a.e.\ $[\nu]$.

(ii)$\Rightarrow$(iii) Apply the implication
(i)$\Rightarrow$(iii) to $\tau=\nu$.

The implications (iii)$\Rightarrow$(i) and
(iii)$\Rightarrow$(ii) are obvious.
   \end{proof}
An inspection of the proof of Lemma \ref{sasq-1} shows
that the conditions (i) and (iii) are equivalent
without assuming that the measure $\nu$ is
$\sigma$-finite.
   \begin{cor} \label{sasq}
Suppose \eqref{stand1} holds, $\tau\colon \ascr \to
\rbop$ is a $\sigma$-finite measure and $k\in \nbb$.
Then the following conditions are equivalent{\em :}
   \begin{enumerate}
   \item[(i)]
$f=f\circ\phi^k$ a.e.\ $[\mu_w]$ for every $f\in
L^2(\tau)$,
   \item[(ii)]
$C_{\phi^k}$ is well-defined as an operator in
$L^2(\mu_w)$ and $C_{\phi^k}=I_{L^2(\mu_w)}$,
   \item[(iii)] for every $\ascr$-measurable function
$f\colon X \to \cbb$, $f=f\circ\phi^k$ a.e.\
$[\mu_w]$.
   \end{enumerate}
Moreover, if {\em (iii)} holds and $\cfw$ is densely
defined, then $\ob{\efw}=L^2(\mu_w)$.
   \end{cor}
   \begin{proof}
The conditions (i)-(iii) are equivalent due to Lemma
\ref{sasq-1}. The ``moreover'' part is a direct
consequence of (iii) and Lemma \ref{roce}(ii).
   \end{proof}
   As shown below, every positive selfadjoint weighted
composition operator is a multiplication operator.
   \begin{pro}\label{psa-pre}
Suppose \eqref{stand2} holds and $\cfw$ is selfadjoint
and positive. Then $w=\sqrt{\hfw}$ a.e.\ $[\mu]$ and
$\cfw=M_w$.
   \end{pro}
   \begin{proof} Since, by
Proposition \ref{polar}(i),
$\cfw=|\cfw|=M_{\sqrt{\hfw}}$, we infer from
Proposition \ref{lemS1}(i) that
   \begin{align}  \label{xzmierz}
w \cdot f\circ \phi = \sqrt{\hfw} \cdot f, \quad f \in
L^2((1+\hfw)\D \mu).
   \end{align}
Let $\{X_n\}_{n=1}^{\infty}$ be as in Lemma
\ref{aproks}. Since $\{\chi_{X_n}\}_{n=1}^{\infty}
\subseteq L^2((1+\hfw)\D \mu)$, we infer from
\eqref{xzmierz} that $w\cdot \chi_{\phi^{-1}(X_n)} =
\sqrt{\hfw} \cdot \chi_{X_n}$ a.e.\ $[\mu]$ for all
$n\in \nbb$. Passing to the limit with $n\to\infty$
and using the fact that $X_n\nearrow X$ as
$n\to\infty$ completes the proof.
   \end{proof}
   Now we are ready to prove the theorem
characterizing the selfadjointness of weighted
composition operators.
   \begin{thm} \label{self}
Suppose \eqref{stand2} holds and $\cfw$ is densely
defined. Then the following statements are
equivalent{\em :}
   \begin{enumerate}
   \item[(i)] $\cfw$ is selfadjoint,
   \item[(ii)] the following two conditions hold{\em :}
   \begin{enumerate}
   \item[(ii-a)] $\widehat w_2 = \hfw$ a.e.\ $[\mu]$,
   \item[(ii-b)] $C_{\phi^2}$ is well-defined as an operator
in $L^2(\mu_w)$ and $C_{\phi^2}=I_{L^2(\mu_w)}$.
   \end{enumerate}
   \end{enumerate}
   \end{thm}
   \begin{proof}
(i)$\Rightarrow$(ii) Since powers of selfadjoint
operators are selfadjoint (see \cite[Theorem
5.9]{Schb}), we infer from Lemma \ref{13-12-13}(iii)
that
   \begin{align} \label{ldnyn}
\cfw^2 = C_{\phi^2,\widehat w_2}.
   \end{align}
Therefore, by Propositions \ref{lemS2} and
\ref{polar}(i), $\hfw < \infty$ a.e.\ $[\mu]$ and
   \begin{align} \label{Kamak}
M_{\hfw} = \Big(M_{\hfw^{1/2}}\Big)^2 = |\cfw|^2 =
\cfw^* \cfw = \cfw^2 \overset{\eqref{ldnyn}}=
C_{\phi^2,\widehat w_2}.
   \end{align}
This implies that the operator $C_{\phi^2,\widehat
w_2}=M_{\hfw}$ is selfadjoint and positive. By
\eqref{Kamak} and Proposition \ref{psa-pre}, we have
   \begin{align} \label{Kamak2}
M_{\hfw} = C_{\phi^2,\widehat w_2}=M_{\widehat w_2},
   \end{align}
which implies \mbox{(ii-a)}. It follows from
\eqref{Kamak} that
   \begin{align} \label{Kamak3}
\hfw \cdot f = \widehat w_2 \cdot f\circ \phi^2 \text{
a.e.\ $[\mu]$}, \quad f\in L^2((1+\hfw^2)\D\mu)
   \end{align}
However, by \mbox{(ii-a)} and the ``moreover'' part of
Theorem \ref{normal-m}, we have
   \begin{align*}
\text{$\{\widehat w_2\neq 0\}= \{\hfw > 0\} = \{w\neq
0\}$ a.e.\ $[\mu]$,}
   \end{align*}
which together with \eqref{Kamak3} yields
   \begin{align} \label{Kamak4}
f = f\circ \phi^2 \text{ a.e.\ $[\mu_w]$}, \quad f\in
L^2((1+\hfw^2)\D\mu).
   \end{align}
Applying Corollary \ref{sasq}(i) with
$\D\tau=(1+\hfw^2)\D\mu$ gives \mbox{(ii-b)}.

(ii)$\Rightarrow$(i) It follows from \mbox{(ii-a)}
that $\hfw=0$ on $\{w=0\}$ a.e.\ $[\mu]$. By Corollary
\ref{sasq} (with $k=2$), $\ob{\efw}=L^2(\mu_w)$ and
   \begin{align} \label{pow}
\text{$w=w\circ \phi^2$ a.e.\ $[\mu_w]$.}
   \end{align}
Applying Lemma \ref{nuklear}, we deduce from
\mbox{(ii-a)} that
   \begin{align*}
\text{$w\circ\phi \cdot w\circ \phi^2 = \hfw \circ
\phi$ a.e.\ $[\mu_w]$.}
   \end{align*}
This combined with \mbox{(ii-a)} and \eqref{pow}
yields $\hfw=\hfw\circ \phi$ a.e.\ $[\mu_w]$.
Summarizing, we have shown that the conditions
\mbox{(ii-a)}, \mbox{(ii-b)} and \mbox{(ii-c)} of
Theorem \ref{normal-m} hold. Hence, by this theorem,
$\cfw$ is normal. This implies that $\cfw^2$ is normal
(see \cite[Proposition 4.22]{Schb}). In particular
$\cfw^2$ is closed. In view of Lemma \ref{normsa}, it
suffices to show that $\cfw^2$ is selfadjoint and
positive. By Lemma \ref{13-12-13}(iii), the equality
\eqref{ldnyn} is valid. It follows from Proposition
\ref{lemS1}(i) and Lemma \ref{lemS11}(iii) that
   \begin{align}  \notag
\dz{\cfw^2} \overset{\eqref{ldnyn}}=
\dz{C_{\phi^2,\widehat w_2}} & = L^2((1 + \hfwn{2})\D
\mu)
   \\ \label{icib}
& = L^2((1 + \hfw^2)\D \mu) = \dz{M_{\hfw}}.
   \end{align}
According to \mbox{(ii-b)} and Corollary
\ref{sasq}(iii), for every $\ascr$-measurable function
$f\colon X \to \cbb$, $f=f\circ\phi^2$ a.e.\
$[\mu_w]$. This implies that
   \begin{align*}
\cfw^2 f \overset{\eqref{ldnyn}}= \widehat w_2 \cdot f
\circ \phi^2 & \overset{\mathrm{(ii\mbox{-}a)}}= \hfw
\cdot f \circ \phi^2
   \\
&\hspace{.9ex} \overset{(\dag)}= \hfw \cdot f =
M_{\hfw} f, \quad f\in \dz{\cfw^2},
   \end{align*}
where $(\dag)$ follows from the normality of $\cfw$
and the ``moreover'' part of Theorem \ref{normal-m}.
Hence, by \eqref{icib}, $\cfw^2=M_{\hfw}$. This
completes the proof.
   \end{proof}
The following result appeared in \cite[Proposition
B.1]{b-j-j-sS}.
   \begin{cor} \label{jfacor}
Let $(X, \ascr, \mu)$ be a $\sigma$-finite measure
space and $\phi$ be a nonsingular transformation of
$X$. Then the following assertions are valid{\em :}
   \begin{enumerate}
   \item[(i)] if $C_{\phi}$ is symmetric, then $C_{\phi}$ is
selfadjoint and unitary, and $C_{\phi}^2=I$,
   \item[(ii)] if $C_{\phi}$ is positive and symmetric, then
$C_{\phi}=I$.
   \end{enumerate}
   \end{cor}
   \begin{proof}
By Proposition \ref{fn2nC}, $C_{\phi}$ is selfadjoint.
Hence so is $C_{\phi}^2$.

(i) Since $C_{\phi}^2$ is closed, we infer from Lemma
\ref{13-12-13}(iii) and Theorem \ref{self} that
$C_{\phi}^2=C_{\phi^2}=I$.

(ii) Apply Proposition \ref{psa-pre} or (i) and the
square root lemma.
   \end{proof}
To prove Theorem \ref{psa} that characterizes positive
selfadjoint weighted composition operators, we need
the following lemma.
   \begin{lem} \label{psa-add}
Suppose \eqref{stand1} holds. Then the following
statements are equivalent{\em :}
   \begin{enumerate}
   \item[(i)] $\cfw$ is well-defined and $\cfw=M_w$,
   \item[(ii)] $C_{\phi}$ is well-defined as an operator
in $L^2(\mu_w)$ and $C_{\phi} = I_{L^2(\mu_w)}$.
   \end{enumerate}
Moreover, if {\em (ii)} holds, then $\hfw=|w|^2$ a.e.\
$[\mu]$.
   \end{lem}
   \begin{proof}
(i)$\Rightarrow$(ii) By assumption and Proposition
\ref{lemS1}(i) we see that $w\cdot f\circ \phi = w
\cdot f$ a.e.\ $[\mu]$ for all $f\in L^2((1+\hfw)\D
\mu)$. This implies that $f\circ \phi = f$ a.e.\
$[\mu_w]$ for all $f\in L^2((1+\hfw)\D \mu)$. Applying
Corollary \ref{sasq} completes the proof of (ii).

(ii)$\Rightarrow$(i) It follows from Corollary
\ref{sasq} that $w \cdot f\circ \phi = w \cdot f$
a.e.\ $[\mu]$ for all $\ascr$-measurable functions
$f\colon X \to \cbb$. This implies (i).

Now assume that (ii) holds. Applying Corollary
\ref{sasq}, we deduce that
   \begin{align*}
\mu_w\circ \phi^{-1}(\varDelta) = \int_X
\chi_{\varDelta} \circ \phi \D\mu_w = \int_{\varDelta}
|w|^2\D\mu, \quad \varDelta \in \ascr,
   \end{align*}
which proves the ``moreover'' part.
   \end{proof}
   \begin{cor} \label{psa-add-c}
Suppose \eqref{stand1} holds. If the condition {\em
(ii)} of Lemma {\em \ref{psa-add}} holds and $u\colon
X \to \cbb$ is an $\ascr$-measurable function such
that $\{w=0\} = \{u=0\}$ a.e.\ $[\mu]$, then
$C_{\phi,u}$ is well-defined and $C_{\phi,u}=M_u$.
   \end{cor}
   \begin{proof}
Combine Corollary \ref{sasq} and Lemma \ref{psa-add}.
   \end{proof}
   Positive selfadjoint weighted composition operators
can be characterized as follows.
   \begin{thm} \label{psa}
Suppose \eqref{stand1} holds. Then the following are
equivalent{\em :}
   \begin{enumerate}
   \item[(i)] $\cfw$ is well-defined, selfadjoint and positive,
   \item[(ii)] the following two conditions hold{\em :}
   \begin{enumerate}
   \item[(ii-a)] $w\Ge 0$ a.e.\ $[\mu]$,
   \item[(ii-b)] $C_{\phi}$ is well-defined as an
operator in $L^2(\mu_w)$ and
$C_{\phi}=I_{L^2(\mu_w)}$,
   \end{enumerate}
   \item[(iii)] $\cfw$ is well-defined, $\cfw=M_w$ and
$w\Ge 0$ a.e.\ $[\mu]$.
   \end{enumerate}
   \end{thm}
   \begin{proof}
(i)$\Rightarrow$(ii) In view of Proposition
\ref{psa-pre}, $w=\sqrt{\hfw} \Ge 0$ a.e.\ $[\mu]$ and
$\cfw=M_w$. Hence, by Lemma \ref{psa-add},
\mbox{(ii-b)} holds.

(ii)$\Rightarrow$(iii) Apply Lemma \ref{psa-add}.

(iii)$\Rightarrow$(i) Obvious.
   \end{proof}
   \begin{cor} \label{psa-cor}
Suppose \eqref{stand2} holds and $\cfw$ is selfadjoint
and positive. Let $u\colon X \to \cbb$ be an
$\ascr$-measurable function such that
   \begin{enumerate}
   \item[(i)] $u\Ge 0$ a.e.\ $[\mu]$,
   \item[(ii)] $\{w=0\} = \{u=0\}$ a.e.\ $[\mu]$.
   \end{enumerate}
Then $C_{\phi,u}$ is well-defined and it is
selfadjoint and positive.
   \end{cor}
   \begin{proof}
By Corollary \ref{sasq} and Theorem \ref{psa},
$f=f\circ\phi$ a.e.\ $[\mu_w]$ for every
$\ascr$-measurable function $f\colon X \to \cbb$.
Noting that the measures $\mu_w$ and $\mu_u$ are
mutually absolutely continuous and applying Corollary
\ref{sasq} and Theorem \ref{psa} to the weight $u$, we
complete the proof.
   \end{proof}
   We close this section by relating some of the
conditions that appeared in the characterizations of
normality, selfadjointness and positivity of weighted
composition operators.
   \begin{pro}\label{wkwlp}
Suppose \eqref{stand2} holds and $\cfw$ is densely
defined. Then the following assertions hold{\em :}
   \begin{enumerate}
   \item[(i)] if $w=\sqrt{\hfw}$ a.e.\ $[\mu]$ and
$\hfw=\hfw\circ \phi$ a.e.\ $[\mu_w]$, then $\widehat
w_2 = \hfw$ a.e.\ ~ $[\mu]$,
   \item[(ii)] the following two conditions are
equivalent
   \begin{enumerate}
   \item[(ii-a)] $w=\sqrt{\hfw}$ a.e.\ $[\mu]$,
$\hfw=\hfw\circ \phi$ a.e.\ $[\mu_w]$, $C_{\phi^2}$ is
well-defined as an operator in $L^2(\mu_w)$ and
$C_{\phi^2}=I_{L^2(\mu_w)}$,
   \item[(ii-b)] $w=\sqrt{\hfw}$ a.e.\ $[\mu]$ and
$\cfw=\cfw^*$.
   \end{enumerate}
   \end{enumerate}
   \end{pro}
   \begin{proof}
(i) Since $w=\sqrt{\hfw}$ a.e.\ $[\mu]$ and
$\hfw=\hfw\circ \phi$ a.e.\ $[\mu_w]$, we deduce from
Lemma \ref{nuklear} that
   \begin{align*}
w\circ \phi=\sqrt{\hfw\circ \phi} = \sqrt{\hfw} \text{
a.e.\ $[\mu_w]$.}
   \end{align*}
This gives the equality
   \begin{align} \label{w2zd}
\widehat w_2 = \hfw \text{ a.e.\ $[\mu_w]$.}
   \end{align}
In turn, the equality $w=\sqrt{\hfw}$ a.e.\ $[\mu]$
implies that $\{w=0\}=\{\hfw=0\}$ a.e.\ $[\mu]$. This
combined with \eqref{w2zd} yields $\widehat w_2 =
\hfw$ a.e.\ $[\mu]$.

\mbox{(ii-a)}$\Rightarrow$\mbox{(ii-b)} Apply the
assertion (i) and Theorem \ref{self}.

\mbox{(ii-b)}$\Rightarrow$\mbox{(ii-a)} Use Theorems
\ref{normal-m} and \ref{self}.
   \end{proof}
   \chapter{Discrete Measure Spaces}
In this chapter, we adapt our general results to the
context of discrete weighted composition operators,
i.e., weighted composition operators over discrete
measure spaces. Section \ref{Sec5.5} has an
introductory character. Section \ref{Sec5.6}
characterizes hyponormality, cohyponormality and
normality of discrete weighted composition operators
(see Theorems \ref{hypdisc}, \ref{cohypon-dis} and
\ref{normal-dis}). Section \ref{Sec6.3} provides two
criteria for subnormality of discrete weighted
composition operators, the second of which can be
thought of as a far reaching generalization of the
discrete version of Lambert's criterion (see Theorems
\ref{MAIN1-disc} and \ref{determcr}). The interplay
between the theory of moments, the geometry of graphs
induced by symbols and the injectivity problem is
discussed in Section \ref{Sec6.4} (see Theorem
\ref{determcr-c3} and Problems \ref{IP} and
\ref{IP2}). The chapter is concluded with Section
\ref{Sec6.5} which contains a variety of examples
illustrating our considerations.
   \section{\label{Sec5.5}Background}
Given a measure space $(X, \ascr,\nu)$ such that
$\{x\} \in \ascr$ for every $x \in X$, we put
   \begin{align}  \label{atom}
\at{\nu}=\{x \in X\colon \nu(x) > 0\}.
   \end{align}
Elements of the set $\at{\nu}$ are called {\em atoms}
of $\nu$. We say that a measure space $(X, \ascr,\nu)$
is {\em discrete} (or that $\nu\colon \ascr \to \rbop$
is a {\em discrete} measure on $X$) if $\ascr = 2^X$,
$\card{\at{\nu}} \Le \aleph_0$, $\nu(X\setminus
\at{\nu})=0$ and $\nu(x)<\infty$ for all $x \in X$.
Clearly, such $\nu$ is $\sigma$-finite, $\at{\nu}=\{x
\in X\colon 0<\nu(x)< \infty\}$, $\nu(\varDelta) =
\nu(\varDelta \cap \at{\nu})$ for all $\varDelta \in
2^X$ and
   \begin{align} \label{ciup}
   \begin{minipage}{70ex}
{\em for every $\varDelta \in 2^X$, $\nu
(\varDelta)=0$ if and only if $\varDelta \subseteq
X\setminus \at{\nu}$.}
   \end{minipage}
   \end{align}
As a consequence, we see that
   \begin{align} \label{jesu2}
   \begin{minipage}{70ex}
{\em if $\mathcal P$ is a property which a point $x
\in X$ may or may not have, then $\mathcal P$ holds
a.e.\ $[\nu]$ if and only if $\mathcal P$ is valid for
every $x\in \at{\nu}$.}
   \end{minipage}
   \end{align}

To avoid the repetition, we state the following
assumption which will be used frequently in this
chapter.
   \begin{align} \label{stand4} \tag{$\mathrm{AS4}$}
   \begin{minipage}{70ex}
The triplet $(X, 2^X,\mu)$ is a discrete measure
space, $\phi$ is a transformation of $X$ and $w\colon
X \to \cbb$ is a function.
   \end{minipage}
   \end{align}
If \eqref{stand4} holds, then, as usual, $\cfw$
denotes the weighted composition operator in
$L^2(\mu)$ with the symbol $\phi$ and the weight $w$.
Note that the measure $\mu_w$ is also discrete and
$\at{\mu_w} \subseteq \at{\mu}$. To simplify notation,
we write
   \begin{align*}
\phi_{w}^{-1}(\{x\}):= \phi^{-1}(\{x\}) \cap
\at{\mu_w}, \quad x \in X.
   \end{align*}
It follows from \eqref{ciup} that
$\phi_{w}^{-1}(\{\cdot\})$ possesses the following
property
   \begin{align} \label{jarmulka}
x\in \at{\mu_w\circ \phi^{-1}} \implies
\phi_{w}^{-1}(\{x\}) \neq \emptyset.
   \end{align}
Since $X = \bigsqcup_{x\in X} \phi^{-1}(\{x\})$, it is
easily seen that
   \begin{align}  \label{bomb}
\at{\mu_w} = \bigsqcup_{x\in \at{\mu_w\circ
\phi^{-1}}} \phi_{w}^{-1}(\{x\}).
   \end{align}

A necessary and sufficient condition for $\cfw$ to be
well-defined and an explicit description of $\hfw$ are
given below.
   \begin{pro}\label{erq1}
Suppose \eqref{stand4} holds. Then the following
assertions are~ valid{\em :}
   \begin{enumerate}
   \item[(i)] $\cfw$ is well-defined if and only if
$\at{\mu_w\circ \phi^{-1}} \subseteq \at{\mu}$,
   \item[(ii)] if $\cfw$ is
well-defined, then
   \begin{align} \label{bas1}
\hfw(x) = \frac{\mu_w(\phi^{-1}{(\{x\})})}{\mu(x)},
\quad x\in \at{\mu}.
   \end{align}
   \end{enumerate}
   \end{pro}
   \begin{proof}
(i) The ``only if'' part is obvious due to Proposition
\ref{wco1}. To prove the ``if'' part assume that
$\mu_w(\phi^{-1}(\{x\}))=0$ for every $x\in X$ such
that $\mu(x)=0$. Using \eqref{ciup}, we easily verify
that
   \begin{align*}
\text{if $x\in X\setminus \at{\mu}$, then $\{w\neq
0\}\cap \phi^{-1}(\{x\}) \subseteq X\setminus
\at{\mu}$.}
   \end{align*}
This combined with \eqref{ciup} implies that $\mu_w
\circ \phi^{-1} \ll \mu$, which by Proposition
\ref{wco1} shows that $\cfw$ is well defined.

(ii) Take $\varDelta \in 2^X$. It follows from
Proposition \ref{wco1} that $\mu_w \circ \phi^{-1} \ll
\mu$. As a consequence, since $\mu\big(\varDelta\cap
(X \setminus \at{\mu})\big)=0$, we see that
$\mu_w\circ \phi^{-1}\big(\varDelta\cap (X \setminus
\at{\mu})\big)=0$. Hence, $\mu_w\circ
\phi^{-1}(\varDelta)=\mu_w\circ
\phi^{-1}\big(\varDelta\cap \at{\mu}\big)$ which
implies that
   \begin{align} \label{jtjn}
\int_{\varDelta}
\frac{\mu_w(\phi^{-1}{(\{x\})})}{\mu(x)} \D \mu(x) =
\int_{\varDelta \cap \at{\mu}}
\frac{\mu_w(\phi^{-1}{(\{x\})})}{\mu(x)} \D \mu(x) =
\mu_w\circ \phi^{-1}(\varDelta).
   \end{align}
Note that according to \eqref{conv-j} the integrand in
\eqref{jtjn} equals $0$ for every $x\in X$ such that
$\mu(x)=0$. This completes the proof.
   \end{proof}
Assume that \eqref{stand4} holds and $\cfw$ is
well-defined. It follows from \eqref{jesu2} and
\eqref{bas1} and the fact that the measure $\mu_w$ is
discrete that
   \begin{align} \label{dzisleje}
   \begin{minipage}{60ex}
{\em $\hfw > 0$ a.e.\ $[\mu]$ $($resp., $\hfw > 0$
a.e.\ $[\mu_w]$$)$ if and only if $\at{\mu} \subseteq
\at{\mu_w\circ\phi^{-1}}$ $($resp., $\at{\mu_w}
\subseteq \at{\mu_w\circ\phi^{-1}}$$)$.}
   \end{minipage}
   \end{align}
By Proposition \ref{lemS2} and \eqref{bas1}, $\cfw$ is
densely defined if and only if
$\mu_w(\phi^{-1}{(\{x\})})<\infty$ for every $x\in
\at{\mu}$.

In Proposition \ref{cpd1} below we explicitly describe
the conditional expectation $\efw$. We begin by
observing that
   \begin{align} \label{zjbzt}
   \begin{minipage}{70ex}
{\em a function $f\colon X \to \rbop$ is
$\phi^{-1}(2^X)$-measurable if and only if for every
$x \in \phi(X)$, $f$ is constant on
$\phi^{-1}(\{x\})$.}
   \end{minipage}
   \end{align}
   \begin{pro}\label{cpd1}
Suppose \eqref{stand4} holds and $\cfw$ is densely
defined. Then the following assertions are valid{\em
:}
   \begin{enumerate}
   \item[(i)] the measure $\mu_w \circ \phi^{-1}$ is
discrete,
   \item[(ii)] $\bigsqcup_{x\in \varOmega}
\phi^{-1}(\{x\})$ is the smallest
$\phi^{-1}(2^X)$-measurable set of full
$\mu_w$-measure,
   \item[(iii)] for every function $f\colon X \to
\rbop$, we have\footnote{\;Note that if $x \in
\phi(X)$ and $\mu_w(\phi^{-1}(\{x\}))=0$, then the
constant value of $\efw(f)$ on $\phi^{-1}(\{x\})$ can
be defined arbitrarily; however, according to
\eqref{conv-j}, the right-hand side of the equality in
\eqref{dzis1} makes sense for such $x$ and equals
$0$.}
   \begin{align}  \label{dzis1}
\efw(f) = \frac{\int_{\phi^{-1}(\{x\})} f \D
\mu_w}{\mu_w(\phi^{-1}(\{x\}))} \text{ on
$\phi^{-1}(\{x\})$ }, \quad x \in \varOmega,
   \end{align}
   \end{enumerate}
where $\varOmega:=\at{\mu_w \circ \phi^{-1}}$.
   \end{pro}
   \begin{proof}
(i) Since the measure $\mu_w$ is discrete, we see that
for every $x \in X$, $x\in \at{\mu_w \circ \phi^{-1}}$
if and only if $\at{\mu_w} \cap \phi^{-1}(\{x\}) \neq
\emptyset$. Since the fibers $\phi^{-1}(\{z\})$, $z\in
X$, are disjoint and the set $\at{\mu_w}$ is at most
countable, we deduce that the set $\varOmega$ is at
most countable and, by the discreteness of $\mu_w$,
$\mu_w(X\setminus \bigsqcup_{x\in \varOmega}
\phi^{-1}(\{x\}))=0$. In turn, by Proposition
\ref{lemS2}, $\mu_w \circ \phi^{-1}$ is
$\sigma$-finite and thus $\mu_w \circ \phi^{-1}$ is
discrete.

(ii) For this, note that if $\varDelta \in 2^X$, then,
by \eqref{ciup} applied to the discrete measure $\mu_w
\circ \phi^{-1}$, $\phi^{-1}(\varDelta)$ is a set of
full $\mu_w$-measure if and only if $X \setminus
\varDelta \subseteq X \setminus \varOmega$, which
proves our claim.

(iii) Using \eqref{zjbzt} and \eqref{B1}, we easily
verify that the formula \eqref{dzis1} holds.
   \end{proof}
As shown below, if the weight $w$ of a well-defined
weighted composition operator $\cfw$ over a discrete
measure space does not vanish on a set of positive
$\mu$-measure, then $\cfw$ is unitarily equivalent to
a weighted composition operator over a ``purely
atomic'' measure space.
   \begin{pro}\label{unitrown}
Suppose \eqref{stand4} holds and $w(x) \neq 0$ for all
$x\in \at{\mu}$. Then
   \begin{enumerate}
   \item[(i)] $\cfw$ is well-defined if and only if
$\phi(\at{\mu}) \subseteq \at{\mu}$,
   \item[(ii)] if $\cfw$ is well-defined, $X_0:=\at{\mu}$,
$\mu_0:=\mu|_{2^{X_0}}$, $\phi_0:=\phi|_{X_0}$ and
$w_0:=w|_{X_0}$, then $\at{\mu_0}=X_0$, $w_0(x)\neq 0$
for all $x\in X_0$, $C_{\phi_0,w_0}$ is well-defined
in $L^2(\mu_0)$ and the mapping $U\colon L^2(\mu) \to
L^2(\mu_0)$ defined by $Uf = f|_{X_0}$ for $f\in
L^2(\mu)$ is a unitary isomorphism such that $U\cfw =
C_{\phi_0,w_0}U$.
   \end{enumerate}
   \end{pro}
   \begin{proof} Note that the measures $\mu$
and $\mu_w$ are mutually absolutely continuous.

(i) If $\cfw$ is well-defined, $x\in X$ and
$\mu(\phi(x))=0$, then, by Proposition \ref{wco1},
   \begin{align*}
0=\mu(\phi^{-1}(\{\phi(x)\}))\Ge \mu(x),
   \end{align*}
and so $x \notin \at{\mu}$. In turn, if
$\phi(\at{\mu}) \subseteq \at{\mu}$, $x\in X$ and
$\mu(\phi^{-1}(\{x\}))>0$, then there exists $y \in
\phi^{-1}(\{x\})\cap\at{\mu}$, which implies that
$x=\phi(y)\in \at{\mu}$. Hence, by Proposition
\ref{wco1}, $\cfw$ is well-defined.

(ii) This is a consequence of (i).
   \end{proof}
   \begin{rem} \label{figurar}
Under the assumptions of Proposition \ref{unitrown},
$\cfw$ is well-defined if and only if $C_{\phi}$ is
well-defined, and if this is the case, then $M_w
C_{\phi} \subseteq \cfw$ (see Section \ref{Sec7.1} for
more information on this matter). What is more, if
$\cfw$ is well-defined, then the underlying measure
space can always be replaced by a complete measure
space such that $\at{\mu}=X$ and $w(x)\neq 0$ for all
$x\in X$. In this particular case, the measure space
$(X,\phi^{-1}(2^X),\mu|_{\phi^{-1}(2^X)})$ is
complete. This is no longer true if the weight $w$ of
$\cfw$ vanishes on a set of positive $\mu$-measure. To
see this, consider the transformation $\phi$ of $X$ as
in Figure \ref{figura20}, a discrete measure $\mu$ on
$X$ such that $X \setminus \at{\mu}=\{A_1,B_1\}$ and a
weight $w\colon X \to \cbb$ such that $\{x\in X\colon
w(x)=0\}= \{A_2,B_2\}$. Then the composition operator
$C_{\phi}$ is not well-defined, the weighted
composition operator $\cfw$ is well-defined and the
measure space
$(X,\phi^{-1}(2^X),\mu|_{\phi^{-1}(2^X)})$ is not
complete (because $\mu(\phi^{-1}(\{0\}))=0$ and
$\{A_1\} \varsubsetneq \phi^{-1}(\{0\})$).
   \end{rem}
   \begin{center}
   \begin{figure}[t]
\subfigure {
\includegraphics[scale=0.20]{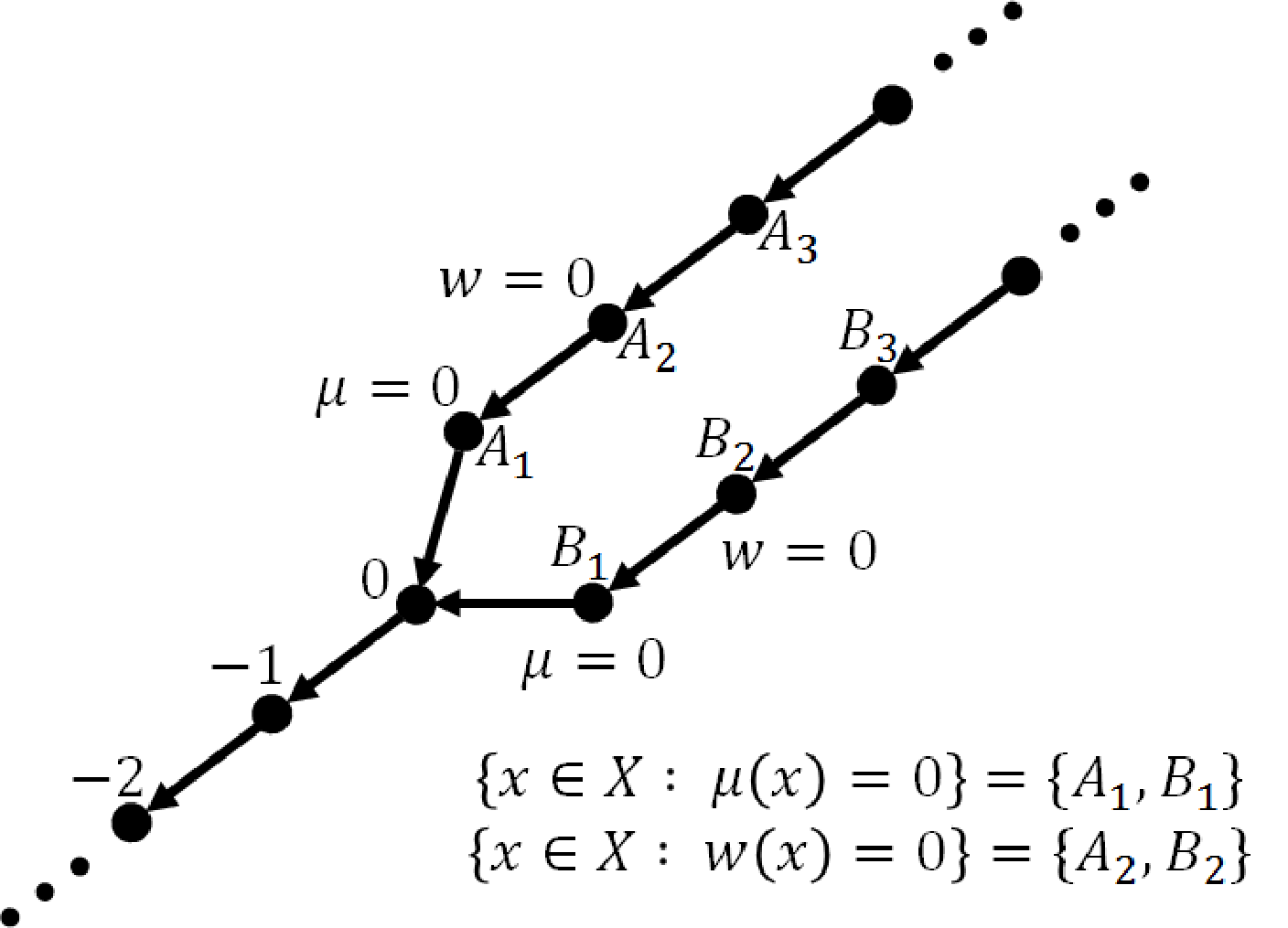}
}
   \caption{\label{6.3}An example illustrating Remark
\ref{figurar}.} \label{figura20}
   \end{figure}
   \end{center}
   \section{\label{Sec5.6}Seminormality}
First, we provide necessary and sufficient conditions
for $\cfw$ to be hyponormal. Let us note that in view
of \eqref{jarmulka} the summation in \eqref{dzis4} is
over a nonempty set.
   \begin{thm} \label{hypdisc}
Suppose \eqref{stand4} holds and $\cfw$ is densely
defined. Then the following statements are
equivalent{\em :}
   \begin{enumerate}
   \item[(i)] $\cfw$ is hyponormal,
   \item[(ii)] the following two conditions
hold{\em :}
   \allowdisplaybreaks
   \begin{gather} \label{dzis3}
\at{\mu_w} \subseteq \at{\mu_w\circ\phi^{-1}},
   \\ \label{dzis4}
\frac{1}{\mu(x)} \sum_{y\in \phi_{w}^{-1}(\{x\})}
\frac{\mu(y)\mu_w(y)}{\mu_w(\phi^{-1}(\{y\}))} \Le 1,
\quad x \in \at{\mu_w\circ \phi^{-1}}.
   \end{gather}
   \end{enumerate}
   \end{thm}
   \begin{proof}
It follows from Corollary \ref{hipinj} and
\eqref{dzisleje} that there is no loss of generality
in assuming that \eqref{dzis3} holds. By Lemma
\ref{lemS6}, Proposition \ref{lemS2} and
\eqref{dzisleje}, there exists a function
$\vartheta\colon X \to (0,\infty)$ such that
$\vartheta(x) = \sqrt{\frac{\hfw \circ
\phi(x)}{\hfw(x)}}$ for every $x \in \at{\mu_w}$.
Hence, by \eqref{bas1}, we have
   \begin{align*}
\vartheta^2(y) =
\frac{\mu_w(\phi^{-1}(\{\phi(y)\}))}{\mu(\phi(y))}
\cdot \frac{\mu(y)}{\mu_w(\phi^{-1}(\{y\}))}, \quad
y\in \at{\mu_w}.
   \end{align*}
(Note that according to our assumptions the numerators
and the denominators of fractions appearing above are
positive and finite.) Now fix $x \in
\at{\mu_w\circ\phi^{-1}}$. Then, by Proposition
\ref{erq1}(i), $x \in \at{\mu}$. As a consequence, we
have
   \allowdisplaybreaks
   \begin{align*}
\efw(\vartheta^2)(z) &\overset{\eqref{dzis1}}=
\frac{\int_{\phi^{-1}(\{x\})} \vartheta^2 \D
\mu_w}{\mu_w(\phi^{-1}(\{x\}))}
   \\
&\hspace{.75ex}=\frac{1}{\mu_w(\phi^{-1}(\{x\}))}
\int_{\phi_{w}^{-1}(\{x\})}
\frac{\mu_w(\phi^{-1}(\{x\}))}{\mu(x)} \cdot
\frac{\mu(y)}{\mu_w(\phi^{-1}(\{y\}))} \D \mu_w(y)
   \\
&\hspace{.75ex}=\frac{1}{\mu(x)}
\int_{\phi_{w}^{-1}(\{x\})}
\frac{\mu(y)}{\mu_w(\phi^{-1}(\{y\}))} \D \mu_w(y)
   \\
&\hspace{.75ex}=\frac{1}{\mu(x)} \sum_{y\in
\phi_{w}^{-1}(\{x\})}
\frac{\mu(y)\mu_w(y)}{\mu_w(\phi^{-1}(\{y\}))}, \quad
z \in \phi^{-1}(\{x\}).
   \end{align*}
Applying Theorem \ref{hypon} and Proposition
\ref{cpd1} completes the proof.
   \end{proof}
The following is a discrete counterpart of Theorem
\ref{cohypon-m}.
   \begin{thm} \label{cohypon-dis}
   Suppose \eqref{stand4} holds and $\cfw$ is densely
defined. Then the following statements are
equivalent{\em :}
   \begin{enumerate}
   \item[(i)] $\cfw$ is cohyponormal,
   \item[(ii)] the following three conditions are satisfied{\em :}
   \begin{enumerate}
   \item[(ii-a$^\dag$)] $\at{\mu_w\circ\phi^{-1}}
\subseteq \{w\neq 0\}$,
   \item[(ii-b$^\dag$)] for every $f\in L^2(\mu_w)$, if
$\hat f= 0$ on $\at{\mu_w\circ \phi^{-1}}$, then $f=0$
on $\at{\mu_w} \cap \at{\mu_w\circ \phi^{-1}}$, where
$\hat f(x)=\int_{\phi^{-1}(\{x\})} f \D \mu_w$ for
$x\in X$,
   \item[(ii-c$^\dag$)]  for every
$x\in \at{\mu_w} \cap \phi(X)$,
   \begin{align*}
\frac{\mu(\phi(x))}{\mu(x)} \sum_{y
\in\phi^{-1}(\{x\})} |w(y)|^2 \mu(y) \Le \sum_{y
\in\phi^{-1}(\{\phi(x)\})} |w(y)|^2 \mu(y).
   \end{align*}
   \end{enumerate}
   \end{enumerate}
Moreover, the condition {\em \mbox{(ii-b$^\dag$)}} is
equivalent to
   \begin{enumerate}
\item[]
   \begin{enumerate}
   \item[(ii-b$^\prime$)] for every $\varDelta \in 2^X$
such that $\varDelta \cap \at{\mu_w} \subseteq
\at{\mu_w \circ \phi^{-1}}$, there exists
$\tilde\varDelta \in 2^X$ such that $\varDelta
\vartriangle \phi^{-1} (\tilde \varDelta) \subseteq X
\setminus \at{\mu_w}$.
   \end{enumerate}
   \end{enumerate}
   \end{thm}
   \begin{proof}
To prove that (i) is equivalent to (ii), it is enough
to show that under the present circumstances the
condition \mbox{(ii-a)} (resp., \mbox{(ii-b)},
\mbox{(ii-c)}) of Theorem \ref{cohypon-m} is
equivalent to the condition \mbox{(ii-a$^\dag$)}
(resp., \mbox{(ii-b$^\dag$)}, \mbox{(ii-c$^\dag$)}).

\mbox{(ii-a)}$\Leftrightarrow$\mbox{(ii-a$^\dag$)}
Note that $\hfw=0$ on $\{w=0\}$ a.e.\ $[\mu]$ if and
only if $\mu(\{w=0\} \cap \{\hfw
> 0\})=0$, or equivalently by \eqref{ciup}, if and
only if
   \begin{align} \label{jesu1}
\{w=0\} \cap \{\hfw > 0\} \cap \at{\mu} = \emptyset.
   \end{align}
Since, by \eqref{bas1} and Proposition \ref{erq1}(i),
$\{\hfw > 0\} \cap \at{\mu} = \at{\mu_w \circ
\phi^{-1}}$, we see that \eqref{jesu1} is equivalent
to
   \begin{align*}
\{w=0\} \cap \at{\mu_w \circ \phi^{-1}} = \emptyset,
   \end{align*}
which is easily seen to be equivalent to
\mbox{(ii-a$\dag$)}.

\mbox{(ii-b)}$\Leftrightarrow$\mbox{(ii-b$^\dag$)} Use
Lemma \ref{cohypon1}(ii) with $\varOmega=\{\hfw
> 0\}$, Proposition \ref{cpd1} and \eqref{jesu2}.

\mbox{(ii-c)}$\Leftrightarrow$\mbox{(ii-c$^\dag$)}
Apply \eqref{jesu2} and \eqref{bas1}.

Summarizing, we have proved that the conditions (i)
and (ii) are equivalent.

\mbox{(ii-b$^\dag$)}$\Leftrightarrow$\mbox{(ii-b$^\prime$)}
In view of Lemma \ref{cohypon1} and the equivalence
\mbox{(ii-b)}$\Leftrightarrow$\mbox{(ii-b$^\dag$)}, it
suffices to prove that the condition (iv) of this
lemma is equivalent to \mbox{(ii-b$^\prime$)}. It is
easily seen that if $\varDelta \in 2^X$, then by
\eqref{ciup}, $\varDelta \subseteq \{\hfw > 0\}$ a.e.\
$[\mu_w]$ if and only if $(\varDelta \setminus \{\hfw
> 0\}) \cap \at{\mu_w} = \emptyset$, or equivalently,
if and only if
   \begin{align*}
\big(\varDelta \cap \at{\mu_w}\big) \setminus
\big(\{\hfw > 0\} \cap \at{\mu_w}\big) = \emptyset.
   \end{align*}
Since $\{\hfw > 0\} \cap \at{\mu_w} = \at{\mu_w\circ
\phi^{-1}} \cap \at{\mu_w}$, we see that $\varDelta
\subseteq \{\hfw > 0\}$ a.e.\ $[\mu_w]$ if and only if
$\varDelta \cap \at{\mu_w} \subseteq \at{\mu_w \circ
\phi^{-1}}$. This together with \eqref{ciup} proves
the claimed equivalence.
   \end{proof}
   \begin{cor} \label{cohypon-c1}
Under the assumptions of Theorem {\em
\ref{cohypon-dis}}, if additionally $\at{\mu}=X$ and
$w(x) \neq 0$ for every $x\in X$, then $\cfw$ is
cohyponormal if and only if the following two
conditions hold{\em :}
   \begin{enumerate}
   \item[(i)] $\{x\} = \phi^{-1}(\{\phi(x)\})$ for
every $x\in \phi(X)$,
   \item[(ii)]  $\mu(\phi(x)) \sum_{y
\in\phi^{-1}(\{x\})} |w(y)|^2 \mu(y) \Le |w(x)|^2
\mu(x)^2$ for every $x\in \phi(X)$.
   \end{enumerate}
   \end{cor}
   \begin{proof}
In view of Theorem \ref{cohypon-dis}, it is enough to
show that the condition \mbox{(ii-b$^\prime$)} is
equivalent to (i). Indeed, if \mbox{(ii-b$^\prime$)}
holds, then for every set $\varDelta \subseteq
\phi(X)$ there exists a set $\tilde \varDelta
\subseteq X$ such that $\varDelta = \phi^{-1}(\tilde
\varDelta)$. In particular, if $\varDelta=\{x\}$,
where $x\in \phi(X)$, then $\phi(x)\in \tilde
\varDelta$ and
   \begin{align*}
\{x\} \subseteq \phi^{-1}(\{\phi(x)\}) \subseteq
\phi^{-1}(\tilde \varDelta) = \{x\},
   \end{align*}
which implies (i). Now we prove the reverse
implication. For this, note that for every set
\mbox{$\varDelta \subseteq \phi(X)$},
   \begin{align*}
\phi^{-1}(\phi(\varDelta)) =
\phi^{-1}\bigg(\bigcup_{x\in \varDelta}
\phi({\{x\}})\bigg) = \bigcup_{x\in \varDelta}
\phi^{-1}(\phi{\{x\}}) = \varDelta,
   \end{align*}
which completes the proof.
   \end{proof}
   The characterization of cohyponormality of weighted
shifts on directed trees stated below appeared in
\cite[Remark 5.2.4]{j-j-s}. The present proof is based
on Theorem \ref{cohypon-dis}. We refer the reader to
the parts (d) and (e) of Section \ref{pco} for
necessary information on weighted shifts on directed
trees.
   \begin{thm} \label{cohypws}
Let $\slam$ be a densely defined weighted shift on a
directed tree $\tcal= (V,E)$ with weights $\lambdab =
\{\lambda_v\}_{v \in V^\circ}$. Then the following
statements hold{\em :}
    \begin{enumerate}
    \item[(i)] if $\tcal$ has a root, then $\slam$ is cohyponormal
if and only if $\slam = 0$,
    \item[(ii)] if $\tcal$ is rootless, then $\slam$ is cohyponormal
if and only if for every $u \in V$ the following two
conditions are satisfied{\em :}
   \begin{enumerate}
   \item[(a)] $\card {\dziplus u} \Le  1$,
   \item[(b)] if $\card{\dziplus u} = 1$, then
$0 < \|\slam e_v\| \Le |\lambda_v|$ for $v \in
\dziplus u$ and $\lambda_x = 0$ for every $x \in
\dzi{u} \setminus \dziplus{u}$,
   \end{enumerate}
   \end{enumerate}
where $e_v=\chi_{\{v\}}$ and $\dziplus u = \{x\in
\dzi{u}\colon \|\slam e_x\| > 0\}$ for $v, u\in V$.
   \end{thm}
   \begin{proof}
Set $\phi(x) = \pa{x}$ and $w(x)=\lambda_x$ for $x \in
V^{\circ}$. If $\tcal$ has a root, then we put
$\phi(\koo) = \koo$ and $w(\koo) = 0$. Note that
$\phi^{-1}(\{x\})=\dzi{x}$ for all $x\in V$. It
follows from \cite[Proposition 3.1.3]{j-j-s} that
$\{e_u\colon u\in V\} \subseteq \dz{\slam}$ and
   \begin{gather} \label{dzi-1}
\mu_w\circ \phi^{-1}(x) = \|\slam e_x\|^2, \quad x\in
V,
   \\ \label{dzi0}
\at{\mu_w\circ \phi^{-1}} = \{x\in V\colon \|\slam
e_x\|>0\} = \{x\in V\colon \sum_{y\in \dzi{x}}
|\lambda_y|^2>0\}.
   \end{gather}

(i) Suppose $\slam$ is cohyponormal. By \eqref{dzi0}
and Theorem \ref{cohypon-dis}\mbox{(ii-a$^\dag$)}, we
see that $\{w=0\} \subseteq V \setminus \at{\mu_w\circ
\phi^{-1}}$. This together with \eqref{dzi0} implies
that $\lambda_u=0$ for all $u \in \dzi{\koo}$, and
$\lambda_v=0$ for every $v\in \dzi{u}$ and every $u
\in V^\circ$ such that $\lambda_u=0$. Applying
\cite[Eqs.\ (2.1.3) and (6.1.3)]{j-j-s}, we deduce
that $\slam =0$. The reverse implication is trivial.

(ii) Assume that $\tcal$ is rootless. Suppose $\slam$
is cohyponormal. Arguing as above, we infer from
Theorem \ref{cohypws}\mbox{(ii-a$^\dag$)} that
   \begin{align} \label{dzi1}
\text{$\lambda_v=0$ for every $v\in \dzi{u}$ and every
$u \in V$ such that $\lambda_u=0$. }
   \end{align}
Now we prove that
   \begin{align} \label{dzi2}
\text{if $u \in V$, $v_1\in \dziplus{u}$, $v_2 \in
\dzi{u}$ and $v_1 \neq v_2$, then $\lambda_{v_2}=0$. }
   \end{align}
Indeed, otherwise $\lambda_{v_2}\neq 0$. By
\eqref{dzi1}, $\lambda_{v_1} \neq 0$. Define $f\in
L^2(\mu_w)$ by
   \begin{align*}
f=\frac{1}{|\lambda_{v_1}|^2} \chi_{\{v_1\}} -
\frac{1}{|\lambda_{v_2}|^2} \chi_{\{v_2\}}.
   \end{align*}
Then, by \eqref{dzi0}, $\hat f= 0$ on $\at{\mu_w\circ
\phi^{-1}}$. Since, by \eqref{dzi0} and \eqref{dzi1},
$v_1 \in \at{\mu_w} \cap \at{\mu_w\circ\phi^{-1}}$, we
infer from Theorem
\ref{cohypon-dis}\mbox{(ii-b$^\dag$)} that $f(v_1)=0$,
which is a contradiction. This justifies our claim.
Clearly, \eqref{dzi1} and \eqref{dzi2} imply the
conditions (a) and (b) except for the requirement that
``$0 < \|\slam e_v\| \Le |\lambda_v|$ for $v \in
\dziplus u$''. To prove the latter, take $u\in V$ such
that $\card{\dziplus u} = 1$. Let $v\in \dziplus{u}$.
By \eqref{dzi1}, $\lambda_v\neq 0$. This combined with
\eqref{dzi2} and the fact that $\mu$ is the counting
measure, enables us to deduce from Theorem
\ref{cohypon-dis}\mbox{(ii-c$^\dag$)} that
   \begin{align*}
0<\|\slam e_v\|^2 \Le \sum_{y \in\dzi{u}}
|\lambda_y|^2 = |\lambda_v|^2,
   \end{align*}
which yields the ``only if'' part of (ii).

To prove the converse implication assume that the
conditions (a) and (b) hold for every $u \in V$. It is
easily seen that (a) and (b) implies \eqref{dzi2}.
Combining \eqref{dzi0} and \eqref{dzi2}, we see that
the conditions \mbox{(ii-a$\dag$)} and
\mbox{(ii-c$\dag$)} of Theorem \ref{cohypon-dis} are
satisfied. To prove that the condition
\mbox{(ii-b$\dag$)} of this theorem is satisfied, take
$f\in L^2(\mu_w)$ such that $\hat f= 0$ on
$\at{\mu_w\circ \phi^{-1}}$. Fix $v\in \at{\mu_w} \cap
\at{\mu_w\circ \phi^{-1}}$. Set $u=\pa{v}$. Then
$\lambda_v\neq 0$ and, by \eqref{dzi0}, $u \in
\at{\mu_w\circ\phi^{-1}}$. Since, again by
\eqref{dzi0}, $v\in \dziplus{u}$, we infer from
\eqref{dzi2} that
   \begin{align*}
0 = \hat f(u) = \int_{\dzi{u}} f \D \mu_w =
|\lambda_v|^2f(v).
   \end{align*}
This means that $f=0$ on $\at{\mu_w} \cap
\at{\mu_w\circ \phi^{-1}}$. As a consequence, we
conclude that \mbox{(ii-b$\dag$)} is satisfied. This
completes the proof.
   \end{proof}
   The normality of weighted composition operators
over discrete measure spaces is characterized in
Theorem \ref{normal-dis} below. We omit its proof
because it is similar to that of Theorem
\ref{cohypon-dis} (apply Lemma \ref{normal1} and
Theorem \ref{normal-m} in place of Lemma
\ref{cohypon1} and Theorem \ref{cohypon-m}).
   \begin{thm} \label{normal-dis}
Suppose \eqref{stand4} holds and $\cfw$ is densely
defined. Then the following statements are
equivalent{\em :}
   \begin{enumerate}
   \item[(i)] $\cfw$ is normal,
   \item[(ii)] the following three conditions are satisfied{\em :}
   \begin{enumerate}
   \item[(ii-a)] $\at{\mu_w\circ\phi^{-1}}
\subseteq \{w\neq 0\}$,
   \item[(ii-b)] for every $f\in L^2(\mu_w)$, if
$\hat f= 0$ on $\at{\mu_w\circ \phi^{-1}}$, then $f=0$
on $\at{\mu_w}$, where $\hat
f(x)=\int_{\phi^{-1}(\{x\})} f \D \mu_w$ for $x\in X$,
   \item[(ii-c)]  for every
$x\in \at{\mu_w}$, $\phi^{-1}(\{x\}) \neq \emptyset$
and
   \begin{align*}
\frac{\mu(\phi(x))}{\mu(x)} \sum_{y
\in\phi^{-1}(\{x\})} |w(y)|^2 \mu(y) = \sum_{y
\in\phi^{-1}(\{\phi(x)\})} |w(y)|^2 \mu(y).
   \end{align*}
   \end{enumerate}
   \end{enumerate}
Moreover, the condition {\em \mbox{(ii-b)}} is
equivalent to
   \begin{enumerate}
\item[]
   \begin{enumerate}
   \item[(ii-b$^\prime$)] for every $\varDelta \in 2^X$,
there exists $\tilde\varDelta \in 2^X$ such that
$\varDelta \vartriangle \phi^{-1} (\tilde \varDelta)
\subseteq X \setminus \at{\mu_w}$.
   \end{enumerate}
   \end{enumerate}
   \end{thm}
   \begin{cor} \label{normal-c1}
Under the assumptions of Theorem {\em
\ref{normal-dis}}, if additionally $\at{\mu}=X$ and
$w(x) \neq 0$ for every $x\in X$, then $\cfw$ is
normal if and only if the following two conditions
hold{\em :}
   \begin{enumerate}
   \item[(i)] $\phi$ is a bijection,
   \item[(ii)]  $|w(\phi^{-1}(x))|^2 \mu(\phi(x))
\mu(\phi^{-1}(x)) = |w(x)|^2 \mu(x)^2$ for every $x\in
X$.
   \end{enumerate}
   \end{cor}
   \begin{proof}
Apply Theorem \ref{normal-dis}(ii) and observe that
the bijectivity of $\phi$ can be inferred from
Corollary \ref{cohypon-c1}(i) and Theorem
\ref{normal-dis}\mbox{(ii-c)}.
   \end{proof}
Regarding Corollary \ref{normal-c1}, we refer the
reader to \cite[Remark 37]{b-j-j-sS} for more
information on composition operators coming from
injections of types I, II and III (cf.\ \cite{StB}).
   \section{\label{Sec6.3}Subnormality}
We begin by stating a criterion for subnormality of
weighted composition operators over discrete measure
spaces.
   \begin{thm} \label{MAIN1-disc}
Suppose \eqref{stand4} holds, $\cfw$ is densely
defined and $\at{\mu_w} \subseteq
\at{\mu_w\circ\phi^{-1}}$. Assume, moreover, that
there exists a family of probability measures $P\colon
X \times \borel{\rbb_+} \to [0,1]$ which satisfies the
following condition
   \begin{align*}
\sum_{y \in \phi^{-1}(\{x\})} \frac{\mu_w(y)}{\mu(x)}
P(y,\sigma) = \int_{\sigma} t P(x,\D t), \quad \sigma
\in \borel{\rbb_+}, \, x \in \at{\mu_w\circ
\phi^{-1}}.
   \end{align*}
Then $\cfw$ is subnormal.
   \end{thm}
   \begin{proof}
Apply \eqref{dzisleje}, Propositions \ref{erq1} and
\ref{cpd1} and Theorem \ref{MAIN1}.
   \end{proof}
We refer the reader to \cite[Theorem 3]{b-d-j-s} for a
criterion for subnormality of weighted shifts on
directed trees. Let us point out that this criterion
has been deduced from Theorem \ref{MAIN1}. It can be
also deduced from Theorem \ref{MAIN1-disc}. This
criterion has found applications in producing some
surprising examples of unbounded subnormal operators
(cf.\ \cite{b-d-j-s} and \cite{b-j-j-sq}).

The following is a generalization of \cite[Theorem
41]{b-j-j-sS} to the case of weighted composition
operators over discrete measure spaces.
   \begin{thm} \label{determcr}
Suppose \eqref{stand4} holds and $\cfw$ is densely
defined. Assume, moreover, that for every $x\in
\at{\mu_w\circ \phi^{-1}}$,
   \begin{align}  \label{sa5}
   \begin{minipage}{71ex}
$\{\hfwn{n}(x)\}_{n=0}^\infty$ is a Stieltjes moment
sequence and $\{\hfwn{n+1}(x)\}_{n=0}^\infty$ is a
determinate Stieltjes moment sequence.
   \end{minipage}
   \end{align}
Then $\cfw$ is subnormal if and only if $\at{\mu_w}
\subseteq \at{\mu_w\circ\phi^{-1}}$.
   \end{thm}
To prove Theorem \ref{determcr}, we need the following
lemma which generalizes \cite[Lemma 38]{b-j-j-sS} to
the case of weighted composition operators. Since its
proof is essentially the same as that of \cite[Lemma
38]{b-j-j-sS}, we leave it to the reader (use Lemma
\ref{lemS11}(ii) and the equality \eqref{dzis1} in
place of \cite[Lemma 15]{b-j-j-sS} and \cite[Eq.\
(57)]{b-j-j-sS}, respectively).
   \begin{lem} \label{determcr-l}
Suppose \eqref{stand4} is satisfied and $\cfw$ is
densely defined. Let $x \in \at{\mu_w\circ \phi^{-1}}$
be such that for every $y \in \phi_{w}^{-1}(\{x\})$,
$\{\hfwn{n}(y)\}_{n=0}^\infty$ is a Stieltjes moment
sequence with a representing measure $\vartheta_y$.
Then the following assertions~ hold.
   \begin{enumerate}
   \item[(i)] If
   \begin{align} \label{ajajZenon}
\sum_{y \in \phi_{w}^{-1}(\{x\})}
\frac{\mu_w(y)}{\mu(x)} \int_0^\infty \frac {1}{t} \,
\vartheta_y(\D t) \Le 1,
   \end{align}
then $\{\hfwn{n}(x)\}_{n=0}^\infty$ is a Stieltjes
moment sequence with a representing measure
$\widetilde\vartheta_x$ given by
   \begin{align} \label{war1}
\widetilde\vartheta_x(\sigma) = \sum_{y \in
\phi_{w}^{-1}(\{x\})} \frac{\mu_w(y)}{\mu(x)}
\int_{\sigma} \frac {1}{t} \, \vartheta_y(\D t) +
\varepsilon_x \cdot \delta_0(\sigma), \quad \sigma \in
\borel{\rbb_+},
   \end{align}
where
   \begin{align} \label{war2}
\varepsilon_x = 1 - \sum_{y \in \phi_{w}^{-1}(\{x\})}
\frac{\mu_w(y)}{\mu(x)} \int_0^\infty \frac {1}{t} \,
\vartheta_y(\D t).
   \end{align}
   \item[(ii)] If $\{\hfwn{n}(x)\}_{n=0}^\infty$
is a Stieltjes moment sequence, and
$\{\hfwn{n+1}(x)\}_{n=0}^\infty$ is a determinate
Stieltjes moment sequence, then the inequality
\eqref{ajajZenon} holds, the Stieltjes moment sequence
$\{\hfwn{n}(x)\}_{n=0}^\infty$ is determinate and its
unique representing measure $\widetilde\vartheta_x$ is
given by \eqref{war1} and \eqref{war2}.
   \end{enumerate}
   \end{lem}
   \begin{proof}[Proof of Theorem  \ref{determcr}]
The ``only if'' part follows from Corollary
\ref{hipinj} and \eqref{dzisleje}. To prove the ``if''
part, assume that $\at{\mu_w} \subseteq
\at{\mu_w\circ\phi^{-1}}$. By Lemma
\ref{determcr-l}(ii), for every $x\in \at{\mu_w\circ
\phi^{-1}}$, the Stieltjes moment sequence
$\{\hfwn{n}(x)\}_{n=0}^\infty$ is determinate; let us
denote its unique representing measure by $P(x,
\cdot)$. Set $P(x,\cdot)=\delta_0$ for $x \in X
\setminus \at{\mu_w \circ \phi^{-1}}$. Since
$\at{\mu_w\circ \phi^{-1}} \subseteq \at{\mu}$ and
$\hfwn{0}(x)= 1$ for all $x\in \at{\mu}$, we see that
$P\colon X\times \borel{\rbb_+} \to [0,1]$ is a family
of probability measures. It follows from Lemma
\ref{determcr-l}(ii) that for every $x \in
\at{\mu_w\circ \phi^{-1}}$,
   \begin{align} \label{krak1}
P(x,\sigma) = \sum_{y \in \phi_{w}^{-1}(\{x\})}
\frac{\mu_w(y)}{\mu(x)} \int_{\sigma} \frac {1}{t}
P(y,\D t) + \varepsilon_x \cdot \delta_0(\sigma),
\quad \sigma\in \borel{\rbb_+},
   \end{align}
where $\varepsilon_x$ is given by \eqref{war2} for
$x\in \at{\mu_w\circ \phi^{-1}}$. It is easily seen
that \eqref{krak1} implies that $P(y, \{0\})=0$ for
all $y \in \phi_{w}^{-1}(\{x\})$ and $x\in
\at{\mu_w\circ \phi^{-1}}$. Hence, integrating the
function $\rbb_+ \ni t \mapsto t \cdot
\chi_{\sigma}(t) \in \rbb_+$ with respect to the
measures appearing on both sides of the equality in
\eqref{krak1} yields
   \begin{align} \label{krak2}
\int_{\sigma} t P(x,\D t) = \sum_{y \in
\phi_{w}^{-1}(\{x\})} \frac{\mu_w(y)}{\mu(x)}
P(y,\sigma), \quad \sigma \in \borel{\rbb_+}, \, x \in
\at{\mu_w\circ \phi^{-1}}.
   \end{align}
Applying Theorem \ref{MAIN1-disc} completes the proof.
   \end{proof}
The following corollary generalizes \cite[Theorem
41]{b-j-j-sS}.
   \begin{cor} \label{determcr-c}
Suppose \eqref{stand4} holds, $\cfw$ is densely
defined and $w(x) \neq 0$ for every $x\in \at{\mu}$.
Then the following assertions are valid{\em :}
   \begin{enumerate}
   \item[(i)] if \eqref{sa5} holds for every
$x\in \at{\mu\circ \phi^{-1}}$, then $\cfw$ is
subnormal if and only if $\at{\mu} \subseteq
\at{\mu\circ\phi^{-1}}$ or, equivalently, if and only
if $\at{\mu} = \at{\mu\circ\phi^{-1}}$,
   \item[(ii)] if \eqref{sa5} holds for every
$x\in \at{\mu}$ and $\at{\mu}=X$, then $\phi(X)=X$,
$\at{\mu\circ \phi^{-1}}=X$ and $\cfw$ is subnormal.
   \end{enumerate}
Moreover, if \eqref{sa5} holds for every $x\in
\at{\mu}$, then $\overline{\dzn{\cfw}}=L^2(\mu)$.
   \end{cor}
   \begin{proof}
Applying Theorem \ref{determcr} and noting that the
measures $\mu$ and $\mu_w$ are mutually absolutely
continuous and $\at{\mu\circ\phi^{-1}} \subseteq
\at{\mu}$ (see Proposition \ref{erq1}(i)), we get (i).
The proof of (ii) is essentially the same as that of
the ``in particular'' part of \cite[Theorem
41]{b-j-j-sS}, and so we leave it to the reader. The
``moreover'' part is a direct consequence of
Proposition \ref{potegi-p} and Theorem \ref{Mittag}.
   \end{proof}
Finally, regarding Theorem \ref{determcr}, we note
that it may happen that $\phi(X)=X$~ and
   \begin{align}  \label{jasno}
\at{\mu_w} \varsubsetneq \at{\mu_w \circ \phi^{-1}}
\varsubsetneq \at{\mu}.
   \end{align}
This phenomenon is illustrated by Figure \ref{figura1}
in which $\mu$ is an arbitrary discrete measure on $X$
such that $\at{\mu}=X$ and
$w=\chi_{X\setminus\{-1,0\}}$.
   \begin{center}
   \begin{figure}[t]
\subfigure {
\includegraphics[scale=0.20]{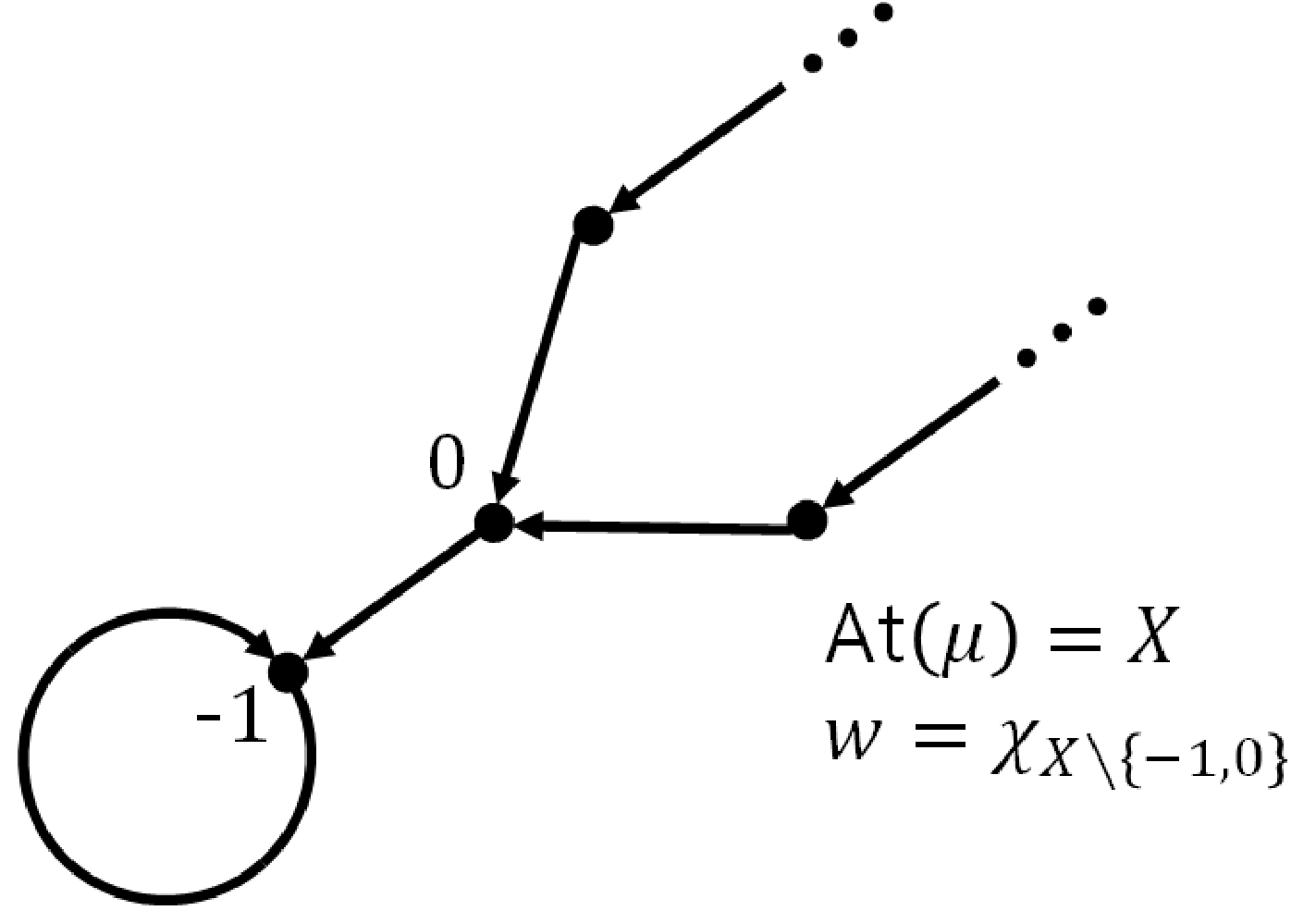}
}
   \caption{\label{6.3}An example illustrating
   \eqref{jasno}.} \label{figura1}
   \end{figure}
   \end{center}
   \section{\label{Sec6.4}Moments and injectivity}
We begin this section by shedding more light on
relationships between the generation of Stieltjes
moment sequences and the geometry of graphs induced by
symbols of weighted composition operators in the
discrete case. We also refer the reader to Remarks
\ref{b-s} and \ref{mgla2} for more comments on this
matter.
   \begin{thm} \label{determcr-c3}
Suppose \eqref{stand4} holds, $\at{\mu}=X$ and $w(x)
\neq 0$ for every $x\in X$. Then $\cfw$ is
well-defined, $\at{\mu\circ \phi^{-1}}=\phi(X)$ and
the following assertions are valid{\em :}
   \begin{enumerate}
   \item[(i)] if $x \in \phi(X)$ and
$\{\hfwn{n}(x)\}_{n=0}^\infty$ is a Stieltjes moment
sequence, then $x \in \phi^{\infty}(X)$, where
$\phi^{\infty}(X):=\bigcap_{n=1}^{\infty} \phi^n(X)$,
   \item[(ii)] if $\cfw$ is
densely defined, $x\in \phi(X \setminus \phi(X))$ and
$\{\hfwn{n}(y)\}_{n=0}^\infty$ is a Stieltjes moment
sequence for all $y\in \{x\} \cup Y_x$, where
\mbox{$Y_x=\phi^{-1}(\{x\}) \cap \phi(X)$}, then $x
\in \phi^{\infty}(X)$, $Y_x=\phi^{-1}(\{x\}) \cap
\phi^{\infty}(X)$ and $\{\hfwn{n+1}(x)\}_{n=0}^\infty$
is an indeterminate Stieltjes moment sequence,
   \item[(iii)] if for every $x\in \phi(X)$,
$\{\hfwn{n}(x)\}_{n=0}^\infty$ is a Stieltjes moment
sequence, then $\cfw$ generates Stieltjes moment
sequences and $\phi(X) = \phi^{\infty}(X)$,
   \item[(iv)] if \eqref{sa5} holds for every
$x\in \phi(X)$, then $\phi(X)=X$ and $\cfw$ is
subnormal,
   \item[(v)] if $\cfw$ is subnormal and
$\overline{\dzn{\cfw}} = L^2(\mu)$, then $\phi(X) =
\phi^{\infty}(X)$.
   \end{enumerate}
   \end{thm}
   \begin{proof}
Obviously, the measures $\mu$ and $\mu_w$ are mutually
absolutely continuous, $\at{\mu\circ
\phi^{-1}}=\phi(X)$ and $\cfw$ is well-defined. It is
also clear that $\phi^{-n}(\{x\}) = \emptyset$ for all
$n\in \nbb$ and $x\in X\setminus \phi(X)$. This,
Proposition \ref{erq1}(ii) and Lemma \ref{lemS3} imply
that
   \begin{align} \label{ajaks}
   \begin{minipage}{70ex}
{\em for every $x\in X \setminus \phi(X)$,
$\{\hfwn{n}(x)\}_{n=0}^{\infty}$ is a determinate
Stieltjes moment sequence with the representing
measure $\delta_0$.}
   \end{minipage}
   \end{align}

(i) Suppose, contrary to our claim, that there exists
$k\in \nbb$ such that $x \in X \setminus \phi^k(X)$.
Then $\phi^{-k}(\{x\})=\emptyset$ and thus, by
Proposition \ref{erq1}(ii), $\hfwn{k}(x)=0$. It
follows from our assumptions and Lemma \ref{lemS3}
that $\hfw(x)=0$. This combined with Proposition
\ref{erq1}(ii) yields $x\in X \setminus \phi(X)$,
which is a contradiction.

(ii) Since, by (i), $x \in \phi^{\infty}(X)$ and
$Y_x=\phi^{-1}(\{x\}) \cap \phi^{\infty}(X)$, it
remains to show that $\{\hfwn{n+1}(x)\}_{n=0}^\infty$
is an indeterminate Stieltjes moment sequence.
Suppose, contrary to our claim, that
$\{\hfwn{n+1}(x)\}_{n=0}^\infty$ is a determinate
Stieltjes moment sequence. By our assumption and
\eqref{ajaks}, for every $y \in \{x\} \cup
\phi^{-1}(\{x\})$, $\{\hfwn{n}(y)\}_{n=0}^{\infty}$ is
a Stieltjes moment sequence with a representing
measure, say $\vartheta_y$. Noting that $x \in
\at{\mu_w \circ \phi^{-1}}$, we infer from Lemma
\ref{determcr-l}(ii) that the inequality
\eqref{ajajZenon} holds. Hence, since $x=\phi(y_0)$
for some $y_0 \in X \setminus \phi(X)$, and
consequently $y_0 \in \phi^{-1}(\{x\}) \cap
(X\setminus \phi(X))$, we have
$\int_0^{\infty}\frac{1}{t} \D \vartheta_{y_0}(t) <
\infty$, which contradicts the equality
$\vartheta_{y_0}=\delta_0$ (see \eqref{ajaks}).

(iii) In view of Theorem \ref{gsms} and \eqref{ajaks},
$\cfw$ generates Stieltjes moment sequences. The
equality $\phi(X) = \phi^{\infty}(X)$ is a direct
consequence of (i).

(iv) According to \eqref{ajaks}, the condition
\eqref{sa5} holds for every $x\in X$. Therefore, the
assertion (iv) follows from Corollary
\ref{determcr-c}(ii).

(v) Apply \cite[Proposition 3.2.1]{b-j-j-sA}, Theorem
\ref{gsms} and (iii).
   \end{proof}
Regarding the assertions (iii) and (v) of Theorem
\ref{determcr-c3}, the following well-known and easy
to prove set-theoretical result is worth recalling.
   \begin{pro} \label{rfin-1}
If $\phi$ is a transformation of a nonempty set $X$,
then the following conditions are equivalent:
   \begin{enumerate}
   \item[(i)] $\phi(X) = \phi^{\infty}(X)$,
   \item[(ii)] $\phi(X)=\phi^2(X)$,
   \item[(iii)] $\phi(X)=\phi^n(X)$ for some integer
$n\Ge 2$,
   \item[(iv)] $\phi(X)=\phi^n(X)$ for every integer
$n\Ge 2$.
   \end{enumerate}
   \end{pro}
Below, we show that the assertion (iii) of Theorem
\ref{determcr-c3} is related to the injectivity
problem (see Problem \ref{IP}).
   \begin{rem} \label{rfin}
Suppose that \eqref{stand4} holds, $\at{\mu}=X$ and
$w(x) \neq 0$ for every $x\in X$. By Theorem
\ref{gsms} and \eqref{ajaks}, the assertion (iii) of
Theorem \ref{determcr-c3} is equivalent to the
statement that, if $\cfw$ generates Stieltjes moment
sequences, then $\phi(X) = \phi^{\infty}(X)$. Hence,
the question arises as to whether this equality
implies the surjectivity of $\phi$ when $\cfw$
generates Stieltjes moment sequences. If the answer to
this question is in the negative, then it can (and
does) happen that such $\cfw$ is not injective because
$\chi_{\{x\}} \in \jd{\cfw}$ for every $x \in X
\setminus \phi(X)$. In turn, if the answer is in the
affirmative, then such $\cfw$ is always injective (see
Lemma \ref{jadro}). This question is a particular case
of a more general problem (see Problem \ref{IP}
below), called the injectivity problem, which was
originally stated for composition operators in
\cite[Problem 3.3.6]{b-j-j-sg}.
   \end{rem}
   \begin{opq}[Injectivity problem] \label{IP}
Suppose that \eqref{stand2} holds, $\cfw$ generates
Stieltjes moment sequences and $w \neq 0$ a.e.\
$[\mu]$. Is it true that $\cfw$ is injective{\em ?}
   \end{opq}
It is worth mentioning that Problem \ref{IP} has a
negative answer if the hypothesis that $w \neq 0$
a.e.\ $[\mu]$ is dropped (cf.\ Section \ref{pco}(g)).
What is more, it may happen that $\cfw$ is an isometry
when $w$ vanishes on a set of positive $\mu$-measure.
   \begin{center}
   \begin{figure}[t]
   \subfigure {
\includegraphics[scale=0.19]{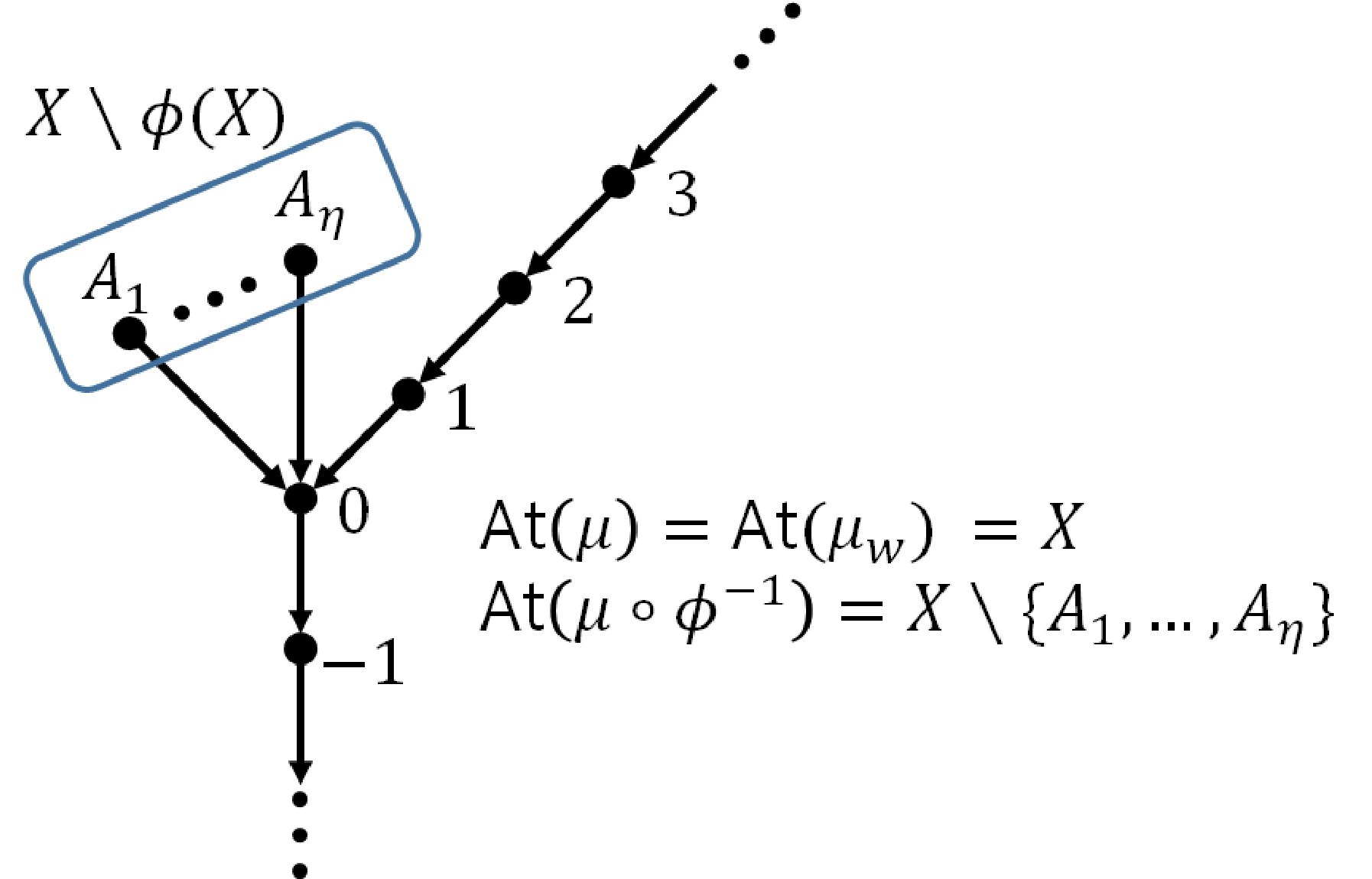}
} \subfigure {
\includegraphics[scale=0.19]{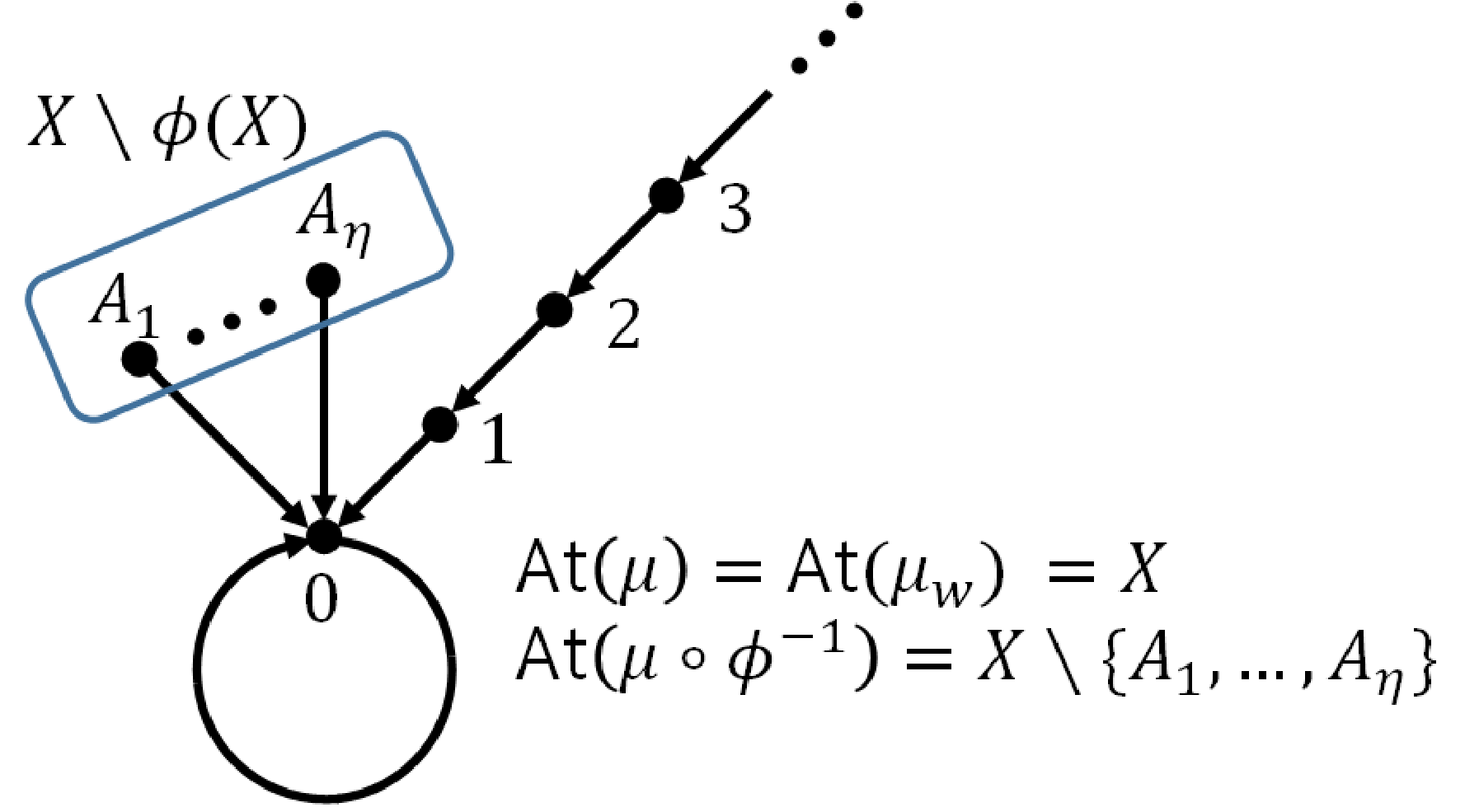}
} \caption{Illustrations of two transformations $\phi$
appearing in Example \ref{cciemno1}.} \label{figura3}
   \end{figure}
   \end{center}

Now, we discuss an example (or, in fact, two examples)
which shows that the injectivity problem is hard to
solve.
   \begin{exa} \label{cciemno1} Fix $\eta
\in \nbb$. Let $\phi$ be a transformation of $X$ as in
one of the two subfigures of Figure \ref{figura3}.
Take any discrete measure $\mu$ on $X$ such that
$\at{\mu}=X$. Let $w\colon X \to \cbb$ be any function
such that $w(x) \neq 0$ for every $x\in X$. Then the
weighted composition operator $\cfw$ is easily seen to
be densely defined (in fact, $\overline{\dzn{\cfw}} =
L^2(\mu)$ because $\chi_{\{x\}} \in \dzn{\cfw}$ for
every $x\in X$). Clearly, $X\setminus \phi(X) = \{A_1,
\ldots, A_{\eta}\}$ and thus $\cfw$ is not injective.
What is more, the symbol $\phi$ satisfies the equality
$\phi(X) = \phi^{\infty}(X)$. It follows from Theorem
\ref{determcr-c3}(ii) that if
$\{\hfwn{n}(x)\}_{n=0}^\infty$ is a Stieltjes moment
sequence for $x\in\{0,1\}$, then
$\{\hfwn{n+1}(0)\}_{n=0}^\infty$ is an indeterminate
Stieltjes moment sequence. We refer the reader to
\cite[Propositions 3.3.3 and 3.3.4]{b-j-j-sg} for more
examples of this kind.
   \end{exa}
Theorem \ref{determcr-c3} turns out to be useful when
localizing points $x$ in $X$ with the property that
$\{\hfwn{n}(x)\}_{n=0}^{\infty}$ is a Stieltjes moment
sequence.
   \begin{rem} \label{b-s}
Assume \eqref{stand4} holds, $\at{\mu}=X$ and $w(x)
\neq 0$ for every $x\in X$. Take any $x\in X$. Then we
have three disjunctive possibilities:
   \begin{enumerate}
   \item[(a)] $x\in X \setminus \phi(X)$,
   \item[(b)] $x \in \phi(X) \setminus \phi^{\infty}(X)$,
   \item[(c)] $x \in \phi^{\infty}(X)$.
   \end{enumerate}
If (a) holds, then by \eqref{ajaks},
$\{\hfwn{n}(x)\}_{n=0}^{\infty}$ is a determinate
Stieltjes moment sequence. If (b) holds, then by the
assertion (i) of Theorem \ref{determcr-c3},
$\{\hfwn{n}(x)\}_{n=0}^{\infty}$ is never a Stieltjes
moment sequence. Finally, if (c) holds, then
$\{\hfwn{n}(x)\}_{n=0}^{\infty}$ may or may not happen
to be a Stieltjes moment sequence. The last
possibility will be discussed in Example \ref{jesien1}
below.
   \end{rem}
   \begin{exa} \label{jesien1}
Let $\phi$ be a transformation of $X$ as in one of the
two subfigures of Figure \ref{figura33} and let $\mu$
be any discrete measure on $X$ such that $\at{\mu}=X$.
Note that $X\setminus \phi(X) = A_2$, $\phi(X)
\setminus \phi^{\infty}(X) = A_1$ and
$\phi^{\infty}(X)=X \setminus \{A_1,A_2\}$. Note also
that $\overline{\dzn{C_{\phi}}} = L^2(\mu)$ and
$C_{\phi}$ is not injective (cf.\ Example
\ref{cciemno1}).

It is easily seen that if
$\{\mathsf{h}_{\phi^n}(x)\}_{n=0}^{\infty}$ is a
Stieltjes moment sequence for a fixed $x\in \nbb$,
then so is
$\{\mathsf{h}_{\phi^n}(x+1)\}_{n=0}^{\infty}$ (because
$\mu(x+1)\mathsf{h}_{\phi^n} (x+1) =
\mu(x)\mathsf{h}_{\phi^{n+1}}(x)$). Suppose now that
$\{\mathsf{h}_{\phi^n}(0)\}_{n=0}^{\infty}$ is a
Stieltjes moment sequence. By analogy with $x=1$, we
can ask whether
$\{\mathsf{h}_{\phi^n}(x)\}_{n=0}^{\infty}$ is a
Stieltjes moment sequence for every $x\in
\phi^{-1}(\{0\})$. In view of Remark \ref{b-s}, this
is not the case for $x=A_1$. We will show that this is
also {\em not} the case for $x=1$ if the Stieltjes
moment sequence
$\{\mathsf{h}_{\phi^{n+2}}(0)\}_{n=0}^{\infty}$ is
determinate. Suppose, contrary to our claim, that
$\{\mathsf{h}_{\phi^n}(x)\}_{n=0}^{\infty}$ is a
Stieltjes moment sequence for $x\in\{0,1\}$ and
$\{\mathsf{h}_{\phi^{n+2}}(0)\}_{n=0}^{\infty}$ is a
determinate Stieltjes moment sequence. Note that Lemma
\ref{determcr-l} is not applicable to $x=0$. Since
$\{\mu(1)\mathsf{h}_{\phi^{n+1}}(1)\}_{n=0}^{\infty} =
\{\mu(n+2)\}_{n=0}^{\infty}$ is a Stieltjes moment
sequence, we deduce that
   \begin{align} \label{jesien3}
   \begin{minipage}{65ex}
{\em $\{\mu(n+2) + \mu(A_2)
\delta_{n,0}\}_{n=0}^{\infty}$ is a Stieltjes moment
sequence with the representing measure
$\nu+\mu(A_2)\delta_0$, where $\nu$ is a representing
measure of
$\{\mu(1)\mathsf{h}_{\phi^{n+1}}(1)\}_{n=0}^{\infty}$.}
   \end{minipage}
   \end{align}

Assume first that $\phi$ is as in the left subfigure
of Figure \ref{figura33}. Then
   \begin{align*}
\{\mu(0)\mathsf{h}_{\phi^{n+2}}(0)\}_{n=0}^{\infty} =
\{\mu(n+2) + \mu(A_2) \delta_{n,0}\}_{n=0}^{\infty}.
   \end{align*}
Since the Stieltjes moment sequence
$\{\mu(0)\mathsf{h}_{\phi^{n+2}}(0)\}_{n=0}^{\infty}$
is determinate, we deduce from \eqref{jesien3} and
\cite[Lemma 2.4.1]{b-j-j-sA} that
$\{\mu(0)\mathsf{h}_{\phi^{n+1}}(0)\}_{n=0}^{\infty}$
is not a Stieltjes moment sequence, which contradicts
the fact that
$\{\mu(0)\mathsf{h}_{\phi^n}(0)\}_{n=0}^{\infty}$ is a
Stieltjes moment sequence.

Suppose now that $\phi$ is as in the right subfigure
of Figure \ref{figura33}. Note that the sequence
$\{\gamma_n\}_{n=0}^{\infty}$ defined by $\gamma_n =
\mu(0)\mathsf{h}_{\phi^{n+1}}(0)$ for $n\in \zbb_+$ is
a Stieltjes moment sequence. Let $\rho$ be a
representing measure of $\{\gamma_n\}_{n=0}^{\infty}$.
Then
   \begin{align*}
\int_0^{\infty} t^n (t-1) \D
\rho(t)=\gamma_{n+1}-\gamma_{n}=\mu(n+2) + \mu(A_2)
\delta_{n,0}, \quad n\in \zbb_+.
   \end{align*}
This and \eqref{jesien3} imply that
   \allowdisplaybreaks
   \begin{multline} \label{dzik1}
\int_0^{\infty} t^n |t-1| \chi_{(1,\infty)}(t) \D
\rho(t) = \int_0^{\infty} t^n |t-1| \chi_{[0,1)}(t) \D
\rho(t)
   \\
+ \int_0^{\infty} t^n \D (\nu + \mu(A_2) \delta_0)(t),
\quad n \in \zbb_+.
   \end{multline}
Observe that
   \begin{align*}
\int_{\sigma} |t-1| \chi_{(1,\infty)}(t) \D \rho(t)
\Le \int_{\sigma} t \D \rho(t), \quad \sigma \in
\borel{\rbb_+}.
   \end{align*}
Since $t\D\rho(t)$ is a representing measure of the
determinate Stieltjes moment sequence
$\{\gamma_{n+1}\}_{n=0}^{\infty}=\{\mu(0)
\mathsf{h}_{\phi^{n+2}}(0)\}_{n=0}^{\infty}$, we
deduce from \cite[Proposition 2.1.3]{b-j-j-sg} that
the Stieltjes moment sequence $\{\int_0^{\infty} t^n
|t-1| \chi_{(1,\infty)}(t) \D
\rho(t)\}_{n=0}^{\infty}$ is determinate. This
together with \eqref{dzik1} implies that
   \begin{align} \label{jesien34}
\int_{\sigma} |t-1| \chi_{(1,\infty)}(t) \D \rho(t) =
\int_{\sigma} |t-1| \chi_{[0,1)}(t) \D \rho(t) + (\nu
+ \mu(A_2) \delta_0)(\sigma)
   \end{align}
for every $\sigma \in \borel{\rbb_+}$. Substituting
$\sigma = \{0\}$ into \eqref{jesien34}, we are led to
a~ contradiction.

Summarizing, in both cases,
$\{\mathsf{h}_{\phi^n}(1)\}_{n=0}^{\infty}$ is not a
Stieltjes moment sequence whenever
$\{\mathsf{h}_{\phi^n}(0)\}_{n=0}^{\infty}$ is a
Stieltjes moment sequence and
$\{\mathsf{h}_{\phi^{n+2}}(0)\}_{n=0}^{\infty}$ is a
determinate Stieltjes moment sequence. It would be
interesting to know whether the determinacy assumption
could be dropped. This leaves us with the following
open question (cf.\ Remark \ref{b-s}).
   \end{exa}
   \begin{center}
   \begin{figure}[t] \subfigure {
\includegraphics[scale=0.19]{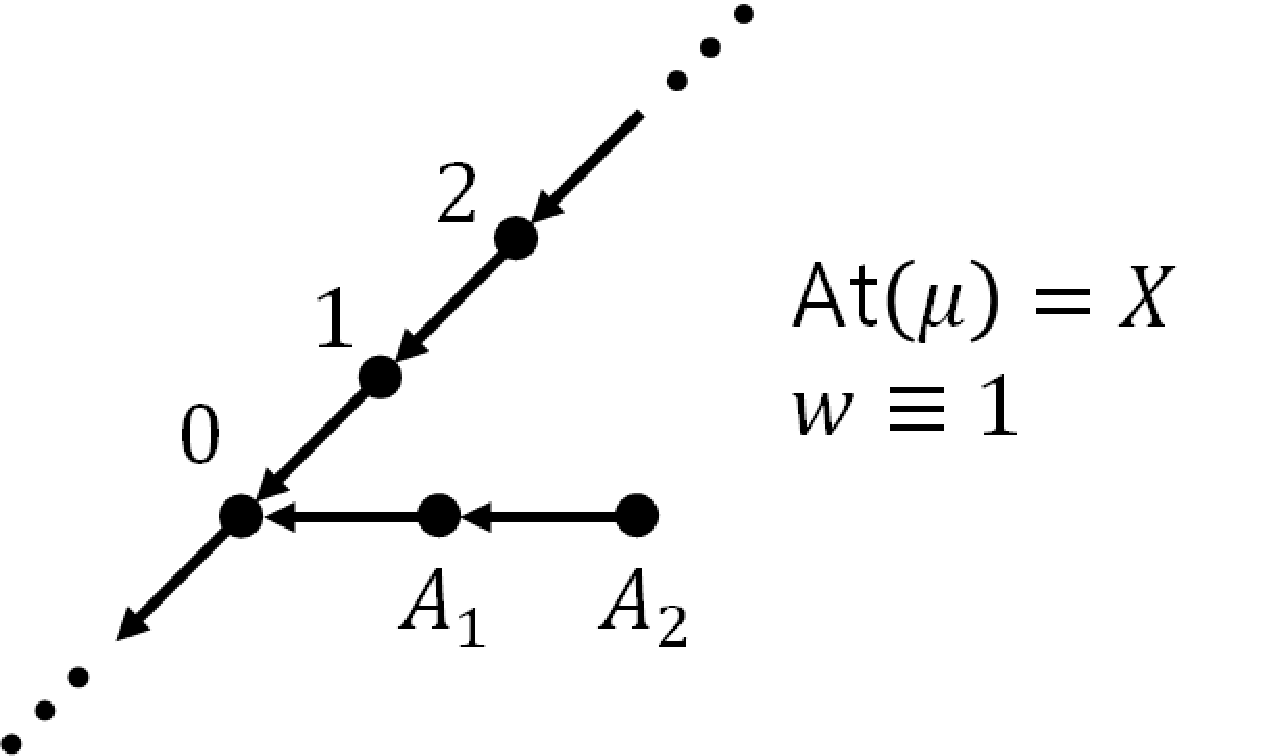}
} \quad \subfigure {
\includegraphics[scale=0.19]{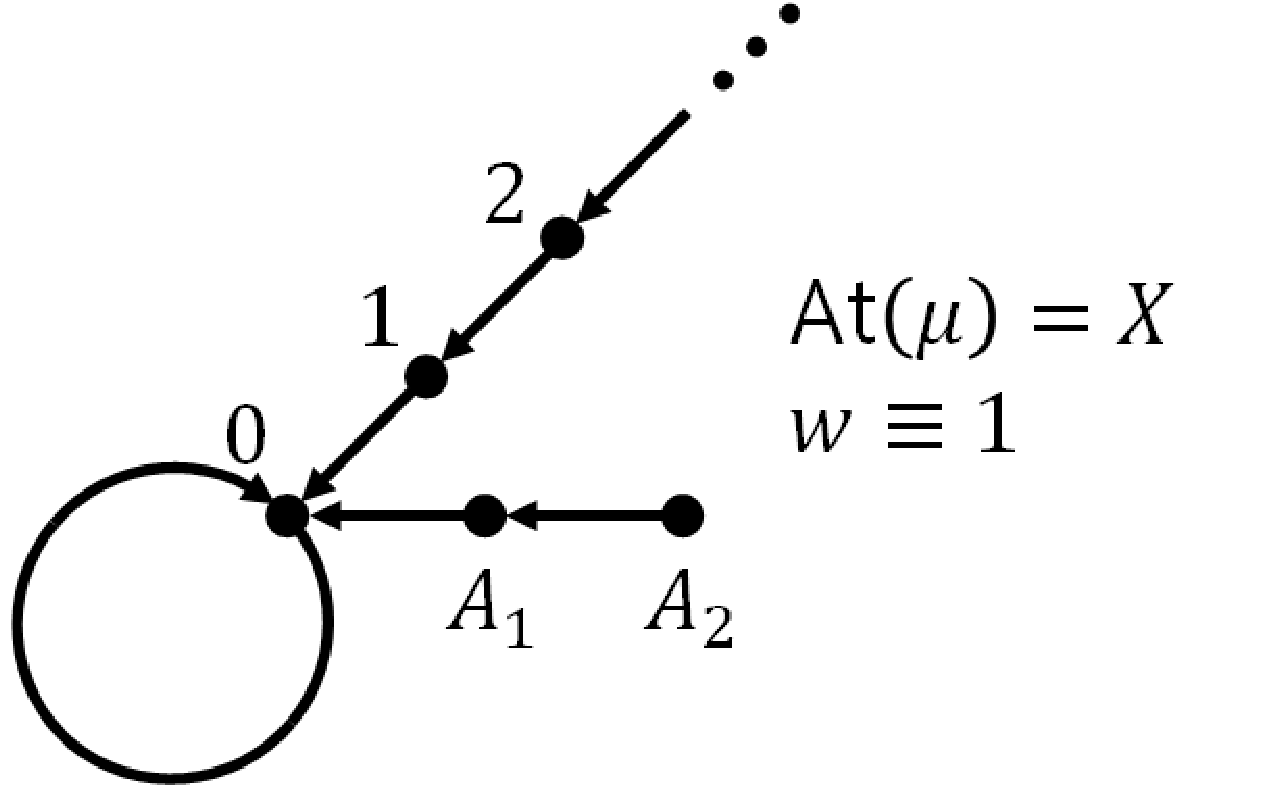}
} \caption{Illustrations of two transformations $\phi$
appearing in Example \ref{jesien1}.} \label{figura33}
   \end{figure}
   \end{center}
   \begin{opq} \label{IP2}
Suppose \eqref{stand4} holds, $\at{\mu}=X$ and $w$ has
no zeros in $X$. Assume that $x \in \phi^{\infty}(X)$
and $Z_{x} \setminus \{x\} \neq \emptyset$, where
   \begin{align} \label{zx}
Z_{x}:= \phi^{-1}(\{x\}) \cap \phi^{\infty}(X).
   \end{align}
Is it true that if $\{\hfwn{n}(x)\}_{n=0}^{\infty}$ is
a Stieltjes moment sequence, then
$\{\hfwn{n}(z)\}_{n=0}^{\infty}$ is a Stieltjes moment
sequence for every $z \in Z_{x}\setminus \{x\}$.
   \end{opq}

   We conclude this section with the following remark.
   \begin{rem} \label{mgla2}
The reader should be aware of the fact that there is a
transformation $\phi$ such that $Z_{x}=\emptyset$ for
some $x \in \phi^{\infty}(X)$, and that there is a
transformation $\phi$ such that $Z_{x}\neq \emptyset$
and $Z_{x}\setminus \{x\} = \emptyset$ (equivalently,
$Z_{x}=\{x\}$) for some $x \in \phi^{\infty}(X)$,
where $Z_{x}$ is as in \eqref{zx}. Indeed, if $\phi$
is the transformation of $X$ as in the left subfigure
of Figure \ref{figura43}, then
   \begin{gather*}
X\setminus \phi(X) = \{A_1, A_2, A_3, \ldots\}, \quad
\phi^{\infty}(X) = \{0,1,2, \ldots\},
   \end{gather*}
and consequently
   \begin{align} \label{jasno55}
   \left. \begin{gathered} \phi^{-1}(\{0\}) \cap
(X\setminus \phi(X)) = \{A_1\},
   \\
\phi^{-1}(\{0\}) \cap (\phi(X) \setminus
\phi^{\infty}(X)) = \{B_1, B_2, B_3, \ldots\},
   \\
Z_0=\phi^{-1}(\{0\}) \cap \phi^{\infty}(X) =
\emptyset.
   \end{gathered}
   \quad \right\}
   \end{align}
Moreover, by \eqref{jasno55}, the point $x=0$ has the
following property (cf.\ Theorem
\ref{determcr-c3}(ii))
   \begin{align} \label{prop1}
\text{$x \in \phi(X\setminus \phi(X))$,
$\phi^{-1}(\{x\}) \cap (\phi(X) \setminus
\phi^{\infty}(X)) \neq \varnothing$ and $Z_x
\varsubsetneq Y_x$.}
   \end{align}
In turn, if $\phi$ is the transformation of $X$ as in
the right subfigure of Figure \ref{figura43}, then
$0\in \phi^{\infty}(X)$, $Z_0 \neq \emptyset$ and
$Z_0\setminus \{0\}=\emptyset$; moreover, as in the
previous case, the point $x=0$ satisfies the condition
\eqref{prop1}.
   \end{rem}
   \begin{figure}[t]
   \begin{center}
   \subfigure {
\includegraphics[scale=0.19]{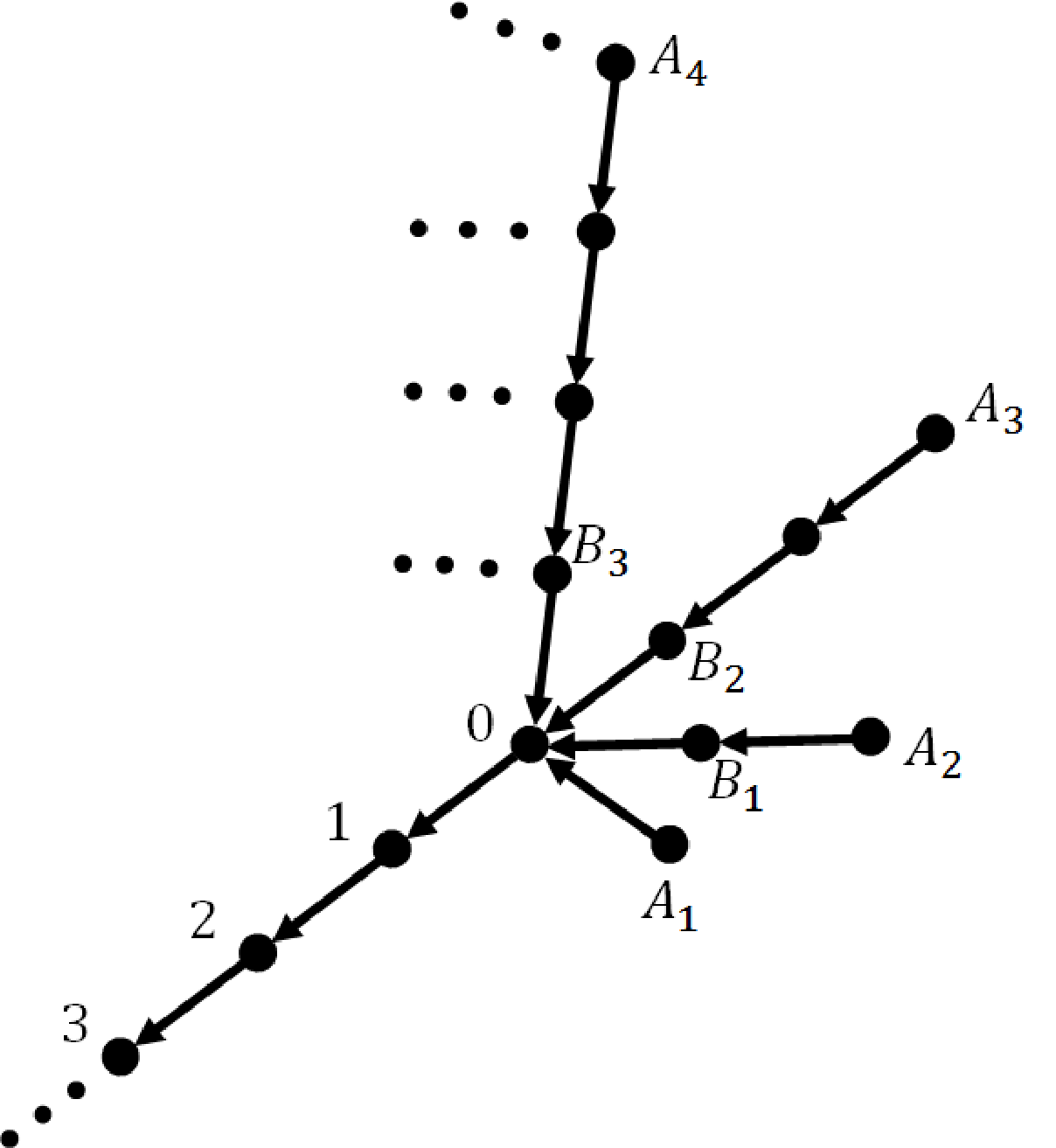}
} \quad \subfigure {
\includegraphics[scale=0.19]{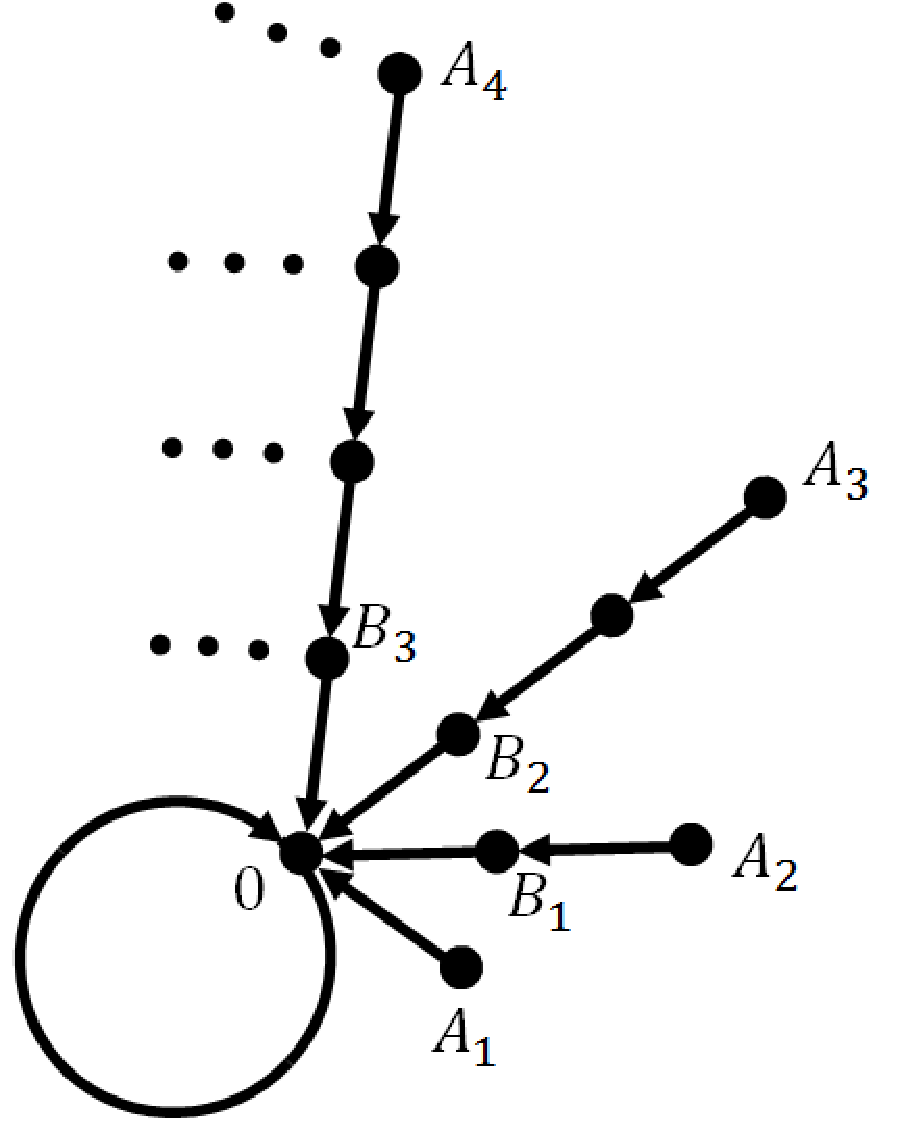}
} \caption{Illustrations of two transformations $\phi$
appearing in Remark \ref{mgla2}.} \label{figura43}
   \end{center}
   \end{figure}
   \section{\label{Sec6.5}Examples} We begin by providing an example
of an isometric weighted composition operator $\cfw$
for which the corresponding composition operator
$C_{\phi}$ is not well-defined (see Example
\ref{cohyp-count2}). This example also shows that the
assumption \mbox{(ii-a)} of Theorem \ref{cohypon-m}
cannot be omitted. That none of the remaining
assumptions \mbox{(ii-b)} and \mbox{(ii-c)} of Theorem
\ref{cohypon-m} can be omitted is demonstrated in
Examples \ref{cohyp-count1} and \ref{cohyp-count3}
respectively.
   \begin{exa}[Condition \mbox{(ii-a)} is missing]
\label{cohyp-count2} Set $X=\zbb_+$. Let $\mu\colon
2^X \to \rbop$ be the $\sigma$-finite measure such
that $\mu(n) = 1$ for every $n\Ge 1$ and $\mu(0)=0$.
Define the transformation $\phi$ of $X$ by
   \begin{align*}
\phi(n) =
   \begin{cases}
n-1 & \text{if } n\Ge 1,
   \\
0 & \text{if } n=0,
   \end{cases}
\quad n\in X.
   \end{align*}
Let $w\colon X \to \cbb$ be the function given by
   \begin{align*}
w(n) =
   \begin{cases}
0 & \text{if } n\in \{0,1\},
   \\
1 & \text{if } n\Ge 2,
   \end{cases}
\quad n\in X.
   \end{align*}
Using Proposition \ref{erq1}, we verify that $\cfw$ is
well-defined and
   \begin{align*}
\hfw(n) = 1, \quad n\Ge 1,
   \end{align*}
Hence, the condition \mbox{(ii-c)} of Theorem
\ref{cohypon-m} holds and, by \eqref{chisom}, $\cfw$
is an isometry on $L^2(\mu)$. Setting
$e_n=\chi_{\{n\}}$ for all $n\in \nbb$ and noting that
$\{e_n\}_{n=1}^{\infty}$ is an orthonormal basis of
$L^2(\mu)$ such that $\cfw e_n = e_{n+1}$ for every
$n\in \nbb$, we deduce that the weighted composition
operator $\cfw$ is unitarily equivalent to the
unilateral shift of multiplicity $1$. This means that
$\cfw$ is not cohyponormal. It is easily seen that the
condition (v) of Lemma \ref{cohypon1} is satisfied, so
by this lemma the condition \mbox{(ii-b)} of Theorem
\ref{cohypon-m} holds. As a consequence, the condition
\mbox{(ii-a)} of this theorem does not hold. The
latter can be also verified directly. Since $\mu(0)=0$
and $\mu\circ \phi^{-1}(0)=1$, we infer from
Proposition \ref{wco1} that the corresponding
composition operator $C_{\phi}$ is not well-defined.
   \end{exa}
In view of Example \ref{cohyp-count2}, one may ask
whether there exists a unitary weighted composition
operator $\cfw$ for which the corresponding
composition operator $C_{\phi}$ is not well-defined.
However, this possibility is disproved by Proposition
\ref{mwc1}(iii).

As shown below, the case when the condition
\mbox{(ii-b)} of Theorem \ref{cohypon-m} is missing
can be realized even by a composition operator in a
two-dimensional $L^2$-space.
   \begin{exa}[Condition \mbox{(ii-b)} is missing]
\label{cohyp-count1} Let $N$, $X$, $\ascr$, $\mu$ and
$\phi$ be as in Example \ref{pico} and let $C_{\phi}$
be the corresponding composition operator in
$L^2(\mu)$. Then $C_{\phi}\in \ogr{L^2(\mu)}$ and
$C_{\phi}$ is a rank-one operator which is not
paranormal. Plainly, the condition \mbox{(ii-a)} of
Theorem \ref{cohypon-m} holds. Since $\mathsf h_{\phi}
= \frac{\mu(X)}{\mu(0)} \, \chi_{\{0\}}$, we see that
$\mathsf{h}_{\phi} \Le \mathsf{h}_{\phi} \circ \phi$,
which means that the condition \mbox{(ii-c)} of
Theorem \ref{cohypon-m} is satisfied as well. Since
$C_{\phi}$ is compact, it is not cohyponormal. Indeed,
otherwise, by \cite[Problem 206]{hal4}, $C_{\phi}$ is
normal and, as such, it is paranormal, which is a
contradiction. (That $C_{\phi}$ is not cohyponormal
can be justified in a more elementary manner by
exploiting the fact that $C_{\phi}$ is a rank-one
operator.) By Theorem \ref{cohypon-m}, the condition
\mbox{(ii-b)} of Theorem \ref{cohypon-m} does not
hold. The same can be deduced from Lemma
\ref{cohypon1}(v) by noticing that the function
$f=\chi_{\{0\}}$ satisfies the equality
$f=\chi_{\{\hfw > 0\}} \cdot f$, though it is not of
the form $f=g\circ \phi$ for any $g\colon X \to
\rbb_+$. Since $\|C_{\phi}\|^2 =
\frac{\mu(X)}{\mu(0)}$, we can immediately obtain an
example of an unbounded composition operator
possessing all the required properties simply by
considering a countably infinite orthogonal sum of
bounded composition operators discussed above (cf.\
\cite[Corollary C.2]{b-j-j-sS}).
   \end{exa}
   \begin{exa}[Condition \mbox{(ii-c)} is missing]
\label{cohyp-count3}
   Set $X=\zbb$ and $\ascr=2^X$. Let $\mu\colon \ascr
\to \rbop$ be the counting measure, $\phi$ be the
transformation of $X$ given by $\phi(n)=n+1$ for $n\in
X$ and $w\colon X \to \cbb\setminus \{0\}$ be any
function. Clearly, the weighted composition operator
$\cfw$ is well-defined and
   \begin{align} \label{slon1}
\hfw(n)=|w(n-1)|^2, \quad n\in \zbb,
   \end{align}
so $\cfw$ is densely defined. The condition
\mbox{(ii-a)} of Theorem \ref{cohypon-m} is obviously
satisfied. As $\phi^{-1}(\ascr) = \ascr$, we see that
$\efw = I_{L^2(\mu_w)}$, and consequently the
condition \mbox{(ii-b)} of Theorem \ref{cohypon-m} is
satisfied as well. It follows from \eqref{slon1} that
the condition \mbox{(ii-c)} of Theorem \ref{cohypon-m}
holds if and only if $|w(n-1)| \Le |w(n)|$ for all
$n\in \zbb$. Choosing an appropriate $w$, we easily
obtain the example with the required properties.
   \end{exa}
Now we demonstrate how restrictive the condition
\mbox{(ii-c)} of Theorem \ref{cohypon-m} really~is.
   \begin{rem} \label{howres}
Let $\slam$ be a weighted shift on a rootless and
leafless directed tree $\tcal=(V,E)$ with nonzero
weights $\lambdab=\{\lambda_v\}_{v \in V} \subseteq
\cbb$. Assume that $\card{V} = \aleph_0$. As we know
(see Sect.\ \ref{pco}(e)), $\slam=\cfw$, where $X=V$,
$\ascr=2^V$, $\mu$ is the counting measure,
$\phi(u)=\pa{u}$ and $w(u)=\lambda_u$ for all $u\in
V$. Clearly, the condition \mbox{(ii-a)} of Theorem
\ref{cohypon-m} is satisfied. Since $\at{\mu_w} = X$,
we deduce that
   \begin{align} \label{okno}
\phi^{-1}(\ascr) = \sigma(\{\dzi{u}\colon u \in V\}).
   \end{align}
Since $\phi$ is surjective, we see that $\hfw(x)>0$
for every $x\in X$. Hence, by Lemma
\ref{cohypon1}(iv), the condition \mbox{(ii-b)} of
Theorem \ref{cohypon-m} holds if and only if $\ascr =
\phi^{-1}(\ascr)$. In view of \eqref{okno} and
\cite[Proposition 2.1.2]{j-j-s}, the latter is valid
if and only if $\card{\dzi{u}} = 1$ for every $u \in
V$. This in turn is equivalent to the fact that
$\tcal$ is graph-isomorphic to the directed tree
$(\zbb, \{(n,n+1)\colon n\in \zbb\})$ (see the second
paragraph of the proof of \cite[Theorem
5.2.2]{j-j-s}).
   \end{rem}
Our next goal is to discuss issues related to the
selfadjointness of weighted composition operators. To
begin with, note that if a composition operators
$C_{\phi}$ is symmetric, then, by \eqref{chisom} and
\cite[Proposition B.1]{b-j-j-sS}, $\mathsf{h}_{\phi} =
1$ a.e.\ $[\mu]$. This is no longer true for symmetric
weighted composition operators. Indeed, it is enough
to consider the multiplication operator $M_{w}$ by an
$\ascr$-measurable function $w\colon X \to \rbb$. This
operator is selfadjoint and $\hfw = |w|^2$ a.e.\
$[\mu]$ (see Lemma \ref{psa-add}).

The next two examples show that neither of the
conditions \mbox{(ii-a)} and \mbox{(ii-b)} of Theorem
\ref{self} is sufficient for $\cfw$ to be selfadjoint
even if $\cfw$ has some additional properties.
   \begin{exa} \label{self-72}
Set $X=\zbb$ and $\ascr=2^X$. Let $\mu\colon \ascr \to
\rbop$ be a $\sigma$-finite measure such that
$\at{\mu} = X$, $\phi$ be the transformation of $X$
given by $\phi(n)=n+1$ for $n\in X$ and $w\colon X \to
(0,\infty)$ be any function. Clearly, $\cfw$ is
densely defined. Note that the condition \mbox{(ii-b)}
of Theorem \ref{self} is not satisfied because
otherwise by Corollary \ref{sasq}(iii) and the fact
that $\at{\mu_w}=X$, we see that $\id_X = \id_X \circ
\phi^2$, which is a contradiction. The condition
\mbox{(ii-a)} of Theorem \ref{self} takes the
following form
   \begin{align} \label{slon2}
\frac{w(n) \, w(n+1)}{w(n-1)^2} = \frac
{\mu(n-1)}{\mu(n)}, \quad n \in \zbb.
   \end{align}
Given any function $w\colon X \to (0,\infty)$, we can
solve the functional equation \eqref{slon2} with
respect to $\mu$ (by fixing the value of $\mu$ at any
point of $\zbb$). For such $\mu$, the condition
\mbox{(ii-a)} of Theorem \ref{self} is satisfied and,
by this theorem, $\cfw$ is not selfadjoint. Since, by
the condition \mbox{(ii-a)} of Theorem \ref{self},
$\hfw(n)=w(n) \, w(n+1)$ for all $n\in \nbb$, we
deduce that $\{\hfw(n)\}_{n\in \zbb}$ can be an
arbitrary two-sided sequence of positive real numbers.
This means that $\cfw$ can be either a bounded or an
unbounded operator. What is more, if
$\hfw=\boldsymbol{1}$, then by \eqref{chisom}, $\cfw$
is an isometry. Since $\Big\{\frac{1}{\sqrt{\mu(n)}}
\, \chi_{\{n\}}\colon n\in \zbb\Big\}$ is an
orthonormal basis of $L^2(\mu)$ and
   \begin{align*}
\cfw(\chi_{\{n\}}) = w(n-1) \, \chi_{\{n-1\}}, \quad
n\in \zbb,
   \end{align*}
we conclude that $\cfw$ is in fact a unitary operator.
   \end{exa}
   \begin{exa} \label{self-3}
We show here that the following phenomena related to
the selfadjointness of $\cfw$ can happen (cf.\
Propositions \ref{psa-pre} and \ref{wkwlp}):
   \begin{enumerate}
   \item[(A)] $\cfw$ is not selfadjoint,
the condition \mbox{(ii-b)} of Theorem \ref{self}
holds and $w = \sqrt{\hfw}$ a.e.\ $[\mu]$,
   \item[(B)] $\cfw$ is  bounded selfadjoint and
not positive, the equality $w = \sqrt{\hfw}$ a.e.\
$[\mu]$ does not hold, $C_{\phi}$ is a densely defined
nonsymmetric operator in $L^2(\mu_w)$ whose square
$C_{\phi}^2$ is not closed and $C_{\phi}^2
\varsubsetneq C_{\phi^2}$,
   \item[(C)] $\cfw$ is selfadjoint and
not positive, and $w=\sqrt{\hfw}$ a.e.\ $[\mu]$ (this
can be done even when $w=\boldsymbol{1}$).
   \end{enumerate}
For this purpose, set $X=\zbb_+$ and $\ascr=2^X$. Let
$w\colon X \to (0,\infty)$ be any function, $\mu\colon
\ascr \to \rbop$ be a $\sigma$-finite measure such
that $\at{\mu}=X$ and $\phi$ be a transformation of
$X$ given by $\phi(2k)=2k+1$ and $\phi(2k+1)=2k$ for
all $k\in \zbb_+$. Then $\phi$ is a bijection such
that $\phi^2=\id_X$. As a consequence, the condition
\mbox{(ii-b)} of Theorem \ref{self} is satisfied. Note
also that the condition \mbox{(ii-b)} of Theorem
\ref{psa} is not satisfied. Indeed, otherwise by
Corollary \ref{sasq}, we have $\id_X \circ \phi =
\id_X$, which is a contradiction. Clearly, $\cfw$ is
densely defined and
   \begin{align} \label{Vienna2}
\hfw(x)=
   \begin{cases}
\frac{w(2k+1)^2\mu(2k+1)}{\mu(2k)} & \text{if } x=2k
\text{ with } k\in\zbb_+,
   \\[1ex]
\frac{w(2k)^2\mu(2k)}{\mu(2k+1)} & \text{if } x=2k+1
\text{ with } k\in\zbb_+,
   \end{cases}
\quad x\in X.
   \end{align}
Hence, the condition \mbox{(ii-a)} of Theorem
\ref{self} takes the form (cf.\ \eqref{slon2})
   \begin{align} \label{Vienna1}
\frac{w(2k+1)}{w(2k)} = \frac {\mu(2k)}{\mu(2k+1)},
\quad k \in \zbb_+.
   \end{align}
In turn, the equality $w = \sqrt{\hfw}$ a.e.\ $[\mu]$
holds if and only if
   \begin{align}  \label{Vienna1+}
\bigg(\frac{w(2k+1)}{w(2k)}\bigg)^2 = \frac
{\mu(2k)}{\mu(2k+1)}, \quad k \in \zbb_+.
   \end{align}
Plainly, for any function $w\colon X \to (0,\infty)$
such that $w(0) \neq w(1)$, there exists a measure
$\mu$ that satisfies \eqref{Vienna1+} and does not
satisfy \eqref{Vienna1}. By Theorem \ref{self}, for
such $w$ and $\mu$, the condition (A) holds. In turn,
for any function $w\colon X \to (0,\infty)$ such that
$w(2k+1) = w(2k)$ for all $k \in \zbb_+$, there exists
a measure $\mu$ that satisfies \eqref{Vienna1} and
\eqref{Vienna1+}. Since the condition \mbox{(ii-b)} of
Theorem \ref{psa} is not satisfied, we infer from
Theorems \ref{self} and \ref{psa} that for such $w$
and $\mu$, the condition (C) holds. In particular,
choosing $w=\boldsymbol{1}$, we get the composition
operator that satisfies (C).

Let us now observe that each of the items (A) and (C)
can be realized by bounded and also by unbounded
operators $\cfw$. Indeed, in each of these items we
can assume that the equalities in \eqref{Vienna1} and
\eqref{Vienna1+} hold for every $k\in \nbb$. Then, by
\eqref{Vienna2}, we have
   \begin{align*}
\hfw(2k) = \hfw(2k+1) = w(2k)^2, \quad k\in\nbb,
   \end{align*}
which together with Proposition \ref{lemS1} enables us
to obtain weighted composition operators with the
required properties.

Finally, we discuss the condition (B). First, we
specify the weight $w$ and the measure $\mu$. Set
$w(2k)=1$ and $w(2k+1)=\frac{1}{k+1}$ for $k\in
\zbb_+$, and choose the measure $\mu$ such that
$\frac{\mu(2k)}{\mu(2k+1)} = \frac{1}{k+1}$ for all
$k\in \zbb_+$. It follows from \eqref{Vienna2} that
$\cfw \in \ogr{L^2(\mu)}$. As \eqref{Vienna1} is
valid, we infer from Theorem \ref{self} that $\cfw$ is
selfadjoint. Since \eqref{Vienna1+} does not hold, we
infer from Proposition \ref{psa-pre} that $\cfw$ is
not positive. Clearly, $C_{\phi}$ is densely defined
as an operator in $L^2(\mu_w)$ and
   \begin{align*}
\mathsf{h}_{\phi} (2k+1) = k+1, \quad k\in \zbb_+,
   \end{align*}
where $\mathsf{h}_{\phi} := \frac{\D \mu_w \circ
\phi^{-1}}{\D \mu_w}$, and so by Proposition
\ref{lemS1}, $C_{\phi}$ is not bounded. Hence, since
$C_{\phi}$ is closed and $C_{\phi^2} =
I_{L^2(\mu_w)}$, we deduce that $C_{\phi}^2
\varsubsetneq C_{\phi^2}$. By Lemma
\ref{13-12-13}(iii), $C_{\phi}^2$ is not closed. In
turn, by Corollary \ref{jfacor}(i), $C_{\phi}$ is not
symmetric.
   \end{exa}
In closing this section, we make a few comments on
selfadjoint weighted composition operators.
   \begin{rem}
First, note that the situation described in the
condition (A) of Example \ref{self-3} never happens in
the case of composition operators $C_{\phi}$. Indeed,
otherwise the condition \mbox{(ii-a)} of Theorem
\ref{self} is automatically satisfied and thus, by
this theorem, $C_{\phi}$ is selfadjoint, which is a
contradiction. Second, in view of Corollary
\ref{jfacor}(i) and \eqref{chisom}, the situation
described in the condition (B) of Example \ref{self-3}
also never happens in the case of composition
operators. Third, the condition \mbox{(ii-b)} of
Theorem \ref{psa} is not sufficient for $\cfw$ to be
positive and selfadjoint. Indeed, if $\mu_w$ is
nonzero and the condition \mbox{(ii-b)} of Theorem
\ref{psa} holds, then by Corollary \ref{psa-add-c}, we
can find an $\ascr$-measurable function $u\colon X \to
\cbb$ such that $\mu(u^{-1}(\cbb\setminus \rbb)) > 0$,
$|w|=|u|$ and $C_{\phi,u}=M_u$. Since $\mu_w=\mu_u$,
the condition \mbox{(ii-b)} of Theorem \ref{psa} holds
with $u$ in place of $w$ and $C_{\phi,u}$ is neither
selfadjoint nor positive. Fourth, also the condition
\mbox{(ii-a)} of Theorem \ref{psa} is not sufficient
for $\cfw$ to be selfadjoint and positive (cf.\
Proposition \ref{fn2nC}). This can be seen even in the
case of composition operators (cf.\ \cite[Example
B.2]{b-j-j-sS}). Fifth, one can show, by modifying
\cite[Example 3.2]{Bu-St1}, that the equality
$C_{\phi} = I_{L^2(\mu)}$ does not imply that $\phi =
\id_X$ a.e.\ $[\mu]$; what is worse, it may happen
that the set $\{x \in X\colon \phi(x) = x\}$ is not
even $\ascr$-measurable.
   \end{rem}
   \chapter{Relationships Between $\cfw$
and $C_\phi$} In this chapter, we investigate the
interplay between selected properties of a weighted
composition operator $\cfw$ and the corresponding
composition operator $C_\phi$. In Section
\ref{Sec7.1}, we discuss the questions of when the
product $M_w C_{\phi}$ is closed and when it coincides
with $\cfw$ (see Theorems \ref{mwc2} and \ref{mwc4}).
The relationships between the Radon-Nikodym
derivatives $\mathsf h_{\phi}$ and $\hfw$ are
described in Section \ref{13IX} (see Propositions
\ref{caroll} and \ref{caroll2a}). In Section
\ref{Sec7.3}, using a result due to Berg and
Dur\'{a}n, we give conditions enabling us to deduce
the subnormality of $\cfw$ from that of $C_{\phi}$
(see Theorem \ref{be-du}). The converse possibility is
discussed in Theorem \ref{be-du2}. In Section
\ref{Sec7.4}, we provide a criterion for a bounded
weighted composition operator with matrix symbol to be
subnormal (see Theorem \ref{wcomc}). We conclude the
chapter with Section \ref{Sec7.5} containing numerous
examples illustrating our considerations.

The reader should be aware that in this chapter we
apply our previous results, not only to $\cfw$, but
also to $C_\phi$. Recall that the Radon-Nikodym
derivative $\mathsf h_{\phi}$ and the conditional
expectation $\mathsf{E}_\phi$ correspond to
$C_{\phi}$.
   \section{\label{Sec7.1}$M_w C_\phi$ versus $\cfw$}
We begin by stating the most basic relationship
between them. Recall that the composition operator
$C_{\phi}$ may not be well-defined even if the
weighted composition operator $\cfw$ is an isometry
(see Example \ref{cohyp-count2}; see also Examples
\ref{mwc6} and \ref{ciach}).
   \begin{pro} \label{mwc1}
Suppose \eqref{stand1} holds. Then the following
assertions hold{\em :}
   \begin{enumerate}
   \item[(i)] if the composition operator $C_{\phi}$ is
well-defined, then $\cfw$ is well-defined and $M_w
C_{\phi} \subseteq \cfw$,
   \item[(ii)] if $w\neq 0$ a.e.\ $[\mu]$ and $\cfw$ is
well-defined, then $C_{\phi}$ is well-defined,
   \item[(iii)] if $\cfw$ is well-defined and has
dense range, then $C_{\phi}$ is well-defined.
   \end{enumerate}
   \end{pro}
   \begin{proof}
(i)\&(ii) This essentially follows from Proposition
\ref{wco1}.

(iii) Suppose, contrary to our claim, that $C_{\phi}$
is not well-defined. Hence, there exists $\varDelta
\in \ascr$ such that $\mu(\varDelta)=0$ and
$\mu(\phi^{-1}(\varDelta)) > 0$. By the
$\sigma$-finiteness of $\mu$, there exists a sequence
$\{X_n\}_{n=1}^{\infty} \subseteq \ascr$ such that
$X_n \nearrow X$ as $n\to\infty$ and $\mu(X_n) <
\infty$ for all $n\in \nbb$. By continuity of measure,
there exists $N \in \nbb$ such that
   \begin{align}  \label{Palg1}
0 < \mu(\phi^{-1}(\varDelta) \cap X_{N})< \infty.
   \end{align}
Since $\cfw$ is well-defined, we have
   \begin{align*}
0 = \mu_w(\phi^{-1}(\varDelta)) =
\int_{\phi^{-1}(\varDelta)} |w|^2 \D\mu,
   \end{align*}
which shows that $w=0$ on $\phi^{-1}(\varDelta)$ a.e.\
$[\mu]$. Therefore, we have
   \begin{align} \label{Palg2}
\text{$w=0$ on $\phi^{-1}(\varDelta) \cap X_{N}$ a.e.\
$[\mu]$.}
   \end{align}
Set $g = \chi_{\phi^{-1}(\varDelta) \cap X_{N}}$.
Then, by \eqref{Palg1} and \eqref{Palg2}, $g \in
L^2(\mu)\setminus \{0\}$ and
   \begin{align*}
\is{\cfw f}{g} = \int_{\phi^{-1}(\varDelta) \cap
X_{N}} w \cdot f\circ \phi \D\mu = 0, \quad f \in
\dz{\cfw}.
   \end{align*}
This means that $g \perp \ob{\cfw}$, which is a
contradiction.
   \end{proof}
It is to be emphasized that in general the operators
$\cfw$ and $M_w C_{\phi}$ may not coincide even if the
composition operator $C_{\phi}$ is densely defined
(see Example \ref{mwc8}). Theorem \ref{mwc2} below
provides a necessary and sufficient condition for the
operators $\cfw$ and $M_w C_{\phi}$ to be equal. Note
that in the proofs of Theorems \ref{mwc2} and
\ref{mwc4} and Lemma \ref{l2g1} we use the fact that
intersections and inclusions of $L^2$-spaces make
sense whenever the underlying measures are mutually
absolutely continuous.
   \begin{thm}\label{mwc2}
Suppose \eqref{stand1} holds and $C_{\phi}$ is
well-defined. Then $\cfw$ is well-defined and the
following two conditions are equivalent{\em: }
   \begin{enumerate}
   \item[(i)] $\cfw = M_w C_{\phi}$,
   \item[(ii)] there exists $c\in \rbb_+$ such that
$\mathsf h_{\phi} \Le c(1+\hfw)$ a.e.\ $[\mu]$.
   \end{enumerate}
   \end{thm}
   \begin{proof}
It follows from Proposition \ref{mwc1} that $C_{\phi}$
is well-defined and $M_w C_{\phi} \subseteq \cfw$.
Since, by Proposition \ref{lemS1}(i), $\dz{\cfw} =
L^2((1+\hfw)\D \mu)$ and $\dz{M_w C_{\phi}} =
L^2((1+\mathsf h_{\phi})\D \mu) \cap L^2((1+\hfw)\D
\mu)$, we deduce that $\cfw = M_w C_{\phi}$ if and
only if $L^2((1+\hfw)\D \mu) \subseteq L^2((1+\mathsf
h_{\phi})\D \mu)$. Now applying \cite[Corollary
12.4]{b-j-j-sC} completes the proof of the equivalence
(i)$\Leftrightarrow$(ii).
   \end{proof}
   As shown below, the operators $\cfw$ and $M_w
C_{\phi}$ coincide if either $C_{\phi}$ is a bounded
operator on $L^2(\mu)$ or if $M_w$ is bounded from
below.
   \begin{pro}\label{mwc3}
Suppose \eqref{stand1} holds. Then the following
assertions hold{\em :}
   \begin{enumerate}
   \item[(i)] if $C_{\phi} \in \ogr{L^2(\mu)}$,
then $\cfw$ is well-defined and $\cfw = M_w C_{\phi}$,
   \item[(ii)] if $\alpha:=\mu\text{-}\mathrm{ess\,inf} |w| > 0$
and $\cfw$ is well-defined, then $C_{\phi}$ is
well-defined, $\cfw = M_w C_{\phi}$ and
   \begin{align} \label{tymczas}
h_{\phi} \Le \frac{1}{\alpha^2} \, \hfw \text{ a.e. }
[\mu];
   \end{align}
if, moreover, $\cfw$ is densely defined $($resp.,
$\cfw$ is in $\ogr{L^2(\mu)}$$)$, then so is
$C_{\phi}$,
   \item[(iii)] if $M_w \in \ogr{L^2(\mu)}$ and
$C_{\phi}$ is well-defined, then $\cfw$ is
well-defined and
   \begin{align*}
\hfw \Le \|M_w\|^2 \mathsf \, h_{\phi} \text{ a.e. }
[\mu];
   \end{align*}
if, moreover, $C_{\phi}$ is densely defined
$($$C_{\phi}$ is in $\ogr{L^2(\mu)}$$)$, then so is
$\cfw$.
   \end{enumerate}
   \end{pro}
   \begin{proof}
The assertion (i) is a consequence of Proposition
\ref{lemS1}(v) and Theorem \ref{mwc2}. In turn,
applying \eqref{l1} twice, we get
   \begin{align*}
\int_{\varDelta} \mathsf{h}_{\phi} \D\mu = \mu\circ
\phi^{-1}(\varDelta) \Le \frac{1}{\alpha^2}
\int_{\varDelta} \chi_{\varDelta}\circ \phi \D \mu_w =
\frac{1}{\alpha^2} \int_{\varDelta} \hfw \D \mu, \quad
\varDelta\in \ascr,
   \end{align*}
which together with \cite[Theorem 1.6.11]{Ash} yields
\eqref{tymczas}. The ``moreover'' part of (ii) is a
direct consequence of \eqref{tymczas} and Propositions
\ref{lemS1} and \ref{lemS2}. The assertion (iii) can
be proved in a similar way.
   \end{proof}
Our next goal is to answer the question of when the
product $M_wC_{\phi}$ is closed. The solution given
below resembles the characterization of when the
operators $\cfw$ and $M_wC_{\phi}$ coincide (see
Theorem \ref{mwc2}).
   \begin{thm} \label{mwc4}
Suppose \eqref{stand1} holds and $C_{\phi}$ is
well-defined. Then $\cfw$ is well-defined and the
following two conditions are equivalent{\em :}
   \begin{enumerate}
   \item[(i)] $M_w C_{\phi}$  is a closed operator,
   \item[(ii)] there exists $c\in \rbb_+$ such that
$\mathsf h_{\phi} \Le c(1+\hfw)$ a.e.\ $[\mu]$ on
$\{\mathsf h_{\phi} < \infty\}$.
   \end{enumerate}
   \end{thm}
Before proving Theorem \ref{mwc4}, we need the
following auxiliary lemma which seems to be of
independent interest. Note that this lemma is no
longer true if we drop the requirement that the
measure $\nu$ be $\sigma$-finite (see \cite[Appendix
A]{b-j-j-sC} for a counterexample as well as for
related results).
   \begin{lem} \label{l2g1}
Let $(Y, \bscr, \nu)$ be a $\sigma$-finite measure
space and let $g_1, g_2$ be $\bscr$-measurable scalar
functions on $Y$ such that $0 < g_1 \Le g_2 \Le
\infty$ a.e.\ $[\nu]$. Then $L^2(g_2\D\mu)$ is a
closed subspace of $L^2(g_1 \D\mu)$ if and only if
there exists $c\in \rbb_+$ such that $g_2 \Le c \,
g_1$ a.e.\ $[\nu]$ on $\{g_2 < \infty\}$.
   \end{lem}
   \begin{proof}
Set $E=\{g_2 < \infty\}$. First, we note that $L^2(g_2
\D \nu)$ is a vector subspace of $L^2(g_1 \D \nu)$ and
   \begin{align} \label{fonta}
   \begin{aligned}
L^2(g_2 \D \nu) & = \chi_{E} \cdot L^2(g_2 \D \nu),
   \\
L^2(g_1 \D \nu) & = \chi_{E} \cdot L^2(g_1 \D \nu)
\oplus \chi_{Y \setminus E} \cdot L^2(g_1 \D \nu),
   \end{aligned}
   \end{align}
where the orthogonality refers to the inner product of
$L^2(g_1 \D \nu)$. Since the mapping
   \begin{align*}
U\colon \chi_{E} \cdot L^2(g_1 \D \nu) \ni f
\longmapsto f|_{E} \in L^2(E, g_1 \D \nu)
   \end{align*}
is a well-defined unitary isomorphism such that
   \begin{align*}
U\big(\chi_{E} \cdot L^2(g_2 \D \nu)\big) = L^2(E, g_2
\D \nu),
   \end{align*}
and, by \cite[Lemma 12.1]{b-j-j-sC}, $L^2(E, g_2 \D
\nu)$ is dense in $L^2(E, g_1 \D \nu)$, we deduce from
\eqref{fonta} that $L^2(g_2 \D \nu)$ is a closed
subspace of $L^2(g_1 \D \nu)$ if and only if $L^2(E,
g_2 \D \nu) = L^2(E, g_1 \D \nu)$. An application of
\cite[Lemma 12.4]{b-j-j-sC} completes the proof.
   \end{proof}
   \begin{proof}[Proof of Theorem \ref{mwc4}]
By Proposition \ref{mwc1}, $\cfw$ is well-defined. Set
$g_1 = 1 + \hfw$ and $g_2 = 1 + \mathsf h_{\phi} +
\hfw$. It follows from Proposition \ref{lemS1} that
   \begin{align*}
\dz{M_w C_{\phi}} = L^2((1+\mathsf h_{\phi})\D \mu)
\cap L^2((1+\hfw)\D \mu) = L^2(g_2 \D \mu),
   \end{align*}
and that $\dz{\cfw}$ equipped with the graph norm
\mbox{$\|\cdot\|_{\cfw}$} coincides with the Hilbert
space $L^2((1+\hfw)\D \mu)$. Since $M_w C_{\phi}
\subseteq \cfw$, we deduce that the product $M_w
C_{\phi}$ is closed if and only if $L^2(g_2 \D \mu)$
is a closed subspace of $L^2(g_1 \D \mu)$. An
application of Lemma \ref{l2g1} completes the proof.
   \end{proof}
The relationship between the closedness of the product
$M_wC_{\phi}$ and the equality $\cfw = M_w C_{\phi}$
is explained in Proposition \ref{fonta2} below. As
shown in Example \ref{mwc5}, the implication
(ii)$\Rightarrow$(i) in Proposition \ref{fonta2} is
false if the assumption that $C_{\phi}$ is densely
defined is dropped (however the reverse implication is
true if $C_{\phi}$ is well-defined).
   \begin{pro} \label{fonta2}
Suppose \eqref{stand1} holds and $C_{\phi}$ is densely
defined. Then $\cfw$ is well-defined and the following
two conditions are equivalent{\em :}
   \begin{enumerate}
   \item[(i)] $\cfw = M_w C_{\phi}$,
   \item[(ii)] $M_w C_{\phi}$  is a closed operator.
   \end{enumerate}
   \end{pro}
   \begin{proof}
Propositions \ref{wco1} and \ref{mwc1} guarantee that
$\cfw$ is well-defined and \eqref{stand2} holds. The
implication (i)$\Rightarrow$(ii) is a direct
consequence of Proposition \ref{lemS1}(iv), while the
implication (ii)$\Rightarrow$(i) follows from Theorem
\ref{mwc4}, Propositions \ref{wco1} and \ref{lemS2}
applied to $w=\boldsymbol{1}$ and Theorem \ref{mwc2}.
   \end{proof}
Concluding this section, we provide a factorization of
a weighted composition operator into the product of a
unitary multiplication operator and a weighted
composition operator with nonnegative weight.
   \begin{pro}
Suppose \eqref{stand1} holds. Then the following
assertions hold{\em :}
   \begin{enumerate}
   \item[(i)] $\mu_{w} = \mu_{|w|}$, $\mu_{w} \circ
\phi^{-1} = \mu_{|w|} \circ \phi^{-1}$ and
$\hfw=\hfwm$ a.e.\ $[\mu]$,
   \item[(ii)] $\cfw$ is well-defined if and only if $\cfwm$ is
well-defined,
   \item[(iii)] if $\cfw$ is well-defined, then $($see Section
{\em \ref{pco}(a)} for notation$)$
   \begin{align}  \label{dysz}
\cfw = M_{\mathrm{sign}(w)} \cfwm,
   \end{align}
where
   \begin{align*}
(\mathrm{sign}(w))(x) =
   \begin{cases}
1 & \text{if } w(x)=0,
   \\[.3ex]
\frac{w(x)}{|w(x)|} & \text{if } w(x)\neq 0;
   \end{cases}
   \end{align*}
in particular, $\dz{\cfw} = \dz{\cfwm}$ and
   \begin{align}  \label{font1}
\dim \overline{\ob{\cfw}} & = \dim
\overline{\ob{\cfwm}},
   \\ \label{font2}
\dim \overline{\ob{\cfw}}^{\perp} & = \dim
\overline{\ob{\cfwm}}^{\perp},
   \end{align}
   \item[(iv)]
if $\cfw$ is well-defined, then
   \begin{align*}
\jd{\cfw} = \jd{\cfwm},
   \end{align*}
   \item[(v)] $\cfw$ is densely defined if and only if $\cfwm$ is
densely defined; moreover, if $\cfw$ is densely
defined, then
   \begin{align*}
\dim \jd{\cfw^*} = \dim \jd{\cfwm^*}.
   \end{align*}
   \end{enumerate}
   \end{pro}
   \begin{proof}
The assertion (i) is obviously true. The assertions
(ii) and (iv) follow from (i), Proposition \ref{wco1}
and Lemma \ref{jadro}.

(iii) It is a simple matter to verify that $\dz{\cfw}
= \dz{\cfwm}$ and that \eqref{dysz} holds. Since
$M_{\mathrm{sign}(w)}$ is a unitary operator, we
obtain \eqref{font1} and \eqref{font2}.

The ``moreover'' part of (v) is a direct consequence
of \eqref{font2}. The remaining part of (v) is a
direct consequence of the equality $\dz{\cfw} =
\dz{\cfwm}$.
   \end{proof}
It is worth pointing out that, despite of the equality
\eqref{dysz}, the weighted composition operators
$\cfw$ and $\cfwm$ may not be unitarily equivalent (or
even similar). To see this it is enough to consider
multiplication operators.
   \section{\label{13IX}Radon-Nikodym derivative
and conditional expectation}
   We begin by expressing the Radon-Nikodym derivative
$\hfw$, that corresponds to $\cfw$, in terms of the
Radon-Nikodym derivative $\mathsf{h}_{\phi}$ and the
conditional expectation $\mathsf{E}_\phi(\cdot)$, that
correspond to $C_{\phi}$. This can be done under the
weakest possible assumption that the conditional
expectation $\mathsf{E}_\phi(\cdot)$ exists, or
equivalently, by Proposition \ref{lemS2}, that
$C_{\phi}$ is densely defined. Surprisingly, it can
happen that $\dz{\cfw}=\{0\}$ even if
$C_{\phi}\in\ogr{L^2(\mu)}$ (see Example
\ref{ciach4}). Fortunately, if the weight function $w$
satisfies some natural constraints, then $\cfw$ is
densely defined (see the assertion (iii) of
Proposition \ref{mwc3}).

Since in this section we investigate powers of
weighted composition operators, it is worth making a
comment on the existence of the conditional
expectation $\mathsf{E}_{\phi^n,\widehat w_n}$ for a
fixed $n\in \nbb$. For this purpose, assume that
$\cfw$ is well-defined. Then, by Lemma \ref{lemS11},
the operator $C_{\phi^n,\widehat w_n}$ is well-defined
and $\cfw^n \subseteq C_{\phi^n,\widehat w_n}$. As we
know, the conditional expectation
$\mathsf{E}_{\phi^n,\widehat w_n}$ exists if and only
if the measure $\mu_{\widehat
w_n}|_{\phi^{-n}(\ascr)}$ is $\sigma$-finite, or
equivalently, by Proposition \ref{lemS2} applied to
$(\phi^{n},\widehat w_n)$, if and only if the operator
$C_{\phi^n,\widehat w_n}$ is densely defined. Thus the
following holds.
   \begin{align} \label{mgla}
   \begin{minipage}{70ex}
{\em If $\cfw$ is well-defined and $\cfw^n$ is densely
defined for a fixed $n\in \nbb$, then
$C_{\phi^k,\widehat w_k}$ is densely defined and the
conditional expectation $\mathsf{E}_{\phi^k,\widehat
w_k}$ exists for all $k\in \{0,\ldots,n\}$.}
   \end{minipage}
   \end{align}
We also refer the reader to Lemma \ref{lemS11p}(vi),
Proposition \ref{potegi-p} and Example \ref{powers}
for more information on the question of density of
domains of the operators $\cfw^n$ and
$C_{\phi^n,\widehat w_n}$.

In this section, we will frequently use the fact,
without explicitly mentioning it, that a weighted
composition operator $\cfw$ is well-defined if and
only if $\mu_{w} \circ \phi^{-1} \ll \mu$ (see
Proposition \ref{wco1}). It is worth mentioning that
the right-hand side of the formula \eqref{poz1} below
is denoted in \cite[Sect.\ 6]{ca-hor} by $J$ and that
the measure $\D \nu := J \D \mu$ considered therein
coincides with our $\mu_w\circ \phi^{-1}$.
   \begin{pro} \label{caroll}
Suppose \eqref{stand1} holds and $C_{\phi}$ is densely
defined. Then $C_{\phi,w}$ is well-defined and
   \begin{align} \label{poz1}
\hfw = \mathsf{h}_{\phi} \cdot \mathsf{E}_\phi
(|w|^2)\circ \phi^{-1} \text{ a.e.\ $[\mu]$},
   \end{align}
where $\mathsf{E}_{\phi}(\cdot) :=
\mathsf{E}(\,\cdot\,;\phi^{-1}(\ascr),\mu)$.
   \end{pro}
   \begin{proof} In view of Proposition \ref{lemS2},
the measure $\mu|_{\phi^{-1}(\ascr)}$ is
$\sigma$-finite and consequently the conditional
expectation $\mathsf{h}_{\phi}(\cdot)$ exists. Hence,
by \eqref{B3}, \eqref{fifi} and \eqref{l2}, we have
   \allowdisplaybreaks
   \begin{align*}
\mu_{w} \circ \phi^{-1} (\varDelta) & = \int_X
\chi_{\varDelta} \circ \phi \cdot |w|^2 \D \mu
   \\
& = \int_X \chi_{\varDelta} \circ \phi \cdot
\mathsf{E}_{\phi}(|w|^2) \D \mu
   \\
& = \int_X \chi_{\varDelta} \circ \phi \cdot
(\mathsf{E}_{\phi}(|w|^2) \circ \phi^{-1}) \circ \phi
\D \mu
   \\
& = \int_{\varDelta}\mathsf{h}_{\phi} \cdot
\mathsf{E}_{\phi}(|w|^2)\circ \phi^{-1} \D \mu, \quad
\varDelta \in \ascr.
   \end{align*}
This implies that $\mu_{w} \circ \phi^{-1} \ll \mu$,
the equality \eqref{poz1} holds and the weighted
composition operator $C_{\phi,w}$ is well-defined
(cf.\ Proposition \ref{mwc1}).
   \end{proof}
   \begin{cor} \label{caroll2}
If \eqref{stand1} holds and $C_{\phi}^n$ is densely
defined for every $n\in \nbb$, then $C_{\phi^n,
\widehat w_n}$ is well-defined for every $n\in \nbb$
and
   \begin{align} \label{poz1-1}
\hfwn{n} = \mathsf{h}_{\phi^n} \cdot \cen{n}
(|\widehat w_n|^2)\circ \phi^{-n} \text{ a.e.\
$[\mu]$}, \quad n \in \zbb_+,
   \end{align}
where $\cen{n}(\cdot) =
\mathsf{E}(\,\cdot\,;\phi^{-n}(\ascr),\mu)$ and
$\widehat w_n$ is as in \eqref{Zak}.
   \end{cor}
   \begin{proof}
Apply Proposition \ref{caroll} to $(\phi^n,\widehat
w_n)$ in place of $(\phi,w)$ and use \eqref{mgla}.
   \end{proof}
The sequence $\{\cen{n} (|\widehat w_n|^2)\circ
\phi^{-n}\}_{n=0}^{\infty}$ which appears in Corollary
\ref{caroll2} can be described by the following
recurrence relation.
   \begin{lem} \label{recfor}
Suppose $(X,\ascr,\mu)$ is a $\sigma$-finite measure
space and $\phi$ is an $\ascr$-measurable
transformation of $X$ such that $C_{\phi}$ is
well-defined and $C_{\phi}^n$ is densely defined for
some fixed $n\in \nbb$. Then
   \begin{align*}
\gamma_{k+1} = \cen{k+1}\big(\gamma_k \circ \phi^k
\cdot |w \circ \phi^k|^2 \big) \circ \phi^{-(k+1)}
\text{ a.e.\ $[\mu]$}, \quad k \in \{0,\ldots, n-1\},
   \end{align*}
where $\gamma_k := \cen{k} (|\widehat w_k|^2)\circ
\phi^{-k}$ for $k\in \{0,\ldots,n\}$.
   \end{lem}
   \begin{proof} It follows from \eqref{mgla} that
the conditional expectation $\cen{k}$ exists for every
$k\in \{0,\ldots,n\}$. In turn, the inclusion
$\phi^{-(k+1)}(\ascr) \subseteq \phi^{-k}(\ascr)$,
which is valid for every $k\in \zbb_+$, yields
   \begin{align*}
\gamma_{k+1} &= \cen{k+1}\big(|\widehat w_k|^2 \cdot
|w\circ \phi^k|^2\big) \circ \phi^{-(k+1)}
   \\
&\hspace{-2ex}\overset{\eqref{Dag9}}=
\cen{k+1}\Big(\cen{k}\big(|\widehat w_k|^2 \cdot
|w\circ \phi^k|^2\big)\Big) \circ \phi^{-(k+1)}
   \\
&\hspace{-2ex}\overset{\eqref{B6}}=
\cen{k+1}\Big(\cen{k}(|\widehat w_k|^2\big) \cdot
|w\circ \phi^k|^2\Big) \circ \phi^{-(k+1)}
      \\
&\hspace{-.8ex}\overset{\eqref{fifi}}=
\cen{k+1}\big(\gamma_k \circ \phi^k \cdot |w \circ
\phi^k|^2\big) \circ \phi^{-(k+1)} \textit{ a.e.\
$[\mu]$}, \quad k \in \{0,\ldots, n-1\}.
   \end{align*}
This completes the proof.
   \end{proof}
Below, we express the Radon-Nikodym derivative
$\mathsf{h}_{\phi}$, that corresponds to $C_{\phi}$,
in terms of the Radon-Nikodym derivative $\hfw$ and
the conditional expectation $\efw(\cdot)$, that
correspond to $\cfw$. In contrast to the previous
case, now the weakest possible assumption that the
conditional expectation $\efw(\cdot)$ exists is not
sufficient for this purpose. In fact, it is not even
sufficient for $C_{\phi}$ to be well-defined, and,
what is worse, this can happen even if $\cfw$ is an
isometry (see Example \ref{cohyp-count2}; see also
Examples \ref{mwc6} and \ref{ciach}). However, even if
$C_{\phi}$ is well-defined, $C_{\phi}$ may have
trivial domain (see Example \ref{ciach3}).
Nevertheless, if the weight function $w$ satisfies
some constraints, then $C_{\phi}$ may be densely
defined (see the assertion (ii) of Proposition
\ref{mwc3}).
   \begin{pro} \label{caroll2a}
Suppose \eqref{stand1} holds, $w \neq 0$ a.e.\ $[\mu]$
and $\cfw$ is densely defined. Then $C_{\phi}$ is
well-defined and
   \begin{align} \label{poz2}
\mathsf{h}_{\phi} = \hfw \cdot \efw
\bigg(\frac{1}{|w|^2}\bigg)\circ \phi^{-1} \text{
a.e.\ $[\mu]$.}
   \end{align}
   \end{pro}
   \begin{proof} Arguing as in the proof of Proposition
\ref{caroll}, we see that
   \begin{align*}
\mu \circ \phi^{-1} (\varDelta) & = \int_X
\chi_{\varDelta} \circ \phi \cdot \frac{1}{|w|^2} \D
\mu_{w}
   \\
& = \int_X \chi_{\varDelta} \circ \phi \cdot \efw
\bigg(\frac{1}{|w|^2}\bigg) \circ \phi^{-1}\bigg)
\circ \phi \D \mu_{w}
   \\
& = \int_{\varDelta} \hfw \cdot
\efw\bigg(\frac{1}{|w|^2}\bigg) \circ \phi^{-1}\D \mu,
\quad \varDelta \in \ascr.
   \end{align*}
This completes the proof.
   \end{proof}
Applying Proposition \ref{caroll2a} to
$(\phi^n,\widehat w_n)$ in place of $(\phi,w)$ and
using Lemma \ref{potegi-p}, we get the following.
   \begin{cor} \label{caroll3}
Suppose \eqref{stand1} holds, $w \neq 0$ a.e.\ $[\mu]$
and $\cfw^n$ is densely defined for every $n\in \nbb$.
Then $\widehat w_n \neq 0$ a.e.\ $[\mu]$ and
$C_{\phi^n}$ is well-defined for every $n \in \zbb_+$,
and
   \begin{align} \label{poz2-2}
\mathsf{h}_{\phi^n} = \hfwn{n} \cdot
\mathsf{E}_{\phi^n, \widehat w_n}
\bigg(\frac{1}{|\widehat w_n|^2}\bigg)\circ \phi^{-n}
\text{ a.e.\ $[\mu]$,} \quad n\in \zbb_+.
   \end{align}
   \end{cor}
   Now, we provide formulas that connect the
conditional expectations $\mathsf{E}_{\phi}(\cdot)$
and $\efw(\cdot)$ calculated at $|w|^2$ and
$\frac{1}{|w|^2}$, respectively. This is done under
the weakest possible assumption that the conditional
expectations $\mathsf{E}_{\phi}(\cdot)$ and
$\efw(\cdot)$ exist and the measures $\mu$ and $\mu_w$
are mutually absolutely continuous.
   \begin{pro} \label{caroll4}
Suppose \eqref{stand1} holds, $w \neq 0$ a.e.\ $[\mu]$
and the operators $C_{\phi}$ and $\cfw$ are densely
defined. Then the following conditions hold{\em :}
   \begin{enumerate}
   \item[(i)] $\{\hfw > 0\} = \{\mathsf{h}_{\phi} > 0\}$
a.e.\ $[\mu]$,
   \item[(ii)] $\mathsf{E}_{\phi} (|w|^2)\circ \phi^{-1} \cdot
\efw \big(\frac{1}{|w|^2}\big)\circ \phi^{-1} =
\chi_{\{\mathsf{h}_{\phi} > 0\}}$ a.e.\ $[\mu]$,
   \item[(iii)] $\mathsf{E}_{\phi} (|w|^2) \cdot
\efw \big(\frac{1}{|w|^2}\big) = 1$ a.e.\ $[\mu]$,
   \item[(iv)] $\jd{C_{\phi}}=\jd{\cfw}$.
   \end{enumerate}
   \end{pro}
   \begin{proof}
It follows from \eqref{poz1} and \eqref{poz2} that
$\{\hfw > 0\} \subseteq \{\mathsf{h}_{\phi}
> 0\}$ a.e.\ $[\mu]$ and $\{\mathsf{h}_{\phi}
> 0\} \subseteq \{\hfw > 0\}$ a.e.\ $[\mu]$, which
yields (i). The assertion (ii) is a direct consequence
of (i), the definitions of $\mathsf{E}_{\phi}
(\cdot)\circ \phi^{-1}$ and $\efw(\cdot)\circ
\phi^{-1}$ and the equalities \eqref{poz1} and
\eqref{poz2} (use also Proposition \ref{lemS2}). The
assertion (iii) can be deduced from (ii) by applying
Lemma \ref{nuklear} and \eqref{fifi}. Finally, the
assertion (iv) follows from (i) and Lemma \ref{jadro}.
   \end{proof}
   \begin{cor} 
Suppose \eqref{stand1} holds, $w \neq 0$ a.e.\ $[\mu]$
and the operators $C_{\phi}^n$ and $\cfw^n$ are
densely defined for every $n\in \nbb$. Then the
following conditions hold{\em :}
   \begin{enumerate}
   \item[(i)] $\widehat w_n \neq 0$ a.e.\ $[\mu]$ for all $n\in \zbb_+$,
   \item[(ii)] $\{\hfwn{n} > 0\} = \{\mathsf{h}_{\phi^n} > 0\}$
a.e.\ $[\mu]$ for all $n\in \zbb_+$,
   \item[(iii)] $\cen{n} (|\widehat w_n|^2)\circ \phi^{-n} \cdot
\mathsf{E}_{\phi^n, \widehat w_n}
\big(\frac{1}{|\widehat w_n|^2}\big)\circ \phi^{-n} =
\chi_{\{\mathsf{h}_{\phi^n} > 0\}}$ a.e.\ $[\mu]$ for
all $n\in \zbb_+$,
   \item[(iv)] $\cen{n} (|\widehat w_n|^2) \cdot
\mathsf{E}_{\phi^n, \widehat w_n}
\big(\frac{1}{|\widehat w_n|^2}\big) = 1$ a.e.\
$[\mu]$ for all $n\in \zbb_+$.
   \end{enumerate}
Moreover, if the operators $C_{\phi}^n$ and $\cfw^n$
are closed for all integers $n\Ge 2$, then
   \begin{enumerate}
   \item[(v)] $\jd{C_{\phi}^n}=\jd{C_{\phi, w}^n}$
for all $n\in \zbb_+$.
   \end{enumerate}
   \end{cor}
   \begin{proof}
Applying Proposition \ref{caroll4} to $(\phi^n,
\widehat w_n)$ in place of $(\phi, w)$ and using
Proposition \ref{potegi-p} we obtain the conditions
(i)-(iv). Employing Proposition \ref{caroll4}(iv) and
using Lemma \ref{13-12-13}(iii) yields (v).
   \end{proof}
Let us make some comments regarding Proposition
\ref{caroll4} and its proof.
   \begin{rem}
First, note that the assertion (iii) can be proved
more directly. Namely, applying \eqref{B3} twice, we
get
   \begin{align*}
\int_{\phi^{-1}(\varDelta)} 1 \D\mu &=
\int_{\phi^{-1}(\varDelta)} \frac{1}{|w|^2}\D\mu_w =
\int_{\phi^{-1}(\varDelta)} |w|^2
\efw\Big(\frac{1}{|w|^2}\Big) \D\mu
   \\
&= \int_{\phi^{-1}(\varDelta)}
\mathsf{E}_{\phi}(|w|^2) \efw\Big(\frac{1}{|w|^2}\Big)
\D\mu, \quad \varDelta \in \ascr,
   \end{align*}
which together with \cite[Theorem 1.6.11]{Ash} and the
fact that $\mu|_{\phi^{-1}(\ascr)}$ is $\sigma$-finite
proves our claim. Second, the assertions (ii) and
(iii) are logically equivalent. Indeed, in view of the
proof of Proposition \ref{caroll4}, it suffices to
show that (iii)$\Rightarrow$(ii). However, this can be
deduced from (i), \eqref{fifi}, Lemma \ref{nuklear}
and the definitions of $\mathsf{E}_{\phi} (\cdot)\circ
\phi^{-1}$ and $\efw(\cdot)\circ \phi^{-1}$.
   \end{rem}
   \section{\label{Sec7.3}Application to subnormality}
Using results of Section \ref{13IX}, we give criteria
for subnormality of some classes of weighted
composition operators. This is the right place to
refer the reader to Lemma \ref{recfor} for the
recurrence relation for the sequence $\{\cen{n}
(|\widehat w_n|^2)\circ \phi^{-n}\}_{n=0}^{\infty}$
which is frequently used in this section.
   \begin{lem} \label{subnci}
Suppose \eqref{stand1} holds, $\cfw\in \ogr{L^2(\mu)}$
and $C_{\phi}^n$ is densely defined for every $n\in
\nbb$. Assume also that $\{\cen{n} (|\widehat
w_n|^2)\circ \phi^{-n}(x)\}_{n=0}^{\infty}$ is a
Stieltjes moment sequence for $\mu$-a.e.\ $x\in X$ and
$C_{\phi}$ is subnormal. Then $\cfw$ is~ subnormal.
   \end{lem}
   \begin{proof}
Note that, by \eqref{mgla}, $\cen{n}$ exists for all
$n\in \nbb$. It follows from \cite[Corollary
10.3]{b-j-j-sC} that
$\{\mathsf{h}_{\phi^n}(x)\}_{n=0}^{\infty}$ is a
Stieltjes moment sequence for $\mu$-a.e.\ $x\in X$.
Since the pointwise product of two Stieltjes moment
sequences is a Stieltjes moment sequence (cf.\
\cite[Lemma 2.1]{B-D}), we deduce from \eqref{poz1-1}
that $\{\hfwn{n}(x)\}_{n=0}^{\infty}$ is a Stieltjes
moment sequence for $\mu$-a.e.\ $x\in X$. Applying
Theorem \ref{gsms+} completes the proof.
   \end{proof}
For the reader's convenience, we state the
Berg-Dur\'{a}n theorem which is an important
ingredient of the proof of Theorem \ref{be-du} (see
\cite[p.\ 252]{B-D}; see also \cite[Theorem 1.1]{Berg}
for the version presented below).
   \begin{thm}
\label{Ber-Dur} If $\{a_n\}_{n=0}^{\infty}$ is a
non-degenerate Hausdorff moment sequence $($i.e., $a_n
\neq 0$ for all $n\in \zbb_+$$)$, then the sequence
$\{s_n\}_{n=0}^{\infty}$ defined by
   \begin{align*}
s_n =
   \begin{cases}
1 & \text{ if } n=0,
   \\
\displaystyle{\frac{1}{a_1 \cdot \ldots \cdot a_n}} &
\text{ if } n\Ge 1,
   \end{cases}
\quad n \in \zbb_+,
   \end{align*}
is a Stieltjes moment sequence.
   \end{thm}
   Now we are in a position to establish a criterion
for deriving subnormality of a bounded weighted
composition operator from subnormality of a
composition operator with the same symbol.
   \begin{thm} \label{be-du}
Suppose \eqref{stand1} holds, $\phi$ is a bijection
whose inverse $\phi^{-1}$ is $\ascr$-measurable and
$\{1/|w(\phi^{-n}(x))|^{2}\}_{n=0}^{\infty}$ is a
Hausdorff moment sequence for $\mu$-a.e.\ $x\in X$.
Assume also that $\cfw\in \ogr{L^2(\mu)}$,
$C_{\phi}^n$ is densely defined for every $n\in \nbb$
and $C_{\phi}$ is subnormal. Then $\cfw$ is subnormal.
   \end{thm}
   \begin{proof}
In view of Lemma \ref{subnci}, it is enough to show
that $\{\gamma_n(x)\}_{n=0}^{\infty}$ is a Stieltjes
moment sequence for $\mu$-a.e.\ $x\in X$, where
$\gamma_n:=\cen{n} (|\widehat w_n|^2)\circ \phi^{-n}$
for $n\in \zbb_+$. Knowing that the transformation
$\phi$ is bijective and $\ascr$-bimeasurable, we see
that $\cen{n}(f)=f$ a.e.\ $[\mu]$ for all
$\ascr$-measurable functions $f\colon X \to \rbop$ and
all $n\in \zbb_+$. Since $C_{\phi}$ is subnormal, then
by \cite[Corollary 10.3]{b-j-j-sC} (see also Theorem
\ref{gsms}) we have $C_{\phi}^n=C_{\phi^n}$ for all
$n\in \zbb_+$. Thus for every $n\in \nbb$, the
operator $C_{\phi^n}$ is subnormal as a densely
defined $n$th power of a subnormal operator. By
\cite[Section 6]{b-j-j-sC}, $\mathsf{h}_{\phi^{n}} >
0$ a.e.\ $[\mu]$ for every $n\in \zbb_+$. As a
consequence, we see that
   \begin{align} \label{dozac}
\text{$\cen{n} (f) \circ \phi^{-n} = f\circ \phi^{-n}$
a.e.\ $[\mu]$}
   \end{align}
for all $\ascr$-measurable functions $f\colon X \to
\rbop$ and all $n\in \zbb_+$, where $f\circ \phi^{-n}$
is the usual composition of functions. Hence, for
every $n \in \zbb_+$ and for $\mu$-a.e.\ $x \in X$,
   \begin{align*}
\gamma_n(x) =
   \begin{cases}
1 & \text{ if } n=0,
   \\
\displaystyle{\frac{1}{a_1(x) \cdot \ldots \cdot
a_n(x)}} & \text{ if } n\Ge 1,
   \end{cases}
   \end{align*}
where for every $n\in \zbb_+$, $a_n\colon X \to
(0,\infty)$ is a function such that
   \begin{align*}
a_n(x) = \frac{1}{|w(\phi^{-n}(x))|^2} \quad
\textit{for $\mu$-a.e.\ $x\in X$};
   \end{align*}
such functions exist because, by our assumption,
$w(\phi^{-n}(x)) \neq 0$ a.e.\ $[\mu]$ for every $n\in
\zbb_+$. Now applying Theorem \ref{Ber-Dur} completes
the proof.
   \end{proof}
   It is worth noting that under the assumptions of
Theorem \ref{be-du} the composition operator
$C_{\phi^{-1}}$ is well-defined. This observation is a
particular case of a more general result stated below
(because subnormal composition operators are
automatically injective; see \cite[Section
6]{b-j-j-sC}). On the other hand, as can be deduced
from Proposition \ref{portor}, the equality
\eqref{dozac} may hold under much weaker assumptions
than those of Theorem \ref{be-du}.
   \begin{pro}\label{portor}
Suppose $(X,\ascr,\mu)$ is a $\sigma$-finite measure
space and $\phi$ is a bijective and
$\ascr$-bimeasurable transformation of $X$ such that
$C_{\phi}$ is well-defined and injective. Then
$C_{\phi^{-1}}$ is well-defined and injective.
Moreover, if $C_{\phi}$ is densely defined, then for
all $\ascr$-measurable functions $f\colon X \to
\rbop$,
   \begin{align*}
\text{$\mathsf{E}_{\phi}(f) \circ \phi^{-1} = f\circ
\phi^{-1}$ a.e.\ $[\mu]$},
   \end{align*}
where $f\circ \phi^{-1}$ is the usual composition of
functions.
   \end{pro}
   \begin{proof}
It follows from \cite[Proposition 6.2]{b-j-j-sC} that
$\mathsf{h}_{\phi} > 0$ a.e.\ $[\mu]$. This
immediately implies the ``moreover'' part. Using
\eqref{l1} with $w=\boldsymbol{1}$, we verify that the
measures $\mu\circ \phi^{-1}$ and $\mu$ are mutually
absolutely continuous. As a consequence, $\mu\circ
(\phi^{-1})^{-1} \ll \mu$, so by Proposition
\ref{wco1} (again with $w=\boldsymbol{1}$), the
composition operator $C_{\phi^{-1}}$ is well-defined.
A direct application of Lemma \ref{nuklear} (as before
with $w=\boldsymbol{1}$) shows that $C_{\phi^{-1}}$ is
injective.
   \end{proof}
Below, we give an example which shows that Proposition
\ref{portor} is no longer true if the assumption on
injectivity of $C_{\phi}$ is removed. What is more,
the equality \eqref{dozac} may not hold even if $\phi$
is bijective and $\ascr$-bimeasurable, and $C_{\phi}
\in \ogr{L^2(\mu)}$.
   \begin{exa} \label{portor-1}
Let $X=\zbb$ and $\ascr=2^X$. Then there exists a
unique (necessarily $\sigma$-finite) measure
$\mu\colon \ascr \to \rbop$ such that
   \begin{align*}
\mu(n) =
   \begin{cases}
1 & \text{ if } n \Ge 0,
   \\
0 & \text{ if } n < 0,
   \end{cases}
\quad n\in X.
   \end{align*}
Let $\phi$ be the transformation of $X$ given by
$\phi(n)=n+1$ for $n\in X$. Clearly $\phi$ is
bijective (and $\ascr$-bimeasurable), $C_{\phi}$ is
well-defined and $C_{\phi^{-1}}$ is not well-defined.
Thus by Proposition \ref{portor} (or by a direct
verification) $C_{\phi}$ is not injective. Observe
that $\mathsf{h}_{\phi^n} = \chi_{n + \zbb_+}$ a.e.\
$[\mu]$ for all $n\in \nbb$. Hence, by Proposition
\ref{lemS1}, we see that $C_{\phi} \in
\ogr{L^2(\mu)}$. Plainly $\cen{n}(f)=f$ a.e.\ $[\mu]$
for all functions $f\colon X \to \rbop$ and all $n\in
\zbb_+$. However, for any $n\in \nbb$, there exists a
function $f\colon X \to \rbop$ for which the condition
\eqref{dozac} fails to hold (e.g., the function
$f=\boldsymbol{1}$ does the job perfectly, cf.\
\eqref{jedynka}).
   \end{exa}
The following result is a ``dual'' version of Lemma
\ref{subnci} in which the roles of operators
$C_{\phi}$ and $\cfw$ are interchanged.
   \begin{lem} \label{subnci2}
Suppose \eqref{stand1} holds, $w \neq 0$ a.e.\
$[\mu]$, $C_{\phi}\in \ogr{L^2(\mu)}$ and
$C_{\phi,w}^n$ is densely defined for every $n\in
\nbb$. Assume also that $\cfw$ is subnormal and
$\{\mathsf{E}_{\phi^n, \widehat w_n}
\big(\frac{1}{|\widehat w_n|^2}\big)\circ \phi^{-n}(x)
\}_{n=0}^{\infty}$ is a Stieltjes moment sequence for
$\mu$-a.e.\ $x\in X$. Then $C_{\phi}$ is subnormal.
   \end{lem}
   \begin{proof}
Note that $\mathsf{E}_{\phi^n, \widehat w_n}$ exists
for all $n\in \nbb$ (see \eqref{mgla}). Since, by
Theorem \ref{Mittag} and \cite[Proposition
3.2.1]{b-j-j-sA}, $\cfw$ generates Stieltjes moment
sequences, we infer from Theorem \ref{gsms} that
$\{\hfwn{n}(x)\}_{n=0}^\infty$ is a Stieltjes moment
sequence for $\mu$-a.e.\ $x \in X$. Then following the
proof of Lemma \ref{subnci} with \eqref{poz2-2} in
place of \eqref{poz1-1} completes the proof.
   \end{proof}
Now arguing as in the proof of Theorem \ref{be-du}
one can show that the following ``dual'' version of
this theorem holds (use: Lemma \ref{subnci2} in
place of Lemma \ref{subnci}; Theorems \ref{Mittag}
and \ref{gsms} and \cite[Proposition
3.2.1]{b-j-j-sA} in place of \cite[Corollary
10.3]{b-j-j-sC}; Corollary \ref{hipinj} in place of
   \cite[Section 6]{b-j-j-sC}; and Corollary
   \ref{caroll3}).
   \begin{thm} \label{be-du2}
Suppose \eqref{stand1} holds, $\phi$ is a bijection
whose inverse $\phi^{-1}$ is $\ascr$-measurable and
$\{|w(\phi^{-n}(x))|^{2}\}_{n=0}^{\infty}$ is a
non-degenerate Hausdorff moment sequence for
$\mu$-a.e.\ $x\in X$. Assume also that $C_{\phi}\in
\ogr{L^2(\mu)}$, $C_{\phi,w}^n$ is densely defined for
every $n\in \nbb$ and $\cfw$ is subnormal. Then
$C_{\phi}$ is subnormal.
   \end{thm}
   \section{\label{Sec7.4}Subnormality in the matrix symbol case}
This section deals with
the question of subnormality of weighted composition
operators with matrix symbols. First, we discuss the
case of composition operators. We refer the reader to
\cite{sto} for more information on this class of
operators (see also \cite{ml}).

Denote by $\hscr$ the set of all entire functions
$\eta$ on $\cbb$ of the form
   \begin{align*}
\eta(z) = \sum_{n=0}^\infty a_n z^n, \quad z \in \cbb,
   \end{align*}
where $\{a_n\}_{n=0}^{\infty}$ is a sequence of
nonnegative real numbers such that $a_k > 0$ for some
$k \Ge 1$. Clearly, if $\eta \in \hscr$, then the
function $\eta|_{\rbb_+}$ is nonnegative and strictly
increasing, and $\lim_{\rbb_+ \ni t \to \infty}
\eta(t) = \infty$. Fix a positive integer $\kappa$.
Let $\eta\in \hscr$ and $\|\cdot\|$ be a norm on
$\rbb^\kappa$. Define the $\sigma$-finite Borel
measure $\mu$ on $\rbb^\kappa$ by
   \begin{align} \label{xidef}
\mu(\varDelta) = \int_{\varDelta} \eta(\|x\|^2) \D x,
\quad \varDelta \in \borel{\rbb^\kappa},
   \end{align}
where $\D x$ indicates integration with respect to the
$\kappa$-dimensional Lebesgue measure. Plainly the
measure $\mu$ and the $\kappa$-dimensional Lebesgue
measure are mutually absolutely continuous. Given a
linear transformation $A$ of $\rbb^\kappa$, one can
prove that the composition operator $C_{A}$ in
$L^2(\mu)$ with the symbol $A$ is well-defined if and
only if $A$ is invertible (see, e.g., the proof of
\cite[Theorem 13.1]{2xSt3}). If this is the case, then
the composition operator $C_{A^n}$ is well-defined for
every $n\in \nbb$ (cf.\ Lemma \ref{lemS11}) and
   \begin{align} \label{pochodna}
\mathsf{h}_{A^n} (x) = \frac{1}{|\det A|^n}
\frac{\eta(\|A^{-n}x\|^2)}{\eta(\|x\|^2)} \quad
\text{for $\mu$-a.e.\ $x\in \rbb^\kappa$}.
   \end{align}
Observe that the rational function appearing on the
right-hand side of the equality in \eqref{pochodna} is
continuous on $\rbb^\kappa \setminus \{0\}$. Hence,
there is only one continuous representative of
$\mathsf{h}_{A^n}$ on $\rbb^\kappa \setminus \{0\}$.
Combining \eqref{pochodna} with Lemmas
\ref{lemS11p}(v) and \ref{jadro} (both applied to
$w=\boldsymbol{1}$), we conclude that if $C_A$ is
well-defined, then $C_A^n$ is densely defined and
injective for every $n\in \nbb$. It follows from
\cite[Proposition 2.2]{sto} that $C_A \in
\ogr{L^2(\mu)}$ if and only if either $\eta$ is a
polynomial, or $\eta$ is not a polynomial and
$\|A^{-1}\|\Le 1$.

Now let $w\colon \rbb^\kappa \to \cbb$ be a Borel
function and $A$ be an invertible linear
transformation of $\rbb^\kappa$. Since $C_A$ is
well-defined, so is $C_{A,w}$, the weighted
composition operator in $L^2(\mu)$ with the symbol $A$
and the weight $w$ (cf.\ Proposition \ref{mwc1}). The
reader should be aware of the fact that the notation
``$C_{A,r}$'' that appears in \cite{sto} has nothing
to do with weighted composition operators $C_{A,w}$;
it simply denotes the composition operator in
$L^2(r(x)\D x)$ with the symbol $A$. By
\eqref{pochodna} and Propositions \ref{caroll} and
\ref{portor}, we have
   \begin{align} \label{haa1}
\mathsf{h}_{A,w}(x) = \frac{1}{|\det A|}
\frac{\eta(\|A^{-1}x\|^2)}{\eta(\|x\|^2)} \cdot
|w(A^{-1}x)|^2 \quad \text{for $\mu$-a.e.\ $x\in
\rbb^\kappa$}.
   \end{align}

In the next result, which is the main application of
Theorem \ref{be-du}, we provide a wide class of
subnormal weighted composition operators with matrix
symbols.
   \begin{thm} \label{wcomc}
Suppose $\eta, \xi \in \hscr$, $\|\cdot\|$ is a norm
on $\rbb^\kappa$ induced by an inner product
\mbox{$\langle \cdot, \mbox{-}\rangle$}, $\mu$ is a
Borel measure on $\rbb^\kappa$ defined by
\eqref{xidef}, $A$ is an invertible linear
transformation of $\rbb^\kappa$ and $w\colon
\rbb^\kappa \to \cbb$ is a Borel function such that
$\|A^{-1}\| \Le 1$ and
   \begin{align} \label{klima1}
|w(x)|^2 = \frac{1}{\xi(\|x\|^2)}, \quad x\in
\rbb^\kappa \setminus \{0\}.
   \end{align}
Then the following two statements are valid{\em :}
   \begin{enumerate}
   \item[(i)] $C_{A,w} \in \ogr{L^2(\mu)}$ if and only if
$\xi(0) > 0$,
   \item[(ii)] if $C_{A,w} \in \ogr{L^2(\mu)}$ and $A$
is a normal operator on \mbox{$(\rbb^\kappa, \langle
\cdot, \mbox{-}\rangle)$}, then $C_{A,w}$ is
subnormal.
   \end{enumerate}
   \end{thm}
   \begin{proof}
(i) It follows from \eqref{haa1} and \eqref{klima1}
that
   \begin{align} \label{haa2}
\mathsf{h}_{A,w}(x) = \frac{1}{|\det A|}
\frac{\eta(\|A^{-1}x\|^2)}{\eta(\|x\|^2)} \cdot
\frac{1}{\xi(\|A^{-1} x\|^2)} \quad \text{for
$\mu$-a.e.\ $x\in \rbb^\kappa$}.
   \end{align}
Suppose $\xi(0) > 0$. Knowing that $\eta|_{\rbb_+}$ is
increasing and $\|A^{-1}\| \Le 1$, we have
   \begin{align} \label{klima3}
\sup_{x\in \rbb^{\kappa} \setminus \{0\}}
\frac{\eta(\|A^{-1}x\|^2)}{\eta(\|x\|^2)} \Le 1.
   \end{align}
Since $\inf_{t\in \rbb_+} \xi(t) > 0$, we infer from
\eqref{haa2} and Proposition \ref{lemS1}(v) that
$C_{A,w} \in \ogr{L^2(\mu)}$. To prove the converse,
assume that $C_{A,w} \in \ogr{L^2(\mu)}$. Suppose
contrary to our claim that $\xi(0)=0$. Denote by $m$
and $n$ the multiplicities of zero of $\eta$ and $\xi$
at $0$, respectively. By our assumption $n\Ge 1$ and
there exist $\tilde \eta, \tilde \xi \in \hscr \cup
\{\boldsymbol{1}\}$ such that $\eta(z) = z^m \tilde
\eta(z)$ and $\xi(z) = z^n \tilde \xi(z)$ for all
$z\in \cbb$, $\tilde \eta(0) > 0$ and $\tilde \xi(0)\
> 0$.  Then  we get
   \begin{align*}
\sup_{x\neq 0} &
\frac{\eta(\|A^{-1}x\|^2)}{\eta(\|x\|^2)} \cdot
\frac{1}{\xi(\|A^{-1} x\|^2)}
   \\
& = \sup_{x\neq 0} \sup_{t>0}
\frac{\eta(\|A^{-1}(tx)\|^2)}{\eta(\|tx\|^2)} \cdot
\frac{1}{\xi(\|A^{-1} (tx)\|^2)}
   \\
& = \sup_{x\neq 0} \sup_{t>0} \frac{1}{t^{2n}} \cdot
\frac{\|A^{-1}(x)\|^{2m}}{\|x\|^{2m}} \cdot
\frac{\tilde\eta(t^2\|A^{-1}x\|^2)}{\tilde\eta(t^2\|x\|^2)}
\cdot \frac{1}{\|A^{-1} x\|^{2n} \tilde
\xi(t^2\|A^{-1} x\|^2)}
   \\
& \hspace{-.3ex}\overset{(\dag)}= \infty.
   \end{align*}
(To obtain $(\dag)$ consider $t\to 0+$.) This combined
with \eqref{haa2} implies that $\mathsf{h}_{A,w}
\notin L^{\infty}(\mu)$, which by Proposition
\ref{lemS1}(v) yields $C_{A,w} \notin \ogr{L^2(\mu)}$,
a contradiction.

   (ii) It follows from \cite[Proposition 2.2]{sto}
(or directly from \eqref{pochodna}, \eqref{klima3} and
Proposition \ref{lemS1}(v) applied to
$w=\boldsymbol{1}$) that $C_A \in \ogr{L^2(\mu)}$.
Hence, by \cite[Theorem 2.5]{sto}, the composition
operator $C_A$ is subnormal. According to
\eqref{klima1}, we have
   \begin{align} \label{klima2}
\frac{1}{|w(A^{-n}x)|^2} = \xi(\|A^{-n}x\|^2), \quad x
\in \rbb^\kappa \setminus \{0\}.
   \end{align}
In view of \cite[Theorem 6.3]{sto-a}, the sequence
$\{\xi(\|A^{-n}x\|^2)\}_{n=0}^{\infty}$ is positive
definite for every $x\in \rbb^{\kappa}$. Substituting
$A^{-1}x$ in place of $x$, we see that the sequence
$\{\xi(\|A^{-(n+1)}x\|^2)\}_{n=0}^{\infty}$ is
positive definite for every $x\in \rbb^{\kappa}$.
Since $\|A^{-1}\| \Le 1$ and $\xi|_{\rbb_+}$ is
increasing, we deduce that the sequence
$\{\xi(\|A^{-n}x\|^2)\}_{n=0}^{\infty}$ is bounded for
every $x\in \rbb^{\kappa}$. Finally, by \eqref{Stiogr}
and \eqref{klima2}, the sequence
$\{1/|w(A^{-n}x)|^{2}\}_{n=0}^{\infty}$ is a Hausdorff
moment sequence for every $x\in \rbb^{\kappa}
\setminus \{0\}$. Applying Theorem \ref{be-du}
completes the proof.
   \end{proof}
Now we make a comment on the proof of Theorem
\ref{wcomc}.
   \begin{rem} \label{klima4}
Assume that $A$ is an invertible normal operator on
\mbox{$(\rbb^\kappa, \langle \cdot, \mbox{-}\rangle)$}
and \eqref{klima1} holds for some $\xi \in \hscr$.
Then $\|A^{-1}\|\Le 1$ if and only if the sequence
$\{1/|w(A^{-n}x)|^{2}\}_{n=0}^{\infty}$ is a Hausdorff
moment sequence for $\mu$-a.e.\ $x \in \rbb^{\kappa}$,
where $\mu$ is as in \eqref{xidef} (note that, in view
of the proof of \cite[Theorem 5.1]{sto-a} and
\eqref{klima2},
$\{1/|w(A^{-n}x)|^{2}\}_{n=0}^{\infty}$ is a Stieltjes
moment sequence for every $x \in
\rbb^{\kappa}\setminus \{0\}$). For the justification
of the ``only if'' part see the proof of the statement
(ii) of Theorem \ref{wcomc}. The ``if'' part can be
proved as follows. Since each Hausdorff moment
sequence is bounded, we infer from \eqref{klima2} that
the sequence $\{\xi(\|A^{-n}x\|^2)\}_{n=0}^{\infty}$
is bounded for $\mu$-a.e.\ $x \in \rbb^{\kappa}$.
Knowing that $\lim_{\rbb_+ \ni t \to \infty} \xi(t) =
\infty$, we deduce that the sequence
$\{\|A^{-n}x\|\}_{n=0}^{\infty}$ is bounded for
$\mu$-a.e.\ $x \in \rbb^{\kappa}$. Combined with
\cite[Lemma 1.4(v)]{sto-05}, this implies that the set
$\mathscr M:= \{x\in \rbb^{\kappa}\colon
\lim_{n\to\infty} \|A^{-n}\|^{1/n} \Le 1\}$ is a
vector subspace of $\rbb^{\kappa}$ which is a set of
full $\mu$-measure. Since $\mu$ and the
$\kappa$-dimensional Lebesgue measure are mutually
absolutely continuous, we deduce that $\mathscr M =
\rbb^{\kappa}$. Hence, by the normality of $A^{-1}$
and \cite[Lemma 1.4(i)]{sto-05}, we conclude that~
\mbox{$\|A^{-1}\|\Le 1$}.

It is worth observing that if $A$ is not normal, then
the assumption that the sequence
$\{\xi(\|A^{-n}x\|^2)\}_{n=0}^{\infty}$ is bounded for
$\mu$-a.e.\ $x \in \rbb^{\kappa}$ may not imply that
$\|A^{-1}\|\Le 1$. Indeed, fix $t \in (0,1)$ and take
a nilpotent operator $N \in \ogr{\rbb^{\kappa}}$ such
that $\|N\| > 1 + t$. Then the operator $t + N$ is
invertible and the spectral radius $r(t+N)$ of $t+N$
equals $t$ (because the spectrum of $t + N$ equals
$\{t\}$). Set $A=(t + N)^{-1}$. Then $\|A^{-1}\| \Ge
\|N\|-t > 1$ and $r(A^{-1})=t<1$, so the sequence
$\{\|A^{-n}x\|\}_{n=0}^{\infty}$ is bounded for every
$x \in \rbb^{\kappa}$, or equivalently the sequence
$\{\xi(\|A^{-n}x\|^2)\}_{n=0}^{\infty}$ is bounded for
every $x \in \rbb^{\kappa}$.
   \end{rem}
   \section{\label{Sec7.5}Examples}
As shown in Example \ref{cohyp-count2}, it may happen
that a weighted composition operator $\cfw$ is an
isometry while the corresponding composition operator
$C_{\phi}$ is not even well-defined. In this example
the measure $\mu$ is infinite and all the measures
$\mu|_{\phi^{-n}(\ascr)}$, $n\in \nbb$, are
$\sigma$-finite. Here we provide two more examples of
this kind. The first one, Example \ref{mwc6}, is built
over a finite complete measure space (in this
situation, all the measures $\mu|_{\phi^{-n}(\ascr)}$,
$n\in \nbb$, are automatically $\sigma$-finite). The
second one, Example \ref{ciach}, which is built over
an infinite $\sigma$-finite complete measure space, is
such that the measure $\mu|_{\phi^{-n}(\ascr)}$ is not
$\sigma$-finite for every $n\in \nbb$.

The next three examples are based on \cite[Example
3.1]{b-j-j-sC}.
   \begin{exa} \label{mwc6}
Set $X=\{0\}\cup\{1\}\cup [2,3]$ and
   \begin{align*}
\ascr=\{\varDelta \in 2^{X}\colon \varDelta \text{ is
a Lebesgue measurable set in $\rbb$}\}.
   \end{align*}
Clearly, $\ascr$ is a $\sigma$-algebra in $X$. Define
the finite complete measure $\mu$ on $\ascr$ by
   \begin{align*}
\mu(\varDelta)= \delta_{0}(\varDelta) +
\delta_{1}(\varDelta) + m(\varDelta\cap [2,3]), \quad
\varDelta \in \ascr,
   \end{align*}
where $m$ stands for the Lebesgue measure on $\rbb$.
Let $\phi$ be the $\ascr$-measurable transformation of
$X$ given by $\phi(0)=2$, $\phi(1)=1$ and $\phi(x)=0$
for $x \in [2,3]$. Since $\phi^{2k-1}=\phi$ and
$\phi^{2k}=\phi^2$ for any $k\in \nbb$, we easily see
that for every $n\in \nbb$, the sigma algebra
$\phi^{-n}(\ascr)$ is relatively $\mu$-complete, or
equivalently, the measure $\mu|_{\phi^{-n}(\ascr)}$ is
complete (because $\mu$ is complete). Noting that
$\mu(2)=0$ and $(\mu\circ \phi^{-1})(\{2\})=1$, we
infer from Proposition \ref{wco1} that $C_{\phi}$ is
not well-defined. Set $w=\chi_{X\setminus \{0\}}$.
Since
   \begin{align*}
\mu_w \circ \phi^{-1} (\varDelta) =
   \begin{cases}
1 & \text{if } \varDelta = \{0\},
   \\
1 & \text{if } \varDelta = \{1\},
   \\
0 & \text{if } \varDelta \in \ascr \cap [2,3],
   \end{cases}
   \end{align*}
we deduce that $\mu_w \circ \phi^{-1} \ll \mu$ and
$\hfw = \chi_{\{0,1\}}$ a.e.\ $[\mu]$. Hence it
follows from Propositions \ref{wco1} and
\ref{lemS1}(v) that the operator $\cfw$ is
well-defined and $\cfw \in \ogr{L^2(\mu)}$.
   \end{exa}
Below we adapt the above example to the case of
infinite $\sigma$-finite measure spaces loosing the
$\sigma$-finiteness of the measures $\mu\circ
\phi^{-n}$, $n\in \nbb$. As before, the weighted
composition operator is well-defined and bounded.
   \begin{exa} \label{ciach}
Set $X=(-\infty,0] \cup \{1\} \cup [2,3]$ and
   \begin{align*}
\ascr=\{\varDelta \in 2^{X}\colon \varDelta \text{ is
a Lebesgue measurable set in $\rbb$}\}.
   \end{align*}
Define the $\sigma$-finite complete measure $\mu\colon
\ascr\to \rbop$~ by
   \begin{align*}
\mu(\varDelta)= m(\varDelta\cap (-\infty,0)) +
\delta_{0}(\varDelta) + \delta_{1}(\varDelta) +
m(\varDelta\cap [2,3]), \quad \varDelta \in \ascr,
   \end{align*}
where $m$ is the Lebesgue measure on $\rbb$. Let
$\phi$ be the $\ascr$-measurable transformation of $X$
defined by
   \begin{align*}
\phi(x) =
   \begin{cases}
2 & \text{if } x\in(-\infty,0],
   \\
1 & \text{if } x=1,
   \\
0 & \text{if } x \in [2,3].
   \end{cases}
   \end{align*}
Since $\mu(2)=0$ and $(\mu\circ
\phi^{-1})(\{2\})=\infty$, Proposition \ref{wco1}
implies that $C_{\phi}$ is not well-defined. Observe
that for every $n\in \nbb$, the measure
$\mu|_{\phi^{-n}(\ascr)}$ is not $\sigma$-finite and
thus the conditional expectation
$\mathsf{E}(\,\cdot\,;\phi^{-n}(\ascr),\mu)$ does not
exist. Indeed, since $\phi^{2k-1}=\phi$ and
$\phi^{2k}=\phi^2$ for all $k\in \nbb$, we see that
$\mu(\phi^{-(2k-1)}(\{2\}))=\infty$ and
$\mu(\phi^{-2k}(\{0\}))=\infty$ for all $k\in \nbb$,
which easily implies that for every $n\in \nbb$, the
measure $\mu|_{\phi^{-n}(\ascr)}$ is not
$\sigma$-finite. As in Example \ref{mwc6}, we verify
that for every $n\in \nbb$, the sigma algebra
$\phi^{-n}(\ascr)$ is relatively $\mu$-complete. Set
$w=\chi_{\{1\} \cup [2,3]}$. Noting that
   \begin{align*}
\mu_w \circ \phi^{-1} (\varDelta) =
   \begin{cases}
0 & \text{if } \varDelta \in \ascr \cap (-\infty,0),
   \\
1 & \text{if } \varDelta = \{0\},
   \\
1 & \text{if } \varDelta = \{1\},
   \\
0 & \text{if } \varDelta \in \ascr \cap [2,3],
   \end{cases}
   \end{align*}
we deduce that $\mu_w \circ \phi^{-1} \ll \mu$ and
$\hfw = \chi_{\{0,1\}}$ a.e.\ $[\mu]$. By Proposition
\ref{lemS1}(v), $\cfw \in \ogr{L^2(\mu)}$.
   \end{exa}
In the subsequent example we construct a bounded
weighted composition operator $\cfw$ such that the
corresponding composition operator $C_{\phi}$ is
well-defined, but the conditional expectation
$\mathsf{E}(\,\cdot\,;\phi^{-n}(\ascr),\mu)$ does not
exist for any $n\in \nbb$. Though Example \ref{ciach2}
is, in a sense, weaker than Example \ref{ciach3}, the
former one is simpler and the underlying measure space
appearing in it is not discrete.
   \begin{exa} \label{ciach2}
Set $X=(-\infty,0] \cup \{1\} \cup [2,3]$ and
   \begin{align*}
\ascr=\{\varDelta \in 2^{X}\colon \varDelta \text{ is
a Lebesgue measurable set in $\rbb$}\}.
   \end{align*}
Define the $\sigma$-finite complete measure $\mu\colon
\ascr\to \rbop$ by
   \begin{align*}
\mu(\varDelta)= m(\varDelta\cap (-\infty,0)) +
\delta_{0}(\varDelta) + \delta_{1}(\varDelta) +
\delta_{2}(\varDelta) + m(\varDelta\cap (2,3]), \quad
\varDelta \in \ascr,
   \end{align*}
where $m$ is the Lebesgue measure on $\rbb$. Let
$\phi$ be the $\ascr$-measurable transformation of $X$
defined by
   \begin{align*}
\phi(x) =
   \begin{cases}
2 & \text{if } x\in(-\infty,0],
   \\
1 & \text{if } x=1,
   \\
0 & \text{if } x \in [2,3].
   \end{cases}
   \end{align*}
Set $w=\chi_{\{1\} \cup [2,3]}$. Since $\mu(2) > 0$,
it is easily seen that $\mu\circ \phi^{-1} \ll \mu$
and so, by Proposition \ref{wco1}, $C_{\phi}$ is
well-defined. Observe that for every $n\in \nbb$, the
measure $\mu|_{\phi^{-n}(\ascr)}$ is not
$\sigma$-finite and thus the conditional expectation
$\mathsf{E}(\,\cdot\,;\phi^{-n}(\ascr),\mu)$ does not
exist. Indeed, since $\phi^{2k-1}=\phi$ and
$\phi^{2k}=\phi^2$ for all $k\in \nbb$, we see that
$\mu(\phi^{-(2k-1)}(\{2\}))=\infty$ and
$\mu(\phi^{-2k}(\{0\}))=\infty$ for all $k\in \nbb$,
which proves our claim. Clearly, the sigma algebras
$\phi^{-n}(\ascr)$, $n\in \nbb$, are relatively
$\mu$-complete. Since
   \begin{align*}
\mu_w \circ \phi^{-1} (\varDelta) =
   \begin{cases}
0 & \text{if } \varDelta \in \ascr \cap (-\infty,0),
   \\
2 & \text{if } \varDelta = \{0\},
   \\
1 & \text{if } \varDelta = \{1\},
   \\
0 & \text{if } \varDelta \in \ascr \cap [2,3],
   \end{cases}
   \end{align*}
we conclude that $\mu_w \circ \phi^{-1} \ll \mu$ and
$\hfw = 2 \chi_{\{0\}} + \chi_{\{1\}}$ a.e.\ $[\mu]$.
Hence, in view of Proposition \ref{lemS1}(v), $\cfw
\in \ogr{L^2(\mu)}$.
   \end{exa}
Below, we construct a unitary weighted composition
operator $\cfw$ such that $C_{\phi}^n$ is densely
defined for every $n\in \nbb$, $C_{\phi} \notin
\ogr{L^2(\mu)}$ and $C_{\phi}$ is not hyponormal. This
example sheds more light on part (iii) of Proposition
\ref{mwc1} and Example \ref{cohyp-count2}.
   \begin{exa} \label{palgong1}
Set $X=\zbb$. Let $\mu\colon 2^X \to \rbop$ be a
$\sigma$-finite measure such that $\at{\mu}=X$. Define
the transformation $\phi$ of $X$ by $\phi(n)=n+1$ for
$n \in X$. Plainly, $C_{\phi}$ is well-defined. Since
$\phi$ is a bijection, we see that
$\phi^{-n}(2^X)=2^X$ for every $n\in \nbb$ (so
$\mathsf{E}(\,\cdot\,;\phi^{-n}(2^X),\mu)$ acts as the
identity mapping). Hence, by Propositions \ref{lemS2}
and \ref{potegi-p}, $C_{\phi}^n$ is densely defined
for every $n\in \nbb$. Let $w\colon X \to \cbb$ be any
function such that $w(n)\neq 0$ for all $n\in X$.
Clearly, $\cfw$ is well-defined and
   \begin{align} \label{palgong2}
\hfw(n) & = \frac{|w(n-1)|^2 \mu(n-1)}{\mu(n)}, \quad
n\in X,
   \\  \label{palgong2b}
\mathsf{h}_{\phi}(n) & = \frac{\mu(n-1)}{\mu(n)},
\quad n\in X.
   \end{align}
Note that, by Theorem \ref{hypdisc}, $C_{\phi}$ is
hyponormal if and only
   \begin{align} \label{palgong3}
\mu(n-1)^2 \Le \mu(n)\mu(n-2), \quad n\in X.
   \end{align}
Fix $k\in \zbb$ and choose $\mu$ so that
$\mu(k-2)=\mu(k-1)=1$, $0 < \mu(k)< 1$, and
   \begin{align} \label{palgong2c}
\sup_{n\in X} \frac{\mu(n)}{\mu(n+1)} = \infty.
   \end{align}
Since \eqref{palgong3} does not hold for $n=k$,
$C_{\phi}$ is not hyponormal. In turn,
\eqref{palgong2b} and \eqref{palgong2c} imply that
$C_{\phi}$ is not bounded (see Proposition
\ref{lemS1}(v)). Now we specify $w$ by requiring it to
satisfy the following condition
   \begin{align*}
|w(n)|^2 & = \frac{\mu(n+1)}{\mu(n)}, \quad n\in X.
   \end{align*}
It follows from \eqref{palgong2} that $\hfw(n)=1$ for
every $n\in X$. This together with \eqref{chisom}
implies that $\cfw$ is an isometry on $L^2(\mu)$. Set
$e_n=\chi_{\{n\}}$ for $n\in \zbb$. Since $\cfw e_n =
e_{n+1}$ for every $n\in\zbb$ and the vectors
$\{e_n\}_{n=0}^{\infty}$ are linearly dense in
$L^2(\mu)$, we conclude that $\cfw$ is a unitary
operator. Since $\{e_n\}_{n=0}^{\infty}$, after
rescaling, is an orthonormal basis, we see that $\cfw$
is unitarily equivalent to the bilateral shift of
multiplicity $1$.
   \end{exa}
Our next goal is to give two examples of weighted
composition operators which have, in a sense, opposite
properties. In Example \ref{ciach3} we construct an
injective weighted composition operator $\cfw\in
\ogr{L^2(\mu)}$ such that the corresponding
composition operator $C_{\phi}$ is well-defined,
$\dz{C_{\phi^n}}=\{0\}$ and the measure
$\mu|_{\phi^{-n}(\ascr)}$ is not $\sigma$-finite for
every $n\in \nbb$, and (cf.\ Corollary \ref{caroll3})
   \begin{align} \label{etyk1}
\bigg(\mathsf{E}_{\phi^n, \widehat w_n}
\bigg(\frac{1}{|\widehat w_n|^2}\bigg)\circ
\phi^{-n}\bigg)(x) = \infty, \quad x \in X, \, n \in
\nbb.
   \end{align}
In turn, in Example \ref{ciach4} we construct a
well-defined weighted composition operator $\cfw$ in
$L^2(\mu)$ such that $\dz{C_{\phi^n,\widehat
w_n}}=\{0\}$ and the measure $\mu_{\widehat
w_n}|_{\phi^{-n}(\ascr)}$ is not $\sigma$-finite for
every $n\in \nbb$, $C_{\phi} \in \ogr{L^2(\mu)}$,
$C_{\phi}$ is injective, and (cf.\ Corollary
\ref{caroll2})
   \begin{align} \label{etyk2}
(\cen{n} (|\widehat w_n|^2)\circ \phi^{-n})(x) =
\infty, \quad x \in X, \, n \in \nbb.
   \end{align}
(See \eqref{Zak} for the definition of $\widehat
w_n$.) In both these examples the weight functions $w$
are positive.

We begin with the following general procedure.
   \begin{preexa} \label{D-c} Set $X=\nbb$ and
$\ascr=2^X$. Suppose $N\in \nbb \cup \{\infty\}$ and
$\{\varOmega_n\}_{n=1}^N$ is a partition of $X$. Let
$\{\omega_n\}_{n=1}^N$ be a fixed injective sequence
in $X$. Define the ($\ascr$-measurable) transformation
$\phi$ of $X$ by
   \begin{align} \label{fidef}
\text{$\phi(x) = \omega_n$ for all $x \in \varOmega_n$
and $n \in J_N$,}
   \end{align}
where $J_N:=\nbb \cap [1,N]$. Let $\mu: \ascr \to
\rbop$ be a $\sigma$-finite measure such that
$\at{\mu}=X$ and let $w \colon X \to (0,\infty)$ be
any function. Then $\at{\mu_w}=X$, $\mu_w\circ
\phi^{-1}\ll\mu$ and, by the injectivity of
$\{\omega_n\}_{n=1}^N$ (see \eqref{bas1}),
   \begin{align} \label{Ter6}
\hfw(x) =
   \begin{cases}
\frac{\mu_w(\varOmega_n)}{\mu(\omega_n)} & \text{if }
x = \omega_n \text{ for some } n\in J_N,
   \\
0 & \text{otherwise.}
   \end{cases}
   \end{align}
Similarly, $\mu\circ \phi^{-1} \ll \mu$ and
   \begin{align} \label{Ter7}
\mathsf{h_{\phi}}(x) =
   \begin{cases}
\frac{\mu(\varOmega_n)}{\mu(\omega_n)} & \text{if } x
= \omega_n \text{ for some } n\in J_N,
   \\
0 & \text{otherwise.}
   \end{cases}
   \end{align}
By \eqref{Ter6} and Proposition \ref{lemS1}(v), we
have
   \begin{align} \label{Ter4}
\cfw\in \ogr{L^2(\mu)} \iff \sup_{n\in J_N}
\frac{\mu_w(\varOmega_n)}{\mu(\omega_n)} < \infty.
   \end{align}
Similarly,
   \begin{align} \label{Ter5}
C_{\phi} \in \ogr{L^2(\mu)} \iff \sup_{n\in J_N}
\frac{\mu(\varOmega_n)}{\mu(\omega_n)} < \infty.
   \end{align}

Now, we calculate the conditional expectations
$\mathsf{E}_{\phi}(\cdot)$ and $\efw(\cdot)$. If the
conditional expectation $\efw(\cdot)$ exists
(equivalently:\ $\hfw(x) < \infty$ for all $x\in X$),
then by Proposition \ref{cpd1} we get
   \begin{align*}
\efw(f)(x) = \frac{\int_{\varOmega_n} f \D
\mu_w}{\mu_w(\varOmega_n)}, \quad x \in \varOmega_n,
\, n \in J_N.
   \end{align*}
In particular, we have
   \begin{align*}
\efw\bigg(\frac{1}{w^2}\bigg)(x) =
\frac{\mu(\varOmega_n)}{\mu_w(\varOmega_n)}, \quad x
\in \varOmega_n, \, n \in J_N,
   \end{align*}
which implies that
   \begin{align*}
\bigg(\efw\bigg(\frac{1}{w^2}\bigg) \circ
\phi^{-1}\bigg) (x) =
   \begin{cases}
\frac{\mu(\varOmega_n)}{\mu_w(\varOmega_n)} & \text{if
} x = \omega_n \text{ for some } n \in J_N,
   \\
0 & \text{otherwise.}
   \end{cases}
   \end{align*}
Similarly, if the conditional expectation
$\mathsf{E}_{\phi}(\cdot)$ exists (equivalently:
$\mathsf{h}_{\phi}(x) < \infty$ for all $x\in X$),
then
   \begin{align*}
\big(\mathsf{E}_{\phi}\big(w^2\big) \circ
\phi^{-1}\big) (x) =
   \begin{cases}
\frac{\mu_w(\varOmega_n)}{\mu(\varOmega_n)} & \text{if
} x = \omega_n \text{ for some } n \in J_N,
   \\
0 & \text{otherwise.}
   \end{cases}
   \end{align*}
   \end{preexa}
A careful look at the calculations of the conditional
expectations $\mathsf{E}_{\phi}(\cdot)$ and
$\efw(\cdot)$ reveals that it is hard to establish
explicit formulas for the conditional expectations
$\cen{n}(\cdot)$ and $\mathsf{E}_{\phi^n, \widehat
w_n}(\cdot)$ of higher orders. The same refers to the
Radon-Nikodym derivatives $\mathsf{h}_{\phi^n}$ and
$\hfwn{n}$ of higher orders.

To get two aforementioned examples, we specify
parameters in Procedure \ref{D-c}.
   \begin{exa} \label{ciach3}
Assume that $N=\infty$,
$\{\varOmega_n\}_{n=1}^{\infty}$ is a partition of $X$
such that each set $\varOmega_n$ is infinite and the
mapping $\nbb \ni n \longmapsto \omega_n \in X$ is a
bijection. Let $\phi$ be as in \eqref{fidef}. Clearly,
$\phi$ is a surjection. Let $\mu$ be the counting
measure on $X$. Then, it is easily seen that there
exists $w \colon X \to (0,\infty)$ such that
   \begin{align*}
\sup_{n\in \nbb}
\frac{\mu_w(\varOmega_n)}{\mu(\omega_n)} < \infty.
   \end{align*}
Applying \eqref{Ter4}, we deduce that $\cfw \in
\ogr{L^2(\mu)}$. Since $\phi$ is a surjection and
$\widehat w_n(x) >0$ for all $x\in X$ and $n\in
\zbb_+$, we infer from Proposition \ref{lemS1}(v) that
   \begin{align} \label{fidef2}
0 < \frac{\mu_{\widehat
w_n}(\phi^{-n}(\{x\}))}{\mu(x)} = \hfwn{n}(x) \Le
\|\cfw^n\|^2, \quad x \in X, \, n\in \zbb_+.
   \end{align}
Hence, by Lemma \ref{jadro}, $\cfw$ is injective. It
follows from \eqref{Ter7} that $C_{\phi}$ is
well-defined and $\mathsf{h}_{\phi}(x)=\infty$ for all
$x\in X$. In view of Proposition \ref{lemS1}(iii),
$\dz{C_{\phi}} = \{0\}$. We show that
$\mathsf{h}_{\phi^n}(x)=\infty$ for all $x\in X$ and
$n\in\nbb$. Indeed, using induction and the
surjectivity of the mapping $\nbb \ni n \longmapsto
\omega_n \in X$, we easily verify that
   \begin{align} \label{Ter8}
\text{for any $(n,x)\in \nbb\times X$ there exists
$k\in \nbb$ such that $\varOmega_k \subseteq
\phi^{-n}(\{x\})$.}
   \end{align}
This implies that
   \begin{align} \label{dr1}
\mathsf{h_{\phi^n}}(x) = \mu(\phi^{-n}(\{x\})) =
\infty, \quad x\in X, \, n \in \nbb.
   \end{align}
Hence, by Propositions \ref{lemS1}(iii) and
\ref{lemS2}, $\dz{C_{\phi^n}}=\{0\}$ and the measure
$\mu|_{\phi^{-n}(\ascr)}$ is not $\sigma$-finite for
every $n\in \nbb$. It follows from \eqref{poz2-2},
\eqref{fidef2} and \eqref{dr1} that \eqref{etyk1}
holds.
   \end{exa}
   \begin{center}
   \begin{figure}[t]
   \subfigure{\includegraphics[scale=0.80]{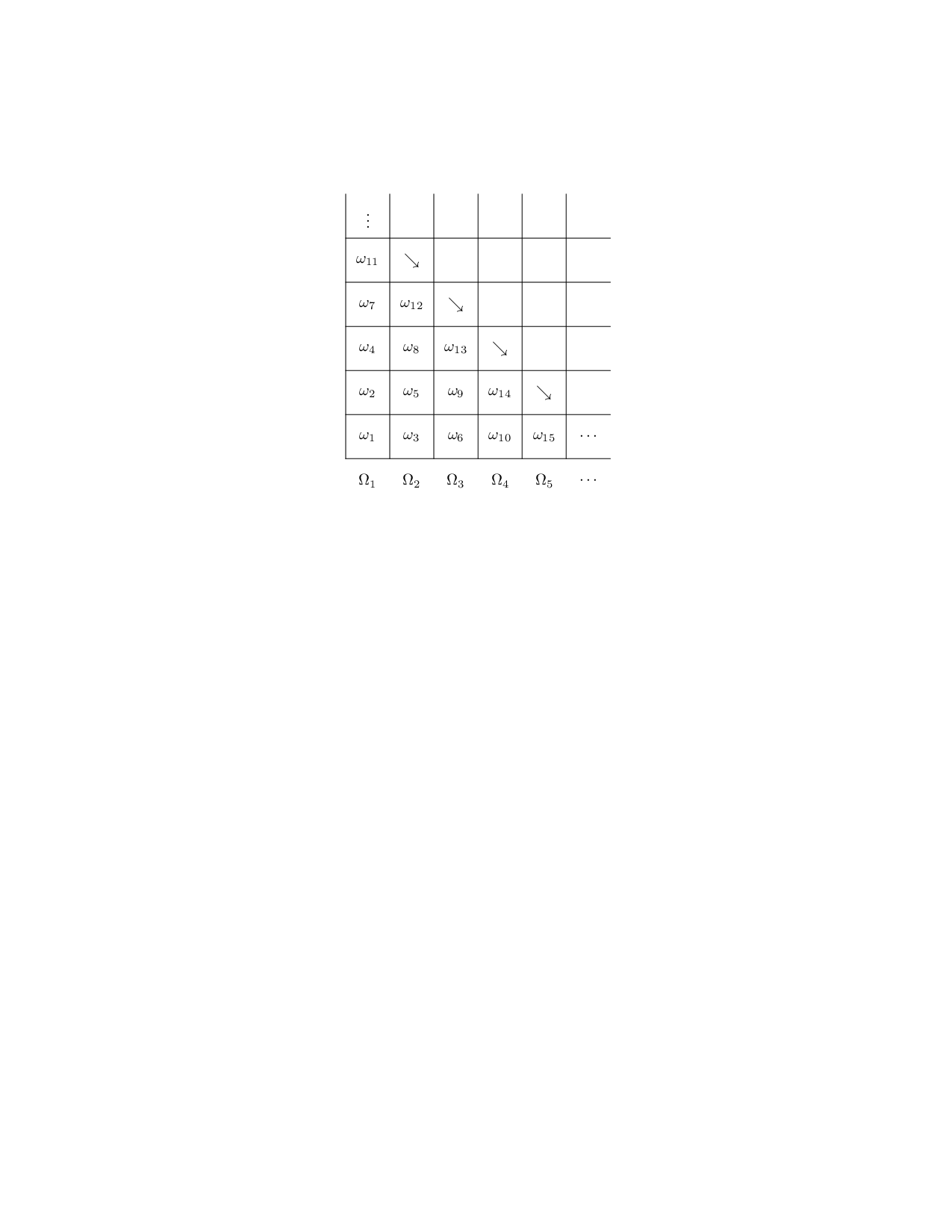}}
\caption{A sample construction of
$\{\varOmega_n\}_{n=1}^{\infty}$ and
$\{\omega_n\}_{n=1}^{\infty}$ satisfying the
requirements of Example \ref{ciach4}.}
   \label{figura6}
   \end{figure}
   \end{center}
   \begin{exa} \label{ciach4}
As in Example \ref{ciach3}, we assume that $N=\infty$.
Let $\{\varOmega_n\}_{n=1}^{\infty}$ be a partition of
$X$ such that each set $\varOmega_n$ is infinite and
let $\nbb \ni n \longmapsto \omega_n \in X$ be a
bijection such that
   \begin{align} \label{inda}
\text{$\omega_1 \in \varOmega_1$ and $\omega_n \in
\bigsqcup_{j=1}^{n-1} \varOmega_j$ for all $n\in
\nbb\setminus \{1\}$.}
   \end{align}
We refer the reader to Figure \ref{figura6} for an
idea of how to construct such objects. Let $\phi$ be
as in \eqref{fidef}. Obviously, $\phi$ is a
surjection. Our next goal is to show that for any
fixed $c\in (1,\infty)$, there exists a finite measure
$\mu$ on $\ascr$ such that
   \begin{align}  \label{nocik}
\sup_{n\in \nbb}
\frac{\mu(\varOmega_n)}{\mu(\omega_n)} \Le c.
   \end{align}
(By rescaling, the above measure $\mu$ can always be
made probabilistic.) We will construct it by
induction. First, we choose
$\{\mu(\omega)\}_{\omega\in \varOmega_1} \subseteq
(0,\infty)$ such that
$\frac{\mu(\varOmega_1)}{\mu(\omega_1)} \Le c$.
Suppose we have constructed $\{\mu(\omega)\}_{\omega
\in \bigsqcup_{j=1}^{n}\varOmega_j} \subseteq
(0,\infty)$ such that
   \begin{align} \label{inda2}
\text{$\frac{\mu(\varOmega_j)}{\mu(\omega_j)} \Le c$
and $\mu(\varOmega_j) \Le 2^{-(j-1)} \mu(\varOmega_1)$
for any $j=1, \ldots, n$,}
   \end{align}
where $n\in \nbb$ is arbitrarily fixed. Since, by
\eqref{inda}, $\omega_{n+1} \in
\bigsqcup_{j=1}^{n}\varOmega_j$ and $\varOmega_{n+1}$
has empty intersection with
$\bigsqcup_{j=1}^{n}\varOmega_j$, we can choose
$\{\mu(\omega)\}_{\omega\in \varOmega_{n+1}} \subseteq
(0,\infty)$ such that
$\frac{\mu(\varOmega_{n+1})}{\mu(\omega_{n+1})} \Le c$
and $\mu(\varOmega_{n+1}) \Le 2^{-n}
\mu(\varOmega_1)$. This completes the induction
argument. As a consequence of \eqref{inda2}, the
measure $\mu$ is finite. Applying \eqref{Ter5} and
\eqref{nocik}, we deduce that $C_{\phi} \in
\ogr{L^2(\mu)}$. Since $\phi$ is a surjection, we
deduce from Proposition \ref{lemS1}(v) that
   \begin{align} \label{fidef2+}
0 < \frac{\mu(\phi^{-n}(\{x\}))}{\mu(x)} =
\mathsf{h}_{\phi^n}(x) \Le \|C_{\phi}^n\|^2, \quad x
\in X, \, n\in \zbb_+.
   \end{align}
Thus, by Lemma \ref{jadro}, $C_{\phi}$ is injective.
It is easily seen that there exists $w \colon X \to
[1,\infty)$ such that $\mu_w(\varOmega_n) = \infty$
for all $n\in \nbb$. This implies that
   \begin{align} \label{Ter9}
\mu_{\widehat w_m}(\varOmega_n) = \infty, \quad m,n\in
\nbb.
   \end{align}
Using \eqref{Ter9}, we see that
   \begin{align}  \label{Wro}
\hfwn{n}(x) = \frac{\mu_{\widehat
w_n}(\phi^{-n}(\{x\}))}{\mu(x)}
\overset{\eqref{Ter8}}\Ge \frac{\mu_{\widehat
w_n}(\varOmega_k)}{\mu(x)} = \infty, \quad x\in X, \,
n\in \nbb,
   \end{align}
where $k$ depends on $n$ and $x$. Hence, by
Propositions \ref{lemS1}(iii) and \ref{lemS2},
$\dz{C_{\phi^n,\widehat w_n}}=\{0\}$ and the measure
$\mu_{\widehat w_n}|_{\phi^{-n}(\ascr)}$ is not
$\sigma$-finite for every $n\in \nbb$. In particular,
$\dz{\cfw}=\{0\}$. Finally, by \eqref{poz1-1},
\eqref{fidef2+} and \eqref{Wro}, the equality
\eqref{etyk2} is satisfied.
   \end{exa}
   \begin{rem}
Note that Example \ref{ciach4} can also be
accomplished for infinite $\sigma$-finite measures
$\mu$. It suffices to consider a countably infinite
orthogonal sum of (weighted) composition operators
constructed in Example \ref{ciach4} with probability
measures (cf.\ \cite[Corollary C.2]{b-j-j-sS}). What
we get is a well-defined weighted composition operator
$\cfw$ in $L^2(\mu)$ with an infinite $\sigma$-finite
measure $\mu$ and a positive weight function $w$ such
that $\dz{C_{\phi^n,\widehat w_n}}=\{0\}$ and the
measure $\mu_{\widehat w_n}|_{\phi^{-n}(\ascr)}$ is
not $\sigma$-finite for every $n\in \nbb$, $C_{\phi}
\in \ogr{L^2(\mu)}$, $C_{\phi}$ is injective and the
equality \eqref{etyk2} holds.
   \end{rem}

Now we show that using Procedure \ref{D-c} we can
construct a bounded finite rank weighted composition
operator with predetermined rank.
   \begin{exa} \label{byby}
Let $(X, \ascr, \mu)$, $\phi$ and $w$ be as in
Procedure \ref{D-c}. Assume that $N\in \nbb$ and
$\mu_w$ is finite. In view of \eqref{Ter6}, $\hfw \in
L^{\infty}(\mu)$, which by Proposition \ref{lemS1}(v)
implies that $\cfw\in \ogr{L^2(\mu)}$. It follows from
\eqref{fidef} that
   \begin{align*}
\cfw f =
{\sum\nolimits^{\smalloplus}}_{\hspace{-1ex}n=1}^N \,
f(\omega_n) \big(w \cdot \chi_{\varOmega_n}\big),
\quad f\in L^2(\mu),
   \end{align*}
where the symbol $\sum\nolimits^{\smalloplus}$ denotes
the orthogonal sum. This implies that $\cfw$ is a
finite rank operator with $\dim \ob{\cfw} = N$. Since
$L^2(\mu)$ is infinite dimensional, the range of
$\cfw$ is not dense in $L^2(\mu)$. In particular, if
the measure $\mu$ is finite, then the composition
operator $C_{\phi}=C_{\phi,\boldsymbol{1}}$ is a
bounded finite rank operator with $\dim \ob{\cfw} =
N$.
   \end{exa}
The next example shows that the operators $\cfw$ and
$M_w C_{\phi}$ may not coincide even if the operators
$C_{\phi}$ and $\cfw$ are subnormal.
   \begin{exa} \label{mwc8}
Set $X=\zbb$ and $\ascr=2^X$. Let
$\{\lambda_n\}_{n\in\zbb}$ be a two-sided sequence of
positive real numbers. Then there exists a unique
(necessarily $\sigma$-finite) measure $\mu\colon \ascr
\to \rbop$ such that
   \begin{align*}
\mu(n)=
   \begin{cases}
\lambda_0^2 \cdot \ldots \cdot \lambda_{n-1}^2 &
\text{for } n\Ge 1,
   \\
1 & \text{for } n=0,
   \\
(\lambda_{-1}^2 \cdot \ldots \cdot \lambda_{n}^2)^{-1}
& \text{for } n \Le -1,
   \end{cases}
\quad n \in \zbb.
   \end{align*}
Define the ($\ascr$-measurable) transformation $\phi$
of $X$ by $\phi(n)=n-1$ for $n \in \zbb$. Let $w\colon
X \to \cbb$ be any weight function. Clearly, the
operators $C_{\phi}$ and $\cfw$ are well-defined. Note
that
   \begin{align*}
\mu(n+1) = \lambda_n^2 \, \mu(n), \quad n \in \zbb,
   \end{align*}
which implies that
   \begin{align} \label{sob1}
   \left.
\begin{gathered} \mathsf{h}_{\phi}(n) =
\frac{\mu(n+1)}{\mu(n)} = \lambda_n^2, \quad n \in
\zbb,
   \\
\hfw(n) = \frac{\mu_w(n+1)}{\mu(n)} = \lambda_n^2
|w(n+1)|^2, \quad n \in \zbb.
   \end{gathered}
   \quad \right\}
   \end{align}
In view of Proposition \ref{lemS2}, the operators
$C_{\phi}$ and $\cfw$ are densely defined. In turn, by
Lemma \ref{jadro} and \eqref{sob1}, $C_{\phi}$ is
always injective, and $\cfw$ is injective whenever
$w(n)\neq 0$ for all $n\in \zbb$. It follows from
\eqref{sob1} and Theorem \ref{mwc2} that
   \begin{align} \label{bialyz2}
\cfw = M_w C_{\phi} \iff \inf_{n \in \zbb} \frac{1+
\lambda_n^2|w(n+1)|^2}{\lambda_n^2}
> 0.
   \end{align}

Now we show that (see Section \ref{pco}(h))
   \begin{align} \label{bialyz}
   \begin{minipage}{65ex}
{\em the composition operator $C_{\phi}$ is unitarily
equivalent to the bilateral weighted shift in
$\ell^2(\zbb)$ with weights $\{\lambda_n\}_{n\in
\zbb}$.}
   \end{minipage}
   \end{align}
Indeed, it is a matter of routine to verify that the
sequence $\{\hat e_n\}_{n\in \zbb}$ defined by
   \begin{align*}
\hat e_n=\frac{1}{\sqrt{\mu(n)}} \chi_{\{n\}}, \quad
n\in \zbb,
   \end{align*}
is an orthonormal basis of $L^2(\mu)$ such that
$\escr:=\lin\{\hat e_n\colon n\in \zbb\} \subseteq
\dz{C_{\phi}}$ and
   \begin{align} \label{Amanda3}
C_{\phi} \hat e_n = \lambda_n \hat e_{n+1}, \quad n
\in \zbb.
   \end{align}
Since $C_{\phi}$ is closed (see Proposition
\ref{lemS1}(iv)), it remains to prove that $\escr$ is
a core for $C_{\phi}$. Take a function $f\in
\dz{C_{\phi}}$ which is orthogonal to $\escr$ with
respect to the graph inner product
$\is{\cdot}{\mbox{-}}_{C_{\phi}}$. Then
   \begin{align*}
0 = \is{f}{\hat e_n}_{C_{\phi}} =
\frac{\mu(n)+\mu(n+1)}{\sqrt{\mu(n)}} f(n), \quad n
\in \zbb,
   \end{align*}
which yields $f=0$. This completes the proof of
\eqref{bialyz}.

The assertion \eqref{bialyz} enables us to construct a
sequence $\{\lambda_n\}_{n=0}^{\infty}$ of positive
real numbers and a weight $w\colon X \to (0,\infty)$
such that (cf.\ Proposition \ref{mwc3})
   \begin{itemize}
   \item[(a)] $C_{\phi}$ and $\cfw$ are subnormal
operators,
   \item[(b)] $C_{\phi} \notin
\ogr{L^2(\mu)}$, $\cfw \notin \ogr{L^2(\mu)}$ and
$\inf_{n\in \zbb} |w(n)|=0$,
   \item[(c)] $M_w C_{\phi} \varsubsetneq \cfw$ and
$\overline{M_w C_{\phi}}=\cfw$,
   \item[(d)] there is no constant $c\in\rbb_+$
such that $\hfw \Le c \, \mathsf{h}_{\phi}$ (a.e.\
$[\mu]$),
   \item[(e)]  there is no constant $c\in\rbb_+$
such that $\mathsf{h}_{\phi} \Le c \, \hfw$ (a.e.\
$[\mu]$).
   \end{itemize}
For this purpose, we fix $q\in (0,1)$ and set
   \begin{align*}
\text{$\lambda_n=q^{-\frac{1}{4} (2n+1)}$ and
$w(n)=q^{\frac{1}{8}(2n-1)}$ for $n\in \zbb$.}
   \end{align*}
Using \eqref{Amanda3}, we see that
$C_{\phi}(\escr)=\escr$. Since $C_{\phi}$ is
injective, we infer from \eqref{Amanda3} that
   \begin{align}  \label{Amanda4}
\|C_{\phi}^n \hat e_0\|^2=q^{-\frac{1}{2}n^2}, \quad
n\in \zbb.
   \end{align}
Knowing that $\{q^{-\frac{1}{2}n^2}\}_{n\in \zbb}$ is
a two-sided Stieltjes moment sequence (see \cite[p.\
J.106]{sti}; see also \cite{chi,lei,ber2}), we deduce
from \eqref{bialyz}, \eqref{Amanda4} and \cite[Theorem
5]{StSz2} that
   \begin{align} \label{kaczka1}
\text{the composition operator $C_{\phi}$ is
subnormal.}
   \end{align}
Clearly, $\escr \subseteq \dz{M_w C_{\phi}}$ and $M_w
C_{\phi} \subseteq \cfw$, so by \eqref{Amanda3}, we
have
   \begin{align} \label{Amanda6}
\cfw \hat e_n = w(n+1)\lambda_n \hat e_{n+1}, \quad n
\in \zbb.
   \end{align}
Arguing as in the previous paragraph (now $\is{f}{\hat
e_n}_{\cfw} =
\frac{\mu(n)+\mu(n+1)w(n+1)^2}{\sqrt{\mu(n)}} f(n)$),
we verify that $\escr$ is a core for $\cfw$. In
particular, we have
   \begin{align} \label{kaczka2}
\overline{M_w C_{\phi}}=\cfw.
   \end{align}
Since $\escr$ is a core for $\cfw$, we infer from
\eqref{Amanda6} that $\cfw$ is unitarily equivalent to
the bilateral weighted shift in $\ell^2(\zbb)$ with
weights $\{w(n+1)\lambda_n\}_{n\in \zbb}$. Using
\eqref{Amanda6}, we see that $\cfw(\escr)=\escr$.
Since $\cfw$ is injective, we deduce from
\eqref{Amanda6} that
   \begin{align*}
\|\cfw^n \hat e_0\|^2 = q^{-\frac{1}{4}n^2}, \quad
n\in \zbb.
   \end{align*}
Applying \cite[Theorem 5]{StSz2} once more, we see
that the weighted composition operator $\cfw$ is
subnormal. This, together with \eqref{kaczka1}, proves
(a). By \eqref{sob1} and Proposition~ \ref{lemS1}, the
operators $C_{\phi}$ and $\cfw$ are unbounded, and so
(b) holds. In view of \eqref{bialyz2}, $M_w C_{\phi}
\varsubsetneq \cfw$. This, combined with
\eqref{kaczka2}, justifies (c). Finally, (d) and (e)
follow from \eqref{sob1}.
   \end{exa}
As shown below, it may happen that a nonzero weighted
composition operator $\cfw$ is densely defined, the
composition operator $C_{\phi}$ is well-defined and
not densely defined (cf.\ Example \ref{ciach3}) and,
what is more interesting, the product $M_{w}C_{\phi}$
can be made closed or not, according to our needs.
   \begin{exa} \label{mwc5}
Set $X=\zbb$ and $\ascr=2^X$. Let $\mu\colon \ascr \to
\rbop$ be any (necessarily $\sigma$-finite) measure
such that $0 < \mu(n) < \infty$ for all $n \in X$.
Define the ($\ascr$-measurable) transformation $\phi$
of $X$ by
   \begin{align*}
\phi(n) =
   \begin{cases}
n-1 & \text{if } n \Le 0,
   \\
0 & \text{if } n\Ge 1,
   \end{cases}
\quad n \in X.
   \end{align*}
Let $w\colon X \to \cbb$ be a weight function such
that $w(n)\neq 0$ for all $n\in X$. Clearly, the
operators $C_{\phi}$ and $\cfw$ are well-defined and
the following equalities hold:
   \begin{align}  \label{was1}
\mathsf{h}_{\phi}(n) & =
   \begin{cases}
0 & \text{if } n \Ge 1,
   \\[1ex]
\frac{\sum_{k=1}^{\infty} \mu(k)}{\mu(0)} & \text{if }
n=0,
   \\[1ex]
\frac{\mu(n+1)}{\mu(n)} & \text{if } n \Le -1,
   \end{cases}
\quad n \in X,
   \\[1ex]  \label{was2}
\hfw(n) & =
   \begin{cases}
0 & \text{if } n \Ge 1,
   \\[1ex]
\frac{\sum_{k=1}^{\infty} |w(k)|^2 \mu(k)}{\mu(0)} &
\text{if } n=0,
   \\[1ex]
\frac{|w(n+1)|^2 \mu(n+1)}{\mu(n)} & \text{if } n \Le
-1,
   \end{cases}
\quad n \in X.
   \end{align}
Assume now that $\sum_{k=1}^{\infty} \mu(k)=\infty$
and $\sum_{k=1}^{\infty} |w(k)|^2\mu(k) < \infty$. It
follows from \eqref{was1}, \eqref{was2} and
Proposition \ref{lemS2} that $\cfw$ is densely
defined, while $C_{\phi}$ is not. In view of
Proposition \ref{mwc1} and Theorem \ref{mwc2}, $M_w
C_{\phi} \varsubsetneq \cfw$. By Theorem \ref{mwc4},
we have
   \begin{align*}
M_{w} C_{\phi} \text{ is closed } \iff \inf_{n\Le -1}
\bigg(\frac{\mu(n)}{\mu(n+1)} + |w(n+1)|^2\bigg)
> 0.
   \end{align*}
Now it is easily seen that, by appropriate choices of
the sequences $\{\mu(n)\}_{n=-\infty}^{-1}$ and
$\{w(n)\}_{n=-\infty}^{-1}$, the product
$M_{w}C_{\phi}$ can be made closed or not, according
to our needs. This also shows that the implication
(ii)$\Rightarrow$(i) in Proposition \ref{fonta2} is
false if we do not assume that $C_{\phi}$ is densely
defined.
   \end{exa}
   \chapter{Miscellanea}
This chapter consists of three sections. In Section
\ref{Sec8.1}, we discuss the problem of when the
tensor product of (finitely many) weighted composition
operators can be regarded as a weighted composition
operator. A partial answer is given in Theorem
\ref{amierz} (see also Corollary \ref{amierz3+}). In
Theorem \ref{cltenp} we show that the closure of a
tensor product of densely defined weighted composition
operators can be regarded as a weighted composition
operator. Two open question related to the above
topics are stated as well (see Problems \ref{probl1}
and \ref{Pro2}). Section \ref{Sect8.2} proposes a
method of modifying the symbol of a weighted
composition operator $\cfw$ which preserves many
properties of objects attached to $\cfw$ and does not
change the operator $\cfw$ itself. As shown in Section
\ref{Sec8.3}, this method enables to modify the symbol
$\phi$ of a quasinormal weighted composition operator
$\cfw$ so as to get a $\phi^{-1}(\ascr)$-measurable
family $P\colon X \times \borel{\rbb_+} \to [0,1]$ of
probability measures that satisfies \eqref{cc-1mu}
(see Proposition \ref{munich3}). We conclude Section
\ref{Sec8.3} with an example of a quasinormal weighted
composition operator $\cfw$ which has no
$\phi^{-1}(\ascr)$-measurable family of probability
measures satisfying \eqref{cc-1mu} (see Example
\ref{quasif}).
   \section{\label{Sec8.1}Tensor products}
In this section, we show that, under certain
circumstances, the tensor product of weighted
composition operators can be regarded as a weighted
composition operator.

The following notations and assumptions are fixed
throughout this section. Let $N$ be an integer such
that $N \Ge 2$ and let $J_N=\{1, \ldots, N\}$. Suppose
that for each $i \in J_N$, $(X_i,\ascr_i,\mu_i)$ is a
$\sigma$-finite measure space. Set
   \begin{align*}
X&=X_1 \times \ldots \times X_N,
   \\
\ascr & = \ascr_1 \otimes \ldots \otimes \ascr_N =
\sigma_X\big(\ascr_1 \boxtimes \ldots \boxtimes
\ascr_N\big),
   \\ & \hspace{3ex}\text{where } \ascr_1 \boxtimes
\ldots \boxtimes \ascr_N = \big\{\varDelta_1\times
\ldots \times \varDelta_N\colon \varDelta_i \in
\ascr_i \text{ for } i\in J_N\big\}.
   \end{align*}
Then there exists a unique (necessarily
$\sigma$-finite) measure $\mu=\mu_1 \otimes \ldots
\otimes \mu_N \colon \ascr \to \rbop$ such that (see
\cite[Sect.\ 2.6]{Ash})
   \begin{align*}
\mu_1 \otimes \ldots \otimes \mu_N(\varDelta_1\times
\ldots \times \varDelta_N) = \mu_1(\varDelta_1) \cdot
\ldots \cdot \mu_N(\varDelta_N), \quad \varDelta_i \in
\ascr_i \text{ for } i\in J_N.
   \end{align*}
The measure $\mu$ is called the {\em product of
measures} $\mu_1, \ldots, \mu_N$. It is well-known
that the Hilbert space $L^2(\mu)$ can be identified
with the complete tensor product of Hilbert spaces
$L^2(\mu_1) \hat\otimes \ldots \hat\otimes L^2(\mu_N)$
(see \cite[Sect.\ II.4]{R-S} and \cite[Sect.\
3.4]{Weid}), where the tensor product mapping
   \begin{align*}
L^2(\mu_1) \times \ldots \times L^2(\mu_N) \ni (f_1,
\ldots,f_N) \longmapsto f_1 \otimes \ldots \otimes f_N
\in L^2(\mu)
   \end{align*}
is given by
   \begin{align} \label{tenpr}
(f_1 \otimes \ldots \otimes f_N)(x_1, \ldots, x_N) =
f_1(x_1) \cdot \ldots \cdot f_N(x_N), \quad (x_1,
\ldots, x_N) \in X.
   \end{align}
Next suppose that for each $i \in \{1, \ldots, N\}$,
$w_i$ is an $\ascr_i$-measurable complex function on
$X_i$ and $\phi_i$ is an $\ascr_i$-measurable
transformation of $X_i$. Set $w = w_1 \otimes \ldots
\otimes w_N$ and $\phi = \phi_1 \times \ldots \times
\phi_N \colon X \to X$, where
   \begin{align*}
(\phi_1 \times \ldots \times \phi_N)(x_1, \ldots, x_N)
= (\phi_1(x_1), \ldots, \phi_N(x_N)) \text{ for }
(x_1, \ldots, x_N) \in X.
   \end{align*}
Note that the function $w$ is $\ascr$-measurable.
Since $\phi^{-1}(\sigma_Z(\cscr)) =
\sigma_Y(\phi^{-1}(\cscr))$ for any function
$\phi\colon Y \to Z$ and any family $\cscr \subseteq
2^Z$, we see that the transformation $\phi$ is
$\ascr$-measurable as well. The above notations and
assumptions will be used throughout this section.

We begin by discussing the question of when the
weighted composition operator $\cfw$ is the zero
operator on the entire $L^2(\mu)$.
   \begin{lem} \label{poprokr0}
The composition operator $\cfw$ is the zero operator
on $L^2(\mu)$ if and only if there exists $i\in J_N$
such that the operator $C_{\phi_i,w_i}$ is the zero
operator on $L^2(\mu_i)$ $($the other operators
$C_{\phi_j,w_j}$, $j \neq i$, may not even be
well-defined\/$)$.
   \end{lem}
   \begin{proof} By Fubini theorem we have
   \begin{align} \label{oro}
\mu_w (X_1 \times \ldots \times X_N) = \prod_{i=1}^N
\int_{X_i} |w_i|^2 \D \mu_i.
   \end{align}
Applying Proposition \ref{lemS1}(vi) and \eqref{oro}
completes the proof.
   \end{proof}
The second issue we want to discuss is the question of
whether the assumption that the operator $\cfw$ is
well-defined implies that the operators
$C_{\phi_i,w_i}$, $i\in J_N$, are well-defined.
   \begin{lem} \label{poprokr}
If the composition operator $\cfw$ is well-defined and
if it is not the zero operator on $L^2(\mu)$, then
each operator $C_{\phi_i,w_i}$, $i\in J_N$, is
well-defined.
   \end{lem}
   \begin{proof}
It follows from Lemma \ref{poprokr0} that
   \begin{align} \label{ajaj}
\int_{X_i} |w_i|^2 \D \mu_i \in (0,\infty], \quad i\in
J_N.
   \end{align}
By Proposition \ref{wco1}, $\mu_w \circ \phi^{-1} \ll
\mu$. Let us fix $i\in J_N$. If $\varDelta \in
\ascr_i$ is such that $\mu_i(\varDelta)=0$, then
   \begin{align*}
\mu(X_1 \times \ldots \times X_{i-1} \times \varDelta
\times X_{i+1} \times \ldots \times X_N) = 0,
   \end{align*}
and consequently, by the Fubini theorem, we have
   \begin{align*}
(\mu_i)_{w_i} \circ \phi_i^{-1} (\varDelta) \;\; \cdot
& \prod_{j\in J_N \setminus \{i\}}\int_{X_i} |w_i|^2
\D \mu_i
   \\
& = \mu\circ \phi^{-1} (X_1 \times \ldots \times
X_{i-1} \times \varDelta \times X_{i+1} \times \ldots
\times X_N)=0.
   \end{align*}
This and \eqref{ajaj} imply that $(\mu_i)_{w_i} \circ
\phi_i^{-1} (\varDelta)=0$. Hence, by Proposition
\ref{wco1}, the operator $C_{\phi_i,w_i}$ is
well-defined. This completes the proof.
   \end{proof}
Now we discuss the converse question.
   \begin{opq} \label{probl1}
Does the assumption that the operators
$C_{\phi_i,w_i}$, $i\in J_N$, are well-defined imply
that the operator $\cfw$ is well-defined\/{\em ?}
   \end{opq}
This problem remains unsolved in full generality.
Below we provide some partial solutions. First, we fix
some notation. Given $\cscr\subseteq 2^Z$, we denote
by $\cscr_{\sigma}$ the class of countable unions of
sets in $\cscr$. Note that $\cscr_{\sigma}$ is the
smallest subclass of $2^Z$ containing $\cscr$ and
closed under the formation of countable unions.
Moreover, if $\cscr$ is an algebra, then
$\cscr_{\sigma}$ coincides with the class of countable
increasing unions of sets in $\cscr$. If $Z$ is a
disjoint union of sets $Z_1$ and $Z_2$,
$\cscr_1\subseteq 2^{Z_1}$ and $\cscr_2 \subseteq
2^{Z_2}$, then we write
   \begin{align*}
\cscr_1 \uplus \cscr_2 = \{C_1 \cup C_2 \colon C_1 \in
\cscr_1, \, C_2 \in \cscr_2\}.
   \end{align*}
It is easy to see that
   \begin{align} \label{c1c2}
(\cscr_1 \uplus \cscr_2)_{\sigma} = (\cscr_1)_{\sigma}
\uplus (\cscr_2)_{\sigma}.
   \end{align}
   \begin{lem} \label{amierz+}
Suppose $(\mu_i)_{w_i} \circ \phi_i^{-1} \ll \mu_i$
for each $i\in J_N$. Set
   \begin{align*}
\hfwi{i} = \frac{\D (\mu_i)_{w_i} \circ
\phi_i^{-1}}{\D\mu_i}, \quad i\in J_N,
   \end{align*}
and $\varOmega_{\infty} = \{\hfwi{1} \otimes \ldots
\otimes \hfwi{N} < \infty\}$ $($see \eqref{tenpr} for
notation$)$. Then the following assertions are
valid{\em :}
   \begin{enumerate}
   \item[(i)] $\varOmega_{\infty}
\in \mathrm{alg}(\ascr_1 \boxtimes \ldots \boxtimes
\ascr_N):=\text{the algebra generated by $\ascr_1
\boxtimes \ldots \boxtimes \ascr_N$}$,
   \item[(ii)]  $\qscr := \Big\{\varDelta \in \ascr \colon \varDelta
\subseteq \varOmega_{\infty}\Big\} \uplus
\Big\{\varDelta \in \mathrm{alg}(\ascr_1 \boxtimes
\ldots \boxtimes \ascr_N)\colon \varDelta \subseteq
X\setminus \varOmega_{\infty}\Big\}$ is an algebra of
sets which contains $\ascr_1 \boxtimes \ldots
\boxtimes \ascr_N$, and
   \begin{align*}
\qscr_{\sigma} = \Big[\ascr \cap
\varOmega_{\infty}\Big] \uplus
\Big[(\mathrm{alg}(\ascr_1 \boxtimes \ldots \boxtimes
\ascr_N))_{\sigma} \cap (X \setminus
\varOmega_{\infty})\Big],
   \end{align*}
   \item[(iii)] $\mu_w \circ \phi^{-1}(\varDelta) = \int_{\varDelta}
\hfwi{1} \otimes \ldots \otimes \hfwi{N} \D \mu$ for
every $\varDelta \in \mathcal \qscr_{\sigma}$,
   \item[(iv)] the measure $\mu_w \circ \phi^{-1}$ is
$\sigma$-finite on $\varOmega_{\infty}$.
   \end{enumerate}
   \end{lem}
   \begin{proof}
If $(\varDelta_1, \ldots, \varDelta_N) \in \ascr_1
\times \ldots \times \ascr_N$, then by the Fubini
theorem we have
   \allowdisplaybreaks
   \begin{align} \notag
\mu_w \circ \phi^{-1}(\varDelta_1\times \ldots \times
\varDelta_N) & = \int_X
\chi_{\phi_1^{-1}(\varDelta_1)} \otimes \ldots \otimes
\chi_{\phi_N^{-1}(\varDelta_N)} |w_1 \otimes \ldots
\otimes w_N|^2 \D \mu
   \\ \notag
& = \prod_{i=1}^N (\mu_i)_{w_i} \circ
\phi_i^{-1}(\varDelta_i)
   \\ \label{alpr}
& = \int_{\varDelta_1\times \ldots \times \varDelta_N}
\hfwi{1} \otimes \ldots \otimes \hfwi{N} \D \mu.
   \end{align}
Fix $i\in J_N$. Let $\varXi_i=\{\hfwi{i} < \infty\}$
and $\cscr_i = \ascr_i \cap \varXi_i$. Set $\varXi=
\varXi_1 \times \ldots \times \varXi_N$ and
$\cscr=\{\varDelta \in \ascr \colon \varDelta
\subseteq \varXi\}$. Clearly $\varXi \in \ascr$ and
the class of sets $\cscr_i$ is a $\sigma$-algebra of
subsets of $\varXi_i$. Notice that
   \begin{align} \notag
\cscr & = \ascr \cap \varXi
   \\ \notag
&= \sigma_X \big(\ascr_1\boxtimes \ldots \boxtimes
\ascr_N\big) \cap \varXi
   \\ \notag
& \hspace{-.25ex}\overset{(\dag)} = \sigma_{\varXi}
\big((\ascr_1\boxtimes \ldots \boxtimes \ascr_N) \cap
\varXi\big)
   \\ \notag
& = \sigma_{\varXi} \big(\cscr_1\boxtimes \ldots
\boxtimes \cscr_N\big)
   \\ \label{dyctu0}
& = \cscr_1 \otimes \ldots \otimes \cscr_N,
   \end{align}
where $(\dag)$ follows from \cite[Sect.\ 1.2.2]{Ash}.
Since $\mu_i$ is $\sigma$-finite, there exists a
sequence $\{\varXi_{i,n}\}_{n=1}^\infty \subseteq
\ascr_i$ such that
   \begin{align} \label{wkonc}
\text{$\mu_i(\varXi_{i,n}) < \infty$ and $\hfwi{i} \Le
n$ on $\varXi_{i,n}$ for every $n\in \nbb$,}
   \end{align}
and $\varXi_{i,n} \nearrow \varXi_i$ as $n\to \infty$.
Define the measure $\nu\colon \ascr \to \rbop$ by
   \begin{align} \label{defnu}
\nu(\varDelta) = \int_{\varDelta} \hfwi{1} \otimes
\ldots \otimes \hfwi{N} \D \mu, \quad \varDelta \in
\ascr.
   \end{align}
It follows from \eqref{alpr} that the measures $\mu_w
\circ \phi^{-1}|_{\cscr}$ and $\nu|_{\cscr}$ coincide
on the semi-algebra $\cscr_1\boxtimes \ldots \boxtimes
\cscr_N$. Moreover, the sequence $\{\varXi_{1,n}
\times \ldots \times \varXi_{N,n}\}_{n=1}^{\infty}
\subseteq \cscr_1\boxtimes \ldots \boxtimes \cscr_N$
is such that $\varXi_{1,n} \times \ldots \times
\varXi_{N,n} \nearrow \varXi$ as $n\to \infty$ and, by
\eqref{wkonc}, $\nu(\varXi_{1,n} \times \ldots \times
\varXi_{N,n}) < \infty$ for all $n\in \nbb$. Hence, by
\eqref{dyctu0} and Lemma \ref{2miary}, we have
   \begin{align} \label{Daeg0}
\mu_w \circ \phi^{-1}(\varDelta)= \nu(\varDelta),
\quad \varDelta \in \ascr, \, \varDelta \subseteq
\varXi.
   \end{align}

(i) For this, note that the set $\varOmega_{\infty}$
can be decomposed into a finite disjoint union of sets
in the semi-algebra $\ascr_1 \boxtimes \ldots
\boxtimes \ascr_N$ as follows:
   \begin{align} \label{Daeg1}
\varOmega_{\infty} & = \varXi \sqcup
\bigsqcup_{\substack{\alpha \in \{0,1,\infty\}^N: \\
|\alpha|=\infty, \, \sqcap(\alpha) = 0}}
\varGamma_{1,\alpha_1} \times \ldots \times
\varGamma_{N,\alpha_N},
   \end{align}
where $|\alpha|=\sum_{i=1}^N\alpha_i$ and
$\sqcap(\alpha) = \alpha_1 \cdot \ldots \cdot
\alpha_N$ for $\alpha=(\alpha_1, \ldots, \alpha_N)\in
\{0,1,\infty\}^N$, and
   \begin{align}  \label{notation}
\varGamma_{i,\beta} =
   \begin{cases}
\{\hfwi{i} = 0\} & \text{if } \beta=0,
   \\
\{0 < \hfwi{i} < \infty\} & \text{if } \beta=1,
   \\
\{\hfwi{i} = \infty\} & \text{if } \beta=\infty,
   \end{cases}
\qquad \beta \in \{0,1,\infty\}, \, i\in J_N.
   \end{align}
This yields (i).

(ii) Use (i) and \eqref{c1c2}.

(iii) If $\alpha \in \{0,1,\infty\}^N$ is such that
$|\alpha|=\infty$ and $\sqcap(\alpha) = 0$, then by
\eqref{alpr} we have
   \begin{align} \label{sun1}
\mu_w \circ \phi^{-1}(\varGamma_{1,\alpha_1} \times
\ldots \times \varGamma_{N,\alpha_N})=
\nu(\varGamma_{1,\alpha_1} \times \ldots \times
\varGamma_{N,\alpha_N})=0,
   \end{align}
and consequently $\mu_w \circ \phi^{-1}(\varDelta)=
\nu(\varDelta)=0$ for every $\varDelta \in \ascr$ such
that $\varDelta \subseteq \varGamma_{1,\alpha_1}
\times \ldots \times \varGamma_{N,\alpha_N}$. This
together with \eqref{Daeg0} and \eqref{Daeg1} implies
that
   \begin{align} \label{14.IV.2016}
\mu_w \circ \phi^{-1}(\varDelta)= \nu(\varDelta),
\quad \varDelta \in \ascr, \, \varDelta \subseteq
\varOmega_{\infty}.
   \end{align}
Applying \cite[Proposition I-6-1]{Nev} and
\eqref{alpr}, we see that
   \begin{align*}
\mu_w \circ \phi^{-1}(\varDelta)= \nu(\varDelta),
\quad \varDelta \in \mathrm{alg}(\ascr_1 \boxtimes
\ldots \boxtimes \ascr_N), \, \varDelta \subseteq
X\setminus \varOmega_{\infty}.
   \end{align*}
This combined with \eqref{14.IV.2016} shows that
$\mu_w \circ \phi^{-1}(\varDelta) = \nu(\varDelta)$
for all $\varDelta \in \mathcal \qscr$. An application
of \cite[Theorem 1.19(d)]{Rud} yields (iii).

(iv) For this, observe that $\varXi_{1,n} \times
\ldots \times \varXi_{N,n} \cup (\varOmega_{\infty}
\setminus \varXi) \nearrow \varOmega_{\infty}$ as
$n\to \infty$ and, by \eqref{Daeg0}, \eqref{Daeg1} and
\eqref{sun1}, $\mu_w \circ \phi^{-1} (\varXi_{1,n}
\times \ldots \times \varXi_{N,n} \sqcup
(\varOmega_{\infty} \setminus \varXi)) < \infty$ for
all $n\in \nbb$. This completes the proof.
   \end{proof}
Let us make some comments regarding Lemma
\ref{amierz+}.
   \begin{rem} \label{Zentolub}
First, observe that, as in the case of
$\varOmega_{\infty}$, the set $X \setminus
\varOmega_{\infty}$ can be decomposed into a finite
disjoint union of sets in $\ascr_1 \boxtimes \ldots
\boxtimes \ascr_N$ as follows (see~ \eqref{notation}):
   \begin{align*}
   X \setminus \varOmega_{\infty} & =
\bigsqcup_{\substack{\alpha \in \{0,1,\infty\}^N: \\
|\alpha|=\infty, \, \sqcap(\alpha) > 0}}
\varGamma_{1,\alpha_1} \times \ldots \times
\varGamma_{N,\alpha_N}.
   \end{align*}
Hence, applying either the above equality or assertion
(i) of Lemma \ref{amierz+}, we obtain
   \begin{align*}
\{\varDelta \in \mathrm{alg}(\ascr_1 \boxtimes \ldots
\boxtimes \ascr_N)\colon \varDelta \subseteq
X\setminus \varOmega_{\infty}\} =
\big(\mathrm{alg}(\ascr_1 \boxtimes \ldots \boxtimes
\ascr_N)\big) \cap (X\setminus \varOmega_{\infty}).
   \end{align*}
Second, note that the measure $\nu$ defined by
\eqref{defnu} is {\em completely non-$\sigma$-finite}
on the set $X \setminus \varOmega_{\infty}$ (i.e.,
$\nu$ is not $\sigma$-finite on any $\ascr$-measurable
subset of $X \setminus \varOmega_{\infty}$ of positive
$\nu$-measure), or equivalently, $\nu$ has the
property that $\nu(\varDelta) \in \{0,\infty\}$ for
every $\ascr$-measurable subset of $X \setminus
\varOmega_{\infty}$; for such $\varDelta$'s,
$\nu(\varDelta) = 0$ if and only if $\mu(\varDelta) =
0$. It follows from Lemma \ref{amierz+} that the
measure $\nu$ is $\sigma$-finite on the set
$\varOmega_{\infty}$. This provides a decomposition of
the measure $\nu$ into a $\sigma$-finite and a
completely non-$\sigma$-finite parts (see
\cite[Exercise 30.11]{hal3}). As a consequence, by
assertion (iii) of Lemma \ref{amierz+}, $\mu_w \circ
\phi^{-1}(\varDelta) \in \{0,\infty\}$ for every
$\varDelta \in (\mathrm{alg}(\ascr_1 \boxtimes \ldots
\boxtimes \ascr_N))_{\sigma} \cap (X \setminus
\varOmega_{\infty})$.
   \end{rem}
The next two theorems give partial solutions to
Problem \ref{probl1}.
   \begin{thm} \label{amierz}
Suppose $C_{\phi_i,w_i}$ is well-defined for every
$i\in J_N$ and
   \begin{align} \label{noc-1}
\ascr=(\mathrm{alg}(\ascr_1 \boxtimes \ldots \boxtimes
\ascr_N))_{\sigma}.
   \end{align}
Then the following assertions are valid{\em :}
   \begin{enumerate}
   \item[(i)] the operator $\cfw$ is well-defined,
   \item[(ii)]  $\hfw := \frac{\D \mu_w \circ
\phi^{-1}}{\D\mu} = \hfwi{1} \otimes \ldots \otimes
\hfwi{N}$ a.e.\ $[\mu]$.
   \end{enumerate}
   \end{thm}
   \begin{proof}
Apply Proposition \ref{wco1} and Lemma \ref{amierz+}.
   \end{proof}
   Let $Z$ be a nonempty set and $\cscr \subseteq 2^Z$
be a $\sigma$-algebra. We say that a set $\varDelta
\in \cscr$ is an {\em atom} of $\cscr$ if the only
proper $\cscr$-measurable subset of $\varDelta$ is the
empty set. Denote by $\ats(\cscr)$ the set of all
atoms of $\cscr$. It is obvious that the atoms of
$\cscr$ are nonempty and pairwise disjoint. If
$\ats(\cscr)\neq \emptyset$ and $\sigma(\ats(\cscr)) =
\cscr$, then the $\sigma$-algebra $\cscr$ is called
{\em atomic}. As shown in Example \ref{predki} below,
there may exist atomic $\sigma$-algebras for which
   \begin{align*}
\bigcup \ats(\cscr) \varsubsetneq Z.
   \end{align*}
   \begin{exa} \label{predki}
Let $Z=[0,1]$ and $\cscr =
\sigma_Z\big(\big\{\{x\}\colon x \in
\big[0,\frac{1}{2}\big]\big\}\big)$. Then a subset
$\varDelta$ of $Z$ is in $\cscr$ if and only if
$\varDelta$ is a countable subset of
$\big[0,\frac{1}{2}\big]$ or it is a disjoint union of
the interval $\big(\frac{1}{2},1\big]$ and a subset of
$\big[0,\frac{1}{2}\big]$ with countable relative
complement in $\big[0,\frac{1}{2}\big]$. This implies
that $\ats(\cscr)=\big\{\{x\}\colon x \in
\big[0,\frac{1}{2}\big]\big\}$, which means that
$\ats(\cscr)$ is not a partition of $Z$.
   \end{exa}
It is a matter of routine to verify that if $Z$ is a
nonempty set, $\cscr \subset 2^Z$ is an atomic
$\sigma$-algebra and $\ats(\cscr)$ is a partition of
$Z$, then $\cscr$ consists of all sets of the form
$\bigsqcup_{\varDelta \in \fscr} \varDelta$, where
$\fscr$ is a countable subset of $\ats(\cscr)$ or it
is a subset of $\ats(\cscr)$ whose relative complement
in $\ats(\cscr)$ is countable. As a consequence, we
have:
   \begin{align} \label{bye}
   \begin{minipage}{70ex}
{\em if $\cscr \subset 2^Z$ is an atomic
$\sigma$-algebra and $\ats(\cscr)$ is a countable
partition of $Z$, then $\cscr$ consists of all sets of
the form $\bigsqcup_{\varDelta \in \fscr} \varDelta$,
where $\fscr \subseteq \ats(\cscr)$.}
   \end{minipage}
   \end{align}

We say that a measure $\tau\colon \cscr \to \rbop$ is
{\em atomic} if $\cscr$ is an atomic $\sigma$-algebra
of subsets of a nonempty set $Z$, the measure $\tau$
is $\sigma$-finite and $\tau(\varDelta)
> 0$ for every $\varDelta \in \ats(\cscr)$. If this
is the case, then the set $\ats(\cscr)$ is countable
(see \cite[Theorem 10.2(iv)]{bill}), and thus $\cscr$
can be described as in \eqref{bye}.

As shown below, the atomic measures fit well into the
scope of Theorem \ref{amierz}.
   \begin{cor}\label{amierz3+}
Suppose that for every $i\in J_N$, $C_{\phi_i,w_i}$ is
well-defined, $\mu_i$ is an atomic measure and
$\ats(\ascr_i)$ is a partition of $X_i$. Then the
following assertions are valid{\em :}
   \begin{enumerate}
   \item[(i)] the operator $\cfw$ is well-defined,
   \item[(ii)]  $\hfw = \hfwi{1} \otimes \ldots \otimes
\hfwi{N}$ a.e.\ $[\mu]$.
   \end{enumerate}
   \end{cor}
   \begin{proof}
Since each $\mu_i$ is $\sigma$-finite, $\ats(\ascr_i)$
is countable for every $i\in J_N$. Let $\cscr$ be the
class of all subsets of $X$ of the form
   \begin{align} \label{noc}
\bigsqcup_{(\varDelta_1, \ldots, \varDelta_N) \in
\fscr} \varDelta_1 \times \ldots \times \varDelta_N,
   \end{align}
where $\fscr$ is a subset of $\ats(\ascr_1) \times
\ldots \times \ats(\ascr_N)$. It is a routine matter
to check that $\cscr$ is a $\sigma$-algebra which is
contained in $\ascr$. On the other hand, if $(\tilde
\varDelta_1, \ldots, \tilde \varDelta_N)\in \ascr_1
\times \ldots \times \ascr_N$, then by \eqref{bye} for
every $i\in J_N$, there exists $\fscr_i \subseteq
\ats(\ascr_i)$ such that $\tilde \varDelta_i =
\bigsqcup_{\varDelta_i \in \fscr_i} \varDelta_i$. This
implies that $\tilde \varDelta_1 \times \ldots \times
\tilde \varDelta_N$ is of the form \eqref{noc} with
$\fscr=\fscr_1 \times \ldots \times \fscr_N$. As a
consequence of the above, $\cscr = \ascr$. This means
that the condition \eqref{noc-1} is satisfied.
Applying Theorem \ref{amierz} completes the proof.
   \end{proof}
Suppose now that the operators $C_{\phi_i,w_i}$, $i\in
J_N$, are well-defined. Then there exists a unique
operator in $L^2(\mu)$, denoted by
$C_{\phi_1,w_1}\otimes \ldots \otimes C_{\phi_N,w_N}$,
such that
   \begin{gather*}
\dz{C_{\phi_1,w_1}\otimes \ldots \otimes
C_{\phi_N,w_N}} = \dz{C_{\phi_1,w_1}}\otimes \ldots
\otimes \dz{C_{\phi_N,w_N}},
   \\
(C_{\phi_1,w_1}\otimes \ldots \otimes C_{\phi_N,w_N})
(f_1 \otimes \ldots \otimes f_N) = (C_{\phi_1,w_1}f_1)
\otimes \ldots \otimes (C_{\phi_N,w_N} f_N),
   \end{gather*}
whenever $f_i \in \dz{C_{\phi_i,w_i}}$ for $i\in J_N$
(see \cite[p.\ 262]{Weid}); here
   \begin{align*}
\dz{C_{\phi_1,w_1}}\otimes \ldots \otimes
\dz{C_{\phi_N,w_N}} = \lin \big\{f_1 \otimes \ldots
\otimes f_N\colon f_i \in \dz{C_{\phi_i,w_i}} \text{
for }i\in J_N\big\}.
   \end{align*}
Since the operators $C_{\phi_i,w_i}$, $i\in J_N$, are
closed (see Proposition \ref{lemS1}) and tensor
products of closable operators are
closable\footnote{\;Adapting the proof of
\cite[Proposition, p.\ 298]{R-S} to the context of
operators acting between different Hilbert spaces and
restricting the tensor product of operators in
question to the closure of its domain, we may drop the
assumption that the tensor factors are densely
defined.}, we get the following.
   \begin{pro}\label{closab}
If the operators $C_{\phi_i,w_i}$, $i\in J_N$, are
well-defined, then their tensor product
$C_{\phi_1,w_1}\otimes \ldots \otimes C_{\phi_N,w_N}$
is closable.
   \end{pro}
   The next result sheds more light on the
relationship between the tensor product
$C_{\phi_1,w_1}\otimes \ldots \otimes C_{\phi_N,w_N}$
whose factors are densely defined and the weighted
composition operator $\cfw$.
   \begin{thm} \label{cltenp}
Suppose $C_{\phi_i,w_i}$ is densely defined for each
$i\in J_N$. Then the following assertions are
valid{\em :}
   \begin{enumerate}
   \item[(i)] the operator $\cfw$ is densely defined,
   \item[(ii)]  $\hfw = \hfwi{1} \otimes \ldots \otimes
\hfwi{N}$ a.e.\ $[\mu]$,
   \item[(iii)] $\mu_w \circ \phi^{-1} = [(\mu_1)_{w_1} \circ
\phi_1^{-1}] \otimes \ldots \otimes [(\mu_N)_{w_N}
\circ \phi_N^{-1}]$,
   \item[(iv)] the operator $C_{\phi_1,w_1}\otimes \ldots \otimes
C_{\phi_N,w_N}$ is densely defined, closable and
   \begin{align*}
\overline{C_{\phi_1,w_1}\otimes \ldots \otimes
C_{\phi_N,w_N}} = \cfw.
   \end{align*}
   \end{enumerate}
   \end{thm}
   \begin{proof}
By Proposition \ref{lemS2}, we can modify $\hfwi{i}$
so as to obtain the equality $\{\hfwi{i} < \infty\} =
X_i$ for each $i\in J_N$. Then we have $\{\hfwi{1}
\otimes \ldots \otimes \hfwi{N} < \infty\}=X$.
Applying Lemma \ref{amierz+}, we deduce that $\mu_w
\circ \phi^{-1}$ is absolutely continuous with respect
to $\mu$ and (ii) is satisfied. By Propositions
\ref{wco1} and \ref{lemS2}, (i) holds as well. Since,
by Proposition \ref{lemS2}, the measures
$\nu_i:=(\mu_i)_{w_i} \circ \phi_i^{-1}$, $i\in J_N$,
are $\sigma$-finite, there exists their product
measure $\nu_1 \otimes \ldots \otimes \nu_N$. Hence,
by \eqref{alpr}, (iii) holds.

It remains to prove (iv). By Proposition \ref{closab},
$C_{\phi_1,w_1}\otimes \ldots \otimes C_{\phi_N,w_N}$
is closable. Observe that $C_{\phi_1,w_1}\otimes
\ldots \otimes C_{\phi_N,w_N} \subseteq \cfw$. Since,
by Proposition \ref{lemS1}, $\cfw$ is closed, it is
enough to show that the orthogonal complement of
$\dz{C_{\phi_1,w_1}\otimes \ldots \otimes
C_{\phi_N,w_N}}$ in $\dz{\cfw}$ with respect to the
graph inner product $\is{\cdot}{\mbox{-}}_{\cfw}$ is
the zero space. For this, take a function $u \in
\dz{\cfw}$ which is orthogonal to
$\dz{C_{\phi_1,w_1}}\otimes \ldots \otimes
\dz{C_{\phi_N,w_N}}$ with respect to the graph inner
product $\is{\cdot}{\mbox{-}}_{\cfw}$. Let $f_i \in
\dz{C_{\phi_i,w_i}}$ for $i\in J_N$. Applying
\eqref{l2}, we deduce that
   \begin{align} \label{igss}
u \bar f_1 \otimes \ldots \otimes \bar f_N (1+\hfw)
\in L^1(\mu)
   \end{align}
   and
   \begin{align} \notag
0= \is{u}{f_1 \otimes \ldots \otimes f_N}_{\cfw} & =
\int_X u \bar f_1 \otimes \ldots \otimes \bar f_N \D
\mu
   \\ \notag
& \hspace{4ex}+ \int_X (u \circ \phi) \big((\bar f_1
\otimes \ldots \otimes \bar f_N) \circ \phi\big)
\D\mu_w
   \\  \label{cieplo}
& = \int_X u \bar f_1 \otimes \ldots \otimes \bar f_N
(1+\hfw) \D \mu.
   \end{align}
It follows from Lemma \ref{aproks} and Proposition
\ref{lemS2} that for every $i \in J_N$, there exists a
sequence $\{\varOmega_{i,n}\}_{n=1}^{\infty} \subseteq
\ascr_i$ such that
   \begin{align} \label{ukryt}
\text{$\mu_i(\varOmega_{i,n}) < \infty$ and $\hfwi{i}
\Le n$ a.e.\ $[\mu_i]$ on $\varOmega_{i,n}$ for every
$n\in \nbb$,}
   \end{align}
and $\varOmega_{i,n} \nearrow X_i$ as $n\to \infty$.
Fix $n\in \nbb$. Let $\cscr_n=\{\varDelta \in \ascr
\colon \varDelta \subseteq \varOmega_n\}$, where
$\varOmega_n:=\varOmega_{1,n} \times \ldots \times
\varOmega_{N,n}$. Clearly, for $i\in J_N$, the family
$\ascr_{i,n} := \ascr_i \cap \varOmega_{i,n}$ is a
$\sigma$-algebra of subsets of $\varOmega_{i,n}$.
Arguing as in \eqref{alpr}, we get
   \begin{align} \label{dyctu}
\cscr_n & = \sigma_{\varOmega_n}
\big(\ascr_{1,n}\boxtimes \ldots \boxtimes
\ascr_{N,n}\big).
   \end{align}
Since, by Proposition \ref{lemS1}(i) and
\eqref{ukryt}, $f_i:=\chi_{\varOmega_{i,n}} \in
\dz{C_{\phi_i,w_i}}$ for $i\in J_N$, we infer from
\eqref{igss} that $u(1+\hfw) \in L^1(\varOmega_n,
\cscr_n, \mu|_{\cscr_n})$. Hence, by Lebesgue's
dominated convergence theorem, the set function
$\nu_n\colon \cscr_n \to \cbb$ defined by
   \begin{align} \label{lypie}
\nu_n(\varDelta) = \int_{\varDelta} u(1+\hfw) \D \mu,
\quad \varDelta \in \cscr_n,
   \end{align}
is a complex measure. Since $\chi_{\varDelta_i} \in
\dz{C_{\phi_i,w_i}}$ for $\varDelta_i \in \ascr_{i,n}$
and $i\in J_N$, we deduce from \eqref{cieplo} that
$\nu_n$ vanishes on the semi-algebra
$\ascr_{1,n}\boxtimes \ldots \boxtimes \ascr_{N,n}$.
By \cite[Proposition I-6-1]{Nev}, $\nu_n$ vanishes on
the algebra $\dscr$ generated by $\ascr_{1,n}\boxtimes
\ldots \boxtimes \ascr_{N,n}$. It follows from
\cite[Proposition 1.19]{Rud} that the class of sets
$\{\varDelta \in \cscr_n\colon \nu_n(\varDelta)=0\}$
is a monotone class which contains $\dscr$. Hence, by
\eqref{dyctu} and the monotone class theorem (see
\cite[Theorem 1.3.9]{Ash}), we see that $\nu_n$
vanishes on the entire $\cscr_n$. This fact combined
with $u(1+\hfw) \in L^1(\varOmega_n, \cscr_n,
\mu|_{\cscr_n})$ and \eqref{lypie} implies that $u=0$
a.e.\ $[\mu]$ on $\varOmega_n$. Since $\varOmega_n
\nearrow X$ as $n\to \infty$, we conclude that $u=0$
a.e.\ $[\mu]$. This completes the proof.
   \end{proof}
It is worth mentioning that the following problem,
which is closely related to Theorem \ref{cltenp},
remains unsolved.
   \begin{opq} \label{Pro2}
Does the assumption that the operator $\cfw$ is
nonzero and densely defined imply that the operators
$C_{\phi_i,w_i}$, $i\in J_N$, are densely
defined\/{\em ?}
   \end{opq}
It may also be useful to provide yet another comment
on Theorem \ref{cltenp}.
   \begin{rem} \label{probl1-com}
We show that under the weakest possible assumption
that the operators $C_{\phi_i,w_i}$, $i\in J_N$, are
well-defined, the tensor product
$C_{\phi_1,w_1}\otimes \ldots \otimes C_{\phi_N,w_N}$
acts in a way that still resembles the formal action
of $\cfw$. Given a measure space $(Z,\cscr,\tau)$ and
a $\cscr$-measurable complex function $g$ on $Z$, we
write $[g]_{\tau}$ for the equivalence class of $g$
with respect to the equivalence relation ``a.e.\
$[\tau]$''. Denote by $\escr$ the linear span of the
set of all functions of the form $f_1 \otimes \ldots
\otimes f_N$, where, for $i\in J_N$, $f_i$ is an
$\ascr_i$-measurable function on $X_i$ such that
$[f_i]_{\mu_i} \in \dz{C_{\phi_i,w_i}}$. It is a
simple matter to verify that
   \begin{gather*}
\dz{C_{\phi_1,w_1}\otimes \ldots \otimes
C_{\phi_N,w_N}} = \big\{[f]_{\mu}\colon f \in
\escr\big\},
   \\
(C_{\phi_1,w_1}\otimes \ldots \otimes C_{\phi_N,w_N})
[f]_{\mu} = [w \cdot f \circ \phi]_{\mu}, \quad f \in
\escr.
   \end{gather*}
This means that $w \cdot f \circ \phi = w \cdot g
\circ \phi$ a.e.\ $[\mu]$ whenever $f$ and $g$ are
functions in $\escr$ such that $f=g$ a.e.\ $[\mu]$ (in
fact, this property is equivalent to knowing that the
tensor product $C_{\phi_1,w_1}\otimes \ldots \otimes
C_{\phi_N,w_N}$ is well-defined). However, we do not
know whether $w \cdot f \circ \phi = w \cdot g \circ
\phi$ a.e.\ $[\mu]$ if $f$ is an $\ascr$-measurable
complex function on $X$, $g$ is a function in $\escr$
and $f=g$ a.e.\ $[\mu]$ (cf.\ Problem \ref{probl1}).
   \end{rem}
   \section{\label{Sect8.2}Modifying the symbol $\phi$}
As will be seen in this and the next section,
modifying the symbols of weighted composition
operators may be fruitful. Here we provide a certain
modification which will be used in two ways: first, to
show that the conclusion of Theorem \ref{main2} is
optimal in a sense that in general non of the
implications in \eqref{futro} can be reversed (see
Example \ref{rem0+}) and, second, to improve the
measurability properties of families $P$ satisfying
\eqref{cc} in the case of quasinormal operators (see
Proposition \ref{munich3}).
   \begin{lem}\label{modphi}
Suppose \eqref{stand2} holds. Then the transformation
$\tilde \phi\colon X \to X$ given~ by
   \begin{align} \label{numerek}
\tilde \phi(x) =
   \begin{cases}
x & \text{if} \quad w(x) = 0,
   \\
\phi(x) & \text{if} \quad w(x) \neq 0,
   \end{cases}
\quad x \in X,
   \end{align}
is $\ascr$-measurable and has the following
properties{\em :}
   \begin{enumerate}
   \item[(i)] $\tilde\phi=\phi$ a.e.\ $[\mu_w]$,
$\mu_w\circ \tilde \phi^{-1} \ll \mu$, $\htfw = \hfw$
a.e.\ $[\mu]$ and $\cfw=\ctfw$,
   \item[(ii)] if $\hfw < \infty$ a.e.\ $[\mu]$, then
$\etfw(f) = \efw(f)$ a.e.\ $[\mu_w]$ for every
$\ascr$-measurable function $f \colon X\to \rbop$,
   \item[(iii)] if $\hfw \circ \phi = \hfw$ a.e.\
$[\mu_w]$, then $\htfw \circ \tilde \phi = \htfw$
a.e.\ $[\mu]$.
   \end{enumerate}
   \end{lem}
   \begin{proof}
(i)\&(ii) Apply Proposition \ref{wco1} and Proposition
\ref{appcoin}.

(iii) By (i) and Lemma \ref{nuklear}, we have
   \begin{align} \label{wcoZ}
\htfw \circ \tilde \phi = \hfw \circ \tilde \phi
\overset{\eqref{numerek}}= \hfw \circ \phi = \hfw =
\htfw \text{ a.e.\ $[\mu_w]$}.
   \end{align}
Since clearly $\htfw \circ \tilde \phi = \htfw$ on
$\{w=0\}$, we get $\htfw \circ \tilde \phi = \htfw$
a.e.\ $[\mu]$.
   \end{proof}
Regarding Lemma \ref{modphi}, we note that if $\hfw <
\infty$ a.e.\ $[\mu]$, then the assertion (iii) of
this lemma can also be deduced from Proposition
\ref{wco1} and Theorem \ref{quain}.
   \begin{pro} \label{modPhi}
If \eqref{stand3} holds and $\tilde \phi$ is as in
\eqref{numerek}, then the following assertions are
valid{\em :}
   \begin{enumerate}
   \item[(i)] if $\mu_w\circ \phi^{-1} \ll
\mu$, $\hfw < \infty$ a.e.\ $[\mu]$ and $P$ satisfies
\eqref{cc}, then $P$ satisfies \eqref{cc} with $\tilde
\phi$ in place of $\phi$,
   \item[(ii)] if $\rho_W \circ \varPhi^{-1} \ll \rho$ and
$\HFW \circ \varPhi = \HFW$ a.e.\ $[\rho_W]$, then
$\rho_W \circ \tilde \varPhi^{-1} \ll \rho$ and $\HtFW
\circ \tilde \varPhi = \HtFW$ a.e.\ $[\rho]$, where
$\tilde \varPhi\colon X \times \rbb_+ \to X \times
\rbb_+$ is given by $\tilde \varPhi(x,t) = (\tilde
\phi(x),t)$ for $x \in X$ and $t \in \rbb_+$.
   \end{enumerate}
   \end{pro}
   \begin{proof}
(i) Apply Lemma \ref{modphi}(ii) and the equality
$\htfw \circ \tilde \phi = \hfw \circ \phi$ a.e.\
$[\mu_w]$ (see \eqref{wcoZ}).

(ii) First note that
   \begin{align*}
\tilde \varPhi(x,t) =
   \begin{cases}
(x,t) & \text{if} \quad W(x,t) = 0,
   \\
\varPhi(x,t) & \text{if} \quad W(x,t) \neq 0,
   \end{cases}
\quad (x,t) \in X \times \rbb_+.
   \end{align*}
This and Lemma \ref{modphi}(iii) applied to $\varPhi$,
$W$ and $\rho$ in place of $\phi$, $w$ and $\mu$,
respectively, complete the proof.
   \end{proof}
Now we are in a position to provide an example which
was announced in the last paragraph of Section
\ref{sectcc-1}. Recall that in view of Theorem
\ref{main2}, the conditions
\mbox{(i$^\star$)-(v$^\star$)} are equivalent and the
implications
\mbox{(iii$^\star$)}$\Rightarrow$\mbox{(vi$^\star$)},
\mbox{(vii$^\star$)}$\Rightarrow$\mbox{(iii$^\star$)}
and
\mbox{(vii$^\star$)}$\Rightarrow$\mbox{(vi$^\star$)}
hold. The example below demonstrates that in general
none of these implications can be reversed.
   \begin{exa} \label{rem0+}
Consider a subnormal weighted composition operator
$\cfw$ that admits an $\ascr$-measurable family
$P\colon X \times \borel{\rbb_+} \to [0,1]$ of
probability measures satisfying \eqref{cc} and has the
property that
   \begin{align*}
\mu(\varTheta_{00}) > 0, \quad \text{where} \quad
\varTheta_{00} := \{\hfw = 0\} \cap \{w = 0\}.
   \end{align*}
In view of Remark \ref{22.X.2013Daegu} or Example
\ref{quasif}, such an operator exists. By Corollary
\ref{hipinj}, $\hfw > 0$ a.e.\ $[\mu_w]$. Clearly,
(vii$^\star$) does not hold. According to Corollary
\ref{lemS15}(i), $P$ can be modified so as to satisfy
the equality $\int_0^\infty t P(\cdot, \D t) = 0$
a.e.\ $[\mu]$ on $\{\hfw = 0\}$. It follows from
Theorem \ref{main2} that the so-modified $P$ satisfies
\eqref{cc-1mu}. This shows that the implication
\mbox{(iii$^\star$)}$\Rightarrow$\mbox{(vii$^\star$)}
does not hold in general. It remains to prove that
also the implications
\mbox{(vi$^\star$)}$\Rightarrow$\mbox{(iii$^\star$)}
and
\mbox{(vi$^\star$)}$\Rightarrow$\mbox{(vii$^\star$)}
do not hold in general. By Corollary \ref{lemS15}(ii),
$P$ can be modified so as to satisfy the equality
$\int_0^\infty t P(\cdot, \D t) = \infty$ a.e.\
$[\mu]$ on $\varTheta_{00}$. The so-modified $P$
satisfies \eqref{cc} and, by Theorem \ref{main2}, does
not satisfy \eqref{momomu} and \eqref{cc-1mu}. To
guarantee that \mbox{(vi$^\star$)} holds for the above
$P$, we have to modify $\phi$ and $\varPhi$ in
accordance with Lemma \ref{modphi} and Proposition
\ref{modPhi} (this is possible due to the implication
(vii)$\Rightarrow$(vi) of Theorem \ref{main1});
summarizing, our $P$ does not satisfy
\mbox{(iii$^\star$)} and \mbox{(vii$^\star$)}.
   \end{exa}
   \section{\label{Sec8.3}Quasinormality revisited}
It follows from \cite[Sect.\ 6]{b-j-j-sC} and
\cite[Theorem 7 and Proposition 10]{b-j-j-sS} that
each quasinormal composition operator $C_{\phi}$
admits a $\phi^{-1}(\ascr)$-measurable family of
probability measures that satisfies \eqref{cc-1mu}. As
shown in Example \ref{quasif} below, this is no longer
true for weighted composition operators. Nevertheless,
this is the case when $\phi^{-1}(\ascr)$-measurability
is replaced by $\ascr$-measurability (see Proposition
\ref{munich2}). What is more,
$\phi^{-1}(\ascr)$-measurability can always be
restored by modifying the symbol $\phi$ of $\cfw$ (see
Proposition \ref{munich3}).

We begin by showing that each quasinormal weighted
composition operator $\cfw$ has an $\ascr$-measurable
family of probability measures $P$ that satisfies
\eqref{cc-1mu} (and consequently \eqref{cc}; see
\eqref{cc-1to-cc}). If $w \neq 0$ a.e.\ $[\mu]$, then
$P$ can always be chosen to be
$\phi^{-1}(\ascr)$-measurable (this covers
\cite[Proposition 10]{b-j-j-sC}). We also discuss the
question of uniqueness of $P$.
   \begin{pro}\label{munich2}
Suppose \eqref{stand2} holds and $\cfw$ is
quasinormal. Then the following assertions are
valid{\em :}
   \begin{enumerate}
   \item[(i)] a mapping $P\colon X \times \borel{\rbb_+}\to[0,1]$ defined by
   \begin{align} \label{def1}
P(x,\sigma)=\chi_{\sigma}(\hfw(x)), \quad x\in X,
\sigma\in\borel{\rbb_+},
   \end{align}
is a $\bscr$-measurable family of probability measures
that satisfies \eqref{cc-1mu}, where
$\bscr=\phi^{-1}(\ascr)^{\mu_w}$ $($see
\eqref{complascr} for notation$)$,
      \item[(ii)] if $w\neq 0$ a.e.\ $[\mu]$, then a mapping $P\colon X \times
\borel{\rbb_+}\to[0,1]$ defined by
   \begin{align} \label{def2}
P(x,\sigma)=\chi_{\sigma}(\hfw(\phi(x))), \quad x\in
X, \sigma\in\borel{\rbb_+},
   \end{align}
is a $\phi^{-1}(\ascr)$-measurable family of
probability measures satisfying \eqref{cc-1mu},
   \item[(iii)] if $P_1,P_2\colon X \times \borel{\rbb_+} \to [0,1]$ are
$\ascr$-measurable families of probability measures
satisfying \eqref{cc}, then $P_1(x, \cdot) = P_2(x,
\cdot)$ for $\mu_w$-a.e.\ $x\in X$,
   \item[(iv)] if $P_1,P_2\colon X \times \borel{\rbb_+} \to [0,1]$ are
$\ascr$-measurable families of probability measures
satisfying \eqref{cc-1mu}, then $P_1(x, \cdot) =
P_2(x, \cdot)$ for $\mu$-a.e.\ $x\in X$.
   \end{enumerate}
   \end{pro}
   \begin{proof}
(i) By \eqref{def1} and Theorem \ref{quain},
$P(\phi(x), \sigma) = P(x, \sigma)$ for $\mu_w$-a.e.\
$x\in X$ and for all $\sigma \in \borel{\rbb_+}$.
Hence, $P$ is $\bscr$-measurable (see \cite[Lemma 1,
p. 169]{Rud}) and $\efw(P(\cdot, \sigma))(x) =
P(\phi(x), \sigma)$ for $\mu_w$-a.e.\ $x \in X$ and
for all $\sigma \in \borel{\rbb_+}$. This implies that
$P$ satisfies \eqref{cc}. Applying the implication
\mbox{(iv$^\star$)}$\Rightarrow$\mbox{(i$^\star$)} of
Theorem \ref{main2}, we see that $P$ satisfies
\eqref{cc-1mu} as well.

(ii) Since $\mu \ll \mu_w$, we infer from Theorem
\ref{quain} that $\hfw \circ \phi = \hfw$ a.e.\
$[\mu]$. Hence, by Lemma \ref{nuklear}, $\hfw \circ
\phi^2 = \hfw\circ \phi$ a.e.\ $[\mu_w]$. Since, by
\eqref{def2}, $P(\cdot,\sigma)$ is
$\phi^{-1}(\ascr)$-measurable, we easily verify that
$P$ satisfies \eqref{cc}. Applying the equality $\hfw
\circ \phi = \hfw$ a.e.\ $[\mu]$ and the implication
\mbox{(iv$^\star$)}$\Rightarrow$\mbox{(i$^\star$)} of
Theorem \ref{main2}, we conclude that $P$ satisfies
\eqref{cc-1mu}.

(iii) Let $P$ be as in (i). By Corollary \ref{hipinj},
$\hfw > 0$ a.e.\ $[\mu_w]$. Using the implication
(vii)$\Rightarrow$(ii) of Theorem \ref{main1}, we
deduce that the measures $P(x,\cdot)$, $P_1(x,\cdot)$
and $P_2(x,\cdot)$ are representing measures of a
Stieltjes moment sequence
$\{(\hfw(x))^n\}_{n=0}^\infty$ for $\mu_w$-a.e.\ $x\in
X$. This and \eqref{Stiogr} prove (iii).

(iv) Argue as in (iii), using Theorem \ref{main2} in
place of Theorem \ref{main1}.
   \end{proof}
Now we show that the modification $\phi
\rightsquigarrow \tilde \phi$ of the symbol of a
quasinormal operator $\cfw$ enables us to obtain a
$\phi^{-1}(\ascr)$-measurable family of probability
measures that satisfies \eqref{cc-1mu}.
   \begin{pro} \label{munich3}
Suppose \eqref{stand2} holds and $\cfw$ is
quasinormal. Let $\tilde \phi$ be as in
\eqref{numerek}. Then $\tilde\phi$ is
$\ascr$-measurable, $C_{\tilde\phi,w}$ is
well-defined, $C_{\tilde\phi,w} = \cfw$ and a mapping
$P\colon X \times \borel{\rbb_+} \to [0,1]$ defined by
   \begin{align*}
P(x,\sigma)=\chi_{\sigma}(\htfw(\tilde\phi(x))),\quad
x\in X, \sigma\in\borel{\rbb_+},
   \end{align*}
is a $\tilde\phi^{-1}(\ascr)$-measurable family of
probability measures that satisfies \eqref{cc-1mu}
with $\tilde\phi$ in place of $\phi$. Moreover,
$\mathsf{h}_{\tilde\phi,w} \circ \tilde\phi =
\mathsf{h}_{\tilde\phi,w}$ a.e.\ $[\mu]$ and $P$ is a
$(\phi^{-1}(\ascr))^{\mu_w}$-measurable.
   \end{pro}
   \begin{proof}
Clearly, $P$ is a $\tilde\phi^{-1}(\ascr)$-measurable
family of probability measures. Hence, by Lemma
\ref{modphi} and Proposition \ref{wco1},
$\tilde\phi=\phi$ a.e.\ $[\mu_w]$, $P$ is
$(\phi^{-1}(\ascr))^{\mu_w}$-measurable,
$C_{\tilde\phi,w}$ is well-defined and
$C_{\tilde\phi,w} = \cfw$. In view of Theorem
\ref{quain}, $\mathsf{h}_{\tilde\phi,w}
(\tilde\phi(x)) = \mathsf{h}_{\tilde\phi,w}(x)$ for
$\mu$-a.e.\ $x \in \{w \neq 0\}$. By the definition of
$\tilde\phi$, $\mathsf{h}_{\tilde\phi,w}
(\tilde\phi(x)) = \mathsf{h}_{\tilde\phi,w}(x)$ for
every $x \in \{w = 0\}$. Hence
$\mathsf{h}_{\tilde\phi,w} \circ \tilde\phi =
\mathsf{h}_{\tilde\phi,w}$ a.e.\ $[\mu]$. Arguing as
in the proof of assertion (ii) of Proposition
\ref{munich2} completes the proof.
   \end{proof}
Below we give an example of a quasinormal weighted
composition operator $\cfw$ which has no
$\phi^{-1}(\ascr)$-measurable family of probability
measures satisfying \eqref{cc}. In this particular
case, $\mu(\varTheta_{++})
>0$, $\mu(\varTheta_{+0}) >0$ and $\mu(\varTheta_{00})
>0$ (see Remark \ref{4sets}). However,
$\mu(\varTheta_{0+}) = 0$ due to Corollary
\ref{hipinj}.
   \begin{exa} \label{quasif}
Fix $M,N \in \nbb \cup \{\infty\}$. Set $X =
\varOmega_0 \sqcup \varOmega_1$ and $\ascr=2^X$, where
$\varOmega_0 = \bigsqcup_{m=1}^M \zbb_+ \times \{m\}
\times \{0\}$ and $\varOmega_1 = \bigsqcup_{n=1}^N
\zbb_+ \times \{n\} \times \{1\}$. Let
$\{\lambda_n\}_{n=1}^N$ be a sequence of positive real
numbers. Define the $\sigma$-finite measure $\mu\colon
\ascr \to \rbop$ by
   \begin{align} \label{defmu}
\mu(\{(j,m,s)\}) =
   \begin{cases}
\delta_0(j) & \text{ if } s=0,
   \\
\lambda_m^j & \text{ if } s=1,
   \end{cases}
\quad (j,m,s) \in X.
   \end{align}
Set $w=\chi_{\varOmega}$ with $\varOmega =
\big(\bigsqcup_{m=1}^M \nbb \times \{m\} \times
\{0\}\big) \sqcup \big(\bigsqcup_{n=1}^N \nbb \times
\{n\} \times \{1\}\big)$. The transformation $\phi$ of
$X$ is defined by
   \begin{align}  \label{defp}
\phi((j,m,s)) =
   \begin{cases}
(j-1,m,s) & \text{ if } j\Ge 1,
   \\
(0,1,s) & \text{ if } j=0,
   \end{cases}
\quad (j,m,s) \in X.
   \end{align}
The reader is referred to Figure \ref{figura5} which
illustrates the measure space $(X,\ascr,\mu)$ and the
transformation $\phi$ defined above.
   \begin{center}
   \begin{figure}[t]
\subfigure {
\includegraphics[scale=0.20]{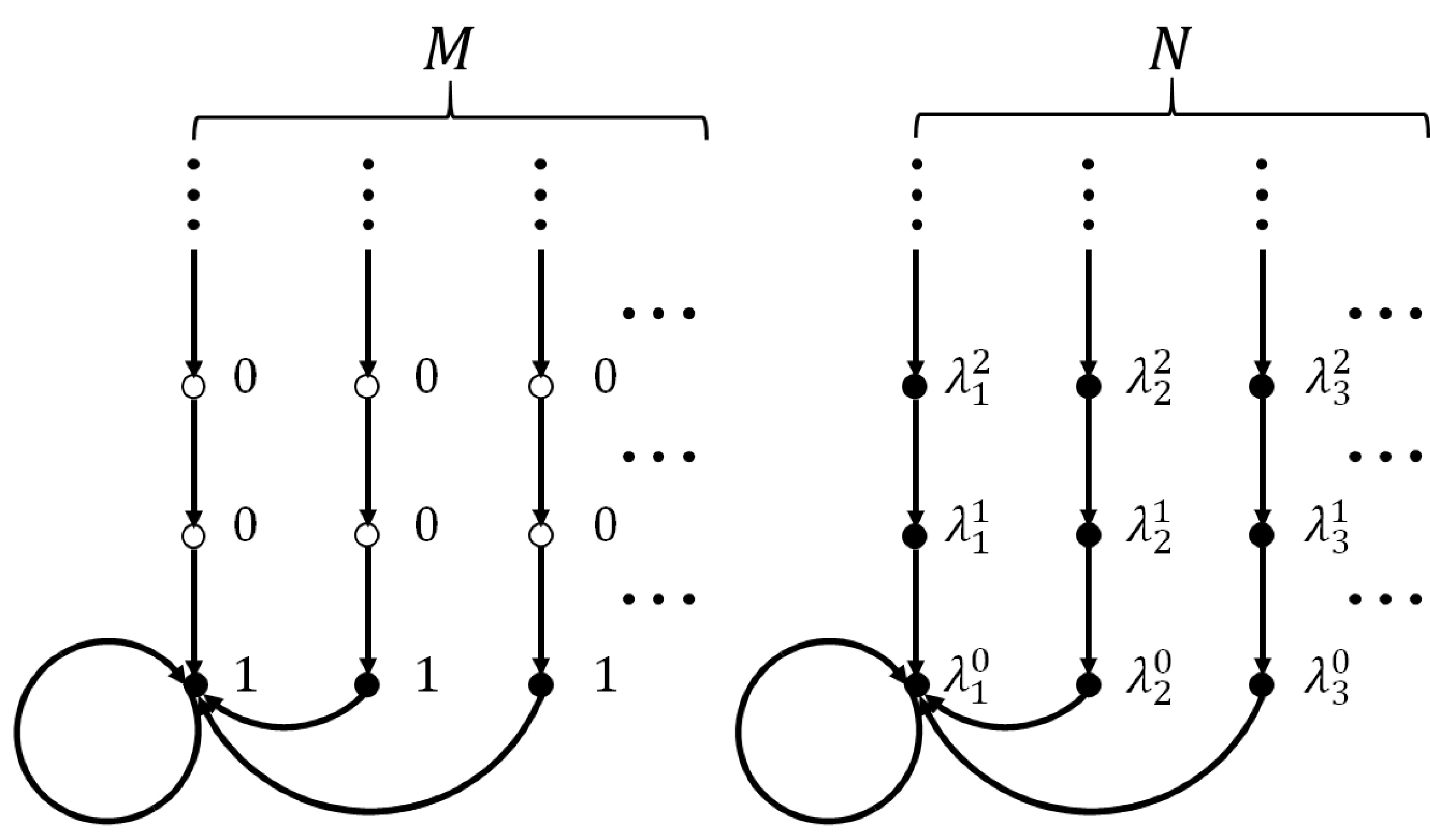}
}
   \caption{The measure $\mu$ and the transformation
$\phi$ that appear in Example \ref{quasif}.}
   \label{figura5}
   \end{figure}
   \end{center}

First, note that $C_{\phi,w}$ is well-defined. Indeed,
if $\mu(\varDelta)=0$, then $\varDelta \subseteq
\varOmega \cap \varOmega_0$ and thus
$\phi^{-1}(\varDelta) \subseteq \varOmega \cap
\varOmega_0$, which implies that
$\mu_w(\phi^{-1}(\varDelta))=0$. It is clear that
   \begin{align} \label{hsf}
\hfw((j,m,s)) =
   \begin{cases}
0 & \text{ if } s=0,
   \\
\lambda_m & \text{ if } s = 1,
   \end{cases}
\quad (j,m,s) \in X.
   \end{align}
(The function $\hfw$ is uniquely determined on the set
${X \setminus (\varOmega \cap \varOmega_0)}$ which
coincides with $\at{\mu}$ (see \eqref{atom}); so we
may put $\hfw|_{\varOmega \cap \varOmega_0}=0$.) By
\eqref{hsf}, $\cfw$ is densely defined, $\hfw > 0$
a.e.\ $[\mu_w]$, $\mu(\varTheta_{+0}) = N >0$ and
$\mu(\varTheta_{00}) = M >0$, which means that the
condition ``$\hfw > 0$ a.e.\ $[\mu]$'' does not hold.
Hence, by Lemma \ref{jadro}, $\cfw$ is not injective.
It follows from \eqref{defp} and \eqref{hsf} that
$\hfw(\phi(x)) = \hfw(x)$ for all $x \in \varOmega
\cap \varOmega_1$. Since $\varOmega \cap
\varOmega_1=\at{\mu_w}$, we conclude that $\hfw \circ
\phi = \hfw$ a.e.\ $[\mu_w]$. By Theorem \ref{quain},
$\cfw$ is quasinormal and, by Proposition
\ref{munich2}, $\cfw$ has an $\ascr$-measurable family
of probability measures that satisfies \eqref{cc-1mu}.
Note that if $P\colon X \times \borel{\rbb_+} \to
[0,1]$ is any $\ascr$-measurable family of probability
measures that satisfies \eqref{cc-1mu}, then
$P(x,\cdot) = \delta_0(\cdot)$ for every $x\in
\varTheta_{00}$. Indeed, this is a direct consequence
of the implication
\mbox{(i$^\star$)}$\Rightarrow$\mbox{(iv$^\star$)} of
Theorem \ref{main2}.

Suppose $P\colon X \times \borel{\rbb_+} \to [0,1]$ is
an $\ascr$-measurable family of probability measures
that satisfies \eqref{cc}. We will show that
$P(x,\cdot)$ is uniquely determined for all $x \in
\varTheta_{+0} = \{(0,n,1) \colon n \in J_N\}$, where
$J_N = \nbb \cap [1,N]$. In fact, we will prove~ that
   \begin{align} \label{pon}
P((0,n,1),\cdot) = \delta_{\lambda_n}(\cdot), \quad n
\in J_N.
   \end{align}
Indeed, since $\varOmega \cap \varOmega_1=\at{\mu_w}$,
we easily verify that for every function $f\colon X
\to \rbop$,
   \begin{align*}
\big(\efw(f)\big)(y) = \frac{\int_{\phi^{-1}(\{x\})} f
\D \mu_w}{\mu_w(\phi^{-1}(\{x\}))} = f(y), \quad y \in
\phi^{-1}(\{x\}), \quad x\in \varOmega \cap
\varOmega_1.
   \end{align*}
(The denominator is nonzero because
$\phi^{-1}(\varOmega \cap \varOmega_1) \subseteq
\varOmega \cap \varOmega_1$.) Then clearly \eqref{cc}
takes the following form
   \begin{align*}
P(x,\sigma) \cdot \hfw(\phi(x)) = \int_{\sigma} t
P(\phi(x), \D t), \quad x\in \varOmega \cap
\varOmega_1.
   \end{align*}
Since $(j+1, n, 1) \in \varOmega \cap \varOmega_1$ for
$j\in \zbb_+$ and $n \in J_N$, we get
   \begin{align*}
P((j+1, n, 1), \sigma) \cdot \lambda_n = \int_{\sigma}
t P((j, n, 1), \D t), \quad \sigma \in \borel{\rbb_+},
\, j \in \zbb_+, \, n \in J_N.
   \end{align*}
By induction on $j$, we have
   \begin{align*}
P((j,n,1), \sigma) = \frac{1}{\lambda_n^j}
\int_{\sigma} t^j P((0,n,1), \D t), \quad \sigma \in
\borel{\rbb_+}, \, j \in \zbb_+, \, n \in J_N.
   \end{align*}
Substituting $\sigma=\rbb_+$, we obtain
   \begin{align*}
\lambda_n^j = \int_0^\infty t^j P((0,n,1), \D t),
\quad j \in \zbb_+, \, n \in J_N,
   \end{align*}
which together with \eqref{Stiogr} implies
\eqref{pon}.

Assume additionally that the family $P$ is
$\phi^{-1}(\ascr)$-measurable. Then for every $\sigma
\in \borel{\rbb_+}$, $P(\cdot, \sigma)$ is a constant
function on the set
   \begin{align*}
\phi^{-1}(\{(0,1,1)\}) = \{(0,n,1)\colon n \in J_N\}
\sqcup \{(1,1,1)\}.
   \end{align*}
This yields
   \begin{align*}
\delta_{\lambda_n}(\cdot) \overset{\eqref{pon}}=
P((0,n,1), \cdot) = P((0,1,1), \cdot)
\overset{\eqref{pon}} = \delta_{\lambda_1}(\cdot),
\quad n \in J_N,
   \end{align*}
which implies that $\lambda_n = \lambda_1$ for every
$n \in J_N$. Hence, if the sequence
$\{\lambda_n\}_{n=1}^N$ is non-constant, then $\cfw$
has no $\phi^{-1}(\ascr)$-measurable family of
probability measures satisfying \eqref{cc}.
   \end{exa}
   \begin{rem} \label{Zenchce}
It is worth pointing out that the operator $\cfw$
constructed in Example \ref{quasif} is of the form
   \begin{align*}
\cfw = C_{\psi,u} \oplus C_{\zeta,v},
   \end{align*}
where $C_{\psi,u}$ is a weighted composition operator
in $L^2(\varOmega_0,
2^{\varOmega_0},\mu|_{2^{\varOmega_0}})$ with
$u=w|_{\varOmega_0}$ and
$\psi=\phi|_{\varOmega_0}\colon \varOmega_0 \to
\varOmega_0$, and $C_{\zeta,v}$ is a weighted
composition operator in $L^2(\varOmega_1,
2^{\varOmega_1},\mu|_{2^{\varOmega_1}})$ with
$v=w|_{\varOmega_1}$ and
$\zeta=\phi|_{\varOmega_1}\colon \varOmega_1 \to
\varOmega_1$. Indeed, it is a matter of routine to
show that $C_{\psi,u}$ and $C_{\zeta,v}$ are
well-defined, $\mathsf{h}_{\psi,u} =
\hfw|_{\varOmega_0}=0$ a.e.\
$[\mu|_{2^{\varOmega_0}}]$ and $\mathsf{h}_{\zeta,v} =
\hfw|_{\varOmega_1}$ a.e.\ $[\mu|_{2^{\varOmega_1}}]$.
This implies that $\cfw = C_{\psi,u} \oplus
C_{\zeta,v}$, $C_{\psi,u}$ is the zero operator on
$L^2(\varOmega_0,
2^{\varOmega_0},\mu|_{2^{\varOmega_0}})$ and
$C_{\zeta,v}$ is a quasinormal operator (use Theorem
\ref{quain}). By \eqref{defmu} and \eqref{hsf}, we
have
   \begin{gather*} \mu(\{\mathsf{h}_{\psi,u} = 0\}
\cap \{u = 0\}) = M > 0,
   \\
\mu(\{\mathsf{h}_{\zeta,v} = 0\} \cap \{v = 0\}) =
0,
   \\
\mu(\{\mathsf{h}_{\zeta,v} > 0\} \cap \{v = 0\}) = N
> 0.
   \end{gather*}
   \end{rem}
   \begin{appendix}
   \numberwithin{equation}{section}
   \numberwithin{thm}{section}

\chapter{Nonprobabilistic Expectation and Operators}

   In this chapter, we discuss some properties of
conditional expectation in the nonprobabilistic
context and Hilbert space operators that have been
used in this paper.
   \section{Conditional expectation} \label{AppB}
In this section, we discuss some basic properties of
conditional expectation in non-probabilistic setting.

Let $(X, \ascr, \nu)$ be a measure space and $\bscr
\subseteq \ascr$ be a $\sigma$-algebra such that the
measure $\nu|_{\bscr}$ is $\sigma$-finite. It follows
from the Radon-Nikodym theorem that for every
$\ascr$-measurable function $f \colon X \to \rbop$
there exists a unique (up to a set of $\nu$-measure
zero) $\bscr$-measurable function $\ceb{f} \colon X
\to \rbop$ such that
   \begin{align} \label{B1}
\int_{\varDelta} f \D \nu = \int_{\varDelta} \ceb{f}
\D \nu, \quad \varDelta \in \bscr.
   \end{align}
We call $\ceb{f}$ the {\em conditional expectation} of
$f$ with respect to the $\sigma$-algebra $\bscr$ and
the measure $\nu$ (see \cite{Rao} for the theory of
conditional expectation in the probabilistic setting).
Clearly, if $g \colon X \to \rbop$ is an
$\ascr$-measurable function such that $f=g$ a.e.\
$[\nu]$, then $\ceb{f}=\ceb{g}$ a.e.\ $[\nu]$.
Moreover, by \eqref{B1}, if $\tilde\bscr\subseteq
\ascr$ is another $\sigma$-algebra such that the
measure $\nu|_{\tilde\bscr}$ is $\sigma$-finite and
$\tilde\bscr \subseteq \bscr$, then for every
$\ascr$-measurable function $f \colon X \to \rbop$,
   \begin{align} \label{Dag9}
\text{$\mathsf{E}(f;\tilde\bscr,\nu) =
\mathsf{E}\big(\ceb{f};\tilde\bscr,\nu\big)$ a.e.\
$[\nu]$.}
   \end{align}
It follows from \eqref{B1} that
   \begin{align}  \label{B2}
   \begin{minipage}{70ex}
{\em for all $\alpha, \beta \in \rbb_+$ and for all
$\ascr$-measurable functions $f,g \colon X \to \rbop$,}
   $$ \text{$\ceb{\alpha f + \beta g} = \alpha \ceb{f} +
\beta \ceb{g}$ a.e.\ $[\nu]$.}
   $$
   \end{minipage}
   \end{align}
Applying the standard approximation procedure to \eqref{B1}, we see that
if $f \colon X \to \rbop$ is $\ascr$-measurable and $g \colon X \to \rbop$
is $\bscr$-measurable, then
   \begin{align} \label{B3}
\int_X g f \D \nu = \int_X g \ceb{f} \D \nu.
   \end{align}
In view of \eqref{B1} and the $\sigma$-finiteness of $\nu|_{\bscr}$, we
have
   \begin{align} \label{B4}
   \begin{minipage}{70ex}
{\em if $f,g \colon X \to \rbop$ are
$\ascr$-measurable functions and $f \Le g$ a.e.\
$[\nu]$, then $\ceb{f} \Le \ceb{g}$ a.e.\ $[\nu]$.}
   \end{minipage}
   \end{align}
Using \eqref{B1}, \eqref{B4} and Lebesque's monotone convergence theorem,
we verify that
   \begin{align} \label{B5}
   \begin{minipage}{72ex}
{\em if $f, f_n\colon X \to \rbop$, $n \in \nbb$, are
$\ascr$-measurable functions such that $f_n \nearrow
f$ a.e.\ $[\nu]$ as $n\to\infty$, then $\ceb{f_n}
\nearrow \ceb{f}$ a.e.\ $[\nu]$ as $n\to\infty$,}
   \end{minipage}
   \end{align}
where the expression ``$f_n \nearrow f$ a.e.\ $[\nu]$
as $n\to \infty$'' means that the sequence
$\{f_n(x)\}_{n=1}^\infty$ is monotonically increasing
to $f(x)$ for $\nu$-a.e.\ $x \in X$. Substituting $g
\cdot \chi_{\varDelta}$ with $\varDelta \in \bscr$ in
place of $g$ in \eqref{B3}, we infer from \eqref{B1},
\eqref{B3} and \eqref{B5} that for every
$\ascr$-measurable function $f \colon X \to \rbop$ and
for every $\bscr$-measurable function $g \colon X \to
\rbop$,
   \begin{align} \label{B6}
\ceb{g f} = g \ceb{f} \text{ a.e.\ $[\nu]$.}
   \end{align}
The following fact is a direct consequence of \eqref{B1}:
   \begin{align} \label{B7}
   \begin{minipage}{70ex}
{\em if $\varDelta \in \bscr$ and $f\colon X \to
\rbop$ is an $\ascr$-measurable function, then $f=0$
a.e.\ $[\nu]$ on $\varDelta$ if and only if $\ceb{f} =
0$ a.e.\ $[\nu]$ on $\varDelta$.}
   \end{minipage}
   \end{align}
The conditional expectation has the property of
``separating points'' like $L^p$-norms.
   \begin{align} \label{B7.5}
   \begin{minipage}{70ex}
{\em If $p\in (0,\infty)$ and $f\colon X \to \rbb_+$
or $f\colon X \to \cbb$ is an $\ascr$-measurable
function such that $\ceb{|f|^p}=0$ a.e.\ $[\nu]$, then
$f=0$ a.e.\ $[\nu]$.}
   \end{minipage}
   \end{align}
This is a direct consequence of the equalities
   \begin{align*}
0=\int_X \ceb{|f|^p} \D \nu \overset{\eqref{B1}}=
\int_X |f|^p \D \nu.
   \end{align*}

The next result resembles H\"{o}lder's inequality.
   \begin{lem} \label{BL1}
Suppose $f, g \colon X \to \rbop$ are $\ascr$-measurable functions. If
$p,q \in (1,\infty)$ are such that $\frac{1}{p} + \frac{1}{q}=1$, then
   \begin{align}  \label{B8}
\ceb{f g} \Le \ceb{f^p}^{1/p} \, \ceb{g^q}^{1/q} \text{ a.e.\ $[\nu]$.}
   \end{align}
Moreover, if $g \in L^{\infty}(\nu)$, then $\ceb{f g} \Le \ceb{f}
\|g\|_{L^{\infty}(\nu)}$ a.e.\ $[\nu]$.
   \end{lem}
   \begin{proof}
Set $u = \ceb{f^p}^{1/p}$, $v=\ceb{g^q}^{1/q}$,
$\varOmega_u = \{0 < u < \infty\}$ and $\varOmega_v =
\{0 < v < \infty\}$. Define the functions $U, V \colon
X \to \rbop$ by
   \begin{align*}
U(x) =
   \begin{cases}
\frac{1}{u(x)} & \text{if } x \in \varOmega_u,
   \\[1ex]
0 & \text{otherwise,}
   \end{cases}
\quad V(x) =
   \begin{cases}
\frac{1}{v(x)} & \text{if } x \in \varOmega_v,
   \\[1ex]
0 & \text{otherwise.}
   \end{cases}
   \end{align*}
Clearly, the functions $U$ and $V$ are
$\bscr$-measurable. Set $F=U \cdot f$ and $G=V \cdot
g$. Then, by the weighted arithmetic-geometric-mean
inequality, we have
   \begin{align*}
F G = (F^p)^{1/p} (G^q)^{1/q} \Le \frac{1}{p} F^p + \frac{1}{q} G^q.
   \end{align*}
This, combined with \eqref{B2}, \eqref{B4} and \eqref{B6}, yields
   \begin{align*}
U V \ceb{f g} = \ceb{F G} \Le \frac{1}{p} U^p \ceb{f^p} + \frac{1}{q} V^q
\ceb{g^q} \text{ a.e.\ $[\nu]$.}
   \end{align*}
Since $U^p\ceb{f^p} = \chi_{\varOmega_u}$ a.e.\ $[\nu]$ and $V^q\ceb{g^q}
= \chi_{\varOmega_v}$ a.e.\ $[\nu]$, we get
   \begin{align*}
U V \ceb{f g} \Le 1 \text{ a.e.\ $[\nu]$.}
   \end{align*}
This implies that the inequality in \eqref{B8} holds
a.e.\ $[\nu]$ on $\varOmega_u \cap \varOmega_v$.
Applying \eqref{B7} to the set $\widehat\varOmega_u :=
\{x \in X \colon u(x) = 0\}$ (resp.,
$\widehat\varOmega_v := \{x \in X \colon v(x) = 0\}$)
and the functions $f^p$, $fg$ (resp., $g^q$ and $fg$),
we deduce that the inequality in \eqref{B8} holds
a.e.\ $[\nu]$ on $\widehat \varOmega_u \cup
\widehat\varOmega_v$. Since the right-hand side of the
inequality in \eqref{B8} is equal to $\infty$ outside
of the set $\widehat\varOmega_u \cup
\widehat\varOmega_v \cup (\varOmega_u \cap
\varOmega_v)$, the proof of \eqref{B8} is complete.
The ``moreover'' part is a direct consequence of
\eqref{B2} and \eqref{B4}.
   \end{proof}
   \begin{cor} \label{BL2}
Suppose $p \in [1,\infty]$ and $f \colon X \to \rbop$
is an $\ascr$-measurable function such that $\int_X
|f|^p \D \nu < \infty$ if $p<\infty$ or
$\mathrm{ess\,sup}_{\nu} |f|<\infty$ if $p=\infty$.
Then $\int_X |\ceb{f}|^p \D \nu < \infty$ if
$p<\infty$ or $\mathrm{ess\,sup}_{\nu}
|\ceb{f}|<\infty$ if $p=\infty$.
   \end{cor}
   \begin{proof}
If $p \in [1,\infty)$, then by applying Lemma
\ref{BL1} to $g = \boldsymbol{1}$, we see that
$\ceb{f}^p \Le \ceb{f^p}$ a.e.\ $[\nu]$, which implies
that
   \begin{align*}
\int_X \ceb{f}^p \D \nu \Le \int_X \ceb{f^p} \D \nu =
\int_X f^p \D \nu.
   \end{align*}
If $p=\infty$, then by \eqref{B4}, $\ceb{f} \Le
\|f\|_{L^{\infty}(\nu)}$.
   \end{proof}
It follows from \eqref{B2} and Corollary \ref{BL2}
that for every $p \in [1,\infty]$, the mapping
$\ceb{\,\cdot\,}$ can be extended linearly from the
convex cone $L_+^p(\nu)$ to all of $L^p(\nu)$ via
standard extension procedure. We use the same symbol
$\ceb{\,\cdot\,}$ for the extended mapping, and call
it the {\em operator of conditional expectation} (we
will suppress the dependence on $p$ in notation). By
Corollary \ref{BL2} and the extension procedure, the
following holds.
   \begin{align} \label{apme}
   \begin{minipage}{70ex}
{\em Suppose $p \in [1,\infty]$ and $\efw$ is regarded
as an operator in $L^p(\nu)$. If $f\in L^p(\nu)$, then
$f\in \ob{\efw}$ if and only if there exists
$\bscr$-measurable function $\tilde f \in L^p(\nu)$
such that $f=\tilde f$ a.e.\ $[\nu]$.}
   \end{minipage}
   \end{align}

By Corollary \ref{BL2}, we can always choose $\ceb{f}$
to be a $\bscr$-measurable complex-valued function
whenever $f \in L^p(\nu)$ and $p \in [1,\infty]$.
Hence it is clear that
   \begin{align} \label{B9}
   \begin{minipage}{70ex}
{\em $\overline{\ceb{f}} = \ceb{\bar f}$ a.e.\ $[\nu]$
whenever $f\in L^p(\nu)$ and $p\in [1,\infty]$,}
   \end{minipage}
   \end{align}
and, by \eqref{B3},
   \begin{align} \label{B9.5}
   \begin{minipage}{70ex}
{\em $\int_X g f \D \nu = \int_X g \ceb{f} \D \nu$
whenever $g\colon X \to \cbb$ is $\bscr$-measur\-able,
$g \in L^q(\nu)$ and $f\in L^p(\nu)$, where $p,q \in
[1,\infty]$ are such that $\frac{1}{p} +
\frac{1}{q}=1$.}
   \end{minipage}
   \end{align}
Moreover, by applying \eqref{B6}, we obtain
   \begin{align} \label{B10}
   \begin{minipage}{70ex}
{\em $\ceb{g f} = g \ceb{f}$ a.e.\ $[\nu]$ whenever
$g\colon X \to \cbb$ is $\bscr$-measur\-able, $f\in
L^p(\nu)$, $gf \in L^r(\nu)$ and $p, r \in
[1,\infty]$.}
   \end{minipage}
   \end{align}
In the case of complex-valued functions, Lemma
\ref{BL1} takes the following form.
   \begin{pro} \label{BL3}
{\em (i)} If $p\in [1,\infty]$, then for every $f\in L^p(\nu)$,
   \begin{align*}
\text{$|\ceb{f}| \Le \ceb{|f|}$ a.e.\ $[\nu]$.}
   \end{align*}

\noindent {\em (ii)} If $p,q \in (1,\infty)$ and $\frac{1}{p} +
\frac{1}{q}=1$, then for all $f\in L^p(\nu)$ and $g \in L^q(\nu)$,
   \begin{align*}
|\ceb{f g}| \Le \ceb{|f|^p}^{1/p} \, \ceb{|g|^q}^{1/q} \text{ a.e.\
$[\nu]$.}
   \end{align*}

\noindent {\em (iii)} If $f \in L^1(\nu)$ and $g \in L^{\infty}(\nu)$,
then
   \begin{align*}
\text{$|\ceb{f g}| \Le \ceb{|f|} \|g\|_{L^{\infty}(\nu)}$ a.e.\ $[\nu]$.}
   \end{align*}
   \end{pro}
   \begin{proof}
(i) Let $g\colon X\to \cbb$ be a $\bscr$-measurable function such that
$|g| = 1$ and $|\ceb{f}| = g\ceb{f}$. Since $|f| - \mathrm{Re}(gf) \Ge 0$
and $\mathrm{Re}(gf), f \in L^p(\nu)$, we infer from \eqref{B4} that
$\ceb{\mathrm{Re}(gf)} \Le \ceb{|f|}$ a.e.\ $[\nu]$, and thus
   \begin{align*}
|\ceb{f}| & = g\ceb{f} \overset{\eqref{B10}} = \ceb{gf}
   \\
& = \mathrm{Re} (\ceb{gf}) \overset{\eqref{B9}} = \ceb{\mathrm{Re}(gf)}
\Le \ceb{|f|} \text{ a.e.\ $[\nu]$.}
   \end{align*}

The conditions (ii) and (iii) follow from (i) and
Lemma \ref{BL1}.
   \end{proof}
Now we prove that the operator of conditional
expectation is contractive with respect to each
$L^p$-norm, where $p\in [1,\infty]$.
   \begin{thm}  \label{Ath}
The following assertions hold:
   \begin{enumerate}
   \item[(i)] if $p\in [1,\infty)$, then $|\ceb{f}|^p
\Le \ceb{|f|^p}$ a.e.\ $[\nu]$ for each $f \in L^p(\nu)$,
   \item[(ii)] if $p\in [1,\infty]$, then
the mapping $L^p(\nu) \ni f \longmapsto \ceb{f} \in
L^p(\nu)$ is a linear contraction which leaves the
convex cone $L_+^p(\nu)$ invariant,
   \item[(iii)] the mapping $L^2(\nu) \ni f \longmapsto
\ceb{f} \in L^2(\nu)$ is an orthogonal projection.
   \end{enumerate}
   \end{thm}
   \begin{proof}
(i) The case of $p=1$ follows from Proposition
\ref{BL3}(i). Suppose $p\in (1,\infty)$ and $f\in
L^p(\nu)$. Since the measure $\nu|_{\bscr}$ is
$\sigma$-finite, there exists a sequence
$\{X_k\}_{k=1}^{\infty} \subseteq \bscr$ such that
$\nu(X_n) < \infty$ for every $n\in \nbb$, and $X_n
\nearrow X$ as $n \to \infty$. Applying Proposition
\ref{BL3}(ii) to the function $g=\chi_{X_n}$, which is
in $L^q(\nu)$, we get
   \begin{align*}
|\ceb{f}|^p \cdot \chi_{X_n} & \overset{\eqref{B10}}=
|\ceb{f \cdot \chi_{X_n}}|^p
   \\
& \hspace{2.3ex}\Le \ceb{|f|^p} \cdot \chi_{X_n}
\text{ a.e.\ $[\nu]$}, \quad n\in \nbb.
   \end{align*}
Passing to the limit with $n$ completes the proof of
(i).

(ii) The invariance is clear. If $p\in[1,\infty)$, then (i) yields
   \begin{align*}
\int_X |\ceb{f}|^p \D \nu \Le \int_X \ceb{|f|^p} \D \nu
\overset{\eqref{B1}}= \int_X |f|^p \D \nu, \quad f \in L^p(\nu).
   \end{align*}
If $p=\infty$, then by Proposition \ref{BL3}(i), we have
   \begin{align*}
\text{$|\ceb{f}| \Le \ceb{|f|} \overset{\eqref{B4}} \Le
\|f\|_{L^{\infty}(\nu)}$ a.e.\ $[\nu]$,} \quad f \in L^{\infty}(\nu),
   \end{align*}
which completes the proof of (ii).

(iii) Since, by (ii), the mapping $L^2(\nu) \ni f
\longmapsto \ceb{f} \in L^2(\nu)$ is a linear
contraction and, by \eqref{Dag9}, $\ceb{\ceb{f}} =
\ceb{f}$ for all $f\in L^2(\nu)$, we deduce that
$L^2(\nu) \ni f \longmapsto \ceb{f} \in L^2(\nu)$ is
an orthogonal projection. This completes the proof.
   \end{proof}
Below, we show that the operators of conditional
expectation with respect to two $\sigma$-algebras
coincide if and only if the relative $\nu$-complements
of these $\sigma$-algebras are equal to each other
(see Sect.\ \ref{sec-2.1} for definition).
   \begin{pro}\label{appcoin}
Let $\bscr, \tilde\bscr\subseteq \ascr$ be
$\sigma$-algebras such that the measures
$\nu|_{\bscr}$ and $\nu|_{\tilde\bscr}$ are
$\sigma$-finite. Then the following conditions are
equivalent{\em :}
   \begin{enumerate}
   \item[(i)] $\mathsf{E}(f;\bscr,\nu) =
\mathsf{E}(f;\tilde\bscr,\nu)$ a.e.\ $[\nu]$ for every
$\ascr$-measurable functions $f \colon X\to \rbop$,
   \item[(ii)]  $\bscr^\nu=\tilde\bscr^\nu$.
   \end{enumerate}
   \end{pro}
   \begin{proof}
(i)$\Rightarrow$(ii) It is enough to show that
$\bscr^\nu \subseteq \tilde\bscr^\nu$. Take $\varDelta
\in \bscr$. Since
   \begin{align*}
\chi_{\varDelta} =
\mathsf{E}(\chi_{\varDelta};\bscr,\nu) =
\mathsf{E}(\chi_{\varDelta};\tilde \bscr,\nu) \text{
a.e.\ $[\nu]$}
   \end{align*}
and the function $\mathsf{E}(\chi_{\varDelta};\tilde
\bscr,\nu)$ is $\tilde\bscr$-measurable, we infer from
\cite[Lemma 13.1]{b-j-j-sC} that $\chi_{\varDelta}$ is
$\tilde \bscr^\nu$-measurable. This implies that
$\varDelta \in \tilde\bscr^\nu$. Hence $\bscr^\nu
\subseteq \tilde\bscr^\nu$.

(ii)$\Rightarrow$(i) Let $\mathscr C$ denote any of
the $\sigma$-algebras $\bscr$ or $\tilde \bscr$. Take
an $\ascr$-measurable function $f \colon X\to \rbop$.
First, we show that
   \begin{align} \label{nabla}
\int_{\varDelta} f \D \nu = \int_{\varDelta}
\mathsf{E}(f;\mathscr C,\nu) \D \nu, \quad \varDelta
\in \mathscr C^\nu.
   \end{align}
Indeed, if $\varDelta \in \mathscr C^\nu$, then by
\eqref{complascr} there exists $\varDelta^{\prime} \in
\mathscr C$ such that $\nu(\varDelta \vartriangle
\varDelta^{\prime})=0$. Then
   \begin{align*}
\int_{\varDelta} f \D \nu = \int_{\varDelta^{\prime}}
f \D \nu \overset{\eqref{B1}}=
\int_{\varDelta^{\prime}} \mathsf{E}(f;\mathscr C,\nu)
\D \nu = \int_{\varDelta} \mathsf{E}(f;\mathscr C,\nu)
\D \nu,
   \end{align*}
which proves our claim. Applying \eqref{nabla} to the
$\sigma$-algebras $\bscr$ and $\tilde\bscr$ and using
the equality $\bscr^\nu=\tilde\bscr^\nu$, we obtain
   \begin{align*}
\int_{\varDelta} \mathsf{E}(f;\bscr,\nu) \D \nu =
\int_{\varDelta} f \D \nu = \int_{\varDelta}
\mathsf{E}(f;\tilde \bscr,\nu) \D \nu, \quad \varDelta
\in \bscr^\nu.
   \end{align*}
Since the functions $\mathsf{E}(f;\bscr,\nu)$ and
$\mathsf{E}(f;\tilde\bscr,\nu)$ are
$\bscr^\nu$-measurable and the measure
$\nu|_{\bscr^\nu}$ is $\sigma$-finite, the proof is
complete.
   \end{proof}
We conclude Appendix \ref{AppB} by making the
following important caution regarding the conditional
expectation.
   \begin{cau}
We say that the conditional expectation
$\mathsf{E}(\,\cdot\,;\cscr,\nu)$ exists if $\cscr
\subseteq \ascr$ is a $\sigma$-algebra and the measure
$\nu|_{\cscr}$ is $\sigma$-finite.
   \end{cau}
   \section{Powers of operators} \label{App}
   This section deals with operators whose powers are
all densely defined. We begin by stating the
following fact, which is a direct consequence of
the Mittag-Leffler theorem (see \cite[Lemma
1.1.2]{Sch}).
   \begin{lem}\label{closedpow}
Let $\{k_n\}_{n=0}^\infty$ be a strictly increasing
sequence of nonnegative integers. Suppose $A$ is an
operator in a complex Hilbert space $\hh$ such that
for every $n \in \zbb_+$, the normed space
\mbox{$(\dz{A^{k_n}}, \|\cdot\|_{A;k_n})$} is complete
and $\dz{A^{k_{n+1}}}$ is dense in $\dz{A^{k_n}}$ with
respect to the norm \mbox{$\|\cdot\|_{A;k_n}$}. Then
for every $n\in \zbb_+$, $\dzn{A}$ is dense in
$\dz{A^{k_n}}$ with respect to the norm
\mbox{$\|\cdot\|_{A;k_n}$}.
   \end{lem}
Theorem \ref{closedpow2} below is an abstract version
of Theorem 4.7 in \cite{b-j-j-sC}. The reader should
be aware of the fact that the former theorem does not
imply the latter because there exists a composition
operator $C_{\phi}$ which has a dense set of
$C^\infty$-vectors but $C_{\phi}^n$ is not closed for
every integer $n \Ge 2$ (see \cite[Example
5.4]{b-j-j-sC}).
   \begin{thm}\label{closedpow2}
Let $\{k_n\}_{n=0}^\infty$ be a strictly increasing
sequence of nonnegative integers. Suppose $A$ is a
closed operator in $\hh$ such that
   \begin{enumerate}
   \item[(i)] $A^{k_n}$ is closed
for every $n\in \zbb_+$,
   \item[(ii)] $\dz{A^{k_{n+1}}}$ is a
core for $A^{k_n}$ for every $n\in \zbb_+$.
   \end{enumerate}
Then for every $n\in \zbb_+$, the norms
$\|\cdot\|_{A;k_n}$ and $\|\cdot\|_{A^{k_n}}$ are
equivalent, and $\dzn{A}$ is a core for $A^{k_n}$.
   \end{thm}
\begin{proof}
Fix $n\in \zbb_+$. Since $A$ is closed, we deduce that
the normed space \mbox{$(\dz{A^{k_n}},
\|\cdot\|_{A;k_n})$} is complete (see
\cite[Proposition 1]{StSz4}). By (i), the normed space
\mbox{$(\dz{A^{k_n}}, \|\cdot\|_{A^{k_n}})$} is
complete. Since the identity mapping from
\mbox{$(\dz{A^{k_n}}, \|\cdot\|_{A;k_n})$} to
\mbox{$(\dz{A^{k_n}}, \|\cdot\|_{A^{k_n}})$} is
continuous, the inverse mapping theorem implies that
the norms \mbox{$\|\cdot\|_{A;k_n}$} and
\mbox{$\|\cdot\|_{A^{k_n}}$} are equivalent. Hence, by
(ii), $\dz{A^{k_{n+1}}}$ is dense in $\dz{A^{k_n}}$
with respect to the norm \mbox{$\|\cdot\|_{A;k_n}$}.
An application of Lemma \ref{closedpow} completes the
proof.
   \end{proof}
It is worth pointing out that closed paranormal
operators satisfy the assumption (i) of Theorem
\ref{closedpow2} (see \cite[Proposition
6(iv)]{StSz4}). In particular, this is the case for
closed subnormal, and consequently closed symmetric,
operators (see \cite[Proposition 1]{StSz3.5} for the
proof that symmetric operators are subnormal). Theorem
\ref{closedpow2} is no longer true if the assumption
(ii) is dropped, even in the class of closed symmetric
operators (see \cite[Theorem 4.8]{b-j-j-sC} which is
essentially due to Schm\"udgen \cite{Schm}).
   \end{appendix}
   \vspace{3ex}

\textbf{Acknowledgement}. A substantial part of this
paper was written while the first, the second and the
fourth author visited Kyungpook National University
during the spring and the autumns of 2013, 2016. They
wish to thank the faculty and the administration of
this unit for their warm hospitality.

   \bibliographystyle{amsalpha}
   
   \renewcommand\baselinestretch{1.3}
   \printindex
   \end{document}